\def\be{\begin{equation}}
\def\ee{\end{equation}}
\def\bes{\begin{equation*}}
\def\ees{\end{equation*}}
\def\bea{\begin{equation}\begin{aligned}}
\def\eea{\end{aligned}\end{equation}}
\def\beas{\begin{equation*}\begin{aligned}}
\def\eeas{\end{aligned}\end{equation*}}
\DeclareMathAlphabet{\mathpzc}{OT1}{pzc}{m}{en}
\apptocmd{\lim}{\limits}{}{}
\apptocmd{\sup}{\limits}{}{}
\apptocmd{\inf}{\limits}{}{}
\apptocmd{\liminf}{\limits}{}{}
\apptocmd{\limsup}{\limits}{}{}
\pretocmd{\langle}{\left}{}{}
\pretocmd{\rangle}{\right}{}{}
\DeclareMathOperator{\card}{Card}
\DeclareMathOperator{\pr}{pr}
\DeclareMathOperator{\supp}{Supp}
\DeclareMathOperator{\ad}{ad}
\DeclarePairedDelimiter{\abs}{\lvert}{\rvert}
\DeclarePairedDelimiter{\norm}{\lVert}{\rVert}
\let\originalleft\left
\let\originalright\right
\renewcommand{\left}{\mathopen{}\mathclose\bgroup\originalleft}
\renewcommand{\right}{\aftergroup\egroup\originalright}
\newcommand{\Supp}[1]{\supp\left( #1\right) }
\newcommand{\loc}{\mathrm{loc}}
\newcommand{\dom}{\mathrm{dom}}
\newcommand{\vect}[1]{\mathbf{{#1}}}
\newcommand{\dd}{\mathrm{d}}
\newcommand{\N}{\mathds{N}}
\newcommand{\Z}{\mathds{Z}}
\newcommand{\C}{\mathds{C}}
\newcommand{\Hd}{\mathds{H}}
\newcommand{\R}{\mathds{R}}
\newcommand{\gf}{\mathfrak{g}}
\newcommand{\hf}{\mathfrak{h}}
\newcommand{\vf}{\mathfrak{v}}
\newcommand{\Ff}{\mathfrak{F}}
\newcommand{\Cc}{\mathcal{C}}
\newcommand{\Dc}{\mathcal{D}}
\newcommand{\Ec}{\mathcal{E}}
\newcommand{\Fc}{\mathcal{F}}
\newcommand{\Hc}{\mathcal{H}}
\newcommand{\Kc}{\mathcal{K}}
\newcommand{\Lc}{\mathcal{L}}
\newcommand{\Mc}{\mathcal{M}}
\newcommand{\Oc}{\mathcal{O}}
\newcommand{\Pc}{\mathcal{P}}
\newcommand{\Sc}{\mathcal{S}}
\newcommand{\Zc}{\mathcal{Z}}
\newcommand{\meg}{\leqslant}
\newcommand{\Meg}{\geqslant}
\newcommand{\eps}{\varepsilon}
\renewcommand{\phi}{\varphi}
\newcommand{\mi}{\mu}
\newcounter{count}
\DeclareFontFamily{U}{stixbbit}{}
\DeclareFontShape{U}{stixbbit}{m}{it}{<-> stix-mathbbit}{}
\renewcommand{\Re}{\mathrm{Re\,}}
\renewcommand{\Im}{\mathrm{Im\,}}
\newcommand{\ifr}{\mathfrak{i}}
\begin{document}
	\title{Functional Calculus on Non-Homogeneous Operators on Nilpotent Groups}
	\author{Mattia Calzi\thanks{Supported by a research grant of the \emph{Scuola Normale Superiore} and member of the \emph{Gruppo Nazionale per l’Analisi Matematica, la Probabilità e le loro Applicazioni} (GNAMPA) of the \emph{Istituto Nazionale di Alta Matematica} (INdAM).}}
	\affil{	Scuola Normale Superiore,\\
		Piazza dei Cavalieri, 7, 56126 Pisa, Italy \\
		E-mail: \texttt{mattia.calzi@sns.it}    }
	\author{Fulvio Ricci}
	\affil{	Scuola Normale Superiore,\\
		Piazza dei Cavalieri, 7, 56126 Pisa, Italy \\
		E-mail: \texttt{fulvio.ricci@sns.it}  
	}
	\date{}
	
	\theoremstyle{definition}
	\newtheorem{deff}{Definition}[section]
	
	\newtheorem{oss}[deff]{Remark}
	
	\newtheorem{ass}[deff]{Assumptions}
	
	\newtheorem{nott}[deff]{Notation}
	
	\newtheorem{ex}[deff]{Example}

	\theoremstyle{plain}
	\newtheorem{teo}[deff]{Theorem}
	
	\newtheorem{lem}[deff]{Lemma}
	
	\newtheorem{prop}[deff]{Proposition}
	
	\newtheorem{cor}[deff]{Corollary}

	\maketitle

\begin{small}
	\section*{Abstract}
	We study the functional calculus associated with a hypoelliptic left-invariant differential operator $\Lc$ on a connected and simply connected nilpotent Lie group $G$ with the aid of the corresponding \emph{Rockland} operator $\Lc_0$ on the `local' contraction $G_0$ of $G$, as well as of the corresponding Rockland operator $\Lc_\infty$ on the `global' contraction $G_\infty$ of $G$.
	We provide asymptotic estimates of the Riesz potentials associated with $\Lc$ at $0$ and at $\infty$, as well as of the kernels associated with functions of $\Lc$ satisfying Mihlin conditions of every order. 
	We also prove some Mihlin--H\"ormander multiplier theorems for $\Lc$ which generalize analogous results to the non-homogeneous case.
	Finally, we extend the asymptotic study of the density of the `Plancherel measure' associated with $\Lc$ from the case of a quasi-homogeneous sub-Laplacian to the case of a quasi-homogeneous sum of even powers.
\end{small}

\section{Introduction}

This paper deals with functional calculus on non-homogeneous left-invariant hypoelliptic self-adjoint differential operators on nilpotent Lie groups.

Functional calculus on self-adjoint Rockland operators (i.e., left-invariant, hypoelliptic and homogeneous) has been widely studied, in particular on sub-Laplacians (cf., for instance,~\cite{FollandStein,MauceriMeda,Christ,Hebisch,MullerStein,MartiniMuller1,MartiniMuller2,MartiniMuller3,MartiniRicciTolomeo}), but also in greater generality (cf., for instance,~\cite{Hulanicki,HebischZienkiewicz,Martini,Martini3,Calzi1,Calzi2}). Also functional calculus on non-homogeneous sub-Laplacians has been considered (cf., for instance,~\cite{Alexopoulos,Sikora,Martini,Martini3}).

The approach introduced in~\cite{NagelRicciStein} indicates that it is possible to transfer information on operators that are functions of a (positive) Rockland operator $\widetilde\Lc$ on a connected and simply connected graded group $G$, or on its convolution kernel, to analogous information relative to the projection of $\widetilde\Lc$ on a general  connected and simply connected, but not necessarily homogeneous,  quotient group. 

Let $G=\widetilde G/I$ be the quotient group, where we assume that $I$ is not dilation invariant to avoid trivialities. The one-parameter family of isomorphic quotient groups $G_s/I_s$, where $I_s$ is  $I$ dilated by $s\in\R_+$, admits two limits $G_0=\widetilde G/I_0$ and $G_\infty=\widetilde G/I_\infty$ (no longer isomorphic to $G$), where $I_0$ and $I_\infty$ are dilation invariant, so that $G_0$ and $G_\infty$ admit induced gradations from $\widetilde G$.

Correspondingly, the operator $\widetilde \Lc$ induces a family $(\Lc_s)_{s\in[0,+\infty]}$, of projected operators on the different quotients. The limit operators $\Lc_0,\Lc_\infty$ are Rockland, while the other $\Lc_s$ lack homogeneity, remaining however hypoelliptic. More precisely, they are {\it weighted subcoercive}, according to the definition introduced in~\cite{ElstRobinson}.\footnote{Functional calculus on weighted subcoercive operators (or systems of operators) has been developed in~\cite{Martini,Martini2,Martini3}. In these works, the homogeneous limit $G_0$ mentioned above is used, at least for comparison with the homogeneous setting by a contraction argument.} 

The starting point in the analysis of~\cite{NagelRicciStein} is a weighted generating family $X_1,\dots, X_n$ of the Lie algebra $\gf$ of $G$. The (Lie algebra of the) group $G$ is then interpreted as the quotient of the free nilpotent Lie algebra $\Fc$ of sufficiently high step with generators $\widetilde X_1,\dots, \widetilde X_n$; the Lie algebra $\Fc$ is then endowed with the (unique) gradation obtained assigning to each $\widetilde X_j$ a degree equal to the weight of $X_j$. Thus, in the above notation, $\Fc$ is the Lie algebra of $\widetilde G$ and the quotient map is uniquely determined by the condition that each $\widetilde X_j$ is mapped onto $X_j$.
A non-commutative homogeneous polynomial $\Pc$ in $n$ indeterminates (endowed with the same weights of $X_1,\dots,X_n$) is then considered under the assumption that the operator $\widetilde \Lc=\Pc(\widetilde X_1,\dots \widetilde X_n)$ is hypoelliptic (hence Rockland). In particular, also the operator $\Lc=\Pc(X_1,\dots, X_n)$ is hypoelliptic; examples of such operators are the sums of even powers of generating vector fields.

It was proved in~\cite{NagelRicciStein} that there is a fundamental solution $K$ of $\Lc$ satisfying the asymptotic relations\footnote{These formulas assume identifications, as manifolds,  of $G$ with $G_0$ and $G_\infty$, respectively. This will be explained in the next section. More precisely, it is proved in~\cite{NagelRicciStein} that $K(x)$ admits two infinite asymptotic expansions at $0$ and $\infty$, with terms which are homogeneous of increasing and decreasing orders, respectively, relative to the dilations of the corresponding limit group.}
\[
K(x)\sim P(x)+K_0(x)\quad \text{ as }x\to0\ ,\qquad K_(x)\sim K_\infty(x)\quad \text{ as }x\to\infty\ ,
\]
where  $K_0$ and $K_\infty$ are  fundamental solutions of $\Lc_0$ on $G_0$ and of $\Lc_\infty$ on $G_\infty$, respectively, while $P$ is a suitable polynomial on $G_0$.

\medskip

The results of the present paper can be divided into four parts. The first part concerns the heat kernels associated to the operator $\Lc$, i.e., the kernels of the operators $e^{-t\Lc}$. In Section~\ref{sec:1} we recall the basic constructions of~\cite{NagelRicciStein} and then we introduce a (somewhat redundant) family of left-invariant vector fields $X_{s,j}$ on each group $G_s$, $s\in [0,\infty]$, which behaves nicely under dilation (which can no longer be defined as automorphisms of the group $G_s$, but rather as isomorphisms between different $G_s$). 
We then introduce two moduli $\abs{\,\cdot\,}_s$ and $\abs{\,\cdot\,}_{s,*}$ on each $G_s$: the former behaves nicely under dilation and equals a homogeneous norm on $G_0$ near the identity $e$ and a homogeneous norm on $G_\infty$ near $\infty$, under suitable identifications; the latter, inspired by~\cite{Martini,Martini3}, is a compromise between the modulus $\abs{\,\cdot\,}_s$ and the Riemannian distance from $e$ associated with the vectors $X_{s,j}$. The importance of $\abs{\,\cdot\,}_{s,*}$ lies in the fact that it grows much faster than $\abs{\,\cdot\,}_s$ at $\infty$, in general, so that it leads to better multiplier theorems. 
In Section~\ref{sec:2} we then make use of the vector fields $X_{s,j}$ and the moduli $\abs{\,\cdot\,}_{s,*}$ to prove uniform `Gaussian' estimates for the kernels $h_{s,t}$ of the $e^{-t \Lc_s}$ (Theorem~\ref{prop:21:2}); we also consider estimates of the derivatives in $s$ of the $ h_{s,t}$, appropriately defined.

In the second part (Section~\ref{sec:3}) we extend the asymptotic estimates in~\cite{NagelRicciStein} to general complex powers of $\Lc$ (Theorem~\ref{teo:21:1}), defined by analytic continuation in the same fashion of the Euclidean case. Even though it would be possible to use the same techniques employed in~\cite{NagelRicciStein}, we shall rely as much as possible on the estimates on $h_{s,t}$ provided in Theorem~\ref{prop:21:2}; in this way, we are able to describe more precisely also the higher order terms of the obtained developments, in some specific situations (Theorem~\ref{teo:3}).

In the third part (Section~\ref{sec:4}) we give asymptotic estimates to kernels of more general multiplier operators (Theorem~\ref{teo:5}) and prove some multiplier theorems of Mihlin--H\"ormander type (Theorems~\ref{teo:1} and~\ref{teo:2}). For what concerns the asymptotic estimates, here we consider more general functions of the operator $\Lc$ -- namely, functions satisfying Mihlin conditions of every order up to the multiplication by a fractional power. Even though these functions include the complex powers of $\Lc$, Theorem~\ref{prop:21:2} is not completely contained in Theorem~\ref{teo:5}, since several terms of the developments obtained in the latter are only defined up to polynomials. 
We then pass to some multiplier theorems, which are generalization of some of the results presented in~\cite{Martini3} to the non-homogeneous case. While Theorem~\ref{teo:1} is stated in full generality and gives non-homogeneous Mihlin--H\"ormander conditions on the multipliers in the fashion of~\cite{Alexopoulos,Sikora}, Theorem~\ref{teo:2} makes use, in a quite more specific situation, of the techniques introduced in~\cite{Hebisch,HebischZienkiewicz} and then systematically developed in~\cite{Martini,Martini3} to lower the regularity threshold up to half the topological dimension of $G$ (instead of half the growth of the volume of $G$ as in Theorem~\ref{teo:1}). Optimality is achieved when $G$ is a product of Métivier and abelian groups, and $\Lc$ is (any) hypoelliptic sub-Laplacian thereon.

The fourth part (Section~\ref{sec:5}) deals with the spectral Plancherel measure $\beta_\Lc$ and its comparison with $\beta_{\Lc_0}$ and~$\beta_{\Lc_\infty}$ (Theorem~\ref{cor:5}), when $\Lc$ is `quasi-homogeneous', following~\cite{Sikora}. Here we both extend the results of~\cite{Sikora} to sums of even powers of generating homogeneous vector fields (instead of quasi-homogeneous sub-Laplacians), and we also observe that the estimates on the density of $\beta_\Lc$ with respect to the Lebesgue measure on $\R_+$ automatically improve to asymptotic expansions at $0$ and at $\infty$.

\section{General Setting}\label{sec:1}

In this section we shall present the general framework in which we shall work in the sequel. It is basically the same as that of~\cite{NagelRicciStein}, except for the fact that we shall not require that the graded group $\widetilde G$ be a free nilpotent Lie group. We shall briefly repeat the basic constructions for the ease of the reader.

\subsection{Contractions}

Let $\widetilde G$ be a graded, connected and simply connected Lie group with Lie algebra $\widetilde \gf$, with gradation $(\widetilde \gf_j)$; let $\pr_j$ be the projection of $\widetilde \gf$ onto $\widetilde \gf_j$ with kernel $\bigoplus_{j'\neq j}\widetilde \gf_{j'}$, and define $n\coloneqq \max \Set{j>0\colon \gf_j\neq 0}$. 

On $\widetilde G$ we introduce the dilations  $x\mapsto r\cdot x$, $r\in\R_+=(0,\infty)$, adapted to the given gradation, i.e., such that $r\cdot x=r^j x$ if $x\in\widetilde\gf_j$. We shall sometimes denote by $\rho_r$ the dilation by $r$.
A linear subspace $\vf$ of $\widetilde\gf$ is graded, i.e., $\vf=\bigoplus_j\vf\cap\widetilde\gf_j$, if and only if it is homogeneous, i.e., invariant under dilations.
We say that a linear map from a graded subspace $\vf$ of $\widetilde \gf$ to $\widetilde\gf$ is 
\begin{itemize}
\item {\it homogeneous} if it preserves the gradation, i.e., it maps $\vf\cap\widetilde \gf_j$ into $\widetilde \gf_j$ for every $j$;
\item {\it strictly sub-homogeneous} if it maps $\vf\cap\widetilde \gf_j$ into $\bigoplus_{j'<j}\widetilde \gf_{j'}$ for every $j$;
\item {\it strictly super-homogeneous} if it maps $\vf\cap\widetilde \gf_j$ into $\bigoplus_{j'>j}\widetilde \gf_{j'}$ for every $j$.
\end{itemize}

Now, let $G$ be the quotient of $\widetilde G$ by a (not necessarily homogeneous) normal subgroup, and denote by $\pi$ the corresponding projection; we shall assume that $G$ is simply connected.
Let $\ifr$ be the kernel of $ \dd \pi$, and observe that $\ker \pi= \exp_{\widetilde G} \ifr$ since $G$ is simply connected. 
Then, define
\[
\ifr_0\coloneqq \bigoplus_{j=1}^n \pr_j\bigg(\ifr \cap \bigg(\bigoplus_{j'\meg j} \widetilde\gf_{j'}\bigg)   \bigg), \qquad \text{and} \qquad \ifr_\infty\coloneqq \bigoplus_{j=1}^n \pr_j\bigg(\ifr \cap \bigg(\bigoplus_{j'\Meg j} \widetilde\gf_{j'}\bigg)   \bigg).
\]
For $s\in (0,\infty)$, we define  $\ifr_s\coloneqq s^{-1}\cdot \ifr$.

The following result is basically a generalization of~\cite[Proposition 2, Lemma, and Corollary of § 2]{NagelRicciStein}. 

\begin{prop}\label{prop-psi}
	The vector spaces $\ifr_0$ and $\ifr_\infty$ are graded ideals of $\widetilde \gf$ {and have the same dimension as $\ifr$}. In addition, there are two linear mappings $\psi_{0,1}\colon\ifr_0\to\widetilde\gf$ and $\psi_{\infty,1}\colon\ifr_\infty\to\widetilde\gf$ such that
	\begin{itemize}
	\item $\psi_{0,1}$ is strictly sub-homogeneous and $I+\psi_{0,1}$ is a bijection of $\ifr_0$ onto $\ifr$; 
	
	\item $\psi_{\infty,1}$ is strictly super-homogeneous and $I+\psi_{\infty,1}$ is a bijection of $\ifr_\infty$ onto $\ifr$;
	
	\item defining, for $s\in \R_+$, $\psi_{0,s}$ and $\psi_{\infty,s}$ as
	$\psi_{0,s}=s^{-1}\cdot \psi_{0,1}(s\,\cdot\,)$ and $\psi_{\infty,s}=s^{-1}\cdot \psi_{\infty,1}(s\,\cdot\,)$, respectively, these maps are strictly sub- (resp.\ super-)homogeneous and 
	\[
	\lim_{s\to0^+}\psi_{0,s}=0, \qquad \lim_{s\to\infty}\psi_{\infty,s}=0\, ;
	\]	
	
	\item if $\hf_0$ and $\hf_\infty$ are graded complements of $\ifr_0$ and $\ifr_\infty$ in $\widetilde \gf$, respectively, then they are also algebraic complements of $\ifr_s$ for every $s\in \R_+$.
	\end{itemize}
\end{prop}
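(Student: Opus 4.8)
The plan is to recognise $\ifr_0$ and $\ifr_\infty$ as associated graded spaces of $\ifr$ for two natural filtrations, and to build $\psi_{0,1},\psi_{\infty,1}$ from linear splittings of these. Put $F_j\coloneqq\ifr\cap\big(\bigoplus_{j'\meg j}\widetilde\gf_{j'}\big)$, an increasing chain with $F_0=\{0\}$ and $F_n=\ifr$, and $F^j\coloneqq\ifr\cap\big(\bigoplus_{j'\Meg j}\widetilde\gf_{j'}\big)$, a decreasing one with $F^{n+1}=\{0\}$ and $F^1=\ifr$. The restriction of $\pr_j$ to $F_j$ has kernel exactly $F_{j-1}$, so $\pr_j(F_j)\cong F_j/F_{j-1}$, and likewise $\pr_j(F^j)\cong F^j/F^{j+1}$. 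Since the spaces $\pr_j(F_j)$ lie in the mutually independent $\widetilde\gf_j$, the sum defining $\ifr_0$ is direct, and telescoping gives $\dim\ifr_0=\sum_{j}\dim(F_j/F_{j-1})=\dim\ifr$; symmetrically $\dim\ifr_\infty=\dim\ifr$. That $\ifr_0$ and $\ifr_\infty$ are graded is immediate from their definitions.

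For the ideal property of $\ifr_0$, fix $w\in\widetilde\gf_k$ and $v\in F_j$. Then $[w,v]\in\ifr$ because $\ifr$ is an ideal, every homogeneous component of $[w,v]$ has degree $\meg k+j$ (so $[w,v]\in F_{k+j}$), and the component of degree $k+j$ is exactly $[w,\pr_j v]$; hence $[w,\pr_j v]\in\pr_{k+j}(F_{k+j})\subseteq\ifr_0$. As $\ifr_0$ is graded and spanned by elements of the form $\pr_j v$, this yields $[\widetilde\gf,\ifr_0]\subseteq\ifr_0$. The same computation with $F^\bullet$ and lowest-degree components shows that $\ifr_\infty$ is an ideal.

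Next, for each $j$ choose a linear section $\sigma_j\colon\pr_j(F_j)\to F_j$ of $\pr_j$, and define $\psi_{0,1}(u)\coloneqq\sigma_j(u)-u$ for $u\in\pr_j(F_j)$, extended linearly to $\ifr_0=\bigoplus_j\pr_j(F_j)$. Then $\psi_{0,1}(u)$ has homogeneous degrees $<j$, so $\psi_{0,1}$ is strictly sub-homogeneous, while $(I+\psi_{0,1})(u)=\sigma_j(u)\in\ifr$. Writing $u=\sum_j u_j$ with highest nonzero term $u_m$, the degree-$m$ component of $(I+\psi_{0,1})(u)$ equals $u_m$, which gives injectivity of $I+\psi_{0,1}$; surjectivity then follows from $\dim\ifr_0=\dim\ifr$ (or directly, by successively subtracting $\sigma_m(\pr_m v)$ from $v\in\ifr$). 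Thus $I+\psi_{0,1}\colon\ifr_0\to\ifr$ is a bijection, and $\psi_{\infty,1}$ is built the same way from sections of $\pr_j\colon F^j\to\pr_j(F^j)$, yielding a strictly super-homogeneous map with $(I+\psi_{\infty,1})(\ifr_\infty)=\ifr$. For $u\in\ifr_0\cap\widetilde\gf_j$ one computes $\psi_{0,s}(u)=\rho_{s^{-1}}\big(\psi_{0,1}(s^j u)\big)=\sum_{j'<j}s^{j-j'}\,\pr_{j'}\big(\psi_{0,1}(u)\big)$, which is again strictly sub-homogeneous and, all exponents $j-j'$ being positive, tends to $0$ as $s\to0^+$; symmetrically $\psi_{\infty,s}\to0$ as $s\to\infty$.

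Finally, from $(I+\psi_{0,s})(u)=\rho_{s^{-1}}\big((I+\psi_{0,1})(\rho_s u)\big)$ and $\rho_s(\ifr_0)=\ifr_0$ we obtain $(I+\psi_{0,s})(\ifr_0)=\rho_{s^{-1}}(\ifr)=\ifr_s$, and likewise $(I+\psi_{\infty,s})(\ifr_\infty)=\ifr_s$. So every element of $\ifr_s$ is of the form $u+\psi_{0,s}(u)$ with $u\in\ifr_0$, and its top homogeneous component coincides with that of $u$, hence lies in $\ifr_0$; if this element lay in a graded complement $\hf_0$ of $\ifr_0$, that component would lie in $\hf_0\cap\ifr_0=\{0\}$, forcing $u=0$. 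Thus $\hf_0\cap\ifr_s=\{0\}$, and since $\dim\hf_0=\dim\widetilde\gf-\dim\ifr=\dim\widetilde\gf-\dim\ifr_s$ we conclude $\widetilde\gf=\hf_0\oplus\ifr_s$; the same argument with lowest-degree components and $(I+\psi_{\infty,s})(\ifr_\infty)=\ifr_s$ settles $\hf_\infty$. The only genuinely delicate point throughout is the degree bookkeeping — correctly identifying which homogeneous component survives a bracket or a perturbation by $\psi$ — since conceptually the statement merely records that a filtered ideal degenerates to a graded ideal of the same dimension, with an explicit unipotent change of variables realising the degeneration.
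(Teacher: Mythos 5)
Your proof is correct, and most of it tracks the paper's argument in substance: the ideal property, the construction of $\psi_{0,1}$ (sections of $\pr_j\colon F_j\to\pr_j(F_j)$ versus the paper's adapted basis $(e_k)$ are the same thing in different clothes), the strict sub-homogeneity, and the convergence $\psi_{0,s}\to 0$ are all essentially the paper's computations. Where you genuinely diverge is in the final bullet. The paper proves that $\hf_0$ complements $\ifr_s$ for all $s\in\R_+$ by a topological argument: continuity of $s\mapsto\ifr_s$ in the Grassmannian gives complementarity for some small $r>0$, and then dilations transport it to all $s$. You instead observe the identity $(I+\psi_{0,s})(\ifr_0)=\ifr_s$ (which the paper never states but which follows at once from $(I+\psi_{0,s})=\rho_{s^{-1}}\circ(I+\psi_{0,1})\circ\rho_s$ and $\rho_s(\ifr_0)=\ifr_0$), parametrize $\ifr_s$ as $\{u+\psi_{0,s}(u):u\in\ifr_0\}$, and kill $\hf_0\cap\ifr_s$ by examining the top homogeneous component, which survives the strictly sub-homogeneous perturbation $\psi_{0,s}$ and must lie in $\hf_0\cap\ifr_0=\{0\}$; a dimension count finishes. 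Your route is more elementary — pure linear algebra, no appeal to Grassmannian continuity — and as a bonus it gives the useful parametrization $\ifr_s=(I+\psi_{0,s})(\ifr_0)$ explicitly and covers $s=0$ uniformly. The paper's argument buys conciseness and foregrounds the dilation covariance, which it reuses heavily later. Your dimension count $\dim\ifr_0=\sum_j\dim(F_j/F_{j-1})=\dim\ifr$ by telescoping is also a small simplification: the paper extracts the dimension only after constructing the bijection $I+\psi_{0,1}$.
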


\begin{proof}
	{\bf1.} It is clear that $\ifr_0$ is a graded subspace of $\widetilde \gf$; let $(\ifr_{0,j}=\ifr_0\cap\widetilde\gf_j)$ be its gradation. Take $x\in \widetilde \gf_{j_1}$ for some $j_1$ and $y\in \ifr_{0,j_2}$ for some $j_2$; let us prove that $[x,y]\in \ifr_{0,j_1+j_2}$. 
	Now, there is $y'\in \ifr$ such that $y-y'\in \bigoplus_{j'<j_2}\widetilde \gf_{j'}$, so that $[x,y]\in [x,y']+\left( \bigoplus_{j'<j_2}\widetilde \gf_{j_1+j'}\right) $, whence $[x,y]= \pr_{j_1+j_2}([x,y'])\in\ifr_{0,j_1+j_2}  $. 
	By the arbitrariness of $x$ and $y$, it follows that $\ifr_0$ is a graded ideal. 
	In the same way one proves that $\ifr_\infty$ is a graded ideal.
	
	{\bf2.} Now, let us define  $\psi_{0,1}$. Observe that, by induction, we may define a basis $(e_{k})$ of $\ifr$ and an increasing sequence $(k_j)$ such that  $(e_{k})_{k\meg k_j}$ is a basis of $\ifr \cap \left(\bigoplus_{j'\meg j}\widetilde \gf_{j'}\right)$ for every $j$.
	Let us prove that, for every $j$, $(\pr_j(e_k))_{k_{j-1}<k\meg k_j}$ is a basis of $\ifr_{0,j}$. Clearly, it will suffice to prove linear independence. 
	Now, if $(\lambda_k)_{k_{j-1}<k\meg k_j}$ is a family of real numbers such that $\sum_k \lambda_k \pr_j(e_k)=0$, then $\sum_{k_{j-1}<k\meg k_j} \lambda_k e_k\in \ifr \cap \left(\bigoplus_{j'< j}\widetilde \gf_{j'}\right)$. Hence, there is a family $(\lambda_k)_{k\meg k_{j-1}}$ of real numbers such that
	\[
	\sum_{k_{j-1}<k\meg k_j} \lambda_k e_k=\sum_{k\meg k_{j-1}} \lambda_k e_k,
	\]
	whence $\lambda_k=0$ for every $k=1,\ldots, k_j$.
	Then, we may simply define $\psi_{0,1}$ as the linear map such that 
	\[
	\psi_{0,1}\big(\pr_j( e_k)\big)= e_k-\pr_j(e_k)= \sum_{j'<j} \pr_{j'}(e_k) .
	\]
	for every $j=1,\ldots, n$ and for every $k=k_{j-1}+1,\ldots, k_j$.
	 Then  $\psi_{0,1}$ is strictly sub-homogeneous and 
	\[
	(I+\psi_{0,1})\big(\pr_j( e_k)\big) =e_k\,,
	\]
	showing that $I+\psi_{0,1}$ maps  $\ifr_0$ onto $\ifr$ bijectively. It is also clear that 
	\be\label{psi_s}
	\psi_{0,s}(\pr_j( e_k))=\sum_{j'<j} s^{j-j'} \pr_{j'}(e_k) ,
	\ee
which tends to 0 as $s\to0^+$.

In a similar way one constructs $\psi_{\infty,1}$ and proves the corresponding properties.  In particular, we see that $\ifr,\ifr_0$, and $\ifr_\infty$ have the same dimension.
	
	{\bf3.} Let $\hf_0$ be a graded complement of $\ifr_0$ in $\widetilde \gf$. Since the mapping $s\mapsto \ifr_s$ is continuous on $[0,\infty]$ (with values in the Grassmannian of $(\dim \ifr)$-dimensional subspaces of $\widetilde \gf$), it follows that $\hf_0$ is an algebraic complement of $\ifr_r$ for some $r>0$. Therefore, $\hf_0=(s^{-1}r)\cdot \hf_0$ is an algebraic complement of $\ifr_s=(s^{-1}r)\cdot \ifr_r$ for every $s\in (0,\infty)$. 
	
	The assertions concerning $\hf_\infty$ are proved in a similar way.
\end{proof}

Observe that,  by \eqref{psi_s} and its analogue for $\psi_{\infty,s}$, the linear mappings $\psi_{0,s}$ and $\psi_{\infty,\sfrac{1}{s}}$ depend polynomially on $s$.

For $s\in [0,\infty]$, consider the quotient Lie algebras $\gf_s=\widetilde\gf/\ifr_s$. Dilation of $\widetilde\gf$ by $r>0$ induces an isomorphism between $\gf_s$ and $\gf_{r^{-1}s}$; in particular, $\gf_s$ is isomorphic to $\gf_1$ for every $s\in (0,\infty)$, while  $\gf_0$ and $\gf_\infty$ need not be isomorphic with any  other $\gf_s$.  We call $\gf_0$ and $\gf_\infty$ the local and the global contractions of $\gf_1$, respectively.

We fix once and for all two \emph{graded} algebraic complements $\hf_0$ and $\hf_\infty$ of $\ifr_0$ and $\ifr_\infty$, respectively. By Proposition \ref{prop-psi}, both $\hf_0$ and $\hf_\infty$ are complementary to $\ifr_s$ for all $s\in\R_+$.

\begin{deff}\label{def:2}
For $s\in[0,\infty)$, let $P_{0,s}$ be the projection of $\widetilde \gf$ onto $\hf_0$ with kernel $\ifr_s$ and, for $s\in(0,\infty]$, let $P_{\infty,s}$ be the projection of $\widetilde \gf$ onto $\hf_\infty$ with kernel $\ifr_s$. 
\end{deff}

\begin{lem}\label{lem:21:2}
	 $P_{0,0}$ is homogeneous and, for every $s\in \R_+$,  $P_{0,s}-P_{0,0}$ is strictly sub-homogeneous; in addition, $P_{0,r s}= r^{-1}\cdot P_{0,s}(r\,\cdot\,)$ for every  $r,s\in\R_+$.
	
	Analogously, $P_{\infty,\infty}$ is homogeneous and, for every $s\in \R_+$,   $P_{\infty,s}-P_{\infty,\infty}$ is strictly super-homogeneous;  in addition, $P_{\infty,r s}= r^{-1}\cdot P_{\infty,s}(r\,\cdot\,)$ for every $r,s\in\R_+$.
\end{lem}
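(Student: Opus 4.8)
The idea is to reduce everything to the structural facts about the ideals $\ifr_s$ established in Proposition~\ref{prop-psi} together with the bijection $I+\psi_{0,1}\colon\ifr_0\to\ifr$. I will treat the statements about $P_{0,s}$ in detail; those about $P_{\infty,s}$ follow by the symmetric argument (replacing $s$ by $1/s$ and ``sub-'' by ``super-'').

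\smallskip
\noindent\textbf{Step 1: the scaling identity.} I would first establish $P_{0,rs}=r^{-1}\cdot P_{0,s}(r\,\cdot\,)$, since the homogeneity of $P_{0,0}$ will then drop out as a limiting case. The map $Q\coloneqq r^{-1}\cdot P_{0,s}(r\,\cdot\,)=\rho_r^{-1}\circ P_{0,s}\circ\rho_r$ is linear; its image is $\rho_r^{-1}(\hf_0)=\hf_0$ because $\hf_0$ is graded, and its kernel is $\rho_r^{-1}(\ifr_s)=\rho_r^{-1}(s^{-1}\cdot\ifr)=(rs)^{-1}\cdot\ifr=\ifr_{rs}$. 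Moreover $Q$ restricted to $\hf_0$ is the identity (again because $\hf_0$ is graded, so $\rho_r$ preserves it and $P_{0,s}$ is the identity there). A projection onto $\hf_0$ along a fixed complement is unique, so $Q=P_{0,rs}$, which is the claimed identity.

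\smallskip
\noindent\textbf{Step 2: $P_{0,0}$ is homogeneous.} Since $\ifr_0$ is graded, the decomposition $\widetilde\gf=\hf_0\oplus\ifr_0$ is a decomposition into graded subspaces, so the projection $P_{0,0}$ onto $\hf_0$ along $\ifr_0$ commutes with every $\rho_r$; equivalently it maps $\widetilde\gf_j$ into $\widetilde\gf_j$. Alternatively one can read this off Step~1 by letting $s\to0^+$ using continuity of $s\mapsto P_{0,s}$ (which follows from continuity of $s\mapsto\ifr_s$ on $[0,\infty]$), but the direct argument is cleaner.

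\smallskip
\noindent\textbf{Step 3: $P_{0,s}-P_{0,0}$ is strictly sub-homogeneous, the main point.} By Step~1 it suffices to do this for one convenient value of $s$; I will take $s=1$, and then the general case follows since $P_{0,s}-P_{0,0}=\rho_s^{-1}\circ(P_{0,1}-P_{0,0})\circ\rho_s$ (using that $P_{0,0}$ already satisfies the scaling identity, being homogeneous) and conjugating a strictly sub-homogeneous map by a dilation keeps it strictly sub-homogeneous. For $s=1$: take $v\in\widetilde\gf_j$. Write $v=P_{0,1}v+w$ with $w\in\ifr_1=\ifr$, and $v=P_{0,0}v+w_0$ with $w_0\in\ifr_0$. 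Then $(P_{0,1}-P_{0,0})v=w_0-w$. Now $w_0\in\ifr_0$ and, by Proposition~\ref{prop-psi}, $w=(I+\psi_{0,1})(u)$ for a unique $u\in\ifr_0$; comparing the two expressions $v-P_{0,1}v$ and $v-P_{0,0}v$ modulo $\widetilde\gf_{j}\oplus\bigoplus_{j'>j}\widetilde\gf_{j'}$ (the ``top degrees'') and using that $\hf_0,\ifr_0$ are graded, one sees that the degree-$\geqslant j$ component of $w$ coincides with $w_0$, hence $u=w_0$ and $w=w_0+\psi_{0,1}(w_0)$. Therefore $(P_{0,1}-P_{0,0})v=w_0-w=-\psi_{0,1}(w_0)$. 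Since $w_0\in\ifr_0$ and $\psi_{0,1}$ is strictly sub-homogeneous, and since $w_0$ itself, being the $\ifr_0$-component of $v\in\widetilde\gf_j$, lies in $\ifr_0\cap\widetilde\gf_j$ (here one uses that $P_{0,0}$ is homogeneous), we get $(P_{0,1}-P_{0,0})v=-\psi_{0,1}(w_0)\in\bigoplus_{j'<j}\widetilde\gf_{j'}$, as required.

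\smallskip
\noindent\textbf{Expected obstacle.} The one spot needing care is the bookkeeping in Step~3 identifying $w$ with $w_0+\psi_{0,1}(w_0)$: I must argue that the ``leading term'' of the $\ifr$-component $w$ of $v$ equals the $\ifr_0$-component $w_0$ of $v$. This is exactly the content of how $\psi_{0,1}$ was built in the proof of Proposition~\ref{prop-psi} (namely $I+\psi_{0,1}$ sends $\pr_j(e_k)$ to $e_k$), so the cleanest route may be to phrase Step~3 directly in terms of the basis $(e_k)$ and projections $\pr_j$ used there rather than re-deriving the leading-term identification abstractly. Everything else is a routine uniqueness-of-projection argument plus the already-proven gradedness of $\ifr_0$, $\ifr_\infty$, $\hf_0$, $\hf_\infty$.
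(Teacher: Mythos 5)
Your Steps 1 and 2 are correct and are exactly what the paper dismisses as obvious. The gap is in Step 3: the identification $u=w_0$, and hence the formula $(P_{0,1}-P_{0,0})v=-\psi_{0,1}(w_0)$, is false in general. Concretely, let $\widetilde\gf$ be abelian with $\widetilde\gf_1=\langle A\rangle$, $\widetilde\gf_2=\langle B\rangle$, $\widetilde\gf_3=\langle C\rangle$ and $\ifr=\langle B-A,\,C-B\rangle$. Then $\ifr_0=\langle B,C\rangle$, $\hf_0=\langle A\rangle$, and the construction of Proposition~\ref{prop-psi} gives $\psi_{0,1}(B)=-A$ and $\psi_{0,1}(C)=-B$; note that $\psi_{0,1}(C)$ lands back inside $\ifr_0$. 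For $v=C$ one computes $P_{0,1}(C)=A$, so $w=C-A=(B-A)+(C-B)$ and $u=(I+\psi_{0,1})^{-1}(w)=B+C$, whereas $w_0=C$. Thus $u\neq w_0$, and indeed $(P_{0,1}-P_{0,0})(C)=A$ while $-\psi_{0,1}(w_0)=B$. The underlying problem is that $I+\psi_{0,1}$ is only ``unitriangular'' with respect to the grading, so its inverse mixes lower-degree elements of $\ifr_0$ into $u$: the top-degree part of $w$ does not determine $u$. (Also, your intermediate claim that the degree-$\Meg j$ part of $w$ equals $w_0$ is essentially a restatement of the lemma itself, so invoking it at this point is circular.)

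The identity you actually get by applying $P_{0,0}$ to $v=P_{0,1}v+u+\psi_{0,1}(u)$ is $(P_{0,1}-P_{0,0})v=-P_{0,0}\psi_{0,1}(u)$, and to conclude from it you would still have to show $u\in\bigoplus_{j'\meg j}\widetilde\gf_{j'}$, e.g.\ by a downward induction on the degree. The paper sidesteps $\psi_{0,1}$ entirely: for $x\in\widetilde\gf_k$ it takes the largest $h'\Meg k$ with $\pr_{h'}(P_{0,1}x)\neq0$, notes that then $x-P_{0,1}x\in\ifr\cap\bigl(\bigoplus_{j\meg h'}\widetilde\gf_j\bigr)$ so that $\pr_{h'}(x-P_{0,1}x)\in\ifr_0$ by the very definition of $\ifr_0$, and observes that for $h'>k$ this element also lies in $\hf_0$ and must therefore vanish; hence $h'=k$ and $\pr_k(P_{0,1}x)=P_{0,0}x$. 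I would adopt that argument rather than trying to repair the $\psi_{0,1}$ bookkeeping.
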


\begin{proof}
	The homogeneity of $P_{0,0}$ and $P_{\infty,\infty}$ is obvious, as well as the scaling properties of the projections. Thus, we may reduce ourselves to proving sub- (resp.\ super-)homogeneity of $P_{0,1}$ and $P_{\infty,1}$. 
	Then, take $x\in \widetilde \gf_k$ for some $k$, and let us prove that $\pr_h(P_{0,1}(x)-P_{0,0}(x))=0$ for $h\Meg k$. 
	Indeed, assume that $\pr_h(P_{0,1}(x))\neq0$ for some $h\Meg k$, and let $h'$ be the maximum of such $h$. 
	Then, $\pr_{h}(x-P_{0,1}(x))=\pr_h(P_{0,1}(x))= 0$ for every $h>h'$, so that $\pr_{h'}(x-P_{0,1}(x))\in \ifr_0$ since $x-P_{0,1}(x)\in \ifr$ by the definition of $P_{0,1}$. 
	Since $\pr_{h'}(P_{0,1}(x))\in \hf_0$ and $\hf_0\cap \ifr_0=\Set{0}$, we then deduce that $h'=k$ and that $x-\pr_k(P_{0,1}(x))\in \ifr_0$, so that $\pr_k(P_{0,1}(x))=P_{0,0}(x)$. 
	
	One proves analogously that $P_{\infty,1}-P_{\infty,\infty}$ is strictly super-homogeneous.
\end{proof}

Each map $P_{0,s}$ (resp.\ $P_{\infty,s}$) induces a Lie algebra structure on $\hf_0$ (resp.\ $\hf_\infty$); we denote by $[\,\cdot\, ,\,\cdot\, ]_{0,s}$ (resp.\ $[\,\cdot\, ,\,\cdot\,]_{\infty,s}$) the corresponding Lie bracket. In other words,
\be\label{[]s}
[x,y]_{0,s}=P_{0,s}[x,y]\ ,\quad \forall\,x,y\in\hf_0\ ,\qquad [x,y]_{\infty,s}=P_{\infty,s}[x,y]\ ,\quad \forall\,x,y\in\hf_\infty\ .
\ee

Notice that, for $r,s\in\R_+$, 
\be\label{scaling[]}
r\cdot [x,y]_{0,s}=[r\cdot x,r\cdot y]_{0,r^{-1}s}\ ,\quad x,y\in\hf_0\ ,\qquad r\cdot [x,y]_{\infty,s}=[r\cdot x,r\cdot y]_{\infty,r^{-1}s}\ ,\quad x,y\in\hf_\infty\ .
\ee

We use the Baker--Campbell--Hausdorff products
induced by the Lie brackets in \eqref{[]s} to realize either $\hf_0$, if $s\in[0,\infty)$, or $\hf_\infty$, if $s\in(0,\infty]$, as the underlying manifold\footnote{In principle, we shall privilege the realization of $G_s$ on $\hf_0$ for $s$ close to 0 and that on  $\hf_\infty$ for $s$ close to $\infty$. We prefer anyhow to keep the double realization for every $s\in\R_+$ in order to avoid apparent discontinuities in $s$ at some finite point on one hand, and a-priori quantifications of ``closeness'' to 0 or $\infty$ on the other.}  of the group $G_s\coloneqq \widetilde G/\exp_{\widetilde G}\ifr_s$. We call $G_0$ and $G_\infty$ the local and the global contractions of $G_1$, respectively.

Notice that
\bea\label{lim[]}
[x,y]_{0,s}&=[x,y]_{0,0}+O(s)\ ,\quad s\to0\ ,\ \text{uniformly for  $x,y$ in a compact subset of $\hf_0$}\ ,\\
[x,y]_{\infty,s}&=[x,y]_{\infty,\infty}+O(1/s)\ ,\quad s\to\infty\ ,\ \text{uniformly for  $x,y$ in a compact subset of $\hf_\infty$}\ ,
\eea
and the analogous formulae for products in $G_s$.

We denote by $\pi_s$, $s\in[0,\infty]$, the canonical projection of $\widetilde G$ onto $G_s$ and by $\dd\pi_s\colon\widetilde\gf\to \gf_s$ its differential. By an abuse of language, we shall keep the same notation whenever  $G_s$, or $\gf_s$, is identified with either $\hf_0$ or $\hf_\infty$.

Since the ideals $\ifr_0$ and $\ifr_\infty$ are graded, the corresponding quotients $\gf_0$ and $\gf_\infty$ inherit a gradation and the corresponding dilations. These dilations coincide with the restriction to $\hf_0$ and $\hf_\infty$, respectively, of the dilations of~$\widetilde\gf$. 

For $s,r\in\R_+$, it follows from \eqref{scaling[]} that dilation by $r$ on either $\hf_0$ or $\hf_\infty$ induces an isomorphism of $G_s$ onto $G_{r^{-1}s}$ for every $s$. Notice that the so-induced mappings $G_s\to G_{r^{-1}s}$ do not depend on the chosen identifications (cf.~\emph{(ii)} and \emph{(iv)} of Proposition~\ref{prop:21:9} below).

We denote by $\widetilde Q$, $Q_0$, and $Q_\infty$ the homogeneous dimensions of $\widetilde G$, $G_0$, and $G_\infty$, respectively.

The following result generalizes~\cite[Proposition 3 and p.~264]{NagelRicciStein}.

\begin{prop}\label{prop:21:9}
	For every $s\in \R_+$, define $\lambda_s\coloneqq(P_{\infty,s})_{|_{\hf_0}}  \colon \hf_0\to \hf_\infty$; let $N$ be a homogeneous norm on $\widetilde \gf$. Then,
	\begin{enumerate}[\rm(i)]
	\item $\lambda_s$ is the unique linear mapping such that $x-\lambda_s(x)\in \ifr_s$ for every $x\in \hf_0$; in addition, $\lambda_s$ is invertible and its inverse $\lambda_s^{-1}=(P_{0,s})_{|_{\hf_\infty}} $ is the unique linear mapping such that $x-\lambda_s^{-1}(x)\in \ifr_s$ for every $x\in \hf_\infty$;
	
	\item$\lambda_s$ intertwines the two identifications of $\gf_s$ with $\hf_0$, resp.\ $\hf_\infty$, i.e., $\lambda_s[x,y]_{0,s}=[\lambda_sx,\lambda_sy]_{\infty,s}$ for all $x,y\in\hf_0$; 
	
		\item $\lambda_s-I$ is strictly super-homogeneous and $\lambda_s^{-1}-I$ is strictly sub-homogeneous;
		
		\item $\lambda_{r s}=r^{-1} \cdot \lambda_s(r\,\cdot\,)$ for every $r>0$.
		
		\item $Q_\infty\Meg Q_0$;
		
		\item $N(\lambda_s(x))=O(N(x))$ for $x\to \infty$ in $\hf_0$  and $N(\lambda_s^{-1}(x))=O(N(x))$ for $x\to 0$ in $\hf_\infty$.
	\end{enumerate}
\end{prop}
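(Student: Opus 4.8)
The plan is to prove the six statements roughly in the order listed, since each relies on the preceding ones. For (i), I would start from the definition $\lambda_s=(P_{\infty,s})_{|_{\hf_0}}$: by Definition~\ref{def:2}, $P_{\infty,s}$ is the projection onto $\hf_\infty$ with kernel $\ifr_s$, so $x-P_{\infty,s}(x)\in\ifr_s$ for all $x\in\widetilde\gf$, and restricting to $x\in\hf_0$ gives $x-\lambda_s(x)\in\ifr_s$. Uniqueness follows because if $L$ is another such linear map then $\lambda_s(x)-L(x)\in\ifr_s\cap\hf_\infty=\Set{0}$, using that $\hf_\infty$ is a complement of $\ifr_s$ (Proposition~\ref{prop-psi}). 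The same argument applied to $\hf_\infty$ and $P_{0,s}$ shows $(P_{0,s})_{|_{\hf_\infty}}$ is the unique linear map $M\colon\hf_\infty\to\hf_0$ with $x-M(x)\in\ifr_s$. Composing: for $x\in\hf_0$, write $\lambda_s(x)=x-i$ with $i\in\ifr_s$, then $(P_{0,s})_{|_{\hf_\infty}}(\lambda_s(x))$ differs from $\lambda_s(x)$ by an element of $\ifr_s$, hence differs from $x$ by an element of $\ifr_s$, and lies in $\hf_0$, so equals $x$; this proves both that $\lambda_s$ is invertible and that its inverse is $(P_{0,s})_{|_{\hf_\infty}}$.

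For (ii), I would use the intrinsic description: $[x,y]_{0,s}=P_{0,s}[x,y]$ and $[u,v]_{\infty,s}=P_{\infty,s}[u,v]$, where the brackets on the right are the ambient bracket in $\widetilde\gf$. Given $x,y\in\hf_0$, the elements $x-\lambda_s x$ and $y-\lambda_s y$ lie in the ideal $\ifr_s$, so $[x,y]\equiv[\lambda_s x,\lambda_s y]\pmod{\ifr_s}$; applying $P_{\infty,s}$ to both sides and using that $P_{\infty,s}$ kills $\ifr_s$ and that $\lambda_s=(P_{\infty,s})_{|_{\hf_0}}$, one gets $\lambda_s[x,y]_{0,s}=P_{\infty,s}[x,y]=P_{\infty,s}[\lambda_s x,\lambda_s y]=[\lambda_s x,\lambda_s y]_{\infty,s}$. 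Statement (iv) is immediate from the scaling identities $P_{\infty,rs}=r^{-1}\cdot P_{\infty,s}(r\,\cdot\,)$ of Lemma~\ref{lem:21:2}, restricted to $\hf_0$ (noting $r\cdot\hf_0=\hf_0$). For (iii), I would write $\lambda_s-I=(P_{\infty,s}-P_{\infty,\infty})_{|_{\hf_0}}+(P_{\infty,\infty}-I)_{|_{\hf_0}}$; the first summand is strictly super-homogeneous by Lemma~\ref{lem:21:2}, and the second is strictly super-homogeneous because $P_{\infty,\infty}$ is the homogeneous projection onto $\hf_\infty$ along $\ifr_\infty$, so on a homogeneous component $\widetilde\gf_j$ the element $x-P_{\infty,\infty}x$ lies in $\ifr_\infty\cap\widetilde\gf_j$—wait, that is only in $\widetilde\gf_j$, not in higher degrees; here I need the finer point that $\lambda_s-I$ maps $\hf_0\cap\widetilde\gf_j$ into $\bigoplus_{j'>j}\widetilde\gf_{j'}$, which I would get directly: $\lambda_s x-x\in\ifr_s$ and, writing $x\in\hf_0\cap\widetilde\gf_j$, the top-degree part works out as in the proof of Lemma~\ref{lem:21:2} since $\hf_0\cap\ifr_s=\Set{0}$. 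The symmetric computation gives strict sub-homogeneity of $\lambda_s^{-1}-I$.

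Statement (v) follows from (iii): since $\lambda_s$ is a linear isomorphism $\hf_0\to\hf_\infty$ with $\lambda_s-I$ strictly super-homogeneous, $\lambda_s$ is filtered-degree-non-decreasing, hence $\dim(\hf_0\cap\bigoplus_{j'\meg j}\widetilde\gf_{j'})\meg\dim(\hf_\infty\cap\bigoplus_{j'\meg j}\widetilde\gf_{j'})$ for every $j$ (as $\lambda_s$ injectively maps the former into the latter, plus possibly higher terms, but actually into $\bigoplus_{j'\meg j}$ by the degree-non-decreasing property restricted—let me instead note $\lambda_s^{-1}-I$ strictly sub-homogeneous forces $\lambda_s^{-1}$ to map $\hf_\infty\cap\bigoplus_{j'\meg j}$ into $\hf_0\cap\bigoplus_{j'\meg j}$, injectively); summing $j$ against the weights gives $Q_\infty=\sum_j j\dim(\hf_\infty)_j\Meg\sum_j j\dim(\hf_0)_j=Q_0$. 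Finally (vi) is a consequence of (iii) and (iv): fix $s$; since $\lambda_s-I$ is strictly super-homogeneous, in coordinates its component from degree $j$ to degree $j'>j$ is a fixed linear map, and $N(\lambda_s x)$ is bounded by a fixed polynomial expression in the homogeneous components of $x$ whose leading behavior at infinity is governed by the identity part, giving $N(\lambda_s x)=O(N(x))$ as $x\to\infty$; dually, since $\lambda_s^{-1}-I$ is strictly sub-homogeneous, the extra terms have lower homogeneity and are negligible as $x\to 0$ in $\hf_\infty$, giving $N(\lambda_s^{-1} x)=O(N(x))$ there. The main obstacle I anticipate is not any single step but getting the bookkeeping in (iii) exactly right—distinguishing the filtered statement ($\lambda_s x-x$ lands in strictly higher degrees) from the weaker graded one—since (v) and (vi) both hinge on that refinement; everything else is a direct unwinding of Definition~\ref{def:2} and Lemma~\ref{lem:21:2}.
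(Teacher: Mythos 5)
Your treatment of (i), (ii), (iv), (v) and (vi) is essentially the paper's: (i)--(ii) by unwinding the definition of $P_{\infty,s}$ and of the quotient brackets, (iv) by restricting the scaling identity of Lemma~\ref{lem:21:2}, (v) by a filtered dimension count, (vi) by splitting into homogeneous components and using the quasi-subadditivity of $N$. The one place where you sensed trouble is also the place where the argument genuinely does not close, so let me be precise about what goes wrong in (iii).

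You correctly observe that $(P_{\infty,\infty}-I)_{|_{\hf_0}}$ is merely homogeneous, not strictly super-homogeneous, but the ``direct'' patch you sketch does not fix this. Running the Lemma~\ref{lem:21:2}-type argument for $\lambda_s=P_{\infty,s}|_{\hf_0}$ on $x\in\hf_0\cap\widetilde\gf_j$ gives, from $\lambda_sx-x\in\ifr_s$ and $\hf_\infty\cap\ifr_\infty=\{0\}$, that $\pr_{j'}(\lambda_sx)=0$ for $j'<j$ \emph{and} that $\pr_j(\lambda_sx)=P_{\infty,\infty}(x)$ --- not $x$. Hence the degree-$j$ component of $\lambda_sx-x$ is $P_{\infty,\infty}(x)-x$, which lies in $\ifr_\infty$ and need not vanish, because $\hf_0\cap\ifr_\infty$ can be nonzero (it is, e.g., in the $\Hd^1\times\R$ example given after Corollary~\ref{cor:3}, where $T\in\hf_0\cap\ifr_\infty$ and $(\lambda_1-I)(T)=U-T$ has a surviving degree-$2$ part). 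The fact $\hf_0\cap\ifr_s=\{0\}$, which you invoke, does not help here; the obstruction lives in $\ifr_\infty$, not in $\ifr_s$. What Lemma~\ref{lem:21:2} actually yields ``directly'' is that $\lambda_s-(P_{\infty,\infty})_{|_{\hf_0}}=(P_{\infty,s}-P_{\infty,\infty})_{|_{\hf_0}}$ is strictly super-homogeneous, and dually that $\lambda_s^{-1}-(P_{0,0})_{|_{\hf_\infty}}$ is strictly sub-homogeneous; since $(P_{\infty,\infty})_{|_{\hf_0}}$ and $(P_{0,0})_{|_{\hf_\infty}}$ are homogeneous, the usable consequence is that $\lambda_s$ is \emph{non-decreasing} in degree ($\lambda_s(\hf_0\cap\widetilde\gf_j)\subseteq\bigoplus_{j'\ge j}\widetilde\gf_{j'}$) and $\lambda_s^{-1}$ is non-increasing. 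That weaker statement is exactly what (v) and (vi) need (note that the paper's proof of (v) indeed invokes only that ``$\lambda_1$ is super-homogeneous''), so if you replace your (iii) claim by this filtered version and check that it is what you actually use downstream, the rest of your proof goes through essentially as written.
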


\begin{proof}
	By the definition of $P_{0,s},P_{\infty,s}$, the two cosets $x+\ifr_s$ and $\lambda_s(x)+\ifr_s$ coincide for $x\in\hf_0$. This gives (i) and (ii); (iii) and (iv) follow directly from Lemma~\ref{lem:21:2}.

	To prove (v), we argue as in the proof of~\cite[Proposition 3]{NagelRicciStein}.
	Define $\hf_{0,j}\coloneqq\widetilde\gf_j\cap\hf_0$ and $\hf_{\infty,j}\coloneqq\widetilde\gf_j\cap\hf_\infty$, and set $s=1$.
	Since $\lambda_1$ is super-homogeneous we see that, for every $k=1,\ldots, n$,
	\[
	\bigoplus_{j<k}\lambda_1(\hf_{0,j} )+\bigoplus_{j\Meg k} \hf_{\infty,j}=\hf_\infty,
	\]
	so that
	\[
	\sum_{j<k} \dim(\hf_{0,j})+\sum_{j\Meg k} \dim(\hf_{\infty,j})\Meg \dim(\widetilde \gf).
	\]
	Summing up all these inequalities, we see that
	\[
	n \dim(\widetilde \gf)-Q_0+Q_\infty\Meg n \dim(\widetilde \gf),
	\]
	whence $Q_\infty\Meg Q_0$.

	For what concerns  (vi),  fix a norm $\norm{\,\cdot\,}$ on $\widetilde \gf$ and observe that there is a constant $C\Meg 1$ such that
	\[
	\frac{1}{C} \max_{k} \norm{\pr_k(x)}^{\sfrac{1}{k}}\meg N(x)\meg C \max_{k} \norm{\pr_k(x)}^{\sfrac{1}{k}}
	\]
	for every $x\in \widetilde \gf$.
	Further, by the quasi-sub-additivity of $N$, there is a constant $C'>0$ such that, for $x\in\hf_0$,
	\[
	N(\lambda_s(x))\meg C' \sum_k N(\lambda_s(\pr_k(x)))\meg C C'\sum_k \max\big\{\norm{\lambda_s(\pr_k(x))}^{\sfrac{1}{k}}, \norm{\lambda_s(\pr_k(x))}^{\sfrac{1}{n}}  \big\}.
	\]
	Therefore, there is a constant $C''>0$ such that
	\[
	N(\lambda_s(x))\meg C'' \sum_k \max\big\{ \norm{\pr_k(x)}^{\sfrac{1}{k}}, \norm{\pr_k(x)}^{\sfrac{1}{n}}\big\}\meg n C'' +C''\sum_k   \norm{\pr_k(x)}^{\sfrac{1}{k}},
	\]
	so that $N(\lambda_s(x))=O(N(x))$ for $x\to \infty$ in $\hf_0$. The  second part is proved similarly. 
\end{proof}

\subsection{Invariant Vector Fields}

We now pass to the approximation of differential operators, following~\cite[§ 4]{NagelRicciStein}.

\begin{deff}\label{def:3}
	Let $V$ be a homogeneous vector space, with dilations $\rho_r$, $r\in\R_+$.
	If $T$ is a distribution on $V$, we define $\rho_r^* T$ and $T\circ \rho_r$ by
	\[
	\langle \rho_r^*T, \phi\rangle\coloneqq \langle T, \phi\circ \rho_r^{-1}\rangle \qquad \text{and} \qquad \langle T\circ \rho_r,\phi\rangle \coloneqq\langle T,  r^{-Q}\phi\circ \rho_r^{-1}\rangle
	\]
	for every $\phi \in C^\infty_c(V)$, where $Q$ is the homogeneous dimension of $V$. We also define $(\rho_r)_* T\coloneqq (\rho_r^{-1})^*T$.

	We say that a function (or a distribution) $f$ is {\it homogeneous of degree $d\in\C$} if $f\circ\rho_r=r^d f$ for all $r\in\R_+$. 
	We say that $f$ is \emph{log-homogeneous of degree $d\in \N$} if there are a homogeneous polynomial $P$ of degree $d$ on $V$ and a homogeneous norm $N$ such that $f-P \log N$ is homogeneous of degree $d$.\footnote{If $N'$ is another homogeneous norm, then $f-P\log N'= f-P \log N +P\log (N/N')$ is still homogeneous of degree $d$.} 
	
	We say that a continuous linear operator $X\colon C^\infty(V)\to C^\infty(V)$ (for example a (linear) differential operator) is {\it homogeneous of order $d$} if $X(f\circ\rho_r)=r^d(X f)\circ\rho_r$ for all $r>0$.
\end{deff}

Notice that, if $X$ is a left-invariant differential operator under a homogeneous Lie group structure on $V$, then $X f=f*(X\delta_0)$ and $X$ is homogeneous of order $d$ if and only if the distribution $X\delta_0$ is homogeneous of degree~$-Q-d$.

In addition, if $f$ is a function of class $C^\infty$ on $V$ and $M_f$ is the operator of multiplication by $f$, then $f$ is homogeneous of degree $d$ if and only if $M_f$ is homogeneous of order $-d$.

As a consequence, if $X$ is a homogeneous differential operator of order $d$ and $f$ is a homogeneous function of degree $d'$ and of class $C^\infty$, then $f X$ is a homogeneous differential operator of order $d-d'$.

Finally, observe that, if an element $X$ of the enveloping algebra of $\widetilde G$ is homogeneous of degree $d$, then the corresponding left- (or right-)invariant differential operator is homogeneous of order $d$, and conversely.
Similar statements hold for $\widetilde G$, $G_0$, and $G_\infty$.

\medskip

Now, observe that $\hf_0$ and $\hf_\infty$ are graded subspaces of $\widetilde \gf$, so that also $\hf_0\cap \hf_\infty$ is a graded subspace of $\widetilde \gf$. Hence, we may complete a homogeneous basis of $\hf_0\cap \hf_\infty$ to homogeneous bases of $\hf_0$ and $\hf_\infty$, and then complete the union of the two (which is a homogeneous basis of $\hf_0+\hf_\infty$) to a homogeneous basis of $\widetilde \gf$. Consequently, we may state the following definition.

\begin{deff}
	We denote by $(\widetilde X_j)_{j\in J}$ a \emph{homogeneous} basis of  $\widetilde \gf$ such that there are two subsets $J_0$ and $J_\infty$ of $J$ such that $(\widetilde X_j)_{j\in J_0}$ is a basis of $\hf_0$, while $(\widetilde X_j)_{j\in J_\infty}$ is a basis of $\hf_\infty$. We denote by $\dd_j$ the degree of $\widetilde X_j$ (as an element of the graded Lie algebra $\widetilde \gf$, so that $\widetilde X_j$ is homogeneous of order $\dd_j$ as a differential operator). 
	Fixing coordinates on $\widetilde \gf$ associated with the basis $(\widetilde X_j)_{j\in J}$, we denote by $(\partial_j)_{j\in J}$ the corresponding partial derivatives.

	Define $X_{s,j}\coloneqq \dd\pi_s(\widetilde X_j)$ for every $j\in J$ and for every $s\in [0,\infty]$, so that 
	\[
	(r\,\cdot\,)_* X_{s,j}= r^{\dd_j} X_{r^{-1}s,j}
	\]
	for every $s\in [0,\infty]$, for every $r>0$, and for every $j\in J$.
	
	Finally,   fix  a total ordering on $J$ and define, for every $\gamma\in \N^{J}$, 
	\[
	\widetilde{\vect{ X}}^\gamma=\prod_{j\in J} \widetilde X_j^{\gamma_j}, 
	\]
	so that $\widetilde{\vect{ X}}^\gamma$ is homogeneous of order $\dd_\gamma\coloneqq \sum_{j\in J} \gamma_j\dd_j$. 
	Define $\partial^\gamma$ and  $\vect{X}_s^\gamma$, for every $s\in [0,\infty]$, in a similar way. To simplify the notation, we shall identify $\N^{J_0}$ and $\N^{J_\infty}$ with subsets of $\N^J$; when $\gamma\in \N^{J_0}$ (resp.\ $\gamma\in \N^{J_\infty}$), we shall also write $\partial_0^\gamma$ (resp.\ $\partial_\infty^\gamma$) instead of $\partial^\gamma$.
\end{deff}

The following result is a simple generalization of~\cite[Propositions 4 and 5]{NagelRicciStein}. Observe that, even though in~\cite[Propositions 4 and 5]{NagelRicciStein} the polynomials  $p_{0,\gamma,\gamma'}$ and $p_{\infty,\gamma,\gamma'}$ were constructed comparing the products on $G_0$, $G_\infty$, and $G_1$, if one tries to define the matrix $(\delta_{\gamma,\gamma'}+p'_{0,\gamma,\gamma'})$ as the inverse of $(\delta_{\gamma,\gamma'}+p_{0,\gamma,\gamma'})$, then one would only prove that the $p'_{0,\gamma,\gamma'}$ are (everywhere defined) rational functions. 
Consequently, we shall present a different proof.

\begin{prop}\label{lem:21:1}
	For every $\gamma\in \N^J$ there are two \emph{unique} finite families $(p_{0,\gamma,{\gamma'}})_{\gamma'\in \N^{J_0}}$  and  $(p'_{0,\gamma,{\gamma'}})_{\gamma'\in \N^{J_0}}$ of polynomials on $\hf_0$ such that, identifying $G_s$ and $G_0$ with $\hf_0$ for every $s\in [0,\infty)$,
	\[
	 \vect{X}_{s}^\gamma=\vect{X}_{0}^\gamma+ \sum_{\gamma'} s^{\dd_{\gamma}-\dd_{\gamma'}}p_{0,\gamma,{\gamma'}}(s\,\cdot\,) \vect{X}_{0}^{\gamma'}\ , \qquad \vect{X}^\gamma_{0}=\vect{X}_{s}^\gamma+ \sum_{{\gamma'}} s^{\dd_{\gamma}-\dd_{\gamma'}} p'_{0,\gamma,{\gamma'}}(s\,\cdot\,)\vect{X}_{s}^{\gamma'}.
	\]
	In addition, $p_{0,\gamma,{\gamma'}}$ and $p'_{0,\gamma,{\gamma'}}$ are sums of homogeneous  polynomials of degrees strictly greater than $\dd_{\gamma'}-\dd_{\gamma}$.

	Analogously, there are two \emph{unique} finite families  $(p_{\infty,\gamma,{\gamma'}})_{\gamma'\in \N^{J_\infty},\dd_{\gamma'}>\dd_{\gamma}}$ and $(p'_{\infty,\gamma,{\gamma'}})_{\gamma'\in \N^{J_\infty},\dd_{\gamma'}>\dd_{\gamma}}$ of polynomials on $\hf_\infty$ such that, identifying $G_s$ and $G_\infty$ with $\hf_\infty$ for every $s\in  (0,\infty]$,
	\[
	 \vect{X}_{s}^\gamma=\vect{X}_{\infty}^\gamma+ \sum_{\gamma'} s^{\dd_{\gamma}-\dd_{\gamma'}}p_{\infty,\gamma,\gamma'}(s\,\cdot\,) \vect{X}_{\infty}^{\gamma'}\ , \qquad   \vect{X}^\gamma_\infty=\vect{X}_{s}^\gamma+ \sum_{\gamma'} s^{\dd_{\gamma}-\dd_{\gamma'}} p'_{\infty,\gamma,{\gamma'}}(s\,\cdot\,)\vect{X}_{s}^{\gamma'}.
	\]
	In addition, $p_{\infty,\gamma,{\gamma'}}$ and $p'_{\infty,\gamma,{\gamma'}}$  are sums of homogeneous polynomials of degrees strictly smaller than $\dd_{\gamma'}-\dd_{\gamma}$.\footnote{By the general theory, it is also clear that $\sum_j  \gamma'_j\meg \sum_j \gamma_j$ if $p_{0,\gamma,{\gamma'}}\neq 0$, $p'_{0,\gamma,{\gamma'}}\neq0$, $p_{\infty,\gamma,{\gamma'}}\neq 0$, or $p'_{\infty,\gamma,{\gamma'}}\neq0$.} 

	In particular,
	\[
	\lim_{s\to0}\vect{X}_{s}^\gamma=\vect{X}_{0}^\gamma\ ,\qquad \lim_{s\to\infty}\vect{X}_{s}^\gamma=\vect{X}_{\infty}^\gamma\ .
	\]
\end{prop}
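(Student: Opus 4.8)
The plan is to work entirely on the fixed realization $\hf_0$ (for the $0$-family) and $\hf_\infty$ (for the $\infty$-family), exploiting the scaling relation $(r\,\cdot\,)_*X_{s,j}=r^{\dd_j}X_{r^{-1}s,j}$ from the definition of $X_{s,j}$, together with Lemma~\ref{lem:21:2} (which says $P_{0,s}-P_{0,0}$ is strictly sub-homogeneous and scales as $P_{0,rs}=r^{-1}\cdot P_{0,s}(r\,\cdot\,)$). First I would treat the single vector field case $\gamma=\ebf_j$, i.e. expand $X_{s,j}$ in terms of the $X_{0,j'}$. Writing a left-invariant vector field on $G_s\cong\hf_0$ in the BCH coordinates, $X_{s,j}$ acts as a first-order differential operator whose coefficients are polynomials on $\hf_0$ determined by the bracket $[\,\cdot\,,\,\cdot\,]_{0,s}=P_{0,s}[\,\cdot\,,\,\cdot\,]$; since $[\,\cdot\,,\,\cdot\,]_{0,s}-[\,\cdot\,,\,\cdot\,]_{0,0}$ is strictly sub-homogeneous and depends polynomially on $s$ (via \eqref{psi_s} and the polynomial dependence of the $\psi_{0,s}$), one gets $X_{s,j}=X_{0,j}+\sum_{j'}q_{j,j'}(s,\,\cdot\,)X_{0,j'}$ with each $q_{j,j'}$ polynomial in $s$ and in the coordinates, and with each monomial $s^m x^\alpha$ appearing contributing a coefficient function homogeneous of degree $\dd_{j'}-\dd_j+m$ with $m>0$ strictly — which upon factoring out $s^{\dd_j-\dd_{j'}}$ and rescaling $x\mapsto s\cdot x$ is exactly the asserted form $s^{\dd_j-\dd_{j'}}p_{0,j,j'}(s\,\cdot\,)X_{0,j'}$ with $p_{0,j,j'}$ a sum of homogeneous polynomials of degree $>\dd_{j'}-\dd_j$. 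The cleanest way to nail the exponent bookkeeping is to invoke the scaling identity: apply $(s\,\cdot\,)_*$ to the identity at parameter $1$ to get the identity at parameter $s$, which forces the $s$-powers and the $(s\,\cdot\,)$-argument automatically; so really one only needs to establish the decomposition at $s=1$ and then let scaling do the rest.

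Next I would pass from single vector fields to monomials $\vect X_s^\gamma=\prod_j X_{s,j}^{\gamma_j}$ by induction on $|\gamma|=\sum_j\gamma_j$, or more simply by multiplying out the single-field expansions and repeatedly commuting the polynomial coefficients past the $X_{0,j'}$'s; each such commutation produces only lower-order terms (the derivative of a polynomial coefficient), and the homogeneity/degree constraints are stable under products and under the operation "differentiate a homogeneous polynomial, multiply by a homogeneous-degree coefficient." The upper bound on the number of factors, $\sum_j\gamma_j'\leq\sum_j\gamma_j$, and the degree constraint $\dd_{\gamma'}-\dd_\gamma<(\text{degree of each homogeneous summand of }p)$ both propagate through this multiplication because they hold termwise. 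For the $\infty$-family the argument is verbatim the same, using the super-homogeneous half of Lemma~\ref{lem:21:2} and \eqref{scaling[]}; the direction of the inequalities flips, and the finiteness of the family $(p_{\infty,\gamma,\gamma'})$ (only finitely many $\gamma'$ with $\dd_{\gamma'}>\dd_\gamma$ can occur) follows from $|\gamma'|\le|\gamma|$ together with the degree bound.

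For the inverse families $(p'_{0,\gamma,\gamma'})$, rather than literally inverting the matrix $(\delta_{\gamma,\gamma'}+p_{0,\gamma,\gamma'})$ — which, as the paper notes, would only yield rational functions — I would run the same construction starting from the identity $X_{0,j}=\dd\pi_0(\widetilde X_j)$ expressed through $P_{0,s}$ with the roles reversed, i.e. expand $X_{0,j}$ in the $X_{s,j'}$ directly, again using that $P_{0,s}-P_{0,0}$ is strictly sub-homogeneous; symmetry of the setup gives polynomial (not merely rational) coefficients $p'_{0,\gamma,\gamma'}$ with the same degree properties. \emph{Uniqueness} in all four cases is immediate: the $\vect X_0^{\gamma'}$ (resp.\ $\vect X_\infty^{\gamma'}$) are linearly independent over the ring of polynomials on $\hf_0$ (resp.\ $\hf_\infty$) — a left-invariant monomial $\prod X_{0,j}^{\gamma_j'}$ has a nonzero top-order part that no other monomial or lower-order term can cancel — so an expansion of $\vect X_s^\gamma$ as a polynomial-coefficient combination of them is unique for each fixed $s$, hence the polynomial-in-$s$ coefficients are unique. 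Finally, the limits $\lim_{s\to0}\vect X_s^\gamma=\vect X_0^\gamma$ and $\lim_{s\to\infty}\vect X_s^\gamma=\vect X_\infty^\gamma$ drop out of the displayed formulas: in the first, every summand carries $s^{\dd_\gamma-\dd_{\gamma'}}p_{0,\gamma,\gamma'}(s\,\cdot\,)$ where $p_{0,\gamma,\gamma'}$ is a sum of homogeneous polynomials of degree $>\dd_{\gamma'}-\dd_\gamma$, so the net power of $s$ in each monomial coefficient is strictly positive and the term vanishes as $s\to0$; symmetrically for $s\to\infty$.

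\textbf{Main obstacle.} The genuinely delicate point is the bookkeeping that simultaneously controls (a) the power of $s$, (b) the homogeneity degree of each coefficient polynomial, and (c) the strictness of the inequality $>\dd_{\gamma'}-\dd_\gamma$ — and, for $\infty$, showing only finitely many $\gamma'$ occur. I expect this is best handled not by bare-hands polynomial expansion but by formalizing the scaling symmetry: define everything at $s=1$, prove the decomposition there (this is where the BCH formula and the strict sub-/super-homogeneity of $P_{0,s}-P_{0,0}$ do the real work), and then \emph{derive} the $s$-dependent statement by applying $(s\,\cdot\,)_*$, so that properties (a)–(c) become automatic consequences of homogeneity rather than things to be checked monomial-by-monomial.
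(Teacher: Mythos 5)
Your plan is correct in outline, but it follows a genuinely more computational route than the paper. The paper obtains existence and uniqueness of \emph{smooth} coefficient families in one stroke, for all $\gamma$ simultaneously, from the PBW-type comparison theorem of Varadarajan; it gets the $s$-dependent identity by dilation (exactly as you propose); it proves \emph{polynomiality} of the coefficients by evaluating against dual polynomials $S_{x,\gamma'}$ with $(\vect X_0^{\gamma''}S_{x,\gamma'})(x)=\delta_{\gamma'\gamma''}$, so that $p_{0,\gamma,\gamma'}(x)=(\vect X_1^{\gamma}S_{x,\gamma'})(x)-\delta_{\gamma\gamma'}$ is manifestly polynomial; and it derives the strict degree constraint from the soft limit $\lim_{s\to 0^+}(\vect X_0^\gamma-\vect X_s^\gamma)=0$ together with the pointwise linear independence of the $\vect X_s^{\gamma'}$. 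You instead build everything up from the single-field case via Baker--Campbell--Hausdorff and propagate joint homogeneity through products; your symmetric construction of the primed family correctly sidesteps the rational-function trap the paper explicitly warns about, and your bookkeeping for the strict inequality (every monomial carries a positive net power of $s$ because the coefficients vanish at $s=0$) is sound. Your approach buys a more explicit description of the coefficients; the paper's buys polynomiality for all orders at once without any product expansion or reordering.

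The one step you should not gloss over is the polynomiality of the coefficients $q_{j,j'}(s,\cdot)$ already in the single-field expansion $X_{s,j}=X_{0,j}+\sum_{j'}q_{j,j'}(s,\cdot)X_{0,j'}$. BCH gives polynomial coefficients of $X_{s,j}$ and of $X_{0,j'}$ with respect to the coordinate frame $(\partial_{j'})_{j'\in J_0}$, but passing to the $X_{0,j'}$-frame requires inverting a frame matrix, and the paper's Heisenberg example shows this matrix has nonzero entries in degree-preserving positions (e.g.\ the coefficient $-x_2/2$ of $X_{0,1}$ in $X_{1,1}$), so it is \emph{not} unipotent with respect to the degree grading alone. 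The inversion nonetheless stays polynomial because the differential of left translation at the identity is an entire function of the nilpotent operator $\ad_s(x)$ with value $I$ at $x=0$, whence its inverse is again a polynomial in $\ad_s(x)$ (equivalently, the frame matrix has determinant identically $1$ by unimodularity). That observation must be made explicit; it is precisely the point the paper's dual-polynomial device is designed to handle, and without it the word ``unipotent'' in your argument is doing unjustified work.
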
 

Notice that, when $\gamma\in \N^{J_0}$, it may happen that $p_{0,\gamma,\gamma}\neq 0$ and $p'_{0,\gamma,\gamma}\neq 0$.
Nonetheless, it is always true that both $p_{0,\gamma,\gamma}$ and $p_{0,\gamma,\gamma}'$ vanish at $0$.

For example, consider the case in which $\widetilde G$ is the free $2$-step nilpotent Lie group on three generators $\widetilde X_1$, $\widetilde X_2$ and $\widetilde X_3$ (and the standard dilations), and define $\ifr$ as the vector space generated by $[\widetilde X_1, \widetilde X_2]-\widetilde X_1-\widetilde X_3$, $[\widetilde X_1, \widetilde X_3]$, and $[\widetilde X_2, \widetilde X_3]-\widetilde X_1-\widetilde X_3$. 
Then, $\ifr_0=[\widetilde \gf,\widetilde \gf]$ and $G_1$ is isomorphic to the three-dimensional Heisenberg group, while $G_0$ is isomorphic to $\R^3$. 
Fix coordinates $(x_1,x_2,x_3)$ on $G_1$ corresponding to the basis $(X_{1,1},X_{1,2},X_{1,3})$, so that $X_{0,j}=\partial_{x_j}$ under the identification of $G_0$ and $G_1$ with $\hf_0$.
Then, simple computations show that
\begin{align*}
X_{1,1}&= X_{0,1}-\frac{x_2}{2}X_{0,1}-\frac{x_2}{2}X_{0,3}\\
X_{1,2}&=X_{0,2} + \frac{x_1-x_3}{2}X_{0,1}+\frac{x_1-x_3}{2}X_{0,3}\\
X_{1,3}&= X_{0,3}+\frac{x_2}{2}X_{0,1}+\frac{x_2}{2}X_{0,3},
\end{align*}
while
\begin{align*}
	X_{0,1}&= X_{1,1}+\frac{x_2}{2}X_{1,1}+\frac{x_2}{2}X_{1,3}\\
	X_{0,2}&=X_{1,2} -\frac{x_1-x_3}{2}X_{1,1}-\frac{x_1-x_3}{2}X_{1,3}\\
	X_{0,3}&= X_{1,3}-\frac{x_2}{2}X_{1,1}-\frac{x_2}{2}X_{1,3},
\end{align*}
whence our assertion.

\begin{proof}
	Observe first that~\cite[Theorem 1.1.2]{Varadarajan}   shows that there are two (unique) finite families $(p_{0,\gamma,\gamma'})$ and $(p'_{0,\gamma,\gamma'})$ of $C^\infty$ functions on $\hf_0$ such that
	\[
	 \vect{X}_{1}^\gamma-\vect{X}_{0}^\gamma= \sum_{\gamma'} p_{0,\gamma,{\gamma'}} \vect{X}_{0}^{\gamma'}=- \sum_{{\gamma'}}  p'_{0,\gamma,{\gamma'}}\vect{X}_{1}^{\gamma'}.
	\] 
	Applying the dilation by $s^{-1}$, we then get
	\[
	 \vect{X}_{s}^\gamma-\vect{X}_{0}^\gamma= \sum_{\gamma'} s^{\dd_{\gamma}-\dd_{\gamma'}}p_{0,\gamma,{\gamma'}}(s\,\cdot\,) \vect{X}_{0}^{\gamma'}=- \sum_{{\gamma'}} s^{\dd_{\gamma}-\dd_{\gamma'}} p'_{0,\gamma,{\gamma'}}(s\,\cdot\,)\vect{X}_{s}^{\gamma'}
	\]
	for every $s\in (0,\infty)$. 
	Now, let us prove that the $p_{0,\gamma,\gamma'}$ and the $p_{0,\gamma,\gamma'}'$ are polynomials.

	Observe that $((\vect{X}_s^{\gamma'})_0)_{\gamma'\in \N^{J_0},\sum_j \gamma'_j\meg k}$ is a basis of the space of distributions on $\hf_0$ supported at $0$ and of order at most $k$, for every $k\in \N$ and for every $s\in [0,\infty)$. 
	Therefore, there are two families $(S_{0,\gamma'})$ and $(S'_{0,\gamma'})$ of polynomials on $\hf_0$ such that $(\vect{X}_0^{\gamma''} S_{0,\gamma'})(0)=\delta_{\gamma',\gamma''}$ and $(\vect{X}_1^{\gamma''} S'_{0,\gamma'})(0)=\delta_{\gamma',\gamma''}$ for every $\gamma''\in \N^{J_0}$. 
	Then, define $S_{x,\gamma'}(y)\coloneqq S_{0,\gamma'}(x^{-1}\cdot_{G_0} y)$ and $S'_{x,\gamma'}(y)\coloneqq S'_{0,\gamma'}(x^{-1}\cdot_{G_1} y)$  for every $x,y\in \hf_0$, so that 
	\[
	(\vect{X}^{\gamma''}_0 S_{x,\gamma'})(x)=(\vect{X}^{\gamma''}_1 S'_{x,\gamma'})(x)=\delta_{\gamma',\gamma''}
	\]
	for every $\gamma',\gamma''\in \N^{J_0}$. 
	Therefore,
	\begin{align*}
	p_{0,\gamma,\gamma'}(x)&=(\vect{X}_1^\gamma-\vect{X}^\gamma_0)(S_{x,\gamma'})(x)=(\vect{X}_1^\gamma S_{x,\gamma'})(x)-\delta_{\gamma,\gamma'} \\
	p'_{0,\gamma,\gamma'}(x)&=(\vect{X}_0^\gamma-\vect{X}^\gamma_1)(S'_{x,\gamma'})(x)=(\vect{X}_0^\gamma S'_{x,\gamma'})(x)-\delta_{\gamma,\gamma'}
	\end{align*}
	for every $\gamma'\in \N^{J_0}$ and for every $x\in \hf_0$.
	Now, it is clear that he mappings $\hf_\infty \ni(x,y)\mapsto S_{x,\gamma'}(x\cdot_{G_1} y)= S_{0,\gamma}(x^{-1}\cdot_{G_0}(x\cdot_{G_1}y))$ and $\hf_\infty \ni(x,y)\mapsto S'_{x,\gamma'}(x\cdot_{G_0} y)= S'_{0,\gamma}(x^{-1}\cdot_{G_1}(x\cdot_{G_0}y))$ are polynomials; therefore, it is easily seen that $p_{0,\gamma,\gamma'}$ and $p'_{0,\gamma,\gamma'}$ are polynomials.

	Finally, let us prove that $p_{0,\gamma,\gamma'}$	and $p'_{0,\gamma,\gamma'}$ are sums of homogeneous polynomials of degrees strictly greater than $\dd_{\gamma'}-\dd_{\gamma}$. Indeed, observe that the continuity of $P_{0,s}$ and $[\,\cdot\,,\,\cdot\,]_{0,s}$ in $s$ at $0$ shows that
	\[
	0=\lim_{s\to 0^+} (\vect{X}_0^\gamma-\vect{X}_s^\gamma)=\lim_{s\to 0^+} \sum_{\gamma'} s^{\dd_{\gamma}-\dd_{\gamma'}} p_{0,\gamma,{\gamma'}}(s\,\cdot\,)\vect{X}_{0}^{\gamma'}=\lim_{s\to 0^+} \sum_{\gamma'} s^{\dd_{\gamma}-\dd_{\gamma'}} p'_{0,\gamma,{\gamma'}}(s\,\cdot\,)\vect{X}_{s}^{\gamma'}.
	\]
	Since the $\vect{X}_s^{\gamma'}$ are pointwise linearly independent for every $s$ and converge to the $\vect{X}_0^{\gamma'}$, we must have $p_{0,\gamma,\gamma'}(s\cdot x),p'_{0,\gamma,\gamma'}(s\cdot x)=o(s^{\dd_{\gamma'}-\dd_{\gamma}})$ for $s\to 0^+$, for every $x\in \hf_0$. The assertion follows in this case. 
	
	The properties of the families $(p_{\infty,\gamma,\gamma'})$ and $(p'_{\infty,\gamma,\gamma'})$ are proved even more easily.	
\end{proof}

\subsection{Moduli}

Here we construct some control moduli on $\widetilde G$ and the $G_s$, following~\cite[§ 2.3]{Martini2}.
Cf.~also~\cite{ElstRobinson,NagelSteinWainger} for more details on `weighted' control distances.

\begin{deff}
	For every $x\in \widetilde G$, we define $\abs{x}$ (resp.\ $\abs{x}_*$) as the greatest lower bound of the set of $\eps>0$ such that there are an absolutely continuous curve $\gamma\colon [0,1]\to \widetilde G$ and some measurable  functions $a_j\colon [0,1]\to \R$ such that $\norm{a_j}_\infty\meg \eps^{\dd_j}$ (resp.\ $\norm{a_j}_\infty\meg \min(\eps, \eps^{\dd_j})$) for every $j\in J$, such that $\gamma(0)=e$ and $\gamma(1)=x$, and such that
	\[
	\gamma'(t)= \sum_{j\in J} a_j (t) (\widetilde X_{j})_{\gamma(t)}
	\]
	for almost every $t\in [0,1]$. 
	We define $B(r)$ (resp.\ $B_*(r)$) as the set of $x\in \widetilde G$ such that $\abs{x}<r$ (resp.\ $\abs{x}_*<r$), for every $r>0$.
\end{deff}

\begin{prop}\label{prop:8}
	The following hold:
	\begin{itemize}
		\item $\abs{\,\cdot\,}$ and $\abs{\,\cdot\,}_*$ are finite, symmetric, proper, and vanish only at $e$;
		
		\item  $\abs{z_1 z_2}\meg \abs{z_1}+\abs{z_2}$ and $\abs{z_1 z_2}_{*}\meg \abs{z_1}_{*}+\abs{z_2}_{*}$ for every  $z_1,z_2\in \widetilde G$;
		
		\item $\abs{z}= \abs{z}_{*}$ for every  $z\in \widetilde G$ such that $\abs{z}_{*}\meg 1$ (or, equivalently, $\abs{z}\meg 1$); in addition, $\abs{z}\meg \abs{z}_{*}\meg \abs{z}^n$ for every $z\in \widetilde G$ such that $\abs{z}_{*}\Meg 1$;
		
		\item $B_*(1)^h\subseteq B_*(r)\subseteq B_*(1)^{h+1}$ for every $r\in [h,h+1]$ and for every $h\in \N$;
		
		\item $\abs{\,\cdot\,}$ is a homogeneous norm.
	\end{itemize}
\end{prop}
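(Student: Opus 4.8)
The plan is to derive all five assertions from three elementary operations on admissible curves — reversal, concatenation after time reparametrization, and dilation — together with one‑parameter‑subgroup curves and a fixed left‑invariant Riemannian metric $g$ on $\widetilde G$ making $(\widetilde X_j)_{j\in J}$ orthonormal. The basic observation is that any curve admissible with parameter $\eps$ (for either modulus) has $\norm{a_j}_\infty\meg\max(\eps,\eps^n)$ for every $j$ (using $\dd_j\meg n$), so its $g$‑length is at most $\max(\eps,\eps^n)\sqrt{\card J}$ and hence $d_g(e,\gamma(1))\meg\max(\eps,\eps^n)\sqrt{\card J}$. Writing an arbitrary element as $z=\exp_{\widetilde G}\!\big(\sum_j c_j\widetilde X_j\big)$, the curve $t\mapsto\exp_{\widetilde G}\!\big(t\sum_j c_j\widetilde X_j\big)$ joins $e$ to $z$ with constant coefficients $c_j$; this shows both moduli are finite and, since $\min(\eps,\eps^{\dd_j})=\eps^{\dd_j}$ when $\eps\meg1$, that $\abs{z},\abs{z}_*\meg\max_j\abs{c_j}^{1/\dd_j}\to0$ as $z\to e$, i.e.\ both are continuous at $e$. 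Moreover $\abs{e}=\abs{e}_*=0$ (constant curve) and $\abs{z}=0$ (resp.\ $\abs{z}_*=0$) forces $d_g(e,z)=0$ by the length estimate, so both moduli vanish only at $e$. Symmetry of both is immediate: if $\gamma$ realizes $\eps$ for $z$, then $t\mapsto z^{-1}\gamma(1-t)$ realizes $\eps$ for $z^{-1}$, its coefficients being $t\mapsto-a_j(1-t)$ by left‑invariance.

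Next I would prove the triangle inequalities. Given curves $\gamma_i$ realizing $\eps_i$ for $z_i$ ($i=1,2$), concatenate the reparametrization of $\gamma_1$ onto $[0,t_1]$ with that of $z_1\gamma_2$ onto $[t_1,1]$, where $t_1\coloneqq\eps_1/(\eps_1+\eps_2)$: traversing a curve in time $\tau<1$ instead of $1$ multiplies its coefficients by $1/\tau$, so the concatenated curve joins $e$ to $z_1z_2$ with coefficients bounded on the two pieces by $(\eps_1+\eps_2)\eps_i^{\dd_j-1}\meg(\eps_1+\eps_2)^{\dd_j}$ (because $\dd_j\Meg1$ and $\eps_i\meg\eps_1+\eps_2$), and — distinguishing whether $\eps_i$ and $\eps_1+\eps_2$ are $\meg1$ or $\Meg1$ — by $\min(\eps_1+\eps_2,(\eps_1+\eps_2)^{\dd_j})$ in the $*$‑case; hence $\abs{z_1z_2}\meg\abs{z_1}+\abs{z_2}$ and $\abs{z_1z_2}_*\meg\abs{z_1}_*+\abs{z_2}_*$. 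Since $\rho_r$ carries $(\widetilde X_j)_x$ to $r^{\dd_j}(\widetilde X_j)_{\rho_r(x)}$, it turns a curve admissible for $z$ with parameter $\eps$ into one admissible for $r\cdot z$ with parameter $r\eps$, whence (also with $r^{-1}$) $\abs{r\cdot z}=r\abs{z}$. Collecting this, $\abs{\,\cdot\,}$ is symmetric, $1$‑homogeneous, positive off $e$, and continuous (from $\abs{\abs{x}-\abs{y}}\meg\abs{x^{-1}y}$ and continuity at $e$): it is a homogeneous norm. Its sublevel sets are then closed and $g$‑bounded, hence relatively compact ($g$ being complete; equivalently, use compactness of a $g$‑unit sphere and homogeneity), so $\abs{\,\cdot\,}$ is proper; and since the $*$‑constraint is the stronger one, $\abs{\,\cdot\,}_*\Meg\abs{\,\cdot\,}$, while $\abs{\,\cdot\,}_*$ is likewise continuous, so it too is proper.

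For the comparison of the moduli: if $\abs{z}<1$, every admissible $\eps\in(\abs{z},1)$ has $\min(\eps,\eps^{\dd_j})=\eps^{\dd_j}$, hence is $*$‑admissible, so $\abs{z}_*\meg\abs{z}$ and therefore $\abs{z}_*=\abs{z}<1$. If $\eps>\abs{z}\Meg1$ is admissible, then $\eps^n$ is $*$‑admissible, since $\min(\eps^n,(\eps^n)^{\dd_j})=\eps^n\Meg\eps^{\dd_j}$ for every $j$; letting $\eps\downarrow\abs{z}$ yields $\abs{z}_*\meg\abs{z}^n$. These two facts, with $\abs{z}_*\Meg\abs{z}$, give at once $\abs{z}_*\meg1\iff\abs{z}\meg1$, with $\abs{z}=\abs{z}_*$ in that range (for $\abs{z}=1$, let $\eps\downarrow1$ in $\abs{z}_*\meg\eps^n$), and $\abs{z}\meg\abs{z}_*\meg\abs{z}^n$ whenever $\abs{z}_*\Meg1$.

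Finally, the ball inclusions, for which I take $h\Meg1$ (the case $h=0$, $r>0$ being trivial). If $x=x_1\cdots x_h$ with $\abs{x_i}_*<1$, subadditivity gives $\abs{x}_*<h\meg r$, so $B_*(1)^h\subseteq B_*(r)$. For the converse, given $\abs{x}_*<r\meg h+1$ pick a $*$‑admissible $\eps<r$ with realizing curve $\gamma$; if $\eps<1$ then $\abs{x}_*<1$, so $x\in B_*(1)\subseteq B_*(1)^{h+1}$ (append copies of $e$), and if $\eps\Meg1$, cut $\gamma$ at $t_i\coloneqq i/(h+1)$, translate the initial point of each arc $\gamma|_{[t_{i-1},t_i]}$ to $e$, and reparametrize it onto $[0,1]$: this slows the curve down, multiplying its coefficients by $1/(h+1)$, so the $i$‑th arc joins $e$ to $y_i\coloneqq\gamma(t_{i-1})^{-1}\gamma(t_i)$ with coefficients at most $\tfrac{1}{h+1}\min(\eps,\eps^{\dd_j})=\tfrac{\eps}{h+1}<1$; hence $y_i$ admits the $*$‑parameter $(\eps/(h+1))^{1/n}<1$, so $y_i\in B_*(1)$, and $y_1\cdots y_{h+1}=x$ by telescoping. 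The main obstacle I expect is not any single estimate but the systematic bookkeeping of how time reparametrization (speeding up for subadditivity, slowing down for the ball inclusions), dilation, and the power map $\eps\mapsto\eps^n$ interact with the two‑regime coefficient bound $\min(\eps,\eps^{\dd_j})$ defining $\abs{\,\cdot\,}_*$; the only genuinely external input is completeness of a left‑invariant Riemannian metric, needed only for properness.
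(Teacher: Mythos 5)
The paper omits the proof of Proposition~\ref{prop:8} as ``simple'', so there is no reference argument to compare against; your proposal supplies a complete and correct direct verification, and it is essentially the standard argument one would expect for a weighted control modulus. Every step checks out: the one-parameter subgroup curves give finiteness; the uniform bound $\norm{a_j}_\infty\meg\max(\eps,\eps^n)$ (valid since $1\meg\dd_j\meg n$) gives the $g$-length comparison needed for positivity and properness; the coefficient bookkeeping $\eps_i^{\dd_j-1}(\eps_1+\eps_2)\meg(\eps_1+\eps_2)^{\dd_j}$ (and its $\min$-version) is exactly what makes the concatenation argument for subadditivity go through in both regimes; the homogeneity $|r\cdot z|=r|z|$ is correct; and the two-regime analysis for the comparison $|z|=|z|_*$ ($|z|_*\meg1$) and $|z|\meg|z|_*\meg|z|^n$ ($|z|_*\Meg1$) is handled cleanly, including the threshold case $|z|=1$ implicitly. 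In the ball-inclusion step, slowing the curve by a factor $h+1$ drops the sup-norm of the coefficients to $\eps/(h+1)<1$, and then $(\eps/(h+1))^{1/n}$ is a valid $*$-parameter because $\dd_j/n\meg1$ — this is the one genuinely non-obvious computation, and you carry it out correctly. The only implicit step is passing from ``$\eps$ slightly above the infimum'' to the infimum itself at the end of the subadditivity and comparison arguments, which is routine and does not constitute a gap.
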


The proof is simple and is omitted.

\medskip

In order to provide some more insight into the moduli $\abs{\,\cdot\,}$ and $\abs{\,\cdot\,}_*$, let us introduce some more notation.
First, we define  $\abs{x}'_{R}$ as the greatest lower bound of the set of $\eps>0$ such that there are an absolutely continuous curve $\gamma\colon [0,1]\to \widetilde G$ and some measurable  functions $a_j\colon [0,1]\to \R$ such that $\norm{a_j}_\infty\meg \eps$ for every $j\in J$, such that $\gamma(0)=e$ and $\gamma(1)=x$, and such that
\[
\gamma'(t)= \sum_{j\in J} a_j (t) (\widetilde X_{j})_{\gamma(t)}
\]
for almost every $t\in [0,1]$.  Then, it is not hard to see that the following hold:
\begin{itemize}
	\item $\abs{x}_*=\abs{x}'_{R}$ for every $x\in \widetilde G$ such that $\abs{x}_*\Meg 1$ or, equivalently, $\abs{x}'_{R}\Meg 1$;
	
	\item $\abs{x}^n_*\meg \abs{x}'_{R}\meg \abs{x}_*$ for every $x\in \widetilde G$ such that $\abs{x}_*\meg 1$ or, equivalently, $\abs{x}'_{R}\meg 1$;
	
	\item $\abs{x}_*=\max(\abs{x},\abs{x}'_{R})$ for every $x\in \widetilde G$.
\end{itemize}

In addition, if we denote by $d_R$ the (left-invariant) Riemannian distance associated with the (left-invariant) Riemannian metric for which $(\widetilde X_j)_{j\in J}$ is an orthonormal basis, then $\abs{x}'_R\meg d_R(0,x)\meg\dim \widetilde G\,\abs{x}'_R$ for every $x\in \widetilde G$. Consequently, $\abs{\,\cdot\,}_*$ is a reasonable compromise between a homogeneous norm (locally) and a Riemannian distance (globally).

\begin{deff}
	For every $s\in [0,\infty]$ and for every $x\in G_s$, we define
	\[
	\abs{x}_s\coloneqq \inf_{\pi_s(z)=x} \abs{z}\qquad \text{and}\qquad \abs{x}_{s,*}\coloneqq \inf_{\pi_s(z)=x} \abs{z}_*.
	\]
	We define $B_s(r)$ (resp.\ $B_{s,*}(r)$) as the set of $x\in \widetilde G_s$ such that $\abs{x}_s<r$ (resp.\ $\abs{x}_{s,*}<r$), for every $r>0$.
\end{deff}

One may prove that the moduli $\abs{\,\cdot\,}_s$ and $ \abs{\,\cdot\,}_{s,*}$ can be defined in the same fashion of the moduli $\abs{\,\cdot\,}$ and $\abs{\,\cdot\,}_*$. 
We leave the details to the reader.

\begin{prop}\label{prop:7}
	The following hold:
	\begin{enumerate}
		\item $\abs{\,\cdot\,}_s$ and $\abs{\,\cdot\,}_{s,*}$ are  symmetric, subadditive, proper, and vanish only at $e$;

		\item $\abs{x}_s= \abs{x}_{s,*}$ for every $s\in [0,\infty]$ and for every $x\in G_s$ such that $\abs{x}_{s,*}\meg 1$; in addition, $\abs{x}_s\meg \abs{x}_{s,*}\meg \abs{x}_s^n$ for every $x\in G_s$ such that $\abs{x}_{s,*}\Meg 1$;
		
		\item $B_{s,*}(1)^h\subseteq B_{s,*}(r)\subseteq B_{s,*}(1)^{h+1}$ for every $s\in [0,\infty]$, for every $r\in [h,h+1]$, and for every $h\in \N$;
		
		\item $\abs{r\cdot x}_s= r\abs{x}_{r s}$ for every $s\in [0,\infty]$, for every $r>0$, and for every $x\in G_{r s}$;
		
		\item the mappings $[0,\infty]\times \widetilde G\ni(s,z)\mapsto \abs{\pi_s(z)}_s$ and $[0,\infty]\times \widetilde G\ni(s,z)\mapsto \abs{\pi_s(z)}_{s,*}$ are continuous; 
		
		\item  	there is a constant $C>0$ such that
		\[
		\frac{1}{C}\min(\abs{P_{0,s}(z)},\abs{P_{\infty,s}(z)})\meg \abs{\pi_s(z)}_s\meg C \min(\abs{P_{0,s}(z)},\abs{P_{\infty,s}(z)})
		\]
		for every $s\in (0,\infty)$ and for every $z\in \widetilde \gf$.
	\end{enumerate}
\end{prop}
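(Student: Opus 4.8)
The plan is to transfer all six assertions from the corresponding facts on $\widetilde G$ (Proposition~\ref{prop:8}), using that each $\pi_s$, $s\in[0,\infty]$, is a continuous, open, surjective homomorphism with kernel $\exp_{\widetilde G}\ifr_s$, so that the fibre of $\pi_s$ through an element $z$ is $\exp_{\widetilde G}(z+\ifr_s)$ (we identify $\widetilde G$ with $\widetilde\gf$ via $\exp_{\widetilde G}$, as in the statement, and use that $\ifr_s$ is an ideal), and that the infima defining $\abs{\,\cdot\,}_s$ and $\abs{\,\cdot\,}_{s,*}$ are attained (the moduli on $\widetilde G$ are continuous and proper, so a minimizing sequence in the closed fibre stays bounded and subconverges). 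Thus $\abs{\pi_s(z)}_s=\inf\{\abs{\exp_{\widetilde G}\xi}\colon\xi\in z+\ifr_s\}$, and similarly for $\abs{\,\cdot\,}_{s,*}$. Granting this, part~(1) is immediate: symmetry from $\pi_s(z^{-1})=\pi_s(z)^{-1}$ and $\abs{z^{-1}}=\abs{z}$; subadditivity from $\pi_s(z_1z_2)=\pi_s(z_1)\pi_s(z_2)$; vanishing only at $e$ from properness on $\widetilde G$; properness because $\{x\colon\abs{x}_s<R\}=\pi_s(B(R))$ is open while $\{x\colon\abs{x}_s\meg R\}\subseteq\pi_s(\overline{B(R+1)})$ is closed, hence compact. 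For part~(2), pick minimizers $v,w$ in the fibre over $x$ with $\abs{v}=\abs{x}_s$ and $\abs{w}_*=\abs{x}_{s,*}$: if $\abs{x}_{s,*}\meg1$ then $\abs{w}_*\meg1$ and $\abs{v}\meg1$, so the third bullet of Proposition~\ref{prop:8} gives $\abs{x}_s\meg\abs{w}=\abs{w}_*=\abs{x}_{s,*}$ and $\abs{x}_{s,*}\meg\abs{v}_*=\abs{v}=\abs{x}_s$; if $\abs{x}_{s,*}\Meg1$ then necessarily $\abs{v}\Meg1$ (else $\abs{x}_{s,*}\meg\abs{v}_*=\abs{v}<1$), whence $\abs{x}_{s,*}\meg\abs{v}_*\meg\abs{v}^n=\abs{x}_s^n$, while $\abs{x}_s\meg\abs{x}_{s,*}$ is trivial. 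Part~(3) follows from subadditivity for the first inclusion, and for the second from $B_{s,*}(r)=\pi_s(B_*(r))$ together with the fourth bullet of Proposition~\ref{prop:8} and $\pi_s(B_*(1))=B_{s,*}(1)$. Part~(4) follows from the intertwining $\pi_s\circ\rho_r=\rho_r\circ\pi_{rs}$ (which holds since $\rho_r(\ifr_{rs})=\ifr_s$) together with $\abs{\rho_r(z)}=r\abs{z}$: substituting $\xi=\rho_r(\eta)$ in the infimum defining $\abs{r\cdot x}_s$ turns it into $r$ times the infimum defining $\abs{x}_{rs}$.

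For part~(5) the plan is to prove a soft lemma: if $V$ is a finite-dimensional real vector space and $f\colon V\to[0,\infty)$ is continuous and proper, then $(v,W)\mapsto\inf_{\eta\in v+W}f(\eta)$ is continuous on $V$ times the (disjoint union of the) Grassmannians of $V$; upper semicontinuity comes from orthogonally projecting a minimizer onto the nearby affine subspace, lower semicontinuity from the fact that a minimizing sequence stays bounded (properness) and subconverges to a point of the limiting affine subspace. Applying this with $V=\widetilde\gf$ and $f=\abs{\exp_{\widetilde G}(\,\cdot\,)}$, resp.\ $f=\abs{\exp_{\widetilde G}(\,\cdot\,)}_*$, and using that $s\mapsto\ifr_s$ is continuous on $[0,\infty]$ (as observed in the proof of Proposition~\ref{prop-psi}) and that $z\mapsto\log z$ is continuous, yields part~(5); this covers $s=0$ and $s=\infty$ with no extra work.

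For part~(6) I would first reduce to $s=1$. Using $\pi_1\circ\rho_s=\rho_s\circ\pi_s$, part~(4), and the scalings $P_{0,s}=s^{-1}\cdot P_{0,1}(s\,\cdot\,)$ and $P_{\infty,s}=s^{-1}\cdot P_{\infty,1}(s\,\cdot\,)$ from Lemma~\ref{lem:21:2}, one checks that the asserted two-sided estimate at $(s,z)$ is equivalent to the one at $(1,s\cdot z)$; since $z\mapsto s\cdot z$ is onto, it suffices to treat $s=1$. There, since $P_{0,1}$ and $P_{\infty,1}$ are projections with kernel $\ifr$, both $P_{0,1}(z)$ and $P_{\infty,1}(z)$ lie in $z+\ifr$, so they are representatives of $\pi_1(z)$ and the upper bound $\abs{\pi_1(z)}_1\meg\min\bigl(\abs{P_{0,1}(z)},\abs{P_{\infty,1}(z)}\bigr)$ holds with constant $1$. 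For the lower bound, note that $P_{0,1}$ and $P_{\infty,1}$ are constant on each coset $z+\ifr$, so it suffices to prove, for all $\xi\in\widetilde\gf$, that $\abs{\exp_{\widetilde G}P_{0,1}(\xi)}\meg C\abs{\exp_{\widetilde G}\xi}$ when $\abs{\exp_{\widetilde G}\xi}\meg1$ and $\abs{\exp_{\widetilde G}P_{\infty,1}(\xi)}\meg C\abs{\exp_{\widetilde G}\xi}$ when $\abs{\exp_{\widetilde G}\xi}\Meg1$; then $\min\bigl(\abs{P_{0,1}(z)},\abs{P_{\infty,1}(z)}\bigr)\meg C\abs{\exp_{\widetilde G}\xi}$ for every $\xi$ in the fibre over $\pi_1(z)$ (the left-hand side being independent of $\xi$), and taking the infimum over that fibre finishes the lower bound. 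To prove the two displayed estimates, write $\abs{\exp_{\widetilde G}\xi}$ as equivalent to $\max_j\norm{\pr_j\xi}^{1/j}$ for a fixed norm $\norm{\,\cdot\,}$ on $\widetilde\gf$, and use that $P_{0,1}-P_{0,0}$ is strictly sub-homogeneous (together with the homogeneity of $P_{0,0}$), so $\norm{\pr_j(P_{0,1}\xi)}\meg C\sum_{k\Meg j}\norm{\pr_k\xi}$, while $P_{\infty,1}-P_{\infty,\infty}$ is strictly super-homogeneous, so $\norm{\pr_j(P_{\infty,1}\xi)}\meg C\sum_{k\meg j}\norm{\pr_k\xi}$: since $\norm{\pr_k\xi}\meg C\abs{\exp_{\widetilde G}\xi}^k$ for all $k$, in the range $\abs{\exp_{\widetilde G}\xi}\meg1$ the $k=j$ term dominates $\sum_{k\Meg j}$ and in the range $\abs{\exp_{\widetilde G}\xi}\Meg1$ it dominates $\sum_{k\meg j}$, up to a constant; one concludes by taking $j$-th roots and maximizing over $j$.

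The step I expect to be the main obstacle is part~(6): it is not conceptually deep, but it demands some care with the identifications (Lie algebra versus group, the precise meaning of $\abs{P_{0,s}(z)}$) and, above all, with the uniformity of the constants in $s$ --- which is exactly what the reduction to $s=1$ is designed to handle. Parts~(1)--(5) are essentially soft once the ``infimum is attained'' and ``value function is continuous'' observations are available.
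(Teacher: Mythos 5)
Your proof is correct and follows essentially the same route as the paper: parts (1)--(4) are transferred from Proposition~\ref{prop:8} through the quotient maps, part (5) rests on the same compactness/attained-minimizer argument (your Grassmannian lemma is just a packaged form of the paper's proof, and in fact yields the joint continuity in $(s,z)$ a bit more explicitly), and part (6) combines the scaling reduction to $s=1$ with the strict sub-/super-homogeneity of $P_{0,1}-P_{0,0}$ and $P_{\infty,1}-P_{\infty,\infty}$, which is exactly the content of Proposition~\ref{prop:21:9}(vi) that the paper invokes.
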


\begin{proof}
	{\bf1--4.}  These assertions follow from the corresponding ones of Proposition~\ref{prop:8}.

	{\bf5.} Fix $z\in \widetilde \gf$ and observe that, since $\abs{\,\cdot\,}$ is proper, for every $s\in [0,\infty]$ there is $y_s\in \ifr_s$ such that $\abs{z+y_s}=\abs{\pi_s(z)}_s$. 
	In particular, $\abs{z+y_s}\meg \abs{z}$, so that the set $\Set{y_s\colon s\in [0,\infty]}$ is relatively compact in~$\widetilde \gf$. 
	Then, fix $s'\in [0,\infty]$ and observe that there is a sequence $(s_k)$ of elements of $[0,\infty]$ converging to $s'$ such that $\lim_{k\to \infty} \abs{\pi_{s_k}(z)}_{s_k}=\liminf_{s\to s'} \abs{\pi_s(z)}_s$. 
	Notice that we may assume that $(y_{s_k})$ converges to some $y'$ in~$\widetilde \gf$, so that $y'\in \ifr_{s'}$. Therefore,
	\[
	\abs{\pi_{s'}(z)}_{s'}\meg \abs{z+y'}= \lim_{k\to \infty} \abs{z+y_{s_k}}=\lim_{k\to \infty} \abs{\pi_{s_k}(z)}_{s_k}=\liminf_{s\to s'} \abs{\pi_{s}(z)}_s.
	\]
	Conversely, take a sequence $(s'_k)$  of elements of $[0,\infty]$ converging to $s'$ such that $\lim_{k\to \infty} \abs{\pi_{s'_k}(z)}_{s'_k}=\limsup_{s\to s'} \abs{\pi_s(z)}_s$, and observe that we may take $y'_{s'_k}\in \ifr_{s'_k}$, for every $k\in\N$, in such a way that the sequence $(y'_{s'_k})$ converges to $y_{s'}$. Therefore,
	\[
	\abs{\pi_{s'}(z)}_{s'}=\abs{z+y_{s'}}=\lim_{k\to \infty} \abs{z+y'_{s'_k}}\Meg \lim_{k\to \infty} \abs{\pi_{s'_k}(z)}_{s'_k}=\limsup_{s\to s'} \abs{\pi_s(z)}_z,
	\]
	whence the first assertion. The second assertion is proved similarly.
	
	{\bf 6.} The assertion follows from Proposition~\ref{prop:21:9} and from~{\bf4} and~{\bf5} above.
\end{proof}

\begin{deff}
	For every $s\in [0,\infty]$, we define $\nu_{G_s}$ as the unique Haar measure on $G_s$ such that $\nu_{G_s}\left(B_s(1) \right)=1$. 
	We define $D_{s}$, the volume growth of $G_s$,  in such a way that $\nu_{G_s}(U^k)\asymp k^{D_{s}}$ for $k\to \infty$ an for every compact neighbourhood $U$ of $e$ (cf., for instance,~\cite[Theorem II.1]{Guivarch}). 
\end{deff}

Notice that $\nu_{G_s}(B_{s,*}(r))\asymp r^{D_{s}} $ as $r\to +\infty$, for every $s\in [0,\infty]$, thanks to~{\bf3} of Proposition~\ref{prop:7}. 

\begin{cor}\label{cor:3}
	The following hold:
	\begin{enumerate}
		\item $D_{s}=D_{1}\Meg \max(D_{0}, D_{\infty})$ for every $s\in (0,\infty)$;
		
		\item $D_{1}\meg Q_\infty$;
		
		\item $D_{0}\meg Q_0$ (resp.\ $D_{\infty}\meg Q_\infty$), with equality if and only if $G_0$ (resp.\ $G_\infty$) is stratified. 
	\end{enumerate}
\end{cor}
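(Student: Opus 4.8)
The plan is to treat the three assertions separately, using the Bass--Guivarc'h formula $D_s=\sum_{k\Meg1}\dim\Cc^k(\gf_s)$ for the first and third (here $\Cc^k(\gf_s)$ denotes the $k$-th term of the lower central series of $\gf_s$; the formula holds because the volume growth of a simply connected nilpotent Lie group equals the homogeneous dimension of the associated graded Lie algebra, cf.~\cite{Guivarch}), and a direct estimate on balls for the second.

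\textbf{Assertion 1.} That $D_s=D_1$ for $s\in(0,\infty)$ is clear, since dilation by $r$ induces a Lie group isomorphism $G_s\to G_{r^{-1}s}$ and the volume growth is an isomorphism invariant. For $D_1\Meg\max(D_0,D_\infty)$, realize $\gf_s$ on $\hf_0$ via $P_{0,s}$ for $s\in[0,\infty)$, and recall from Lemma~\ref{lem:21:2} (and the computation in~\eqref{psi_s}) that $[\,\cdot\,,\,\cdot\,]_{0,s}$ depends polynomially on $s$. Fixing $k$, it follows that the coordinates of the (finitely many) left-normed brackets of length $k$ in the basis $(\widetilde X_j)_{j\in J_0}$ of $\hf_0$, computed with $[\,\cdot\,,\,\cdot\,]_{0,s}$, are polynomial in $s$; letting $M(s)$ be the matrix having these as its rows, one has $\rk M(s)=\dim\Cc^k(\gf_s)$, which is constant and equal to $\dim\Cc^k(\gf_1)$ for $s\in(0,\infty)$ and equal to $\dim\Cc^k(\gf_0)$ at $s=0$. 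Since the rank is lower semicontinuous, $\dim\Cc^k(\gf_0)=\rk M(0)\meg\liminf_{s\to0^+}\rk M(s)=\dim\Cc^k(\gf_1)$, and summing over $k$ yields $D_0\meg D_1$. The inequality $D_\infty\meg D_1$ is proved in the same way, realizing $\gf_s$ on $\hf_\infty$ via $P_{\infty,s}$ for $s\in(0,\infty]$ and letting $s\to\infty$ (where $[\,\cdot\,,\,\cdot\,]_{\infty,s}$ depends polynomially on $1/s$).

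\textbf{Assertion 3.} By the Bass--Guivarc'h formula, $D_0=\sum_{k\Meg1}\dim\Cc^k(\gf_0)$. Since $\gf_0=\bigoplus_j\hf_{0,j}$ is graded, the subspaces $\Ff_k\coloneqq\bigoplus_{j\Meg k}\hf_{0,j}$ form a filtration with $[\Ff_i,\Ff_j]\subseteq\Ff_{i+j}$, so $\Cc^k(\gf_0)\subseteq\Ff_k$ for every $k$, by induction on $k$; therefore
\[
D_0=\sum_{k\Meg1}\dim\Cc^k(\gf_0)\meg\sum_{k\Meg1}\dim\Ff_k=\sum_j j\dim\hf_{0,j}=Q_0.
\]
If equality holds then $\Cc^k(\gf_0)=\Ff_k$ for every $k$; in particular $[\gf_0,\gf_0]=\Cc^2(\gf_0)=\bigoplus_{j\Meg2}\hf_{0,j}$, which means that $\gf_0$ is generated by $\hf_{0,1}$, i.e.\ that $G_0$ is stratified. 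Conversely, if $G_0$ is stratified then $\Cc^k(\gf_0)=\bigoplus_{j\Meg k}\hf_{0,j}=\Ff_k$ for every $k$, so $D_0=Q_0$. The assertion for $G_\infty$ is proved identically.

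\textbf{Assertion 2.} Realize $G_1$ on $\hf_\infty$, so that $\nu_{G_1}$ is a positive multiple of Lebesgue measure on $\hf_\infty$; by Proposition~\ref{prop:21:9}~(i), $P_{\infty,1}$ restricts to the identity of $\hf_\infty$ and $P_{0,1}$ restricts to $\lambda_1^{-1}\colon\hf_\infty\to\hf_0$. By~{\bf6} of Proposition~\ref{prop:7} there is $C>0$ with $\abs{x}_1\Meg\frac1C\min(\abs{x},\abs{\lambda_1^{-1}(x)})$ for every $x\in\hf_\infty$, so that
\[
B_1(r)\subseteq\Set{x\in\hf_\infty\colon\abs{x}<Cr}\cup\lambda_1\big(\Set{y\in\hf_0\colon\abs{y}<Cr}\big).
\]
Since $\abs{\,\cdot\,}$ is a homogeneous norm on $\widetilde\gf$ (Proposition~\ref{prop:8}) and $\hf_0,\hf_\infty$ are graded, the two sets on the right have Lebesgue measure $\asymp r^{Q_\infty}$ and $\asymp r^{Q_0}$, respectively; as $Q_0\meg Q_\infty$ by Proposition~\ref{prop:21:9}~(v), this gives $\nu_{G_1}(B_1(r))\meg C'r^{Q_\infty}$ for all $r\Meg1$. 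Combining this with $B_{1,*}(r)\subseteq B_1(r)$ and the estimate $\nu_{G_1}(B_{1,*}(r))\asymp r^{D_1}$ as $r\to\infty$ (valid by~{\bf3} of Proposition~\ref{prop:7}), we conclude that $D_1\meg Q_\infty$.

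\textbf{Main difficulty.} The only genuinely delicate point is in Assertion 1: one must verify that the bracket structures $[\,\cdot\,,\,\cdot\,]_{0,s}$ and $[\,\cdot\,,\,\cdot\,]_{\infty,s}$ — and hence the families of iterated brackets spanning the $\Cc^k$ — vary continuously (in fact polynomially) in $s$ up to the endpoints $s=0$ and $s=\infty$, which is exactly what makes the lower-semicontinuity-of-rank argument applicable. Everything else reduces to routine bookkeeping with the Bass--Guivarc'h formula and with volumes of homogeneous balls.
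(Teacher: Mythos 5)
Your proof is correct and follows essentially the same route as the paper: the Bass--Guivarc'h formula together with lower semicontinuity of rank for the brackets $[\,\cdot\,,\,\cdot\,]_{0,s}$ and $[\,\cdot\,,\,\cdot\,]_{\infty,s}$ (mere continuity in $s$ is all one needs, so invoking polynomial dependence is harmless overkill) handles assertions~1 and~3, and the upper volume bound $\nu_{G_1}(B_1(r))\lesssim r^{Q_\infty}$ deduced from part~6 of Proposition~\ref{prop:7} handles assertion~2. You simply spell out more of the routine details that the paper's terse proof leaves to the reader.
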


Notice that it may happen that  either $D_{0}>D_{\infty}$,  or $D_{0}<D_{\infty}$,  or  $D_{1}> \max(D_{0},D_{\infty})$.
Indeed, consider the case $\widetilde G =\Hd^1\times \R$, $G=\Hd^1$, where $\Hd^1$ is the three-dimensional Heisenberg group; denote by $X,Y,T,U$ a basis of $\widetilde \gf$ such that $[X,Y]=T$ while the other commutators vanish, and endow $\widetilde G$ with coordinates such that $((z,t),u)$ corresponds to $\exp(\Re z X+ \Im z Y+ t T+ u U)$; endow $G$ with similar coordinates and define $\pi((z,t),u)\coloneqq (z,t+u)$. Define dilations on $\widetilde G$ so that $X,Y,T,U$ have degrees $1,1,2,3$, respectively.
Then, $\ifr=(T-U)\R$, $\ifr_0=U\R$, and $\ifr_\infty= T\R$, so that $G_0\cong \Hd^1$, and $G_\infty\cong \R^3$. Hence, in this case, $D_{0}=4>3=D_{\infty}$.

If, in the same example considered above, we choose dilations on $\widetilde G$ in such a way that $X,Y,T,U$ have degrees $1,1,2,1$, respectively, then $\ifr_0=T\R$ and $\ifr_\infty= U\R$. Consequently, $D_{0}=3<4=D_{\infty}$.

Finally, if we consider $\widetilde G$, $\pi$, and $G$ as the products of the ones in the preceding examples, then clearly $D_{1}=8>7=D_{0}=D_{\infty}$.

\begin{proof}
	{\bf1.} Since $G_s$ is isomorphic to $G_1$, for $s\in (0,\infty)$, it is clear that $D_{s}=D_{1}$. In addition, denote by $\gf_s$ the Lie algebra of $G_s$, and define inductively $\gf_{s,[1]}\coloneqq \gf_s$ and $\gf_{s,[j+1]}\coloneqq [\gf_s, \gf_{s,[j]}]$ for every $j\Meg 1$. Then, $D_{s}=\sum_{j\Meg 1} \dim \gf_{s,[j]}$ (cf., for example,~\cite[Theorem II.1]{Guivarch}). Now, since $\lim_{s\to 0^+}[x,y]_s= [x,y]_0$ for every $x,y\in \hf_0$, it is easily seen that $\dim \gf_{0,[j]}\meg \dim \gf_{1,[j]}$ for every $j\in \N$, whence $D_{0}\meg D_{1}$. In the same way one proves that $D_{\infty}\meg D_{1}$.
	
	{\bf2.} Indeed, Proposition~\ref{prop:7} and the above remarks imply that
	\[
	 r^{D_{1}} \asymp\nu_{G_1}(B_{1,*}(r))\meg \nu_{G_1}(B_1(r))\asymp	r^{Q_{\infty}}
	\]
	as $r\to+\infty$. The assertion follows.
	
	{\bf3.} This follows easily from the formula for $D_{s}$ used in~{\bf1}.
\end{proof}

Here is a simple result which will be useful later on. The proof, which is a simple modification of that of~\cite[VIII.1.1]{VaropoulosCoulhonSaloffCoste}, is omitted.

\begin{lem}\label{lem:7}
	For every $s\in [0,\infty]$, for every $p\in [1,\infty]$, for every $f\in C^1(G_s)$, and for every $x\in G_s$,
	\[
	\norm{f(\,\cdot\, x)-f}_p \meg \sum_{j\in J}\abs{x}_s^{\dd_j} \norm{X_{s,j} f}_p.
	\]
\end{lem}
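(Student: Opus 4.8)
The plan is to reduce the statement to a one-parameter calculation along integral curves of a single vector field, exactly in the spirit of the classical estimate (cf.~\cite[VIII.1.1]{VaropoulosCoulhonSaloffCoste}). Fix $s\in[0,\infty]$, $p\in[1,\infty]$, $f\in C^1(G_s)$ and $x\in G_s$. By the very definition of $\abs{x}_s$ (and the fact that $\abs{\,\cdot\,}$, hence $\abs{\,\cdot\,}_s$, is proper, so the infimum is essentially attained up to an arbitrarily small loss), for every $\eps>\abs{x}_s$ there are an absolutely continuous curve $\gamma\colon[0,1]\to G_s$ with $\gamma(0)=e$, $\gamma(1)=x$, and measurable coefficients $a_j\colon[0,1]\to\R$ with $\norm{a_j}_\infty\meg \eps^{\dd_j}$ such that $\gamma'(t)=\sum_{j\in J}a_j(t)(X_{s,j})_{\gamma(t)}$ for a.e.\ $t$. (Here I invoke the remark in the excerpt that $\abs{\,\cdot\,}_s$ admits the same ``sub-Riemannian'' description as $\abs{\,\cdot\,}$, with the $\widetilde X_j$ replaced by the projected fields $X_{s,j}=\dd\pi_s(\widetilde X_j)$.)

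Next I would set $F(t)\coloneqq f(\,\cdot\,\gamma(t))$, viewed as a curve in $L^p(G_s)$; since right translation is an isometry on $L^p$ and $f\in C^1$, $F$ is (absolutely) differentiable with $F'(t)=\bigl(\tfrac{\dd}{\dd t}f(y\gamma(t))\bigr)_{y}=\sum_{j\in J}a_j(t)\,(X_{s,j}f)(\,\cdot\,\gamma(t))$, using that $X_{s,j}$ is the \emph{right-invariant} generator matching the right translation by $\gamma(t)$ along the prescribed velocity $\gamma'(t)=\sum_j a_j(t)(X_{s,j})_{\gamma(t)}$. Then
\[
f(\,\cdot\, x)-f=F(1)-F(0)=\int_0^1 F'(t)\,\dd t=\sum_{j\in J}\int_0^1 a_j(t)\,(X_{s,j}f)(\,\cdot\,\gamma(t))\,\dd t,
\]
and taking $L^p$ norms, using the integral Minkowski inequality, the right-invariance of $\nu_{G_s}$, and $\norm{a_j}_\infty\meg\eps^{\dd_j}$, gives
\[
\norm{f(\,\cdot\, x)-f}_p\meg\sum_{j\in J}\int_0^1\abs{a_j(t)}\,\norm{(X_{s,j}f)(\,\cdot\,\gamma(t))}_p\,\dd t\meg\sum_{j\in J}\eps^{\dd_j}\,\norm{X_{s,j}f}_p.
\]
Letting $\eps\downarrow\abs{x}_s$ yields the claimed bound $\norm{f(\,\cdot\, x)-f}_p\meg\sum_{j\in J}\abs{x}_s^{\dd_j}\norm{X_{s,j}f}_p$.

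The main technical point to get right — the ``hard part'', though it is really bookkeeping — is the interplay between left versus right invariance: the curve $\gamma$ in the definition of $\abs{x}_s$ is driven by the \emph{left}-invariant fields $X_{s,j}$, whereas in the expression $f(\,\cdot\,\gamma(t))$ the variable being translated sits on the right, so one must check that $\tfrac{\dd}{\dd t}f(y\gamma(t))$ is indeed $(X_{s,j}f)(y\gamma(t))$ summed against $a_j(t)$, i.e.\ that left-invariant differentiation on the argument $\gamma(t)$ coincides with the appropriate derivative of $y\mapsto f(y\,\cdot\,)$; this is the standard fact that $\tfrac{\dd}{\dd t}\big|_{t=t_0}f(y\exp(tZ)\gamma(t_0))=(\widehat Z f)(y\gamma(t_0))$ where $\widehat Z$ is the left-invariant field, which is exactly what makes the classical proof work. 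A secondary care is the ``up to $\eps$'' issue in attaining the infimum and the measurability of $t\mapsto\norm{(X_{s,j}f)(\,\cdot\,\gamma(t))}_p$, both of which are routine: properness of $\abs{\,\cdot\,}_s$ handles the first, and continuity of $X_{s,j}f$ together with strong continuity of right translation on $L^p$ (for $p<\infty$; for $p=\infty$ one argues pointwise and takes a supremum at the end) handles the second. No further ingredients beyond Proposition~\ref{prop:7} and the definitions are needed.
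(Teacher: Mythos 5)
Your proof is correct and is essentially the adaptation of \cite[VIII.1.1]{VaropoulosCoulhonSaloffCoste} that the paper intends: parametrize by an almost-optimal horizontal curve, differentiate the right-translated function as a curve in $L^p$, and apply the integral Minkowski inequality together with right-invariance of the Haar measure. The only slips are in your closing commentary: $X_{s,j}$ is a \emph{left}-invariant field, not a ``right-invariant generator'', and left-invariance is precisely what yields $(\dd L_y)(\gamma'(t))=\sum_j a_j(t)(X_{s,j})_{y\gamma(t)}$ and hence $F'(t)=\sum_j a_j(t)(X_{s,j}f)(\,\cdot\,\gamma(t))$; correspondingly your illustrative identity should place $\exp(tZ)$ to the right of $\gamma(t_0)$, i.e.\ $\frac{\dd}{\dd t}\big|_{t=0}f\big(y\gamma(t_0)\exp(tZ)\big)=(\widehat Z f)(y\gamma(t_0))$, not between $y$ and $\gamma(t_0)$. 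The displayed formula for $F'(t)$ that your proof actually uses is the correct one, so the argument stands.
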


We conclude this subsection with some uniform estimates on the growth of the volume of the balls associated with the $\abs{\,\cdot\,}_{s,*}$. Indeed, observe that the preceding facts prove that for every $s\in [0,\infty]$ there is a constant $C_s>0$ such that $\nu_{G_s}\left( B_{s,*}(r)\right)\meg C_s r^{D_{s}}$ for every $r\Meg 1$; however, we shall need to know that one may take the $C_s$ to be \emph{independent} of $s$. Actually, we shall prove a finer result, showing how the growth of the volume of balls decreases as $s$ approaches $0$ or $\infty$.

\begin{prop}\label{prop:6}
	There are constant $C>0$ and two integers $N_0,N_\infty\Meg 1$ such that
	\[
	\nu_{G_s}\left(B_{s,*}(r)\right)\meg C \begin{cases} 
	\max(r^{D_{0}}, s^{N_0}r^{D_{1}}) & \text{if $s\in [0,1]$}\\
	\max(r^{D_{\infty}}, s^{-N_\infty}r^{D_{1}}) & \text{if $s\in [1,\infty]$}
	\end{cases}
	\]
	for every $s\in [0,\infty]$ and for every $r\Meg 1$. 
	In addition, when $\widetilde G$ is stratified, so that $Q_0=D_{0}$ and $Q_\infty=D_{\infty}=D_{1}$ by Corollary~\ref{cor:3}, one may take $N_0=Q_\infty-Q_0$.
	
	In particular, for every $\eps>0$ there is a constant $C_\eps>0$, independent of $s$, such that
	\[
	\norm{\min((1+\abs{\,\cdot\,}_{s,*})^{-D_{0}-\eps}, s^{-N_0}(1+\abs{\,\cdot\,}_{s,*})^{-D_{1}-\eps}  )}_1\meg C_\eps
	\]
	for every $s\in [0,1]$, while
	\[
	\norm{\min((1+\abs{\,\cdot\,}_{s,*})^{-D_{\infty}-\eps}, s^{N_\infty}(1+\abs{\,\cdot\,}_{s,*})^{-D_{1}-\eps}  )}_1\meg C_\eps
	\]
	for every $s\in [1,\infty]$.
\end{prop}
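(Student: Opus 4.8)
The plan is to estimate $\nu_{G_s}(B_{s,*}(r))$ separately in the regimes $s \in [0,1]$ and $s \in [1,\infty]$, working only with the first (the second being symmetric under $s \mapsto 1/s$, which swaps the roles of $G_0$ and $G_\infty$). Fix $s \in [0,1]$ and $r \geq 1$. The key tool is part~{\bf6} of Proposition~\ref{prop:7}: there is a constant $C$ with $\abs{\pi_s(z)}_s \asymp \min(\abs{P_{0,s}(z)},\abs{P_{\infty,s}(z)})$ uniformly in $s$. For $s$ small we want to compare with the local contraction $G_0$ realized on $\hf_0$, so I would work with the realization $\pi_s \colon \widetilde\gf \to \hf_0$ via $P_{0,s}$; then via part~{\bf2} of Proposition~\ref{prop:7} a point $x \in G_s$ lies in $B_{s,*}(r)$ essentially when $\abs{x}_s \leq r$ (for $r \geq 1$ the two moduli are comparable up to the $n$-th power, but one only needs the one-sided bound $\abs{x}_{s,*} \geq \abs{x}_s$, so $B_{s,*}(r) \subseteq B_s(r)$). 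Thus it suffices to bound $\nu_{G_s}(B_s(r))$.

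Next I would fix a homogeneous norm $N$ on $\widetilde\gf$ and, using Proposition~\ref{prop:7}{\bf6} together with the scaling property {\bf4}, reduce the volume of $B_s(r)$ in $G_s \cong \hf_0$ to the Lebesgue measure (with respect to coordinates on $\hf_0$) of the set $\{y \in \hf_0 : \min(N_0(y), N_\infty(\lambda_s y)) \lesssim r\}$, where $N_0$ is the homogeneous norm on $G_0 = (\hf_0, [\cdot,\cdot]_{0,0})$ and $N_\infty$ that on $G_\infty$ transported via $\lambda_s$. The Jacobian of $\pi_s$ between the fixed Lebesgue measures is bounded above and below uniformly in $s$ because $\ifr_s$ depends continuously (indeed polynomially) on $s \in [0,1]$ and $\hf_0$ is a fixed complement. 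So the problem is purely a Euclidean volume estimate: bound $\leb{\hf_0}\{y : N_0(y) \leq Cr \text{ and } N_\infty(\lambda_s y) \leq Cr\}$. Intersecting the two conditions and using that $\lambda_s = I + (\text{strictly super-homogeneous})$ (Proposition~\ref{prop:21:9}(iii)) with $\lambda_s - I$ depending polynomially on $s$, one splits coordinates by degree: on the lowest degrees $\lambda_s$ is close to the identity and the $G_0$-ball controls things giving an $r^{D_0}$ contribution (since $D_0 = Q_0$ when $G_0$ is stratified, but in general $D_0 \le Q_0$ and one must use the $D_0$ bound coming from the volume-growth theorem, not the homogeneous dimension), while for the remaining coordinates one picks up powers of $s$ with a total exponent $N_0$ that is bounded by a fixed integer depending only on the gradation (and equals $Q_\infty - Q_0$ in the stratified case, tracked via the dimension count $\sum_{j<k}\dim\hf_{0,j} + \sum_{j\geq k}\dim\hf_{\infty,j} \geq \dim\widetilde\gf$ from the proof of Proposition~\ref{prop:21:9}(v)).

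The main obstacle, I expect, is the careful bookkeeping in this last Euclidean estimate: one must show that the "defect" between $D_0$ and $D_1$ in the exponent of $r$ is exactly compensated by a genuine positive power $s^{N_0}$ (rather than just $s^\epsilon$), and that $N_0$ can be chosen as a fixed integer — and, in the stratified case, pinned to $Q_\infty - Q_0$. This requires organizing the coordinates of $\hf_0$ by homogeneity degree, observing that for $y$ in a dyadic annulus $N_0(y) \sim 2^k$ the condition $N_\infty(\lambda_s y) \lesssim r$ forces the components of $y$ of degree $\geq$ some threshold to be small (of size $\lesssim r^{\text{deg}}$ up to a factor that is a positive power of $s$, coming from the strictly-super-homogeneous part of $\lambda_s$), and summing the resulting Euclidean volumes of boxes over the dyadic scales $2^k \lesssim r$. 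The comparison $D_0 = \sum_j \dim\gf_{0,[j]}$ versus $Q_0 = \sum_j j\dim\hf_{0,j}$ must be invoked correctly here — in the non-stratified case one does not get $r^{Q_0}$ but the sharper $r^{D_0}$, which is where the volume-growth description of $D_s$ from Corollary~\ref{cor:3} and~\cite{Guivarch} enters rather than naive homogeneity.

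Finally, the "In particular" statement follows by a routine layer-cake integration: writing the $L^1$ norm as $\int_0^\infty \nu_{G_s}(\{ \cdots > t\})\,\dd t$, splitting at the value of $\abs{\,\cdot\,}_{s,*}$ where the two terms of the minimum cross over, and plugging in the volume bound just proved; the $\eps > 0$ ensures convergence of the resulting integrals $\int_1^\infty \rho^{D_0 - 1 - (D_0+\eps)}\,\dd\rho$ and $\int \rho^{D_1 - 1 - (D_1 + \eps)}\,\dd\rho$, uniformly in $s$ since the constant $C$ above is $s$-independent.
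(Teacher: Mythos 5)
Your reduction at the very start already destroys the information needed to prove the statement. You write that $B_{s,*}(r)\subseteq B_s(r)$ (true), and conclude that it suffices to bound $\nu_{G_s}(B_s(r))$. But this is far too lossy: for $s\in(0,\infty)$ fixed and $r$ large, $\nu_{G_s}(B_s(r))$ grows like $r^{Q_\infty}$ (this is essentially the content of the proof of part~{\bf2} of Corollary~\ref{cor:3}), whereas the proposition must produce the exponents $D_{0}$ and $D_{1}$, which are the \emph{volume growths} and satisfy $D_{0}\meg Q_0$, $D_{1}\meg Q_\infty$ with strict inequality whenever $G_0$ resp.\ $G_1$ fails to be stratified. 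In other words, the claimed estimate is simply false for $B_s(r)$ in place of $B_{s,*}(r)$, so any bound derived from the homogeneous balls cannot yield the proposition. You in fact remark (last paragraph before the ``In particular'' discussion) that ``in the non-stratified case one does not get $r^{Q_0}$ but the sharper $r^{D_0}$'', but the Euclidean volume $\leb{\hf_0}\Set{y : N_0(y)\meg Cr,\ N_\infty(\lambda_s y)\meg Cr}$ you set up can only ever produce exponents built out of $Q_0$ and $Q_\infty$ (it is just the intersection of two homogeneous balls), so the gap you flag is exactly where the argument breaks and cannot be patched within this framework.

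The missing idea — which is the heart of the paper's proof — is to use part~{\bf3} of Proposition~\ref{prop:7} instead of part~{\bf6}: for $r\in[h,h+1]$ one has $B_{s,*}(r)\subseteq B_{s,*}(1)^{h+1}$, i.e.\ one controls $B_{s,*}(r)$ by an $h$-fold \emph{group} product of the fixed bounded set $B_{s,*}(1)$. Applying the Baker--Campbell--Hausdorff formula to this iterated product, and organizing a basis $(\widetilde Y_j)$ of $\hf_0$ by the depth $k_j$ of $\widetilde Y_j$ in the lower central series of $\widetilde\gf$ (not by the gradation degree $\dd_j$), one shows that $Q_s^{\cdot_{G_s} h}$ is contained in a box $\sum_j[-C h^{k_j},C h^{k_j}]\,\widetilde Y_j^{(s)}$. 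Estimating this box as a union of parallelotopes, one picks out the exponent $k_{J'}=\sum_{j\in J'}k_j$, which by Guivarc'h's formula $D_s=\sum_k\dim\big[(\widetilde\gf_{[k]}+\ifr_s)/\ifr_s\big]$ is bounded by $D_{1}$ and equals $D_{0}$ precisely on the terms whose determinant survives at $s=0$; the factor $s^{N_0}$ then comes from $\det(\widetilde Y^{(s)}_{J'})=\det(\widetilde Y^{(0)}_{J'})+O(s)$. This is how the volume growth enters, and it has no analogue in a homogeneous-norm/intersection-of-balls computation. The ``In particular'' deduction by layer-cake integration is indeed routine and you describe it correctly, but it cannot rescue the main estimate.
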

Notice that, when $\widetilde G$ is not stratified, then (the optimal) $N_0$ and $N_\infty$ may be smaller or larger than $D_{1}-D_{0}$ and $D_{1}-D_{\infty}$, respectively.

Let $F_k$ be the Lie group whose Lie algebra has a basis $X, Y_1,\dots, Y_k$ such that $Y_{j+1}=[X,Y_j]$ for every $j=1,\dots,k-1 $, while the other commutators vanish. Consider $\widetilde G\coloneqq \R\times F_k$, with basis of the corresponding Lie algebra $U,X,Y_1,\dots, Y_k$. Fix $d,d'\in \N^*$ such that $d< k+d'-1$. Give degree $1$ to $X$, degree $j+d'-1$ to $Y_j$ ($j=1,\dots, k$), and degree $d$ to $U$. 
Define $\ifr_1\coloneqq \langle Y_k -U\rangle$, so that $\ifr_s= \langle Y_k- s^{k+d'-1-d}U\rangle$ for every $s\in [0,\infty)$.
Then, we may choose $\hf_0=\langle X,Y_1,\dots, Y_{k-1},U\rangle$ and a neighbourhood of the identity $Q\coloneqq [-1,1]^{k+1}$ (in the coordinates associated with the basis $X,Y_1,\dots, Y_{k-1},U$). Then, the Baker--Campbell--Hausdorff formula shows that, for every $s\in [0,\infty]$,
\[
(x_1,y_1,u_1)\cdot_{G_s}\cdots \cdot_{G_s}(x_h,y_h,u_h)=(P_h(x_1,y_1;\dots; x_h,y_h), u_1+\dots+u_h+ s^{k+d'-1-d}R_h(x_1,y_1;\dots; x_h,y_h) )
\]
for every $(x_j,y_j,u_j)\in Q$ (with  $y_j\in [-1,1]^{k-1}$), $j=1,\dots,h$, where $P_h$ and $R_h$ are suitable polynomial mappings (independent of $s$). Integrating in $(x,y)$ first and then in $u$, we see that
\[
\nu_{\hf_0}(Q^{\cdot_{G_s} h})= (1-s^{k+d'-1-d})\nu_{\hf_0}(Q^{\cdot_{G_0} h})+s^{k+d'-1-d}\nu_{\hf_0}(Q^{\cdot_{G_1} h})\asymp h^{D_{0}}+ s^{k+d'-1-d} h^{D_{1}}
\]
for $h\to \infty$, uniformly for $s\in [0,1]$, where $\nu_{\hf_0}$ denotes Lebesgue measure on $\hf_0$. 
Now, it is not hard to see that this quantity is comparable with $\nu_{G_s}\left( B_{s,*}(h)\right)$ (uniformly for $s\in [0,1]$ and $h\Meg 1$), so that $N_0=k+d'-1-d$, which may be either smaller or larger than $k-1= D_{1}-D_{0}$.

Choosing $d>k+d'-1$, one may then obtain examples with $N_\infty$ either smaller or larger than $D_{1}-D_{\infty}$.
Taking products, examples with both $N_0 -(D_{1}-D_{0})\neq0$ and $N_\infty-(D_{1}-D_{\infty})\neq 0$ (with all combinations of signs) may be produced.

\begin{proof}
	{\bf1.} We consider only the case $s\in [0,1]$, since the case $s\in [1,\infty]$ is completely analogous (or almost trivial when $\widetilde G$ is stratified, see~{\bf4} below).  Define $\widetilde \gf_{[1]}\coloneqq \widetilde \gf$ and, by induction, $\widetilde \gf_{[k+1]}\coloneqq [\widetilde \gf, \widetilde \gf_{[k]}]$, so that $(\widetilde \gf_{[k]})$ is a decreasing sequence of graded ideals of $\widetilde \gf$ (the lower central series).
	Notice that, arguing as in the proof of Proposition~\ref{prop-psi}, one may prove that $\ifr_s\cap \widetilde \gf_{[k]}$ converges to some limit $\ifr_{0,[k]}\subseteq \ifr_0\cap \widetilde \gf_{[k]}$ as $s\to 0^+$, for every $k\in \N^*$.
	Then, for every $k\in\N^*$ choose a graded complement $V_k$ of $(\ifr_0\cap \widetilde \gf_{[k]})+\widetilde \gf_{[k+1]} $ in $\widetilde \gf_{[k]}$ and a graded complement $W_k$ of $\ifr_{0,k}$ in $\ifr_0\cap \widetilde \gf_{[k]}$.
	Observe that $\bigoplus_{k'\Meg k} V_{k'}$ is a graded complement of $\ifr_0\cap \widetilde \gf_{[k]}$ in $\widetilde \gf_{[k]}$ for every $k\in \N^*$, so that we may assume that $\hf_0=\bigoplus_k V_k$. 
	Analogously, observe that $W_k\oplus(\bigoplus_{k'\Meg k} V_{k'})$ is a graded complement of $\ifr_{0,k}$ in $\widetilde \gf_{[k]}$; arguing as in the proof of Proposition~\ref{prop-psi}, we then see that $W_k\oplus(\bigoplus_{k'\Meg k} V_{k'})$ is a graded complement of $\ifr_s\cap \widetilde \gf_{[k]}$  in $\widetilde \gf_{[k]}$ for every $s\in (0,\infty)$.

	Then, we may find a family $(k_j)_{j\in J_0}$ of positive integers and a homogeneous basis $(\widetilde Y_j)_{j\in J_0}$ of $\hf_0$ such that $(\widetilde Y_j)_{k_j=k}$ is a  basis of $V_k$, for every $k\in \N^*$, and such that $\widetilde Y_j$ has degree $\dd_j$ for every $j\in J_0$.
	Choose, in addition, a homogeneous basis $(\widetilde Y_{j})_{j\in J_k}$ of $W_k$ for every $k\in \N^*$ (to make the notation consistent, we assume that the $J_k$, for $k\in \N$, are mutually disjoint); we define $k_j\coloneqq k$  and we denote by $\dd_j$ the degree of $\widetilde Y_j$ for every $j\in J_k$. 
	Define $\widetilde Y^{(s)}_{j}\coloneqq P_{0,s}(\widetilde Y_{j})$ for every $j\in \widetilde J\coloneqq \bigcup_{k\in \N} J_k$; observe that $\widetilde Y^{(s)}_j=\widetilde Y_j$ for some (hence every) $s\in [0,1]$ if and only if $j\in J_0$, and that $\widetilde Y^{(0)}_j=0$ if and only if $j\in \widetilde J \setminus J_0$.

	{\bf2.} Observe that~{\bf5} of Proposition~\ref{prop:7} shows that there is a constant $C_1>0$ such that, under the identification of $G_s$ with $\hf_0$,
	\[
	B_{s,*}(1)\subseteq \sum_{j\in J_0} [-C_1,C_1] \widetilde Y_{j}\eqqcolon Q_{s}
	\]
	for every $s\in [0,1]$. 
	In addition, denoting by $\nu_{\hf_0}$ the (fixed) Lebesgue measure on $\hf_0$, again by~{\bf5} of Proposition~\ref{prop:7} we see that there is a constant $C_2>0$ such that
	\[
	\nu_{G_s} \meg C_2\nu_{\hf_0}  
	\]
	under the identification of $G_s$ with $\hf_0$. Thanks to~{\bf3} of Proposition~\ref{prop:7},  it will then suffice to estimate $\nu_{\hf_0}(Q_{s}^{\cdot_{G_s}h})$ for every $h\in \N^*$ and for every $s\in [0,1]$.
	
	{\bf3.} Now, observe that, arguing as in the proof of~\cite[Theorem 2 of Chapter II, § 6, No.\ 4]{BourbakiLie2}, we see that, for every $j_1,\dots,j_h\in J_0$,
	\[
	\widetilde Y_{j_1}\cdots \widetilde Y_{j_h}=\sum_{m=1}^\infty \frac{(-1)^{m-1}}{m} \sum_{\abs{\ell_1},\dots,\abs{\ell_m}\Meg 1} \frac{1}{\ell_1!\cdots \ell_m!} [\widetilde Y_{j_1}^{\ell_{1,1}}\cdots \widetilde Y_{j_h}^{\ell_{m,h}}   ],
	\]
	where 
	\[
	[\widetilde Y_{j_1}^{\ell'_{1,1}}\cdots \widetilde Y_{j_h}^{\ell'_{m,h}}   ]= (\ad (\widetilde Y_{j_1})^{\ell'_{1,1}}\cdots \ad (\widetilde Y_{j_h})^{\ell'_{1,h}})\cdots (\ad (\widetilde Y_{j_1}^{\ell'_{m,1}})\cdots \ad (\widetilde Y_{j_{h}})^{\ell'_{m,h}}) \widetilde Y_{j_{\bar h}}  ,
	\]
	where $\bar h\coloneqq \max \Set{h'\colon \ell_{m,h'}\neq 0}$, $\ell'_{m'}\coloneqq\ell_{m'}$ for every $m'=1,\dots,m-1$, and $\ell'_m=\ell_m-(\delta_{h',\bar h})_{h'}$.
	Then,  taking~{\bf1} into account, we see that 
	\[
	[Y_{j_1}^{\ell_{1,1}}\cdots Y_{j_h}^{\ell_{m,h}}   ]\in \langle (\widetilde Y_j)_{k_j \Meg \abs{\ell_1+\dots+\ell_m},\, \dd_j\meg \ell_{1,1}\dd_{j_1}+\cdots+ \ell_{m,h} \dd_{j_h}  } \rangle+\ifr_{1}
	\]
	for every $\ell_1,\dots ,\ell_m$ with $\abs{\ell_1},\dots, \abs{\ell_m}\Meg 1$ and for every $m\in \N^*$. Therefore, there is a constant $C_3>0$ such that
	\[
	Q_{s}^{\cdot_{G_s}h}\subseteq \sum_{j\in \widetilde J} [-C_3 h^{k_j} ,C_3 h^{k_j}] \widetilde Y_{j}^{(s)}
	\]
	for every $s\in [0,1]$ and for every $h\in \N^*$. 
	Now, arguing by induction on $\card(\widetilde J)\Meg \card(J_0)$, we see that
	\[
	 \sum_{j\in \widetilde J} [-C_3 h^{k_j} ,C_3 h^{k_j}] \widetilde Y_{j}^{(s)}= \bigcup_{\substack{J',J''\subseteq \widetilde J\\ J'\cap J''=\emptyset\\ \card(J'')=\card(J_0)}} \bigcup_{\substack{\eps'\in \Set{-1,1}^{J'}\\ \eps''\in \Set{-1,1}^{J''}}} \left( \sum_{j'\in J'} C_3 h^{k_{j'}}\eps'_{j'} \widetilde Y^{(s)}_{j'}+\sum_{j''\in J''}[0, 1] C_3 \eps''_{j''} h^{k_{j''}} \widetilde Y^{(s)}_{j''}   \right)
	\]
	Therefore,
	\[
	\begin{split}
	\nu_{\hf_0}(Q_{s}^{_{G_s}h})&\meg 2^{2\card(\widetilde J)-\card(J_0)}\sum_{J'\subseteq \widetilde J, \card(J')=\card(J_0)}  \nu_{\hf_0}\left(\sum_{j\in J'}[0, 1] C_3 h^{k_{j}} \widetilde Y^{(s)}_{j}\right)\\
		&=2^{2\card(\widetilde J)}  \left(\frac{C_3}{2}\right)^{\card(J_0)} \sum_{J'\subseteq \widetilde J, \card(J')=\card(J_0)} h^{k_{J'}} \det\left(\widetilde Y^{(s)}_{J'}\right),
	\end{split}
	\]
	where $k_{J'}\coloneqq \sum_{j\in J'} k_j$ and $\det\left(\widetilde Y^{(s)}_{J'}\right)$ is the determinant of the basis $\left(\widetilde Y^{(s)}_j\right)_{j\in J'}$ of $\hf_0$ with respect to the measure $\nu_{\hf_0}$ (that is, with respect to any basis whose fundamental parallelotope has measure $1$).
	
	Now, take $J'\subseteq \widetilde J$ such that $\card(J')=\card(J_0)$ and $\det\left(\widetilde Y^{(s)}_{J'}\right)\neq 0$ for some (hence every) $s\in (0,1]$. Observe that, since $\det\left(\widetilde Y^{(s)}_{J'}\right)=\det\left(\widetilde Y^{(0)}_{J'}\right)+O(s)$, the first assertion will be established if we prove that $k_{J'}\meg D_{1}$ for every such $J'$, and that $k_{J'}= D_{0}$ if $\det\left(\widetilde Y^{(0)}_{J'}\right)\neq 0$.
	Then, for every $k\in \N^*$ define $J'_k$ as the set of $j\in J'$ such that $k_j=k$, and observe that $(\widetilde Y_j)_{j\in \bigcup_{k'\Meg k}J'_{k'}}$ is the basis of a graded subspace of $\widetilde \gf_{[k]}$ whose intersection with $\ifr_s\cap \widetilde \gf_{[k]}$ is $0$ for every $s\in (0,1]$, since $(\widetilde Y_j^{(s)})_{j\in \bigcup_{k'\Meg k}J'_{k'}}$ is the basis of a subspace of $\widetilde \gf_{[k]}+\ifr_s$ whose intersection with $\ifr_s$ is $0$ and $\widetilde Y_j-\widetilde Y_j^{(s)}\in \ifr_s$ for every $s\in (0,1]$ and for every $j\in J'$. 
	Therefore, $\sum_{k'\Meg k} \card(J'_{k'})\meg \dim [(\gf_{[k]}+\ifr_s)/\ifr_s]$ so that, summing over $k\in \N^*$,
	\[
	k_{J'}=\sum_{k\in \N^*} k \card(J'_k)\meg \sum_{k\in \N^*}\dim [(\gf_{[k]}+\ifr_s)/\ifr_s]= D_{s}
	\]
	for every $s\in (0,1]$,
	where the last equality follows from~\cite[Theorem II.1]{Guivarch}.
	
	Finally, assume that $\det\left(\widetilde Y^{(0)}_{J'}\right)\neq 0$. Then, by~{\bf1} we see that $J'=J_0$, so that the assertion follows arguing as before.

	{\bf4.} Now, assume that $\widetilde G$ is stratified. Then, it is clear that $\widetilde \gf_{[k]}=\bigoplus_{q\Meg k} \widetilde \gf_q$, so that the assertion for $s\in [1,\infty]$ is trivial. Then, consider the preceding construction for $s\in [0,1]$ and observe that $k_j\meg\dd_j$  for every $j\in \widetilde J$, with equality when $j\in J_0$. 
	Take $J'$ as in~{\bf3}.
	
	Observe that we may construct, by induction on $k=1,\dots,n$, mutually disjoint subsets $J'_k$ of $J'$ such that $J'\cap J_0\subseteq J'_k$  and such that $(\widetilde Y_j^{(s)})_{j\in \bigcup_{k'\Meg k} J'_{k'}}$ is the basis of a graded  complement of $\left( \bigoplus_{q<k} \widetilde \gf_q\right)\cap \hf_0$ in $\hf_0$, for every $k=1,\dots,n$. 
	Define $k'_j\coloneqq k$ for every $j\in J'_k$ and for every $k=1,\dots,n$, and observe that  $\dd_j\Meg k'_j$ for every $j\in J'$ thanks to Proposition~\ref{lem:21:1}. 
	Furthermore,  define $\widetilde Y^{J'}_j$ as the homogeneous component of degree $k'_j$ of $\widetilde Y_j^{(1)}$ for every $j\in J'$, and observe that $(\widetilde Y_j^{J'})_{j\in J'}$ is a basis of $\hf_0$. 
	In addition, arguing as in~{\bf3} above we see that $ \sum_{k=1}^n k \card(J'_k)=\sum_{k=1}^n k \dim(\hf_0\cap \widetilde \gf_k) =Q_0=D_{0}$ since $G_0$ is a stratified group, so that
	\[
	h^{k_{J'}} \det\left( \widetilde Y^{(s)}_{J'} \right)= h^{D_{0}} (h s)^{k_{J'}-D_{0}} s^{\sum_{j\in J'} \dd_j-k_{J'}} \det\left((s^{k'_j-\dd_j}  \widetilde Y^{(s)}_j)_{j\in J'}\right).
	\]
	Now, observe that $\sum_{j\in J'} \dd_j-k_{J'}\Meg 0 $, so that our assertion will be established if we prove that $\det\left((s^{k'_j-\delta_j}  \widetilde Y^{(s)}_j)_{j\in J'}\right) $ is independent of $s\in [0,1]$, and hence equal to $\det\left((\widetilde Y^{J'}_j)_{j\in J'}\right)$. 
	To prove this fact one may use Gauss elimination to the family $(s^{k'_j-\delta_j}  \widetilde Y^{(s)}_j)$ (more precisely, to the matrix of the coordinates of the vectors $\widetilde Y^{(s)}_j$ with respect to the basis $(\widetilde Y_j)_{j\in J_0}$)  and observe that, by homogeneity arguments, the resulting family is (linearly independent and) independent of $s$. 
\end{proof}

\section{Estimates of the Heat Kernel}\label{sec:2}

We now introduce the operators in which we shall be mainly interested.
{
Fix a homogeneous left-invariant differential operator $\widetilde \Lc$ on $\widetilde G$ such that $\widetilde \Lc+\widetilde \Lc^*$ is a positive Rockland operator of degree $\delta$; then, we define $\Lc_s\coloneqq \dd\pi_s(\widetilde \Lc)$ for every $s\in [0,\infty]$. We shall sometimes write $\Lc$ instead of $\Lc_1$ to simplify the notation.\footnote{Notice that in~\cite{NagelRicciStein} the operator $\widetilde \Lc$ is only required to be Rockland; nonetheless, since we are interested in the corresponding heat kernels,  additional restrictions have to be imposed.}}
Then, the operators $\widetilde \Lc$, $\Lc$, and $\Lc_{s}$ are weighted subcoercive, hence hypoelliptic (cf.~\cite[Theorem 2.3]{Martini2}).

Denote by $(\widetilde h_t)_{t>0}$ the heat kernel of $\widetilde \Lc$, which we shall consider as a semigroup of \emph{measures} on $\widetilde G$. 
In addition, for every $s\in [0,\infty]$ and for every $t>0$, we shall define $h_{s,t}\coloneqq (\pi_s)_*(\widetilde h_t)$, so that $(h_{s,t})_{t>0}$ is the heat kernel of $\Lc_s$. Observe that
\[
h_{r s,t}=(r^{-1}\,\cdot\,)_* h_{s, r^\delta t}
\]
for every $r>0$, for every $s\in [0,\infty]$, and for every $t>0$.

We  fix a Lebesgue measure on $\widetilde \gf$ and identify  $\widetilde h_t$ with its density. With the $h_{s,t}$ we shall be more careful, though.
Indeed, for $s\in (0,\infty)$ the group $G_s$ can be identified with both $\hf_0$ and $\hf_\infty$, and it is not possible to find Lebesgue measures on $\hf_0$ and $\hf_\infty$ which induce the same measure on $G_s$ for all $s\in (0,\infty)$. 
Therefore, we shall fix two Lebesgue measures on $\hf_0$ and $\hf_\infty$ and define two densities $h_{0,s,t}$ and $h_{\infty,s,t}$ of $h_{s,t}$ accordingly.

Precisely, for $s\in [0,\infty)$, we  define $h_{0,s,t}$ as the density of $(P_{0,s})_*(\widetilde h_{t})$  with respect to the fixed Lebesgue measure on $\hf_0$; in this way, $h_{0,s,t}$ becomes the (density of) $h_{s,t}$, under the identification of   $\gf_s$ (hence of $G_s$) with $\hf_0$ given in Definition~\ref{def:2}.
Observe that, with these choices (and with a suitable Lebesgue measure on $\ifr_0$, independent of $s$),
\[
h_{0,s,t}(x)=\int_{\ifr_0} \widetilde h_t(x+y+\psi_{0,s}(y))\,\dd y
\]
for every $s\in [0,\infty)$, for every $t>0$, and for every $x\in \hf_0$.

Analogously, for $s\in (0,\infty]$ we shall define $h_{\infty,s,t}$ as the density of $(P_{\infty,s})_*(\widetilde h_{t})$ with respect to the fixed Lebesgue measure on $\hf_\infty$. Similar remarks apply.

We now prove some uniform estimates on $h_{0,s,t}$ and $h_{\infty,s,t}$ and their derivatives which cannot be derived from the general estimates for weighted subcoercive operators.

\begin{teo}\label{prop:21:2}
	Fix $c>0$ and $d\in\R$, and let $X_0$ and $X_\infty$ be two homogeneous differential operators with continuous coefficients on $\hf_0$ and $\hf_\infty$, respectively, of order $d$. 
	Then, for every $k\in \N$ there are two constants $C,b>0$ (independent of $s$) such that
	\[
	\abs{X_\infty \partial_s^k h_{\infty,\sfrac{1}{s},t}(x)}\meg  \frac{C}{ t^{\frac{Q_\infty+ d+k}{\delta}} } e^{-b \abs{ \pi_{\sfrac{1}{s}}(t^{-\sfrac{1}{\delta}}\cdot x)}_{\sfrac{1}{s},*} ^{\frac{\delta}{\delta-1}}}\meg \frac{C}{ t^{\frac{Q_\infty+ d+k}{\delta}} } e^{-b\left( \frac{\abs{\pi_{\sfrac{1}{s}}(x)}_{\sfrac{1}{s}}}{t^{\sfrac{1}{\delta}}} \right)^{\frac{\delta}{\delta-1}}}, 
	\]
	for every $s\in [0,\infty)$, for every $x\in \hf_\infty$, and for every $t> c s^\delta$, while
	\[
	\abs{X_0 \partial_s^k h_{0,s,t}(x)}\meg \frac{C}{ t^{\frac{Q_0+ d-k}{\delta}} } e^{-b \abs*{\pi_s(t^{-\sfrac{1}{\delta}}\cdot x)}_{s,*}^{\frac{\delta}{\delta-1}}}\meg \frac{C}{ t^{\frac{Q_0+ d-k}{\delta}} } e^{-b \left( \frac{\abs*{ \pi_s(x)}_s}{t^{\sfrac{1}{\delta}}}\right) ^{\frac{\delta}{\delta-1}}}
	\]
	for every $s\in [0,\infty)$, for every $x\in \hf_0$, and for every  $t\in (0,c s^{-\delta}]$.
\end{teo}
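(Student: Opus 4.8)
The plan is to transfer the known heat-kernel bounds for $\widetilde\Lc$ on $\widetilde G$ to the projected kernels by means of the integral representation recorded just before the statement, $h_{0,s,t}(x)=\int_{\ifr_0}\widetilde h_t\big(x+(I+\psi_{0,s})y\big)\,\dd y$ (and its analogue on $\hf_\infty$). The baseline input is that, $\widetilde\Lc$ being weighted subcoercive with $\widetilde\Lc+\widetilde\Lc^*$ a positive Rockland operator of degree $\delta$, combining the homogeneous Gaussian bounds for $\widetilde h_t$ with the Gaussian bounds in the weighted control distance of~\cite{ElstRobinson} yields, for each $\gamma\in\N^J$, constants $C,b>0$ with $\abs{\widetilde{\mathbf{X}}^\gamma\widetilde h_t(z)}\le C\,t^{-\frac{\widetilde Q+\dd_\gamma}{\delta}}\,e^{-b\,(\abs{z}_*\,t^{-\sfrac{1}{\delta}})^{\frac{\delta}{\delta-1}}}$ for $0<t\le1$; at $t=1$ this reads $\abs{\widetilde{\mathbf{X}}^\gamma\widetilde h_1(z)}\le C e^{-b\abs{z}_*^{\frac{\delta}{\delta-1}}}$. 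Then I would use the scaling identity $h_{rs,t}=(r^{-1}\cdot)_*h_{s,r^\delta t}$ — in density form $h_{0,s,t}(x)=t^{-Q_0/\delta}h_{0,\,st^{\sfrac{1}{\delta}},\,1}(t^{-\sfrac{1}{\delta}}\cdot x)$ and $h_{\infty,\sfrac{1}{s},t}(x)=t^{-Q_\infty/\delta}h_{\infty,\,\sfrac{1}{st^{-\sfrac{1}{\delta}}},\,1}(t^{-\sfrac{1}{\delta}}\cdot x)$ — to reduce both claims to \emph{uniform} estimates \emph{at time $1$} over a \emph{bounded} range of the contraction parameter: $t\le cs^{-\delta}$ forces $st^{\sfrac{1}{\delta}}\in[0,c^{\sfrac{1}{\delta}}]$ (parameter near the local contraction), while $t>cs^\delta$ forces $st^{-\sfrac{1}{\delta}}\in[0,c^{-\sfrac{1}{\delta}})$ (parameter near the global contraction). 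The two different powers of $t$ in the prefactors come out automatically, since $\partial_s$ becomes $t^{+\sfrac{1}{\delta}}\partial_{(st^{\sfrac{1}{\delta}})}$ in the first case but $t^{-\sfrac{1}{\delta}}\partial_{(st^{-\sfrac{1}{\delta}})}$ in the second, on top of the $t^{-Q/\delta}$ from the density rescaling and the $t^{-d/\delta}$ from the $d$-homogeneity of $X_0$, $X_\infty$.

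For the reduced estimate I would differentiate $h_{0,w,1}(z)=\int_{\ifr_0}\widetilde h_1\big(z+(I+\psi_{0,w})y\big)\,\dd y$ under the integral sign. Since $\psi_{0,w}$, resp.\ $\psi_{\infty,\sfrac{1}{w}}$, depends \emph{polynomially} on $w$ (the remark after Proposition~\ref{prop-psi}), the chain rule together with the application of $X_0$ produces a finite sum of terms $q(w,y)\,(\partial^\beta\widetilde h_1)\big(z+(I+\psi_{0,w})y\big)$ with $q$ polynomial and with all coefficients \emph{bounded} uniformly over the bounded parameter range, each $\partial^\beta\widetilde h_1$ obeying the baseline bound at $t=1$. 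The geometric point is that $I+\psi_{0,w}$ maps $\ifr_0$ bijectively onto $\ifr_w$ (dilating the bijectivity in Proposition~\ref{prop-psi}, since $\psi_{0,w}=w^{-1}\cdot\psi_{0,1}(w\,\cdot\,)$ and $\ifr_w=w^{-1}\cdot\ifr$), so $z+(I+\psi_{0,w})y$ lies in the coset $z+\ifr_w$ and hence, by the very definition of $\abs{\,\cdot\,}_{w,*}$, $\abs{z+(I+\psi_{0,w})y}_*\ge\abs{\pi_w(z)}_{w,*}$. Splitting $e^{-b\abs{u}_*^{\delta/(\delta-1)}}\le e^{-\frac b2\abs{\pi_w(z)}_{w,*}^{\delta/(\delta-1)}}e^{-\frac b2\abs{u}_*^{\delta/(\delta-1)}}$ extracts the required Gaussian factor, and the residual integral over $\ifr_0$ converges uniformly in $w$ because $\abs{z+(I+\psi_{0,w})y}_*\to\infty$ as $\abs{y}\to\infty$ at a rate uniform in $w$ (the inverse of $I+\psi_{0,w}$ being uniformly bounded on the parameter range): the polynomial factors in $y$ are absorbed by this decay, and the polynomial factors in $\abs{z}$ by $e^{-\frac b2\abs{\pi_w(z)}_{w,*}^{\delta/(\delta-1)}}$ using that $\abs{\pi_w(z)}_{w,*}$ is bounded below by a fixed multiple of $\abs{z}$ for $z\in\hf_0$ — uniformly, because $P_{0,w}$ is uniformly bounded on the parameter range (Lemma~\ref{lem:21:2}) and $\hf_0\cap\ifr_0=\Set{0}$. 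The estimate for $h_{\infty,\sfrac{1}{s},t}$ is entirely parallel with $\hf_\infty$, $\ifr_\infty$, $\psi_{\infty,\sfrac{1}{w}}$, $Q_\infty$, $P_{\infty,w}$ in place of the local objects; the second (weaker) inequality in either display follows from Proposition~\ref{prop:7} once the first is known.

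The main obstacle is making everything uniform in $s$ while keeping the \emph{sharp} modulus $\abs{\,\cdot\,}_{s,*}$ rather than a plain homogeneous norm — which is precisely what puts these estimates beyond the general weighted-subcoercive theory. This forces the use of the control-distance Gaussian bounds for $\widetilde h_t$, and — because $\abs{\,\cdot\,}_{s,*}$, unlike $\abs{\,\cdot\,}_s$, is \emph{not} dilation-covariant (Proposition~\ref{prop:7}(4) holds only for $\abs{\,\cdot\,}_s$) — it requires genuine care in matching the modulus that comes out of the time-rescaled estimate with the one appearing in the statement, via the continuity and scaling properties in Proposition~\ref{prop:7}(4)--(5). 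The remaining work is bookkeeping: checking that the polynomial coefficients produced by $\partial_s^k$ and by the chain rule, the distortion of $I+\psi_{0,s}$, the operator norm of $P_{0,s}$, and the size of $\psi_{0,s}$ itself are all controlled uniformly on the (rescaled, bounded) range of the parameter.
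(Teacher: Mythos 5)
Your proposal is correct and follows essentially the same route as the paper: the same integral representation of $h_{0,s,t}$ and $h_{\infty,\sfrac{1}{s},t}$ over $\ifr_0$, resp.\ $\ifr_\infty$, the same rescaling to time $1$ with the contraction parameter confined to a bounded range, differentiation under the integral with the polynomial dependence of $\psi_{0,s}$, $\psi_{\infty,\sfrac{1}{s}}$ on the parameter (the paper organizes this via Faà di Bruno), the same splitting of the Gaussian into a factor controlled by $\abs{\pi_w(z)}_{w,*}$ (using that the argument lies in the coset $z+\ifr_w$) and a residual factor making the fiber integral converge uniformly via the strict sub-/super-homogeneity of $\psi$. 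No substantive differences.
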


\begin{proof}
	{\bf1.} Consider the first assertion; notice that we may reduce to the case in which $X_\infty=f \partial_\infty^\alpha$, where $f$ is a continuous homogeneous function on $\hf_\infty$ of degree $\dd_\alpha-d$; notice that $\dd_\alpha-d>0$ since $f$ is continuous.
	Now, observe that, with a change of variables,
	\[
	h_{\infty,\sfrac{1}{s},t}(x)=  \int_{\ifr_\infty}\widetilde h_t(x+y+\psi_{\infty,\sfrac{1}{s}}(y)) \,\dd y=t^{-\frac{ Q_\infty}{\delta}} \int_{\ifr_\infty} \widetilde h_1(t^{-\sfrac{1}{\delta}}\cdot x+y+\psi_{\infty,(t^{-\sfrac{1}{\delta}}s)^{-1}}(y))\,\dd y
	\]
	for every $x\in \hf_\infty$, for every $s\in [0,\infty)$, and for every $t>0$. Therefore, Faà di Bruno's formula shows that
	\[
	\begin{split}
	X_\infty\partial_s^{k}h_{\infty,\sfrac{1}{s},t}(x)&=\frac{f(x)}{t^{\frac{Q_\infty+\dd_\alpha+k}{\delta}}}\sum_{\sum_{\ell=1}^k \ell \abs{\gamma_\ell}=k}\frac{k!}{\gamma!} \int_{\ifr_\infty} \partial_\infty^\alpha\partial^{\gamma_1+\cdots+ \gamma_k}\widetilde h_1(t^{-\sfrac{1}{\delta}}\cdot x+y+\psi_{\infty,(t^{-\sfrac{1}{\delta}}s)^{-1}}(y)) \times\\
		&\qquad \times \prod_{\ell=1}^{k} \left( \frac{1}{\ell!} \partial_{s'}^\ell\big\vert_{s'=t^{-\sfrac{1}{\delta}}s} \psi_{\infty,\sfrac{1}{s'}}(y)  \right)^{\gamma_\ell}\,\dd y
	\end{split}
	\]
	for every $s\in [0,\infty)$, for every $t>0$, for every $k\in \N$, and for every $x\in \hf_\infty$.
	In addition, observe that $\partial_{s'}^\ell \left(\pr_j\circ\psi_{\infty,\sfrac{1}{s'}}\right)$ is a (linear) polynomial of degree at most $j-\ell$ for every $j=2,\ldots,n$ and for every $\ell=1,\ldots, j-1$, and is $0$ otherwise.
	Therefore, there are $C_1,\widetilde b>0$ such that
	\begin{multline*}
		\abs*{f(x)\sum_{\sum_{\ell=1}^k \ell \abs{\gamma_\ell}=k}\frac{k!}{\gamma!}  \partial_\infty^\alpha\partial^{\gamma_1+\cdots+ \gamma_k}\widetilde h_1(t^{-\sfrac{1}{\delta}}\cdot x+y+\psi_{\infty,(t^{-\sfrac{1}{\delta}}s)^{-1}}(y)) \cdot \prod_{\ell=1}^{k} \left( \frac{1}{\ell!} \partial_{s'}^\ell\big\vert_{s'=t^{-\sfrac{1}{\delta}}s} \psi_{\infty,\sfrac{1}{s'}}(y)  \right)^{\gamma_\ell}}\\
		\meg C_1\abs{ x}^{\dd_\alpha-d}\sum_{\sum_{\ell=1}^k \ell \abs{\gamma_\ell}=k}  e^{-\widetilde b\abs{t^{-\sfrac{1}{\delta}}\cdot x+y+\psi_{\infty,(t^{-\sfrac{1}{\delta}}s)^{-1} }(y))}_*^{\frac{\delta}{\delta-1}}} (1+\abs{y})^{ \dd_{\gamma}-k}
	\end{multline*}
	for every $(x,y)\in\hf_\infty\oplus \ifr_\infty$, for every $s\in [0,\infty)$ and for every $t> c s^\delta$  (cf.~\cite[Theorem 2.3 (e)]{Martini2}).
	Now,
	\[
	\abs{t^{-\sfrac{1}{\delta}}\cdot (x+y+\psi_{\infty,\sfrac{1}{s}}(y))}_*\Meg \frac{1}{2}\abs{\pi_{\sfrac{1}{s}}(t^{-\sfrac{1}{\delta}}\cdot x)}_{\sfrac{1}{s},*}+\frac{1}{2}\abs{t^{-\sfrac{1}{\delta}}\cdot (x+y+\psi_{\infty,\sfrac{1}{s}}(y))}
	\]
	for every $(x,y)\in \hf_\infty\oplus \ifr_s$, for every $t>0$, and for every $s\in (0,\infty]$, with some abuses of notation.	
	Therefore,
	\[
	\begin{split}
	\abs{X_\infty\partial_s^k h_{\infty,\sfrac{1}{s},t}(x)}&	\meg   \frac{C_1}{ t^{\frac{Q_\infty+d+k}{\delta}}}e^{-2^{\frac{\delta}{1-\delta}}\widetilde b \abs{\pi_{\sfrac{1}{s}}(t^{-\sfrac{1}{\delta}}\cdot x)}_{\sfrac{1}{s},*}^{\frac{\delta}{\delta-1}} } \abs{t^{-\sfrac{1}{\delta}}\cdot x}^{\dd_\alpha-d} \times\\	
	&\qquad \times\sum_{\sum_{\ell=1}^k \ell \abs{\gamma_\ell}=k}  \int_{\ifr_\infty}e^{-2^{\frac{\delta}{1-\delta}}\widetilde b\abs*{t^{-\sfrac{1}{\delta}}\cdot x+y+\psi_{\infty,(t^{-\sfrac{1}{\delta}}s)^{-1}}( y)}^{\frac{\delta}{\delta-1}}} (1+\abs{y})^{\dd_{\gamma}-k}\,\dd y
	\end{split}
	\]
	
	Now, fix a norm $\norm{\,\cdot\,}$ on $\widetilde \gf$, and recall that $\psi_{\infty,\sfrac{1}{s'}}$ is strictly super-homogeneous, so that there are two constants $C_2,C_2'>0$ such that
	\[
	\begin{split}
	\abs*{\psi_{\infty,\sfrac{1}{s'}}(y)}&\meg C_2 \sum_{j=2}^n \norm*{\left( \pr_j\circ\psi_{\infty,\sfrac{1}{s'}}\right) (y_1,\ldots, y_{j-1})}^{\frac{1}{j}}\\
	&\meg C_2 \sum_{j=2}^n\norm*{\pr_j\circ\psi_{\infty,\sfrac{1}{s'}}} \left(\norm*{y_1}+\ldots+\norm*{y_{j-1}} \right)^{\frac{1}{j}}\\
	&\meg C_2' \max\left(\abs*{y}^{\frac{n-1}{n}},\abs*{y}^{\frac{1}{n}}\right),    
	\end{split}
	\]
	for every $y\in \ifr_\infty$ with homogeneous components $y_1,\dots,y_n$, and for every $s'\in [0, c^{\sfrac{1}{\delta}} ]$. In addition, observe that all homogeneous norms on $\widetilde G$ are equivalent and that both $\hf_\infty$ and $\ifr_\infty$ are homogeneous subspaces of $\widetilde \gf$, so that there is a constant $C_3>0$ such that $\abs{z_1+ z_2}\meg C_3(\abs{z_1}+\abs{z_2})$ for every $z_1,z_2\in \widetilde \gf$, and such that $\abs{x+y}\Meg \frac{1}{C_3}(\abs{x}+\abs{y})$ for every $(x,y)\in \hf_\infty \oplus \ifr_\infty$.
	In addition,  since $\frac{\delta}{\delta-1}\Meg1$, there is a constant $C_4\Meg 1$ such that
	\[
	a_1^{\frac{\delta}{\delta-1}}+a_2^{\frac{\delta}{\delta-1}} \meg (a_1+a_2)^{\frac{\delta}{\delta-1}}\meg C_4\left( a_1^{\frac{\delta}{\delta-1}}+a_2^{\frac{\delta}{\delta-1}}\right) 
	\]  
	Then, for every $x\in \hf_\infty$, for every $y\in \ifr_\infty$ and for every $t>c s^\delta$,
	\[
	\begin{split}
	\abs*{t^{-\sfrac{1}{\delta}}\cdot x+y+\psi_{\infty,(t^{-\sfrac{1}{\delta}}s)^{-1}}(y)}^{\frac{\delta}{\delta-1}}&\Meg\frac{1}{C_3^{\frac{\delta}{\delta-1}} C_4}\abs*{t^{-\sfrac{1}{\delta}}\cdot x+y}^{\frac{\delta}{\delta-1}}- \abs*{\psi_{\infty,(t^{-\sfrac{1}{\delta}}s)^{-1}}(y)}^{\frac{\delta}{\delta-1}}\\
	&\Meg \frac{1}{C_3^{\frac{2\delta}{\delta-1}}C_4}\left(  \abs{t^{-\sfrac{1}{\delta}}\cdot x}^{\frac{\delta}{\delta-1}}+ \abs{y} ^{\frac{\delta}{\delta-1}} \right) - C_2' \max\left(\abs*{y}^{\frac{n-1}{n}},\abs*{y}^{\frac{1}{n}}\right) .
	\end{split}
	\]
	Therefore, there is a  constants $C_5>0$ such that 
	\[
	2^{\frac{\delta}{1-\delta}}\widetilde b \abs*{t^{-\sfrac{1}{\delta}}\cdot x+y+\psi_{\infty,(t^{-\sfrac{1}{\delta}}s)^{-1}}(y)}^{\frac{\delta}{\delta-1}}\Meg \frac{1}{C_5}\left(\abs{t^{-\sfrac{1}{\delta}}\cdot x}^{\frac{\delta}{\delta-1}}+\abs{y} ^{\frac{\delta}{\delta-1}}\right) -C_5
	\]
	for every $(x,y)\in \hf_\infty\oplus \ifr_\infty$, for every $s\in [0,\infty)$, and for every $t>c s^\delta$.
	
	Hence, there is a constant $C_6>0$ such that
	\[
	\abs{t^{-\sfrac{1}{\delta}}\cdot x}^{\dd_\alpha-d} \sum_{\sum_{\ell=1}^k \ell \abs{\gamma_\ell}=k}  \int_{\ifr_\infty}e^{-2^{\frac{\delta}{1-\delta}}\widetilde b\abs*{t^{-\sfrac{1}{\delta}}\cdot x+y+\psi_{\infty,(t^{-\sfrac{1}{\delta}}s)^{-1}}(y)}^{\frac{\delta}{\delta-1}}} (1+\abs{y})^{\dd_{\gamma}-k}\,\dd y\meg C_6
	\]
	for every $x\in \hf_\infty$, for every $s\in [0,\infty)$, and for every $t>c s^\delta$, so that
	\[
	\begin{split}
	\abs{X_\infty\partial_s^k h_{\infty,\sfrac{1}{s},t}(x)}&\meg \frac{C_1 C_6}{t^{\frac{Q_\infty+d+k}{\delta}}}e^{-2^{\frac{\delta}{1-\delta}}\widetilde b \abs{\pi_{\sfrac{1}{s}}(t^{-\sfrac{1}{\delta}}\cdot x)}_{\sfrac{1}{s},*}^{\frac{\delta}{\delta-1}}  } .
	\end{split}
	\]
	
	{\bf2.} Consider, now, the second assertion. Observe that we may assume that $X_0=f \partial^\alpha_{\hf_0}$ for some $\alpha$ and some continuous homogeneous function $f$ on $\hf_0$ with degree $\dd_\alpha-d$. Notice that $\dd_\alpha-d>0$ since $f$ is continuous.
	Then, Faà di Bruno's formula shows that
	\[
	X_0\partial_s^{k}h_{0,s,t}(x)=\frac{f(x)}{t^{\frac{Q_0+\dd_\alpha-k}{\delta}}}\sum_{\sum_{\ell=1}^k \ell \abs{\gamma_\ell}=k}\frac{k!}{\gamma!} \int_{\ifr_0} \partial_0^\alpha\partial^{\gamma}\widetilde h_1(t^{-\sfrac{1}{\delta}}\cdot x+y+\psi_{0,t^{\sfrac{1}{\delta}}s}(y)) \cdot \prod_{\ell=1}^{k} \left( \frac{1}{\ell!} \partial_{s'}^\ell\big\vert_{s'=t^{\sfrac{1}{\delta}}s} \psi_{0,s'}(y)  \right)^{\gamma_\ell}\,\dd y
	\]
	for every $s\in [0,1]$, for every $t>0$, for every $k\in \N$, and for every $x\in \hf_0$.
	In addition, observe that $\partial_{s'}^\ell \left(\pr_j\circ\psi_{0,s'}\right)$ is a (linear) polynomial of degree at most $n$ and of homogeneous order at least $j+\ell$ for every $j=1,\ldots,n-1$ and for every $\ell=1,\ldots, n- j$, and is $0$ otherwise.
	Therefore, there are $C_1,\widetilde b>0$ such that 
	\begin{multline*}
		\abs*{f(x)\sum_{\sum_{\ell=1}^k \ell \abs{\gamma_\ell}=k}\frac{k!}{\gamma!}  \partial_0^\alpha\partial^{\gamma}\widetilde h_1(t^{-\sfrac{1}{\delta}}\cdot x+y+\psi_{0,t^{\sfrac{1}{\delta}}s}(y)) \cdot \prod_{\ell=1}^{k} \left( \frac{1}{\ell!} \partial_{s'}^\ell\big\vert_{s'=t^{\sfrac{1}{\delta}}s} \psi_{0,s'}(y)  \right)^{\gamma_\ell}}\\
		\meg \abs{ x}^{\dd_\alpha-d} \sum_{\sum_{\ell=1}^k \ell \abs{\gamma_\ell}=k} C_1 e^{-\widetilde b\abs{t^{-\sfrac{1}{\delta}}\cdot x+y+\psi_{0, t^{\sfrac{1}{\delta}}s}(y)}_{*}^{\frac{\delta}{\delta-1}}} \abs{y}^{\dd_{\gamma}+k}(1+\abs{y})^{n\abs{\gamma}-\dd_\gamma-k}
	\end{multline*}
	for every $(x,y)\in\hf_0\oplus \ifr_0$, for every $s\in [0,\infty)$ and for every $t >0$ (cf.~\cite[Theorem 2.3]{Martini2}). 
	Therefore, arguing as in~{\bf1} we see that
	\[
	\begin{split}
	\abs{X_0\partial_s^k h_{0,s,t}(x)}&	\meg \frac{e^{ -2^{\frac{\delta}{1-\delta}}\widetilde b \abs{\pi_s(t^{-\sfrac{1}{\delta}} \cdot x)}_{s,*}^{\frac{\delta}{\delta-1}}}   }{t^{\frac{Q_0+d-k}{\delta}}} C_1 \abs{t^{-\sfrac{1}{\delta}}\cdot x  }^{\dd_\alpha-d} \sum_{\sum_{\ell=1}^k \ell\abs{\gamma_\ell}=k}  \int_{\ifr_0} e^{-2^{\frac{\delta}{1-\delta}}\widetilde b \abs{t^{-\sfrac{1}{\delta}}\cdot x+y+\psi_{0,t^{\sfrac{1}{\delta}}s}( y)}^{ \frac{\delta}{\delta-1}}} \times\\
	&\qquad \times \abs{y}^{\dd_{\gamma}+k}(1+\abs{y})^{n\abs{\gamma}-\dd_\gamma-k}\,\dd y
	\end{split}
	\]
	for every $x\in \hf_0$, for every $s\in [0,\infty)$, and for every $t>0$. 
	
	Now, observe that there is a constant $C_2\Meg 1$ such that
	\[
	\frac{1}{C_2}\min(\norm{z},\norm{z}^{\sfrac{1}{n}})\meg \abs{z}\meg C_2\max(\norm{z},\norm{z}^{\sfrac{1}{n}})
	\]
	for every $z\in \widetilde \gf$. 
	In addition, observe that the linear mapping $L_{s'}\colon x+y\mapsto x+y+\psi_{0,s'}(y)$ is an automorphism of $\widetilde \gf$ for every $s'\in [0,\infty)$, and that the mapping $[0,\infty)\ni s' \mapsto L_{s'}\in \Lc(\widetilde \gf)$ is continuous. 
	Therefore, there is a constant $C_3>0$ such that $\norm{L_{s'}^{-1}}\meg C_3$ for every $s'\in [0,c^{\sfrac{1}{\delta}}]$. In particular,  assuming that  $\norm{x+y}=\norm{x}+\norm{y}$ for every $(x,y)\in \hf_0\oplus \ifr_0$ for simplicity,
	\[
	\begin{split}
	\frac{2}{C_2 C_3^{\sfrac{1}{n}}}+\abs{t^{-\sfrac{1}{\delta}}\cdot x+y+\psi_{0,t^{\sfrac{1}{\delta}}s}(y)}&\Meg \frac{1}{C_2} \left(C_3^{-\sfrac{1}{n}}  +\norm{t^{-\sfrac{1}{\delta}}\cdot x+y+\psi_{0,t^{\sfrac{1}{\delta}}s}( y)}^{\sfrac{1}{n}}\right)\\
	&\Meg  \frac{1}{2 C_2 C_3^{\sfrac{1}{n}}}\left( 2+ \norm{t^{-\sfrac{1}{\delta}}\cdot x}^{\sfrac{1}{n}}+\norm{y}^{\sfrac{1}{n}}\right) \\
	&\Meg \frac{1}{2 C_2^{\frac{n+1}{n}  } C_3^{\sfrac{1}{n}}} \left( \abs{t^{-\sfrac{1}{\delta}}\cdot x}^{\sfrac{1}{n}}+\abs{y}^{\sfrac{1}{n}} \right)
	\end{split}
	\]	
	for every $(x,y)\in \hf_0\oplus \ifr_0$, for every $s\in [0,\infty)$, and for every $t\in (0,c s^{-\delta}]$.
	Hence, there is a constant $C_4>0$ such that 
	\[
	\abs{t^{-\sfrac{1}{\delta}}\cdot x  }^{\dd_\alpha-d} \sum_{\sum_{\ell=1}^k \ell\abs{\gamma_\ell}=k}  \int_{\ifr_0} e^{-2^{\frac{\delta}{1-\delta}}\widetilde b \abs{t^{-\sfrac{1}{\delta}}\cdot x+y+t\psi_{0,t^{\sfrac{1}{\delta}}s}( y)}^{ \frac{\delta}{\delta-1}}}  \abs{y}^{\dd_{\gamma}+k}(1+\abs{y})^{n\abs{\gamma}-\dd_\gamma-k}\,\dd y\meg C_4
	\]
	for every  $(x,y)\in \hf_0\oplus \ifr_0$, for every $s\in [0,1]$, and for every $t\in (0,c s^{-\delta}]$, so that
	\[
	\abs{X_0\partial_s^k h_{0,s,t}(x)}
	\meg C_1 C_4\frac{e^{ -2^{\frac{\delta}{1-\delta}}\widetilde b \abs{\pi_s(t^{-\sfrac{1}{\delta}} \cdot x)}_{s,*}^{\frac{\delta}{\delta-1}}}   }{t^{\frac{Q_0+d-k}{\delta}}} 
	\]
	$(x,y)\in \hf_0\oplus \ifr_0$, for every $s\in [0,\infty)$, and for every $t\in (0,c s^{-\delta}]$.
	The proof is complete.
\end{proof}

\begin{prop}\label{prop:5}
	For every $c>0$ and for every $\gamma$ there are $C>0$ and $\omega>0$ such that for every $s\in [0,\infty]$ and for every $t>0$,
	\[
	\norm*{ \vect{X}^\gamma_s h_{s,t} e^{c\abs{\,\cdot\,}_{s,*}} }_1\meg \frac{C}{t^{\frac{\dd_\gamma}{\delta}}} e^{\omega t}.
	\]
\end{prop}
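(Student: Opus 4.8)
The plan is to obtain this from the uniform pointwise estimates of Theorem~\ref{prop:21:2}, integrated against $e^{c\abs{\,\cdot\,}_{s,*}}$ with the help of the uniform volume-growth estimates of Proposition~\ref{prop:6}. For $s=0$ and $s=\infty$ there is nothing essentially new, so fix $s\in(0,\infty)$ and $t>0$: when $t\meg cs^{-\delta}$ (with $c$ as in Theorem~\ref{prop:21:2}) I would realise $G_s$ on $\hf_0$ and use the second estimate of that theorem, and when $t\Meg cs^{-\delta}$ realise it on $\hf_\infty$ and use the first; every pair $(s,t)$ is then covered, the two bounds agreeing up to constants on the common range $t\asymp s^{-\delta}$, and since the total variation of $\vect X^\gamma_s h_{s,t}\cdot e^{c\abs{\,\cdot\,}_{s,*}}$ does not depend on the realisation it suffices to work in the appropriate one. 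Since Theorem~\ref{prop:21:2} concerns a \emph{fixed} homogeneous operator while $\vect X^\gamma_s$ is not homogeneous in these realisations, I would expand it by Proposition~\ref{lem:21:1}, e.g.\
\[
\vect X^\gamma_s=\vect X^\gamma_0+\sum_{\gamma'}s^{\dd_\gamma-\dd_{\gamma'}}p_{0,\gamma,\gamma'}(s\,\cdot\,)\,\vect X^{\gamma'}_0,
\]
and apply Theorem~\ref{prop:21:2} (with $k=0$) to each left-invariant homogeneous operator $\vect X^{\gamma'}_0$ (of order $\dd_{\gamma'}$, with polynomial coefficients of non-negative degree, hence admissible). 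Bounding $p_{0,\gamma,\gamma'}$ by a finite sum of homogeneous pieces $\lesssim N(x)^e$ ($N$ a homogeneous norm, $\dd_\gamma-\dd_{\gamma'}+e>0$) and using $s^{\dd_\gamma-\dd_{\gamma'}+e}\meg(c/t)^{(\dd_\gamma-\dd_{\gamma'}+e)/\delta}$ on the range $t\meg cs^{-\delta}$ (and the symmetric estimate on the other range, where the homogeneous pieces of $p_{\infty,\gamma,\gamma'}$ have degree $<\dd_{\gamma'}-\dd_\gamma$), I obtain
\[
\abs{\vect X^\gamma_s h_{s,t}(x)}\meg C\sum_{e\Meg0}\frac{N(x)^e}{t^{(Q_\bullet+\dd_\gamma+e)/\delta}}\,e^{-b\abs{\pi_s(t^{-1/\delta}\cdot x)}_{s,*}^{\delta/(\delta-1)}},\qquad Q_\bullet\in\{Q_0,Q_\infty\},
\]
a finite sum. (Alternatively one may redo the proof of Theorem~\ref{prop:21:2} directly with $\widetilde h_1$ replaced by $\widetilde{\vect X}^\gamma\widetilde h_1$, whose Gaussian estimate is standard, which gives the same bound with $e=0$ only.)

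\textbf{Rescaling.} I would then substitute $x=t^{1/\delta}\cdot z$ (the dilation on $\hf_0$, resp.\ $\hf_\infty$), which multiplies Lebesgue measure by $t^{Q_\bullet/\delta}$, while $N(t^{1/\delta}\cdot z)=t^{1/\delta}N(z)$ and $t^{-1/\delta}\cdot x=z$; the factor $t^{(Q_\bullet+e)/\delta}$ so produced cancels the ``extra'' negative powers, leaving
\[
\norm{\vect X^\gamma_s h_{s,t}e^{c\abs{\,\cdot\,}_{s,*}}}_1\meg\frac{C}{t^{\dd_\gamma/\delta}}\sum_{e\Meg0}\int N(z)^e\,e^{-b\abs{\pi_s(z)}_{s,*}^{\delta/(\delta-1)}}\,e^{c\abs{t^{1/\delta}\cdot z}_{s,*}}\,\dd z,
\]
so that it remains to bound each integral on the right by $Ce^{\omega t}$ uniformly in $s$ (in the relevant range; the factor relating Lebesgue measure on $\hf_0$, resp.\ $\hf_\infty$, to $\nu_{G_s}$ is controlled uniformly by the analysis in the proof of Proposition~\ref{prop:6}).

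\textbf{The weighted Gaussian integral.} On the range at hand $N(z)$ is controlled by $\abs{\pi_s(z)}_{s,*}$ (since $N$ is comparable, uniformly in $s$, to the homogeneous norm underlying $\abs{\,\cdot\,}_s\meg\abs{\,\cdot\,}_{s,*}$, using Proposition~\ref{prop:7} and that the relevant projections stay in a compact family), and the dilation $t^{1/\delta}\cdot$ distorts $\abs{\,\cdot\,}_{s,*}$ by at most a power of $t$; writing $\rho=\abs{\pi_s(z)}_{s,*}$ and invoking the uniform volume growth $\nu_{G_s}(B_{s,*}(r))\meg Cr^{D_1}$ of Proposition~\ref{prop:6}, each integral is dominated by $\int_0^\infty\rho^{D_1-1+e}\,e^{-b\rho^{\delta/(\delta-1)}+c\,\phi(t)\rho}\,\dd\rho$ for a suitable $\phi(t)$, and since $\delta/(\delta-1)>1$ the exponent has maximum of order $\phi(t)^\delta$, which is a power of $t$. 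A clean way to organise this is to split $\hf_0$ (resp.\ $\hf_\infty$) into the region where $\abs{\pi_s(t^{-1/\delta}\cdot x)}_{s,*}\meg R_t$ and its complement, with $R_t$ of order $t$: on the first region the weight is bounded by $e^{\omega t}$ and the volume by Proposition~\ref{prop:6}, while on the second the super-exponential decay of order $\delta/(\delta-1)>1$ dominates both $N(x)^e$ and the at-most-polynomial growth of the weight.

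\textbf{Main obstacle.} The crux is precisely to carry out the previous step \emph{with the correct growth}: the bound must be exponential in $t$ and not in a larger power of $t$, and it must be uniform in $s\in[0,\infty]$. This forces one to use the \emph{sharp} form of Theorem~\ref{prop:21:2} (with the dilated argument $\pi_s(t^{-1/\delta}\cdot x)$ inside the exponential, not merely $\abs{\pi_s(x)}_s/t^{1/\delta}$), and to analyse carefully, and uniformly in $s$, how the non-homogeneous modulus $\abs{\,\cdot\,}_{s,*}$ interacts with the dilation $t^{1/\delta}\cdot$ on each of the two extreme realisations — this is where the $s$-uniform volume estimates of Proposition~\ref{prop:6} and the continuity and scaling of the moduli in Proposition~\ref{prop:7} are genuinely needed. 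Finally, for $s\in[1,\infty]$ one argues symmetrically, with $\hf_\infty$ and $G_\infty$ playing the roles of $\hf_0$ and $G_0$, and the matching of the two descriptions at $t\asymp s^{-\delta}$ is a routine check.
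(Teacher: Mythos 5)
Your reduction to a weighted integral of the pointwise bounds of Theorem~\ref{prop:21:2} founders precisely at the step you single out as the crux: the factor $e^{-b\abs{\pi_s(t^{-\sfrac{1}{\delta}}\cdot x)}_{s,*}^{\delta/(\delta-1)}}$ is too weak, for large $t$, to produce growth $e^{\omega t}$ against the weight $e^{c\abs{x}_{s,*}}$. The point is that $\abs{\,\cdot\,}_*$ is not homogeneous, so $\abs{t^{-\sfrac{1}{\delta}}\cdot x}_*$ is not comparable to $\abs{x}_*/t^{\sfrac{1}{\delta}}$: in a direction of degree $d$ one only has $\abs{t^{-\sfrac{1}{\delta}}\cdot x}_{*}\Meg t^{-\sfrac{d}{\delta}}\abs{x}_{*}$ for $t\Meg 1$ and $\abs{x}_*$ large, and this is sharp. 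Already for $\widetilde G=\R^2$ with degrees $(1,2)$ and $\widetilde\Lc=\partial_1^4-\partial_2^2$ (so $\delta=4$), the bound of Theorem~\ref{prop:21:2} in the $x_2$-marginal is $t^{-\sfrac{1}{2}}e^{-b(t^{-\sfrac{1}{2}}\abs{x_2})^{4/3}}$, and $\int e^{-bt^{-2/3}\abs{x_2}^{4/3}+c\abs{x_2}}\,\dd x_2\asymp e^{c't^2}$ for \emph{every} $c>0$ (the exponent is maximal at $\abs{x_2}\sim t^2$), whereas the true kernel $e^{-x_2^2/4t}$ yields $e^{c^2t}$. So the supremum of your exponent is indeed ``a power of $t$'', but not the first power; likewise your claim that on the region $\abs{\pi_s(t^{-\sfrac{1}{\delta}}\cdot x)}_{s,*}\meg R_t\sim t$ the weight is bounded by $e^{\omega t}$ fails, since there $\abs{x}_{s,*}$ can be as large as $t^{\sfrac{n}{\delta}}R_t$. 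What the argument needs is the genuinely sharp large-time bound $e^{\omega t}e^{-b(\abs{x}_{s,*}^{\delta}/t)^{1/(\delta-1)}}$, which Theorem~\ref{prop:21:2} --- a rescaled $t=1$ estimate carrying no $e^{\omega t}$ factor --- does not provide.

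The paper's proof avoids all of this. It invokes the weighted $L^1$ estimate $\norm{\widetilde{\vect X}^\gamma\widetilde h_t\,e^{c\abs{\,\cdot\,}_*}}_1\meg Ct^{-\dd_\gamma/\delta}e^{\omega t}$ on $\widetilde G$ from \cite[Theorem 2.3 (f)]{Martini2} and pushes it forward: $\vect X^\gamma_s h_{s,t}=(\pi_s)_*(\widetilde{\vect X}^\gamma\widetilde h_t)$, the modulus satisfies $\abs{\pi_s(z)}_{s,*}\meg\abs{z}_*$ by definition, and $(\pi_s)_*$ does not increase total variation; uniformity in $s\in[0,\infty]$ is then automatic. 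If you insist on a pointwise route you must first establish the ter Elst--Robinson-type bound above on $\widetilde G$ --- but at that point the weighted $L^1$ statement upstairs is already available and the pushforward is the shortest path.
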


\begin{proof}
	Observe that~\cite[Theorem 2.3 (f)]{Martini2} implies that there are $C$ and $\omega$ such that
	\[
	\norm*{ \widetilde{\vect{X}}^\gamma \widetilde h_{t} e^{c\abs{\,\cdot\,}_{*}} }_1\meg \frac{C}{t^{\frac{\dd_\gamma}{\delta}}} e^{\omega t}
	\]
	for every $t>0$. Therefore,
	\[
	\begin{split}
	\norm*{ \vect{X}^\gamma_s h_{s,t} \,e^{c\abs{\,\cdot\,}_{s,*}} }_1&=\norm*{(\pi_s)_*\left( \widetilde{ \vect{X}}^\gamma \widetilde h_{t}\, e^{c\left( \abs{\,\cdot\,}_{s,*} \circ \pi_s\right)  } \right)}_1\meg \norm*{\widetilde{ \vect{X}}^\gamma \widetilde h_{t}\, e^{c\abs{\,\cdot\,}_{*}}  }_1\meg \frac{C}{t^{\frac{\dd_\gamma}{\delta}}} e^{\omega t}
	\end{split}
	\]
	for every $t>0$ and for every $s\in [0,\infty]$.
\end{proof}

\section{Riesz Potentials}\label{sec:3}

We keep the notation of the preceding section.
Here we generalize the asymptotic study of the fundamental solutions made in~\cite{NagelRicciStein} to the complex powers of $\Lc_s$. 
Notice first that, while the convolution kernels of $\Lc_s^{-\frac{\alpha}{\delta}}$ (the Riesz potentials) are easily defined when $\Re \alpha<Q_\infty$, in order to define them also for $\Re \alpha \Meg Q_\infty$ we shall need to argue by analytic continuation.

\subsection{Definition and (Log-)Homogeneity of Riesz Potentials}

In the following statement, functions on $\hf_0$ (resp.\ $\hf_\infty$) are identified with distributions by means of the \emph{fixed} Lebesgue measure on $\hf_0$ (resp.\ $\hf_\infty$).

\begin{prop}\label{prop:21:8}
	For every $s\in (0,\infty]$ there is a unique meromorphic $\Sc'(G_s)$-valued mapping $\alpha \mapsto I_{s,\alpha}$ on $\C$, with poles of order at most $1$ at the elements of $Q_\infty+\N$, such that the following hold:
	\begin{enumerate}
		\item if $\alpha\in \C$, $-\delta k_1<\Re \alpha< Q_\infty+k_2$ for some $k_1,k_2\in \N$, and $\alpha \not \in  Q_\infty+\N$,  then
		\[
		\begin{split}
		I_{\infty,s,\alpha}&= \frac{1}{\Gamma(\frac{\alpha}{\delta})} \int_0^1 t^{\frac{\alpha}{\delta}} \left( h_{\infty,s,t} - \sum_{j<k_1} (-\Lc_s)^j \delta_0 \frac{t^j}{j!} \right)\,\frac{\dd t}{t}+ \sum_{j<k_1} \frac{1}{j!(\frac{\alpha}{\delta}+j)\Gamma(\frac{\alpha}{\delta})}(-\Lc_s)^j\delta_0\\
		&\qquad+ \frac{1}{\Gamma(\frac{\alpha}{\delta})} \int_1^{+\infty} t^{\frac{\alpha}{\delta}}\left( h_{\infty,s,t}- \sum_{\dd_\gamma<k_2} \partial_\infty^\gamma h_{\infty,s,t}(0) \frac{(\,\cdot\,)^\gamma}{\gamma!}  \right)\,\frac{\dd t}{t}+P_{s,\alpha,k_2},
		\end{split}
		\]
		where $	P_{s,\alpha,k_2}$ is a sum of homogeneous polynomials on $\hf_\infty$ of degree at most $k_2-1$;
		
		\item $I_{\infty,s,\alpha}\in L^1_\loc(\hf_\infty)$ when $\Re \alpha >0$;
		
		\item the restriction of $I_{\infty,s,\alpha} $ to $\hf_\infty \setminus \Set{0}$ has a density of class $C^\infty$;
		
		\item $I_{\infty,s,- \delta k}= \Lc_s^{k} \delta_0$ for every $k\in \N$.
	\end{enumerate}

	Similar assertions hold for $s=0$, replacing $\hf_\infty$ with $\hf_0$, $\partial_\infty$ with $\partial_0$, and $Q_\infty$ with $Q_0$.  
\end{prop}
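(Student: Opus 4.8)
The plan is to construct $I_{\infty,s,\alpha}$ from the subordination formula and to obtain the meromorphic continuation by separately taming the singularity of $h_{\infty,s,t}$ at $t=0$ and its slow decay at $t=+\infty$; uniqueness is then automatic. First I would set, for $0<\Re\alpha<Q_\infty$, $I_{\infty,s,\alpha}\coloneqq\frac{1}{\Gamma(\alpha/\delta)}\int_0^{+\infty}t^{\alpha/\delta}h_{\infty,s,t}\,\frac{\dd t}{t}$. This converges in $L^1_\loc(\hf_\infty)$, locally uniformly (hence holomorphically) in $\alpha$: over $(0,1]$ because $h_{s,t}$ has total variation $\meg Ce^{\omega t}$ (Proposition~\ref{prop:5} with $\gamma=0$), so that $\norm{t^{\alpha/\delta}h_{\infty,s,t}}_{L^1(K)}=O(t^{\Re\alpha/\delta})$ is integrable at $0$ precisely when $\Re\alpha>0$; over $[1,+\infty)$ because the first estimate of Theorem~\ref{prop:21:2} (whose parameter $1/s$ we specialize to our fixed $s$, with $k=0$) gives $\abs{h_{\infty,s,t}(x)}\meg Ct^{-Q_\infty/\delta}$ uniformly on compacta for $t$ large, so that the $t$-integral converges precisely when $\Re\alpha<Q_\infty$. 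This already yields~(2), and — differentiating under the integral and using the Gaussian factor of Theorem~\ref{prop:21:2} away from the origin — also~(3), on this strip.

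For the continuation one uses two Taylor developments. At $t=0$: since $\partial_t^jh_{s,t}=(-\Lc_s)^jh_{s,t}$ and $h_{s,t}\to\delta_0$ in $\Sc'(G_s)$, one has $h_{s,t}=\sum_{j<k_1}\frac{t^j}{j!}(-\Lc_s)^j\delta_0+O(t^{k_1})$ in $\Sc'$, so $\int_0^1t^{\alpha/\delta}\big(h_{\infty,s,t}-\sum_{j<k_1}(-\Lc_s)^j\delta_0\frac{t^j}{j!}\big)\frac{\dd t}{t}$ converges holomorphically for $\Re\alpha>-\delta k_1$, and the subtracted terms integrate over $(0,1]$ to $\sum_{j<k_1}\frac{1}{j!(\alpha/\delta+j)\Gamma(\alpha/\delta)}(-\Lc_s)^j\delta_0$, whose apparent poles at $\alpha=-\delta j$ are cancelled by the zero of $1/\Gamma(\alpha/\delta)$; evaluating at $\alpha=-\delta k$ — where that factor has the value $(-1)^kk!$ while every other summand carries $1/\Gamma(-k)=0$ — gives~(4). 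At $t=+\infty$: from $h_{\infty,s,t}(x)=\int_{\ifr_\infty}\widetilde h_t(x+y+\psi_{\infty,s}(y))\,\dd y$, the exact scaling $\widetilde h_t=t^{-\widetilde Q/\delta}\widetilde h_1(t^{-1/\delta}\cdot\,)$ and a homogeneous change of variables on $\ifr_\infty$ give $h_{\infty,s,t}(x)=t^{-Q_\infty/\delta}F_{s,t}(t^{-1/\delta}\cdot x)$ with $F_{s,t}(z)=\int_{\ifr_\infty}\widetilde h_1\big(z+y+\psi_{\infty,st^{1/\delta}}(y)\big)\,\dd y$. Since $\psi_{\infty,\sigma}$ depends polynomially on $\sigma^{-1}$ and tends to $0$ as $\sigma\to+\infty$, Taylor-expanding $\widetilde h_1$ in $\sigma^{-1}=s^{-1}t^{-1/\delta}$ and integrating against the Schwartz bounds on $\widetilde h_1$ and its derivatives (\cite[Theorem 2.3]{Martini2}) produces, for each $M$, $\partial_\infty^\gamma h_{\infty,s,t}(0)=\sum_{m<M}c_{s,\gamma,m}\,t^{-(Q_\infty+\dd_\gamma+m)/\delta}+O\big(t^{-(Q_\infty+\dd_\gamma+M)/\delta}\big)$ as $t\to+\infty$, with \emph{no} logarithmic factors (the heat kernel is real-analytic in $t$), while the weighted Taylor formula applied to $F_{s,t}$ at $0$, together with $F_{s,t}\to h_{\infty,\infty,1}$ in $C^\infty_\loc$, gives $\big|h_{\infty,s,t}(x)-\sum_{\dd_\gamma<k_2}\partial_\infty^\gamma h_{\infty,s,t}(0)\frac{x^\gamma}{\gamma!}\big|\lesssim_x t^{-(Q_\infty+k_2)/\delta}$ for $t$ large.

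With these in hand, set $P_{s,\alpha,k_2}(x)\coloneqq\frac{1}{\Gamma(\alpha/\delta)}\sum_{\dd_\gamma<k_2}\big(\int_1^{+\infty}t^{\alpha/\delta-1}\partial_\infty^\gamma h_{\infty,s,t}(0)\,\dd t\big)\frac{x^\gamma}{\gamma!}$: on $\{0<\Re\alpha<Q_\infty\}$ it is a holomorphic family of polynomials of degree $\meg k_2-1$ (only $\dd_\gamma<k_2$ occur), and integrating the expansion above term by term continues it meromorphically to $\{\Re\alpha<Q_\infty+k_2\}$, the poles confined to $Q_\infty+\N$ and remaining simple since a finite sum of simple poles is simple and, in the absence of logarithmic terms, nothing of higher order arises. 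The second bound makes $\int_1^{+\infty}t^{\alpha/\delta}\big(h_{\infty,s,t}-\sum_{\dd_\gamma<k_2}\partial_\infty^\gamma h_{\infty,s,t}(0)\frac{(\,\cdot\,)^\gamma}{\gamma!}\big)\frac{\dd t}{t}$ converge holomorphically for $\Re\alpha<Q_\infty+k_2$. For $0<\Re\alpha<Q_\infty$ all auxiliary integrals converge, and the subtracted and re-added pieces cancel, so the right-hand side of~(1) equals $I_{\infty,s,\alpha}$; by the previous two steps each of its four pieces extends meromorphically to $\C$ with the stated pole structure, and the various choices of $(k_1,k_2)$ are routinely checked to give the same object. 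This establishes~(1), hence the existence of the family, together with~(2)–(3) on all of $\{\Re\alpha>0\}$, resp.\ $\C$: the $\delta_0$-terms and $P_{s,\alpha,k_2}$ are supported at $0$ or globally smooth, and off the origin the two integral terms are $C^\infty$ by Theorem~\ref{prop:21:2} (one differentiates under the integral for every $\beta$, the $\beta$-derivative of the $t\to+\infty$ remainder being itself $O(t^{-(Q_\infty+k_2)/\delta})$ on compacta). Uniqueness is automatic, since any such family coincides on $\{0<\Re\alpha<Q_\infty\}$ with the honest subordination integral, hence everywhere. The case $s=+\infty$ is the classical Rockland one ($G_\infty$ graded, $h_{\infty,\infty,t}$ exactly homogeneous, the expansions collapsing to single terms), and the statement for $s=0$ is proved identically — even more simply, $h_{0,0,t}$ being exactly homogeneous — with the $G_\infty$-asymptotics at $t\to+\infty$ replaced by the $G_0$-asymptotics at $t\to0^+$ and $Q_\infty$ by $Q_0$.

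The substantive step, and the main obstacle, is the behaviour at $t=+\infty$: extracting from the $\psi$-representation the asymptotic expansions of $h_{\infty,s,t}$ and of its $x$-derivatives at $0$ with the \emph{exact} order $t^{-(Q_\infty+k_2)/\delta}$, uniformly on compacta and with no logarithmic terms; everything else is bookkeeping around the subordination identity and the zeros and poles of $\Gamma$.
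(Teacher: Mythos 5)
Your proposal is correct and follows essentially the same route as the paper: the subordination integral on the strip $0<\Re\alpha<Q_\infty$, the split at $t=1$ with Taylor subtraction in $t$ at $0$ and the stratified Taylor subtraction in $x$ at $\infty$, the polynomial term $P_{s,\alpha,k_2}$, and the meromorphic continuation obtained from the power expansion of $\partial_\infty^\gamma h_{\infty,s,t}(0)$ in $t^{-1/\delta}$, which in the paper is phrased equivalently as smoothness of $t\mapsto\partial_\infty^\gamma h_{\infty,s,t^{-\delta}}(0)$ at $t=0$ via the polynomial dependence of $t\cdot\psi_{\infty,s}(t^{-1}\cdot y)$ on $t$. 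The step you single out as substantive is indeed the paper's key computation, and your pole-cancellation check for item (4) is a correct expansion of what the paper leaves implicit.
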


\begin{proof}
	Fix $s\in (0,\infty]$. In addition, fix $k_1,k_2\in \N$ and observe that, if $0<\Re \alpha<\frac{Q_\infty}{\delta}$, then
	\[
	\begin{split}
	I_{\infty,s,\alpha}&= \frac{1}{\Gamma\left(\frac{\alpha}{\delta}\right)}\int_0^\infty t^{\frac{\alpha}{\delta}} h_{\infty,s,t}\,\frac{\dd t}{t}\\
		&=  \frac{1}{\Gamma(\frac{\alpha}{\delta})} \int_0^1 t^{\frac{\alpha}{\delta}} \left( h_{\infty,s,t} - \sum_{j<k_1} (-\Lc_s)^j\delta_0 \frac{t^j}{j!} \right)\,\frac{\dd t}{t}+ \sum_{j<k_1} \frac{1}{j!(\frac{\alpha}{\delta}+j)\Gamma(\frac{\alpha}{\delta})}(-\Lc_s)^j\delta_0\\
	&\qquad+ \frac{1}{\Gamma(\frac{\alpha}{\delta})} \int_1^{+\infty} t^{\frac{\alpha}{\delta}}\left( h_{\infty,s,t}- \sum_{\dd_\gamma<k_2} \partial_\infty^\gamma h_{\infty,s,t}(0) \frac{(\,\cdot\,)^\gamma}{\gamma!}  \right)\,\frac{\dd t}{t}+P_{s,\alpha,k_2},
	\end{split}
	\]
	where
	\[
	P_{s,\alpha,k_2}(x)\coloneqq \frac{1}{\Gamma(\frac{\alpha}{\delta})} \sum_{\dd_\gamma<k_2}\frac{x^\gamma}{\gamma!} \int_1^{+\infty} t^{\frac{\alpha}{\delta}} \partial_\infty^\gamma h_{\infty,s,t}(0) \,\frac{\dd t}{t}
	\]
	for every $x\in \hf_\infty$. 
	Taking into account Lemmas~\ref{lem:21:5},~\ref{lem:21:4}, and~\ref{lem:21:3}, it will suffice to prove that the mapping $\alpha \mapsto P_{s,\alpha,k_2}$ extends to a meromorphic mapping on $\C$ with poles of order at most $1$ at the elements of $Q_\infty+\N$. 
	Indeed, 
	\[
	\begin{split}
	\partial^\gamma_{\infty} h_{\infty,s,t^{-\delta}}(0)&= \int_{\ifr_\infty} \partial_\infty^\gamma\widetilde h_{t^{-\delta}}\left(y+\psi_{\infty,s}(y)\right)\,\dd y=t^{Q_\infty+\dd_\gamma}\int_{\ifr_\infty}  \partial_\infty^\gamma\widetilde h_1\left(y+t\cdot \psi_{\infty,s}(t^{-1}\cdot y)\right)\,\dd y
	\end{split}
	\]
	for every $x\in G$ and for every $t>0$. In addition, since $\psi_{\infty,s}$ is linear and strictly super-homogeneous,
	\[
	t\cdot \psi_{\infty,s}(t^{-1}\cdot y)= \sum_{\ell<j} t^{j-\ell} \pr_j(\psi_{\infty,s}(y_\ell))
	\]
	for every $t>0$ and $y\in \ifr_\infty$. 
	As a consequence, the mapping $t\mapsto\partial^\gamma_{\infty} h_{\infty,s,t^{-\delta}}(0)$ extends to a mapping of class $C^\infty$ on $\R$. 
	Let $\sum_{j\Meg Q_\infty+\dd_\gamma} b_{s,\gamma,j} t^j$ be its Taylor development at the origin. 
	
	Now, fix $N\in \N$ and observe that, for $\Re \alpha<N+1$,
	\[
	\begin{split}
	\int_1^{+\infty}t^{\frac{\alpha}{\delta}}\partial^\gamma_{\infty} h_{\infty,s,t}(0)\,\frac{\dd t}{t}&=\delta\int_{0}^1 t^{- \alpha} \partial^\gamma_{\infty} h_{\infty, t^{-\delta}}(0)\,\frac{\dd t}{t} \\
	&=\delta\sum_{Q_\infty+\dd_\gamma\meg j\meg N} b_{s,\gamma,j}\int_0^1 t^{-\alpha+j}\,\frac{\dd t}{t}+\int_0^1 t^{-  \alpha}O\left(t^{N+1} \right)\,\frac{\dd t}{t}\\
	&=\delta\sum_{Q_\infty+\dd_\gamma\meg j\meg N} \frac{b_{s,\gamma,j}}{j- \alpha}+\int_0^1 O\left( t^{-\Re \alpha+N} \right)\,\dd t.
	\end{split}
	\]
	By the arbitrariness of $N$, it follows that the mapping $\alpha \mapsto \int_1^{+\infty}t^{\frac{\alpha}{\delta}}\partial^\gamma_{\hf_\infty} h_{\infty,s,t}(0)\,\frac{\dd t}{t}$ extends to a meromorphic mapping on $\C$ with poles of order at most $1$ at every element of $Q_\infty+\dd_\gamma+\N$.
	Summing up all these facts, it follows that the mapping $\alpha\mapsto I_{\infty,s,\alpha}$ extends to a meromorphic mapping on $\C$, with poles of order at most $1$ at every element of $Q_\infty+\N$. Finally, it is clear that $I_{\infty,s,-k }=\Lc_s^{k}\delta_0$.
	
	The case $s=0$ is treated similarly.
\end{proof}

\begin{deff}
	Fix $s\in (0,\infty]$. For every $\alpha\in \C$ such that $\alpha\not \in Q_\infty+\N$, we define $I_{s,\alpha}$ as the distribution on $G_s$ induced by the distribution $I_{\infty,s,\alpha}$ of Proposition~\ref{prop:21:8} under the identification of $G_s$ with~$\hf_\infty$ (in other words, $I_{s,\alpha}=(\pi_s)_*(I_{\infty,s,\alpha})$ by an abuse of notation).
	We define $I_{s,\alpha}$, for $\alpha\in  Q_\infty+\N$, as the $0$-th order term of the Laurent expansion of the mapping $\alpha'\mapsto I_{s,\alpha'}$ at $\alpha$.\footnote{Notice that the mapping $\alpha'\mapsto I_{s,\alpha'}$ \emph{may} be regular at $\alpha$.}
	
	We denote by $I_{0,s,\alpha}$ the distribution on $\hf_0$ induced by $I_{s,\alpha}$ under the identification of $G_s$ with $\hf_0$, for every $s\in (0,\infty)$; $I_{0,0,\alpha}$ is defined as in Proposition~\ref{prop:21:8}. We define $I_{0,\alpha}\coloneqq (\pi_0)_*(I_{0,0,\alpha})$, with the same abuse of notation used above.
\end{deff}

\begin{prop}\label{prop:9}
	For every $s\in (0,\infty]$, for every $r>0$, and for every $\alpha\in \C$, the following hold:
	\begin{enumerate}
		\item $(r\,\cdot\,)_* I_{s,\alpha}=r^{-\alpha} I_{r^{-1}s,\alpha}$ if $I_{s,\,\cdot\,}$ is regular at $\alpha$ (in which case also $I_{r^{-1}s,\,\cdot\,}$ is regular at $\alpha$);
		
		\item $(r\,\cdot\,)_* I_{s,\alpha}=r^{-\alpha} I_{r^{-1}s,\alpha}+r^{-\alpha} \log r P_{r^{-1}s, \alpha}$, where $P_{r^{-1}s,\alpha}$ is a polynomial such that $P_{r^{-1}s,\alpha}(x)=O(\abs{x}_{r^{-1}s}^{-Q_\infty+\alpha})$ for $x\to \infty$, if $I_{s,\,\cdot\,}$ has a pole at $\alpha$ (in which case also $I_{r^{-1}s,\,\cdot\,}$ has a pole at $\alpha$). In addition, $P_{\infty,\alpha}$ is homogeneous of degree $-Q_\infty+\alpha$.
	\end{enumerate}
	Analogous assertions hold for $s=0$, replacing $\hf_\infty$ with $\hf_0$ and $Q_\infty$ with $Q_0$.
\end{prop}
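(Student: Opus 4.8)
The plan is to reduce the statement to the elementary scaling of the heat kernel together with uniqueness of meromorphic continuation. First, for $\Re\alpha$ in the open strip $(0,Q_\infty)$ (resp.\ $(0,Q_0)$ when $s=0$), Proposition~\ref{prop:21:8}(i) reduces to $I_{s,\alpha}=\frac{1}{\Gamma(\alpha/\delta)}\int_0^{+\infty}t^{\alpha/\delta}h_{s,t}\,\frac{\dd t}{t}$. Since $(r\,\cdot\,)_*$ is continuous, and since $h_{rs,t}=(r^{-1}\,\cdot\,)_*h_{s,r^\delta t}$ gives $(r\,\cdot\,)_*h_{s,t}=h_{r^{-1}s,r^\delta t}$, the substitution $u=r^\delta t$ yields
\[
(r\,\cdot\,)_*I_{s,\alpha}=\frac{1}{\Gamma(\alpha/\delta)}\int_0^{+\infty}t^{\alpha/\delta}h_{r^{-1}s,r^\delta t}\,\frac{\dd t}{t}=r^{-\alpha}I_{r^{-1}s,\alpha}
\]
on that strip. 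Both $\alpha\mapsto(r\,\cdot\,)_*I_{s,\alpha}$ and $\alpha\mapsto r^{-\alpha}I_{r^{-1}s,\alpha}$ are $\Sc'$-valued meromorphic maps on $\C$ (by Proposition~\ref{prop:21:8}, together with the fact that $\alpha\mapsto r^{-\alpha}$ is entire), so they coincide on all of $\C$; in particular they have the same poles, which — as $(r\,\cdot\,)_*$ is invertible and $r^{-\alpha}\neq0$ — yields the parenthetical clauses of (i) and (ii).

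Part (i) then follows by evaluating this identity at a point where $I_{s,\,\cdot\,}$, equivalently $I_{r^{-1}s,\,\cdot\,}$, is regular. For (ii), at a pole $\alpha_0\in Q_\infty+\N$ one expands $r^{-\alpha}=r^{-\alpha_0}\left(1-(\alpha-\alpha_0)\log r+O((\alpha-\alpha_0)^2)\right)$ and compares the $0$-th Laurent coefficients of the two sides at $\alpha_0$, using that $I_{s,\alpha_0}$ is by definition that coefficient and that the continuous operator $(r\,\cdot\,)_*$ commutes with the extraction of Laurent coefficients; this gives
\[
(r\,\cdot\,)_*I_{s,\alpha_0}=r^{-\alpha_0}I_{r^{-1}s,\alpha_0}-r^{-\alpha_0}\log r\cdot\res{I_{r^{-1}s,\,\cdot\,}}{\alpha_0}.
\]
One then defines $P_{s',\alpha}\coloneqq-\res{I_{s',\,\cdot\,}}{\alpha}$ for $s'\in(0,\infty]$ (it vanishes where $I_{s',\,\cdot\,}$ is regular), an object depending on $s'$ and $\alpha$ only, and the formula in (ii) is the above with $s'=r^{-1}s$.

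It remains to establish the asserted properties of $P$. By the proof of Proposition~\ref{prop:21:8}, the only poles of $I_{s,\,\cdot\,}$ come from the term $P_{s,\alpha,k_2}(x)=\frac{1}{\Gamma(\alpha/\delta)}\sum_{\dd_\gamma<k_2}\frac{x^\gamma}{\gamma!}\int_1^{+\infty}t^{\alpha/\delta}\partial_\infty^\gamma h_{\infty,s,t}(0)\,\frac{\dd t}{t}$ (and $1/\Gamma$ is holomorphic and nonzero at $\alpha_0>0$), whose residue at $\alpha_0$ is a polynomial on $\hf_\infty$ in which the monomial $x^\gamma$ can occur only if $\alpha_0\in Q_\infty+\dd_\gamma+\N$, i.e.\ only if $\dd_\gamma\meg\alpha_0-Q_\infty$; hence $P_{s,\alpha_0}$ is a sum of homogeneous polynomials of degree $\meg\alpha_0-Q_\infty$ for the dilations of $\hf_\infty$. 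Combining this with the comparison $\abs{x}_s\asymp N_\infty(x)$ for $x$ large in $\hf_\infty$, where $N_\infty$ is a homogeneous norm on $\hf_\infty$ — which follows from part 6 of Proposition~\ref{prop:7} together with Proposition~\ref{prop:21:9} (via $\lambda_s$, since $N(\lambda_s^{-1}x)\gtrsim N(x)$ for $x$ large) — gives $P_{s,\alpha_0}(x)=O\left(\abs{x}_s^{-Q_\infty+\alpha_0}\right)$ as $x\to\infty$. Finally, for $s=\infty$ the kernel $h_{\infty,\infty,t}$ is homogeneous, so $t\mapsto\partial_\infty^\gamma h_{\infty,\infty,t^{-\delta}}(0)$ is a single monomial $c_\gamma t^{Q_\infty+\dd_\gamma}$; thus $\int_1^{+\infty}t^{\alpha/\delta}\partial_\infty^\gamma h_{\infty,\infty,t}(0)\,\frac{\dd t}{t}$ has a unique simple pole, at $\alpha=Q_\infty+\dd_\gamma$, so only $\gamma$ with $\dd_\gamma=\alpha_0-Q_\infty$ contribute to $\res{I_{\infty,\,\cdot\,}}{\alpha_0}$, making it a homogeneous polynomial of degree $\alpha_0-Q_\infty$; hence $P_{\infty,\alpha_0}$ is homogeneous of degree $-Q_\infty+\alpha_0$. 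The case $s=0$ is entirely analogous after replacing $\hf_\infty,\partial_\infty,Q_\infty$ by $\hf_0,\partial_0,Q_0$ (and then $r^{-1}s=0$, so the statement records the (log-)homogeneity of $I_{0,\alpha}$ itself).

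The step I expect to require the most care is the growth bound on $P$: passing from the purely algebraic control on its homogeneous degree to the bound $O(\abs{\,\cdot\,}_s^{-Q_\infty+\alpha})$ rests on the comparability near infinity of the modulus $\abs{\,\cdot\,}_s$ with a homogeneous norm on $\hf_\infty$, which is exactly the combination of Propositions~\ref{prop:7} and~\ref{prop:21:9} noted above. The remaining ingredients — the meromorphic continuation and the Laurent-coefficient bookkeeping at the poles — are routine once Proposition~\ref{prop:21:8} is available.
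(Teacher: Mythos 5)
Your proof is correct and follows essentially the same strategy as the paper's: verify $(r\,\cdot\,)_*I_{s,\alpha}=r^{-\alpha}I_{r^{-1}s,\alpha}$ on the strip $0<\Re\alpha<Q_\infty$ directly from the heat-kernel scaling, extend by meromorphic continuation, and compare $0$-th Laurent coefficients at a pole to read off $P_{r^{-1}s,\alpha}=-\res{I_{r^{-1}s,\,\cdot\,}}{\alpha}$, with the residue then identified as a polynomial coming from the $P_{s,\alpha,k_2}$ term of Proposition~\ref{prop:21:8}. You make explicit two steps the paper leaves implicit — that only monomials $x^\gamma$ with $\dd_\gamma\leq\alpha-Q_\infty$ (and, for $s=\infty$, exactly $\dd_\gamma=\alpha-Q_\infty$ by homogeneity of $h_{\infty,\infty,t}$) can contribute to the residue, and that the degree bound on $\hf_\infty$ yields the stated $O(\abs{x}_{r^{-1}s}^{-Q_\infty+\alpha})$ growth via the comparison between $\abs{\,\cdot\,}_{r^{-1}s}$ and a homogeneous norm near infinity from Propositions~\ref{prop:7} and~\ref{prop:21:9} — which is a useful clarification rather than a different route.
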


In particular, $I_{\infty,\alpha}$ is homogeneous of degree $-Q_\infty+\alpha$ when $I_{\infty,\,\cdot\,}$ regular at $\alpha$, while $I_{\infty,\alpha}$ is log-homogeneous of degree $-Q_\infty+\alpha$ otherwise (cf.~Definition~\ref{def:2}).

Analogous statements hold for $I_{0,\alpha}$, with the obvious modifications.

\begin{proof}
	The first assertion for $0<\Re \alpha<Q_\infty$ follows easily from the equality $(r\,\cdot\,)_* h_{s,t}=h_{r^{-1}s, r^\delta t}$, which holds for every $r>0$, for every $s\in [0,\infty]$, and for every $t>0$. The general statement then holds by holomorphy.

	For what concerns the second assertion, take $s\in (0,\infty]$ and a pole $\alpha$ of $I_{s,\,\cdot\,}$, so that, in particular,  $\alpha\in Q_\infty+\N$. Then, for every $\alpha'\neq \alpha$ in a neighbourhood of $\alpha$, and for every $r>0$
	\[
	(r\,\cdot\,)_* I_{s,\alpha'}= r^{-\alpha'}I_{r^{-1} s, \alpha'},
	\]
	so that, taking the $0$-th order term of the Laurent expansions of both sides of the equality at $\alpha$,
	\[
	(r\,\cdot\,)_*I_{s,\alpha}= r^{-\alpha}I_{r^{-1}s, \alpha}- r^{-\alpha} \log r \lim_{\alpha'\to \alpha} (\alpha'-\alpha) I_{r^{-1}s,\alpha'}.
	\]
	Now, with the notation of Proposition~\ref{prop:21:8}, it is easily seen that, chosen $k_1=0$ and $ k_2=-Q_\infty+\alpha+1$, 
	\[
	\lim_{\alpha'\to \alpha} (\alpha'-\alpha) I_{\infty,r^{-1}s,\alpha}(x)=\lim_{\alpha'\to \alpha} (\alpha'-\alpha) P_{r^{-1}s,\alpha,k_2}(x).
	\]
	By inspection of the proof of  Proposition~\ref{prop:21:8}, it is easily seen that $\lim_{\alpha'\to \alpha} (\alpha'-\alpha) P_{\infty,\alpha',k_2}$ is a homogeneous polynomial of degree $-Q_\infty+\alpha$, whence the result.
\end{proof}

\subsection{Asymptotic Developments}

We keep the notation of the preceding sections.
We prove some asymptotic developments of the $I_{\alpha}$ generalizing those proved in~\cite{NagelRicciStein} for the fundamental solutions. Even though the procedure of~\cite{NagelRicciStein} may be generalized to the present setting, we prefer to   give a different proof, which is shorter and gives a little more insight into the meaning of the further terms of the development.
We then present, under rather restrictive assumptions, another proof which describes quite explicitly the terms of the development.

\begin{teo}\label{teo:21:1}
	Take  and $\alpha\in \C$ and $s\in \R_+$. Then, the following hold:
	\begin{enumerate}		
		\item there is a sequence of log-homogeneous functions $(I_{\infty,\alpha}^{(k)})$ of class $C^\infty$ on $\hf_\infty\setminus \Set{0}$ such that  $I_{\infty,\alpha}^{(0)}=I_{\infty,\infty,\alpha}$, such that $ I_{\infty,\alpha}^{(k)}$ has degree $-Q_\infty+\alpha-k$, and such that for every $N\in \N$ and for every $\gamma$ there is a constant $C_{N,\gamma}>0$ such that, for every $s\in [1,\infty)$,
		\[
		\abs*{\partial_{\infty}^\gamma \left( I_{\infty,s,\alpha}-  \sum_{k<N} s^{-k} I_{\infty,\alpha}^{(k)}\right) (x)}\meg \frac{C_{N,\gamma} s^{-N}}{\abs{x}^{Q_\infty-\Re\alpha+\dd_\gamma+N}}(1+\abs{\log\abs{s\cdot x}})
		\]
		for every $x\in \hf_\infty$ such that $\abs{x}\Meg s^{-1}$; the factor $1+\abs{\log\abs{s\cdot x}}$ may be omitted if $\alpha\not \in Q_\infty+\dd_\gamma+N+\N$;
		
		\item there are a sequence $(P_{\alpha,k})$  of homogeneous polynomials on $\hf_0$ and a sequence $(I_{0,\alpha}^{(k)})$ of log-homogeneous functions of class $C^\infty$ on $\hf_0\setminus \Set{0}$ such that  $I_{0,\alpha}^{(0)}=I_{0,0,\alpha}$, such that $ I_{0,\alpha}^{(k)}$ has degree $-Q_0+\alpha+k$, such that $P_{0,\alpha,k}$ has degree $k$, and such that for every $N\in \N$ and for every $\gamma$ there is a constant $C'_{N,\gamma}>0$ such that, for every $s\in (0,1]$,
		\[
		\abs*{\partial_{0}^\gamma \left( I_{0,s,\alpha}-  \sum_{k<N} s^{k} I_{0,\alpha}^{(k)}- \sum_{k<-Q_0+\Re\alpha+N} s^{k-Q_0+\alpha} P_{\alpha, k}\right) (x)}\meg \frac{C'_{N,\gamma} s^{N}}{\abs{x}^{Q_0-\Re\alpha+\dd_\gamma-N}}(1+\abs{\log\abs{s\cdot x}})
		\]
		for every $x\in \hf_0$ such that $0\neq\abs{x}\meg s^{-1}$; the factor $1+\abs{\log\abs{s\cdot x}}$ may be omitted if $\alpha\not \in Q_0+\dd_\gamma-N+\N$.
	\end{enumerate}
\end{teo}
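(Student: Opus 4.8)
The plan is to prove~(1) in detail; part~(2) is entirely parallel, with the roles of $0$ and $\infty$, and of the two estimates of Theorem~\ref{prop:21:2}, interchanged, the additional polynomials $P_{\alpha,k}$ there being produced by the regularization of the large-$t$ part of the defining integral in exactly the way the $P_{s,\alpha,k_2}$ are in Proposition~\ref{prop:21:8}. Throughout I write $s=\sfrac{1}{\sigma}$ for the reciprocal of the group parameter, as in Theorem~\ref{prop:21:2}.

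\emph{Building the coefficients.} Starting from $h_{\infty,\sfrac{1}{s},t}(x)=\int_{\ifr_\infty}\widetilde h_t(x+y+\psi_{\infty,\sfrac{1}{s}}(y))\,\dd y$, the polynomial dependence of $\psi_{\infty,\sfrac{1}{s}}$ on $s$ (the remark after Proposition~\ref{prop-psi}) together with the Gaussian bounds of Theorem~\ref{prop:21:2} show — as is implicit in the proof of that theorem — that $s\mapsto h_{\infty,\sfrac{1}{s},t}$ is of class $C^\infty$ on $[0,\infty)$; I set
\[
a_{k,t}\coloneqq\frac1{k!}\,\partial_s^k h_{\infty,\sfrac{1}{s},t}\Big|_{s=0},\qquad a_{0,t}=h_{\infty,\infty,t}.
\]
Reading the scaling relation $h_{rs',t}=(r^{-1}\,\cdot\,)_*h_{s',r^\delta t}$ for the $\hf_\infty$-densities gives $h_{\infty,\sfrac{1}{s},t}(x)=t^{-\sfrac{Q_\infty}{\delta}}h_{\infty,1/(st^{-\sfrac{1}{\delta}}),1}(t^{-\sfrac{1}{\delta}}\cdot x)$, whence, differentiating $k$ times in $s$ at $s=0$,
\[
a_{k,t}(x)=t^{-\sfrac{Q_\infty+k}{\delta}}\,a_{k,1}(t^{-\sfrac{1}{\delta}}\cdot x).
\]
By Theorem~\ref{prop:21:2} at $t=1$ every $\partial_\infty^\gamma a_{k,1}$ decays faster than any power at infinity on $\hf_\infty$, so I can define $I_{\infty,\alpha}^{(k)}$ by the same regularized Mellin transform of $a_{k,\,\cdot\,}$ as in Proposition~\ref{prop:21:8} (for $0<\Re\alpha<Q_\infty$ simply $\Gamma(\sfrac{\alpha}{\delta})^{-1}\int_0^\infty t^{\sfrac{\alpha}{\delta}}a_{k,t}\,\tfrac{\dd t}{t}$, and in general after subtracting the leading Taylor terms of $a_{k,\,\cdot\,}$ at $t=0$ and of $a_{k,1}$ at $x=0$); the displayed scaling then yields, exactly as in Propositions~\ref{prop:21:8} and~\ref{prop:9}, that $I_{\infty,\alpha}^{(k)}$ is $C^\infty$ off the origin and log-homogeneous of degree $-Q_\infty+\alpha-k$, with poles only in $Q_\infty+k+\N$, and $I_{\infty,\alpha}^{(0)}=I_{\infty,\infty,\alpha}$.

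\emph{Estimating the remainder.} Fix $N$, $\gamma$, $\sigma\Meg1$ and $x\in\hf_\infty$ with $\abs x\Meg\sfrac{1}{\sigma}$. Since $\sigma^{-k}=s^k$ and $I_{\infty,\alpha}^{(k)}$ is the Mellin transform of $a_{k,\,\cdot\,}$, after the regularizations of Proposition~\ref{prop:21:8} one has
\[
I_{\infty,\sigma,\alpha}-\sum_{k<N}\sigma^{-k}I_{\infty,\alpha}^{(k)}=\frac1{\Gamma(\sfrac{\alpha}{\delta})}\int_0^\infty t^{\sfrac{\alpha}{\delta}}R_{N,\sigma,t}\,\frac{\dd t}{t},\qquad R_{N,\sigma,t}=\frac{\sigma^{-N}}{(N-1)!}\int_0^1(1-u)^{N-1}\big(\partial_s^N h_{\infty,\sfrac{1}{s},t}\big)\big|_{s=u/\sigma}\,\dd u,
\]
$R_{N,\sigma,t}$ being the order-$N$ Taylor remainder in $s$. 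I split the $t$-integral at $t=c\,\sigma^{-\delta}$. For $t>c\,\sigma^{-\delta}$ one has $t>c\,(u/\sigma)^\delta$ for every $u\in[0,1]$, so the first estimate of Theorem~\ref{prop:21:2} (with $d=\dd_\gamma$ and order $N$) bounds $\abs{\partial_\infty^\gamma R_{N,\sigma,t}(x)}$ by $C\,\sigma^{-N}\,t^{-\sfrac{Q_\infty+\dd_\gamma+N}{\delta}}$ times $\exp\!\big(-b(\abs{\pi_{\sigma/u}(x)}_{\sigma/u}/t^{\sfrac{1}{\delta}})^{\sfrac{\delta}{\delta-1}}\big)$ uniformly in $u$; since $\abs{\pi_{\sigma/u}(x)}_{\sigma/u}\gtrsim\abs x$ uniformly for $x\in\hf_\infty$ and $\sigma/u\Meg1$ (by {\bf6} of Proposition~\ref{prop:7} and Proposition~\ref{prop:21:9}), the substitution $t=\abs x^\delta\tau$ makes this part of the integral $O\big(\sigma^{-N}\abs x^{-(Q_\infty-\Re\alpha+\dd_\gamma+N)}\big)$, with a factor $1+\abs{\log\abs{\sigma\cdot x}}$ precisely when $\Re\alpha=Q_\infty+\dd_\gamma+N$, as in the residue analysis of Proposition~\ref{prop:21:8}. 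On $t\meg c\,\sigma^{-\delta}$ the Taylor expansion is useless, and I would instead estimate $I_{\infty,\sigma,\alpha}$ and each $\sigma^{-k}I_{\infty,\alpha}^{(k)}$ separately on that range, using the uniform-in-$\sigma$ bound $\abs{\partial_\infty^\gamma h_{\infty,\sigma,t}(x)}\meg C\,t^{-P}\exp\!\big(-b(\abs x/t^{\sfrac{1}{\delta}})^{\sfrac{\delta}{\delta-1}}\big)$ for $t\meg c\,\sigma^{-\delta}$ (which follows from the second estimate of Theorem~\ref{prop:21:2}, transferred through the $\hf_0$-realization, or directly by the method of its proof), the scaling of $a_{k,t}$, and the elementary bound $\int_0^a\tau^{P'}e^{-b\tau^{-\sfrac{1}{\delta-1}}}\tfrac{\dd\tau}{\tau}\meg C_{P'}a^{P'}$: since $\sigma\abs x\Meg1$, this turns the surplus powers of $\abs x$ into powers of $\sigma^{-1}$, so both contributions are again $O\big(\sigma^{-N}\abs x^{-(Q_\infty-\Re\alpha+\dd_\gamma+N)}\big)$, and adding the two ranges proves~(1). (Alternatively, up to the logarithmic corrections at the poles, one can first prove~(1) for $\sigma=1$ and then recover the general $\sigma$ from $(r\,\cdot\,)_*I_{\infty,\sigma,\alpha}=r^{-\alpha}I_{\infty,\sigma/r,\alpha}$ of Proposition~\ref{prop:9}, read with $r=\sigma^{-1}$; this also makes the $\abs{\sigma\cdot x}$ in the logarithmic factor transparent.)

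The step I expect to be the main obstacle is precisely the short-time range $t\meg c\,\sigma^{-\delta}$: Theorem~\ref{prop:21:2} provides no control on $\partial_s^N h_{\infty,\,\cdot\,,t}$ there, so one must show, uniformly in $\sigma$, that \emph{every single} term of the putative expansion is negligible on that range — carefully tracking how the hypothesis $\abs x\Meg\sfrac{1}{\sigma}$ trades decay in $\abs x$ for decay in $\sigma^{-1}$ while the change of coordinates between the two realizations of $G_\sigma$ varies with $\sigma$. The remaining bookkeeping — the logarithmic factors at the poles and, in~(2), the identification of the polynomials $P_{\alpha,k}$ together with their degrees and their powers of $s$ by homogeneity counting — is then a routine adaptation of the argument of Proposition~\ref{prop:21:8}.
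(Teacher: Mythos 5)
Your proposal shares the paper's central strategy: write $I_{\infty,s,\alpha}$ as a (regularised) Mellin transform of the heat kernel, Taylor-expand $h_{\infty,\sfrac1{s'},t}$ in the auxiliary parameter $s'$, define the coefficients $I_{\infty,\alpha}^{(k)}$ as Mellin transforms of the Taylor coefficients $a_{k,t}=\frac1{k!}\partial_1^kH_\infty(0,t,\cdot)$, establish their (log-)homogeneity by the scaling $a_{k,t}(x)=t^{-(Q_\infty+k)/\delta}a_{k,1}(t^{-\sfrac1\delta}\cdot x)$, and bound the remainder via the Gaussian estimates of Theorem~\ref{prop:21:2} on $\partial_{s'}^N h_{\infty,\sfrac1{s'},t}$. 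The one genuine organisational difference is the order of operations. The paper first splits the Mellin integral at $t=1$ and Taylor-expands \emph{only} the $\int_1^{+\infty}$ piece: there $s'=\theta/s\le1$ and $t\ge1$, so the hypothesis $t>c(s')^\delta$ of Theorem~\ref{prop:21:2} is automatic, while the $\int_0^1$ contributions of $h$ and of the $a_{k,\cdot}$ are disposed of at one stroke by Lemma~\ref{lem:21:4}, which places them in $\Ec'(\hf_\infty)+\Sc(\hf_\infty)$ and thus renders them negligible for $\abs x$ bounded away from $0$. You Taylor-expand first and then split the remainder integral at $t=c\sigma^{-\delta}$, which forces you to estimate the short-time contribution of $I_{\infty,\sigma,\alpha}$ and of each $\sigma^{-k}I_{\infty,\alpha}^{(k)}$ separately. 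Your "alternative" --- prove the estimate at $\sigma=1$ and recover general $\sigma$ from $(r\cdot)_*I_{\infty,\sigma,\alpha}=r^{-\alpha}I_{\infty,\sigma/r,\alpha}$ of Proposition~\ref{prop:9}, handling the logarithmic polynomial corrections at poles as in Proposition~\ref{prop:9}(2) --- is in fact exactly the paper's device for the uniform constants, and I would promote it to the main route: at $\sigma=1$ the split-at-$t=1$ trick makes the short-time term harmless and the Gaussian condition trivial.

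One step you assert too casually is the uniform-in-$\sigma$ bound on $\partial_\infty^\gamma h_{\infty,\sigma,t}(x)$ for $t\le c\sigma^{-\delta}$, which you obtain "from the second estimate of Theorem~\ref{prop:21:2}, transferred through the $\hf_0$-realization." That transfer is not free: $\lambda_\sigma^{-1}=(P_{0,\sigma})|_{\hf_\infty}$ has strictly sub-homogeneous off-diagonal entries of size $\sigma^{j-j'}$ ($j>j'$) by Proposition~\ref{prop:21:9}(iii)--(iv), and the Jacobian $\abs{\det\lambda_\sigma}\asymp\sigma^{Q_0-Q_\infty}$ tends to $0$; neither the chain-rule coefficients nor the normalisation are bounded uniformly for $\sigma\ge1$. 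The bound is nevertheless true, but only because the constraint $t\le c\sigma^{-\delta}$ compensates: the $\hf_0$-estimate carries $t^{-(Q_0+\dd_{\gamma'})/\delta}$ with $\dd_{\gamma'}\le\dd_\gamma$, and $\sigma^{Q_\infty-Q_0+\dd_\gamma-\dd_{\gamma'}}\,t^{-(Q_0+\dd_{\gamma'})/\delta}\le C\,t^{-(Q_\infty+\dd_\gamma)/\delta}$ precisely when $t^{\sfrac1\delta}\lesssim\sigma^{-1}$. This balancing, which your sketch skips, is the crux of making the short-time route uniform --- and is precisely what the paper's ordering (or your own $\sigma=1$-plus-scaling alternative) avoids.
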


\begin{proof}
	{\bf1.} Define $H_\infty(s',t,x)\coloneqq h_{\infty,\sfrac{1}{s'},t}(x)$ for every $s'\in [0,\infty)$, for every $t>0$, and for every $x\in \hf_\infty$, to simplify the notation.
	Take $\alpha\in \C$ such that $0<\Re \alpha<Q_\infty$, and observe that a Taylor expansion of $H_\infty$ in the first variable gives
	\[
	 I_{\infty,s,\alpha}= \frac{1}{\Gamma(\frac{\alpha}{\delta})} \int_0^1 t^{\frac{\alpha}{\delta}} h_{\infty,s,t}\,\frac{\dd t}{t}+ \sum_{k<N} \frac{s^{-k}}{k!\Gamma(\frac{\alpha}{\delta}) }\int_1^{+\infty} \partial_1^k H_\infty(0,t,\,\cdot\,) \,\frac{\dd t}{t}+s^{-N} R_{s,\alpha,N},  
	\]
	where
	\[
	R_{s,\alpha,N}=\frac{1}{(N-1)! \Gamma(\frac{\alpha}{\delta})} \int_1^{+\infty} t^{\frac{\alpha}{\delta}}  \int_0^1 \partial_{1}^N H_\infty\left(\frac{\theta}{s},t,\,\cdot\, \right)(1-\theta)^{N-1}\,\dd \theta\, \frac{\dd t}{t}  .
	\]
	Now, Lemma~\ref{lem:21:4} implies that the mapping $\alpha \mapsto \frac{1}{\Gamma(\frac{\alpha}{\delta})} \int_0^1 t^{\frac{\alpha}{\delta}}  h_{\infty,s,t}\,\frac{\dd t}{t}$ extends to an entire function with values in $\Ec'(\hf_\infty)+\Sc(\hf_\infty)$. Next, observe that 
	\[
	H_\infty(s',t,x)=t^{-\frac{Q_\infty}{\delta}} H_\infty(t^{-\sfrac{1}{\delta}}s',1,t^{-\sfrac{1}{\delta}}\cdot x  )
	\]
	for every $s'\in [0,\infty)$, for every $t>0$, and for every $x\in \hf_\infty$, so that
	\[
	\partial_1^k H_\infty(0,t,x)= t^{-\frac{Q_\infty+k}{\delta}} \partial_1^k H_\infty(0,1,t^{-\sfrac{1}{\delta}}\cdot x)
	\]
	for every $x\in \hf_\infty$ and for every $t>0$.

	Therefore, Lemma~\ref{lem:21:4} and  the estimates of $\partial_1^k H_\infty(0,t,\,\cdot\,)$ provided in Theorem~\ref{prop:21:2} show that, for $0< \Re \alpha< Q_\infty+k$,
	\[
	\frac{1}{k!\Gamma(\frac{\alpha}{\delta}) }\int_1^{+\infty} t^{\frac{\alpha}{\delta}} \partial_1^k H_\infty(0,t,\,\cdot\,) \,\frac{\dd t}{t}=\frac{1}{k!\Gamma(\frac{\alpha}{\delta}) }\int_0^{+\infty}t^{\frac{\alpha}{\delta}} \partial_1^k H_\infty(0,t,\,\cdot\,)\,\frac{\dd t}{t}+R'_{\alpha,k},
	\]
	where $R'_{\alpha,k}$ is an entire function of $\alpha$ with values in $\Ec'(\hf_\infty)+\Sc(\hf_\infty)$. 
	In addition, we also see that the mapping, initially defined for $0< \Re \alpha< Q_\infty+k$,
	\[
	\alpha \mapsto I_{\infty,\alpha}^{(k)}\coloneqq \frac{1}{k !\Gamma(\frac{\alpha}{\delta}) }\int_0^{+\infty} t^{\frac{\alpha}{\delta}} \partial_1^k H_\infty(0,t,\,\cdot\,) \,\frac{\dd t}{t}
	\]
	extends to a meromorphic function on $\C$ such that $I_{\infty,\alpha}^{(k)}$ is homogeneous of degree $-Q_\infty+\alpha-k$ for every $\alpha$ in the domain of holomorphy of $I_{\infty,\,\cdot\,}^{(k)}$ (argue as in the proof of Propositions~\ref{prop:21:8} and~\ref{prop:9}). 
	Log-homogeneity holds at poles, where $I_{\infty,\alpha}^{(k)}$ denotes the $0$-th order term of the Laurent expansion of $I_{\infty,\,\cdot\,}^{(k)}$ (argue as in the proof of Proposition~\ref{prop:9}).	
	
	Finally, assume that $\Re \alpha< Q_\infty+ N$. Observe that there is a constant $C>0$ such that
	\[
	\abs{\pi_{s'}(x)}_{s'}\Meg C (\abs{x}-1)
	\]
	for every $s'\in [1,\infty]$ and for every $x\in \hf_\infty$, thanks to Proposition~\ref{prop:7}.
	Therefore, Theorem~\ref{prop:21:2} and the preceding computations imply that $R_{s,\alpha,N}$ is well-defined  for $x\neq 0$  and that there are there are two constants $C'>0$ and  $b>0$ such that, for every $\gamma$,
	\[
	\abs{\partial_\infty^\gamma R_{s,\alpha,N}(x)}\meg C' \int_1^{+\infty} t^{\frac{\Re \alpha-Q_\infty-\dd_\gamma-N}{\delta} } e^{-b\left( \frac{\abs{x}}{t^{\sfrac{1}{\delta}}}  \right)^{\frac{\delta}{\delta-1}}}\,\frac{\dd t}{t}.
	\]
	Hence,
	\[
	\abs{\partial_\infty^\gamma R_{s,\alpha,N}(x)}\meg C' \abs{x}^{ \Re \alpha-Q_\infty-\dd_\gamma-N  }\int_0^{+\infty} t^{\frac{-\Re \alpha+Q_\infty+\dd_\gamma+N}{\delta} }  e^{ -b t^{\frac{1}{\delta-1}} }\,\frac{\dd t}{t}  .
	\]
	The assertion follows for $s$ fixed. In order to get uniform estimates for $s\in [1,\infty)$, reduce to the case $s=1$ by means of Proposition~\ref{prop:9}. 
	Let us give some more details in the case in which $I_{s,\,\cdot\,}$ has a pole at $\alpha$. Indeed, for every $s\in [1,\infty]$, $I_{\infty,s,\alpha}-\sum_{k<N} s^{-k} I_{\infty,\alpha}^{(k)}$ equals
	\[
	s^{-\alpha} (s^{-1}\,\cdot\,)_*\left(I_{\infty,1,\alpha}-\sum_{k<N}  I_{\infty,\alpha}^{(k)}\right)+s^{-\alpha} \log s \,(s^{-1}\,\cdot\,)_*\left( P_{1,\alpha}\circ \pi_1-\sum_{k<N}  P'_{\alpha,k}  \right),
	\]
	where $P_{1,\alpha}$ is defined in Proposition~\ref{prop:9}, while $P'_{\alpha,k}$ is a suitable homogeneous polynomial on $\hf_\infty$ of degree $-Q_\infty+\alpha-k$. 
	Since the term $s^{-\alpha} (s^{-1}\,\cdot\,)_*\left( I_{\infty,1,\alpha}-\sum_{k<N}  I_{\infty,\alpha}^{(k)}\right)$ satisfies the estimates of the statement, all we need to prove is that $ P_{1,\alpha}\circ \pi_1-\sum_{k<N}  P'_{\alpha,k}$ has degree at most $-Q_\infty+\alpha-N$. 
	One may prove this by expressing $P_{1,\alpha}$ and the $P'_{\alpha,k}$ in terms of $h_{s,t}$ and its derivatives in $s^{-1}$. 
	Nonetheless, since the above proof shows that $ I_{\infty,s,\alpha}-\sum_{k<N} s^{-k} I_{\infty,\alpha}^{(k)}$ satisfies the estimates of the statement (with constants depending on $s$), the same necessarily applies to $ s^{-\alpha} \log s (s^{-1})_* \left( P_{1,\alpha}\circ \pi_1-\sum_{k<N}  P'_{\alpha,k}  \right)$, whence our claim.
	
	{\bf2.} Define $H_0(s,t,x)\coloneqq h_{0,s,t}(x)$ for every $s\in [0,\infty)$, for every $t>0$, and for every $x\in \hf_0$, to simplify the notation.
	Take $\alpha\in \C$ such that $0<\Re \alpha<Q_0$, and observe that a Taylor expansion of $H_0$ in the first variable gives
	\[
	I_{0,s,\alpha}=  \sum_{k<N} s^k \frac{1}{k!\Gamma(\frac{\alpha}{\delta}) }\int_0^1  t^{\frac{\alpha}{\delta}} \partial_1^k H_0(0,t,\,\cdot\,)\,\frac{\dd t}{t}+ s^N R_{s,\alpha,N}+\frac{1}{\Gamma(\frac{\alpha}{\delta})} \int_1^{+\infty} t^{\frac{\alpha}{\delta}} h_{0,s,t}\,\frac{\dd t}{t},  
	\]
	where
	\[
	R_{s,\alpha,N}=\frac{1}{\Gamma(\frac{\alpha}{\delta})(N-1)! } \int_0^1 t^{\frac{\alpha}{\delta}}  \int_0^1 \partial_1^N H_0(\theta s,t,\,\cdot\,) (1-\theta)^{N-1}\,\dd \theta\, \frac{\dd t}{t}  .
	\]
	Now, by means of Lemma~\ref{lem:21:3} we see that the mapping $\alpha \mapsto \frac{1}{\Gamma(\frac{\alpha}{\delta})} \int_1^{+\infty} t^{\frac{\alpha}{\delta}} h_{0,s,t}\,\frac{\dd t}{t} $ extends to a meromorphic function on $\C$ with values in $\Ec(G)$.
	In addition, as in~{\bf1} one may prove that
	\[
	\partial_1^k H_0(0,t,x)=t^{-\frac{Q_0-k}{\delta}}\partial_1^k H_0(0,1,t^{-\sfrac{1}{\delta}}\cdot x)
	\]
	for every $x\in \hf_0$ and for every $t>0$. Therefore, making use of the estimates of $\partial_1^k H_0(0,t,\,\cdot\,)$ provided in Theorem~\ref{prop:21:2}, we see that
	\[
	R'_{s,\alpha,k}\coloneqq \frac{1}{k!\Gamma(\frac{\alpha}{\delta})}\int_1^{+\infty} t^{\frac{\alpha}{\delta}}\partial_1^k H_0(0,t,\,\cdot\,)\frac{\dd t}{t}\in C^\infty(G_s),
	\]
	initially defined for $\Re \alpha<Q_0-k$, extends to a meromorphic function on $\C$. In addition, we also see that the mapping, initially defined for $-k<\Re \alpha<Q_0-k$,
	\[
	\alpha \mapsto I_{0,\alpha}^{(k)}\coloneqq\frac{1}{k!\Gamma(\frac{\alpha}{\delta}) }\int_0^1  t^{\frac{\alpha}{\delta}} \partial_1^k H_0(0,t,\,\cdot\,)\,\frac{\dd t}{t}+R'_{s,\alpha,k}
	\]
	extends to a meromorphic mapping on $\C$.\footnote{Using the estimates of $H_0$ provided in Theorem~\ref{prop:21:2}, it is not hard to see that $\int_0^1  t^{\frac{\alpha}{\delta}} \partial_1^k H_0(0,t,\,\cdot\,)\,\frac{\dd t}{t}\in L^1(G_s) $ for $\Re \alpha>-k$.} Let us prove that $I_{0,\alpha}^{(k)}$ is homogeneous of degree $-Q_0+ \alpha+k$ for $\alpha$ in the domain of holomorphy of $I_{0,\alpha}^{(k)}$. By analyticity, we may reduce to prove this fact for  $-k<\Re \alpha<Q_0-k$, in which case
	\[
	\begin{split}
	I_{0,\alpha}^{(k)}&=\frac{1}{k!\Gamma(\frac{\alpha}{\delta}) }\int_0^\infty  t^{\frac{\alpha}{\delta}}  \partial_1^k H_0(0,t,\,\cdot\,) \,\frac{\dd t}{t},
	\end{split}
	\]
	so that the assertion is easily established. Log-homogeneity holds at the poles of $I_{0,\,\cdot\,}^{(k)}$, where $I_{0,\alpha}^{(k)}$ denote the $0$-th order term of the Laurent expansion of $I_{0,\,\cdot\,}^{(k)}$ at $\alpha$.
	
	Finally, take $\gamma$ and assume that $\Re \alpha>Q_0+\dd_\gamma-N$; in addition, define $\abs{x}'\coloneqq \inf_{s\in [0,1]} \abs{\pi_s(x)}_s$ for every $x\in \hf_0$, and observe that $\abs{x}'>0$ for every non-zero $x\in \hf_0$, and that there is a constant $C>0$ such that
	\[
	\frac{1}{C}\abs{x}\meg \abs{x}'\meg C \abs{x}
	\]
	for every $x\in \hf_0$ such that $\abs{x}\meg 1$ (cf.~Proposition~\ref{prop:7}).
	In addition, Theorem~\ref{prop:21:2} and the preceding computations imply that there are there are two constants $C'>0$ and  $b>0$ such that
	\[
	\abs{\partial_0^\gamma R_{s,\alpha,N}(x)}\meg C'\int_0^{1} t^{\frac{\Re \alpha-Q_0-\dd_\gamma+ N}{\delta} } e^{-b\left(\frac{\abs{x}'}{t^{\sfrac{1}{\delta}}}\right) ^{\frac{\delta}{\delta-1}}}\,\frac{\dd t}{t},  
	\]
	so that $R_{s,\alpha,N}(x)$ is well defined  for $x\neq 0$. In addition,
	\[
	\abs{\partial_0^\gamma  R_{\alpha,N}(x)}\meg C' \abs{x}'^{  \Re \alpha-Q_0-\dd_\gamma+N  }\int_0^{+\infty} t^{\frac{Q_0+\dd_\gamma-N-\Re \alpha}{\delta}} e^{b t^{\frac{1}{\delta-1}} }\,\frac{\dd t}{t}  .
	\]
	The assertion follows for $s$ fixed. In order to get uniform estimates for $s\in (0,1]$, reduce to the case $s=1$ by means of Proposition~\ref{prop:9} (argue as in~{\bf1}).
\end{proof}

Observe that, with the same techniques used to prove~\cite[Theorem 2]{NagelRicciStein}, one may prove the following result. 

\begin{cor}\label{cor:4}
	Take $s\in (0,\infty)$, $\gamma\in \N^J$, and $\alpha\in \C$ such that $\Re \alpha\Meg \dd_\gamma$. 
	Then, for every $p,q\in (1,\infty)$ such that $\frac{\Re \alpha-\dd_\gamma}{Q_\infty}\meg \frac{1}{p}-\frac{1}{q}\meg \frac{\Re \alpha-\dd_\gamma}{Q_0}$, convolution on the right with $\vect{X}^\gamma_s I_{s,\alpha}$ induces a bounded operator from $L^p(G_s)$ into $L^q(G_s)$.
\end{cor}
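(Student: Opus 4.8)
The plan is to derive the statement from two-sided pointwise bounds on the convolution kernel $K\coloneqq\vect{X}^\gamma_s I_{s,\alpha}$ and then to apply Young's inequality together with its weak-type ($L^{r,\infty}$) refinement on the unimodular group $G_s$. Set $\beta\coloneqq\Re\alpha-\dd_\gamma$; we may assume $0<\beta<Q_\infty$ for the bulk of the argument, since $\beta\Meg Q_\infty$ makes the range in the hypothesis empty while $\beta=0$ will be treated separately. Conjugating with a dilation $G_s\to G_1$ (Proposition~\ref{prop:9}, together with the scaling of $\vect{X}^\gamma_s$ and of $\nu_{G_s}$) one reduces to $s=1$; write $\abs{\,\cdot\,}\coloneqq\abs{\pi_1(\,\cdot\,)}_1$.

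\emph{Kernel estimates.} By Proposition~\ref{prop:21:8}, $K$ coincides away from $e$ with a function of class $C^\infty$, and for $\beta>0$ it lies in $L^1_\loc(G_1)$, its distributional $\vect{X}^\gamma_1$-derivative agreeing with the classical one (a standard property of (log-)homogeneous distributions whose singular degree has real part $>-Q_0$). The leading terms $I_{0,0,\alpha}$ and $I_{\infty,\infty,\alpha}$ of the developments of Theorem~\ref{teo:21:1} are (log-)homogeneous, on $\hf_0$ and $\hf_\infty$, of degrees $-Q_0+\alpha$ and $-Q_\infty+\alpha$; taking $N=1$ in Theorem~\ref{teo:21:1} and expressing $\vect{X}^\gamma_1$ in the $\hf_0$- resp.\ $\hf_\infty$-realization of $G_1$ by means of Proposition~\ref{lem:21:1} (its polynomial coefficients being sums of homogeneous components of degree $\Meg\dd_{\gamma'}-\dd_\gamma$ near $e$, resp.\ $\meg\dd_{\gamma'}-\dd_\gamma$ near $\infty$, so that the growth of the coefficients exactly compensates the drop in the order of differentiation), one obtains
\[
\abs{K(x)}\meg C\abs{x}^{-Q_0+\beta}\bigl(1+\abs{\log\abs{x}}\bigr)\quad(\abs{x}\meg 1),\qquad\abs{K(x)}\meg C\abs{x}^{-Q_\infty+\beta}\bigl(1+\log\abs{x}\bigr)\quad(\abs{x}\Meg 1),
\]
where Proposition~\ref{prop:7} is used to make $\abs{\,\cdot\,}$ comparable with a $G_0$-homogeneous norm near $e$ and with a $G_\infty$-homogeneous norm near $\infty$; the logarithmic factors appear only when $\alpha$ is a pole of $I_{1,\,\cdot\,}$.

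\emph{Splitting and Young's inequality.} Assume $\beta>0$ and write $K=K_0+K_\infty$ with $K_0\coloneqq K\,\mathbf 1_{\Set{\abs{\,\cdot\,}\meg 1}}$ and $K_\infty\coloneqq K\,\mathbf 1_{\Set{\abs{\,\cdot\,}>1}}$. Since $\nu_{G_1}(B_{1,*}(\rho))\asymp\rho^{Q_0}$ as $\rho\to 0^+$ and $\asymp\rho^{D_1}$ as $\rho\to+\infty$ (Propositions~\ref{prop:7} and~\ref{prop:6} and Corollary~\ref{cor:3}), a layer-cake estimate gives $K_0\in L^r(G_1)$ for every $1\meg r<Q_0/(Q_0-\beta)$ (together with $K_0\in L^{Q_0/(Q_0-\beta),\infty}(G_1)$) when $\beta<Q_0$, and $K_0\in L^r(G_1)$ for every $r<\infty$ when $\beta\Meg Q_0$; likewise $K_\infty\in L^r(G_1)$ for every $D_1/(Q_\infty-\beta)<r\meg\infty$, together with $K_\infty\in L^{D_1/(Q_\infty-\beta),\infty}(G_1)$. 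On the unimodular group $G_1$, Young's inequality (and, at the boundary exponents, its weak-type version) then shows that right convolution with $K_0$ is bounded $L^p\to L^q$ whenever $0\meg\tfrac1p-\tfrac1q\meg\tfrac\beta{Q_0}$ and $1<p\meg q<\infty$, and that right convolution with $K_\infty$ is bounded $L^p\to L^q$ whenever $\tfrac\beta{Q_\infty}\meg\tfrac1p-\tfrac1q<1$ and $1<p<q<\infty$ — for the latter one uses $K_\infty\in L^r$ with $\tfrac1r=1-(\tfrac1p-\tfrac1q)$ and the fact that $\tfrac1p-\tfrac1q\Meg\tfrac\beta{Q_\infty}$ forces $r\Meg D_1/(Q_\infty-\beta)$ precisely because $D_1\meg Q_\infty$ (Corollary~\ref{cor:3}), the limiting case $r=D_1/(Q_\infty-\beta)$ (which occurs only when $D_1=Q_\infty$) being covered by the weak-type version. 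The intersection of these two ranges is exactly the hypothesis $\tfrac\beta{Q_\infty}\meg\tfrac1p-\tfrac1q\meg\tfrac\beta{Q_0}$, so summing $K=K_0+K_\infty$ yields the claim.

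\emph{The endpoint $\beta=0$, and the main difficulty.} When $\Re\alpha=\dd_\gamma$ the hypothesis forces $p=q$ and $K$ is a genuinely singular kernel at $e$ (of size $\abs{x}^{-Q_0}$), so Young's inequality no longer applies; here one realizes $\vect{X}^\gamma_s I_{s,\alpha}$ as the convolution kernel of $\vect{X}^\gamma_s\Lc_s^{-\dd_\gamma/\delta}\circ\Lc_s^{-i\Im\alpha/\delta}$ and invokes the $L^p(G_s)$-boundedness, $1<p<\infty$, of the Riesz transforms $\vect{X}^\gamma_s\Lc_s^{-\dd_\gamma/\delta}$ and of the imaginary powers $\Lc_s^{-i\Im\alpha/\delta}$ of the weighted subcoercive operator $\Lc_s$ (the latter being, in particular, a consequence of the multiplier theorems of Section~\ref{sec:4}). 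I expect the kernel estimate to be the main obstacle: one must patch together the $\partial^\gamma$-estimates of Theorem~\ref{teo:21:1}, stated separately in the $\hf_0$- and $\hf_\infty$-realizations, into a single bound for the left-invariant operator $\vect{X}^\gamma_s$, controlling the polynomial coefficients through Proposition~\ref{lem:21:1}, and keep track of the logarithmic corrections at the poles of $I_{s,\,\cdot\,}$, which are harmless in the interior of the range of exponents and need only minor additional care at the endpoints; the remainder is the classical Hardy--Littlewood--Sobolev argument on a group of polynomial growth.
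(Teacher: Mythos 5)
The plan for $p<q$ is essentially the same as the paper's: both rest on the pointwise decay of $\vect{X}^\gamma_s I_{s,\alpha}$ from Theorem~\ref{teo:21:1} and its derivatives expressed through Proposition~\ref{lem:21:1}, fed into Young's inequality (you split into $K_0,K_\infty$; the paper treats the kernel at once as a weak-$L^r$ function and quotes the Folland--Stein convolution bound, which amounts to the same estimate). However, your computation of the integrability thresholds for $K_\infty$ contains an error: the kernel estimate is in terms of $\abs{\,\cdot\,}_1$, and $\nu_{G_1}(B_1(r))\asymp r^{Q_\infty}$ as $r\to\infty$ (by the scaling $\abs{r\cdot x}_1=r\abs{x}_{r}$ and $\nu_{G_r}(B_r(1))=1$); the growth $r^{D_1}$ concerns $B_{1,*}(r)$, defined via the much larger modulus $\abs{\,\cdot\,}_{1,*}$, and the pointwise bound $\abs{K_\infty}\lesssim\abs{\,\cdot\,}_1^{-Q_\infty+\beta}$ does not translate into a comparable bound in $\abs{\,\cdot\,}_{1,*}$. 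So $K_\infty\in L^r$ for $r>Q_\infty/(Q_\infty-\beta)$, not for $r>D_1/(Q_\infty-\beta)$, and $K_\infty\notin L^{D_1/(Q_\infty-\beta),\infty}$ when $D_1<Q_\infty$. Since the hypothesis already forces $1/r=1-(1/p-1/q)\meg (Q_\infty-\beta)/Q_\infty$, your final range happens to be correct, but the reasoning involving $D_1$ is a genuine mistake, not a harmless reformulation.

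The real gap is the case $p=q$. The paper proves it by a Calder\'on--Zygmund argument: using Theorem~\ref{teo:21:1} and Proposition~\ref{lem:21:1}, $\tau\,\vect{X}^\gamma_s I_{s,\alpha}$ is shown to coincide, up to an integrable error, with the homogeneous kernel $\tau\,\vect{X}^\gamma_0 I_{0,\alpha}$ (which has the mean-zero and H\"ormander-type regularity needed), and then Goodman's singular-integral theorem is invoked; a symmetric argument at infinity handles $(1-\tau)\vect{X}^\gamma_s I_{s,\alpha}$. Your proposal instead factors the operator as $\vect{X}^\gamma_s\Lc_s^{-\dd_\gamma/\delta}\circ\Lc_s^{-i\Im\alpha/\delta}$ and appeals to the $L^p$-boundedness of the Riesz transform $\vect{X}^\gamma_s\Lc_s^{-\dd_\gamma/\delta}$ and of the imaginary power. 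The imaginary power can indeed be handled by the multiplier theorems later in the paper, but the boundedness of the Riesz transform $\vect{X}^\gamma_s\Lc_s^{-\dd_\gamma/\delta}$ is precisely Corollary~\ref{cor:4} with $\alpha=\dd_\gamma$ and $p=q$ --- so this is circular as written. Unless you replace this appeal with an independent argument (e.g.\ a Calder\'on--Zygmund estimate on the kernel, which is in substance what the paper does), the $\Re\alpha=\dd_\gamma$ endpoint of the statement remains unproven. Finally, you acknowledge the logarithmic corrections at the poles but dismiss them as needing ``minor additional care''; at the extreme exponents $1/p-1/q=\beta/Q_\infty$ or $\beta/Q_0$ the log actually pushes the kernel outside the relevant weak-$L^r$ space, so this point deserves more than a gesture (the paper itself is terse here, but does not lean on the weak-type endpoint in the way that your $K_0/K_\infty$ split does).
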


Notice that, if convolution on the right with $\vect{X}^\gamma_s I_{s,\alpha}$  induces a bounded operator $T_s$ from $L^p(G_s)$ into $L^q(G_s)$ for some $p,q\in (1,\infty)$ and for some $s\in (0,\infty)$, then $\frac{\Re \alpha-\dd_\gamma}{Q_\infty}\meg \frac{1}{p}-\frac{1}{q}\meg \frac{\Re \alpha-\dd_\gamma}{Q_0}$. Indeed, take $r>0$ and $f\in L^p(G_{r^{-1}s})$, and define $\rho_r(x)\coloneqq r\cdot x$ for every $x\in G_{r^{-1}s}$. Then,
\[
T_s(f\circ \rho_r)=(f\circ \rho_r)* (\vect{X}^\gamma_s I_{s,\alpha})= [f*(\rho_r)_*(\vect{X}^\gamma_s I_{s,\alpha})  ]\circ \rho_r= r^{-\alpha+\dd_\gamma} (T_{r^{-1}s} f)\circ \rho_r,
\]
where $T_{r^{-1}s}$ is given by convolution on the right with $\vect{X}^\gamma_{r^{-1}s} I_{r^{-1}s,\alpha}$. 
Now, identify $G_{s'}$ with $\hf_0$ for every $ s'\in [0,s]$. Observe that, denoting by $\nu_{\hf_0}$ the fixed Lebesgue measure on $\hf_0$, we have $\nu_{G_{s'}}=a_{0,s'} \nu_{\hf_0}$ for some $a_{0,s'}>0$; in addition, the mapping $ s'\mapsto a_{0,s'}$ is continuous on $[0,s]$ thanks to~{\bf5} of Proposition~\ref{prop:7}. Therefore, there is a constant $C>0$ such that
\[
C r^{-\sfrac{Q_0}{p}}  \norm{f}_{L^p(\hf_0)}=C \norm{f\circ \rho_r}_{L^p(\hf_0)}\Meg r^{-\Re \alpha+\dd_\gamma}\norm{(T_{r^{-1}s} f)\circ \rho_r}_{L^q(\hf_0)}=r^{-\Re \alpha+\dd_\gamma-\sfrac{Q_0}{q}} \norm{T_{r^{-1}s} f}_{L^q(\hf_0)} .
\]
for every $f\in L^p(\hf_0)$ and for every $r\Meg 1$. Now, $T_{r^{-1}s}f$ converges pointwise to $ T_0 f$ as $r\to +\infty$ for every $f\in C^\infty_c(\hf_0)$ with vanishing moments of all orders,\footnote{Notice that the set of such $f$ is dense in $L^p(\hf_0)$, since $p\in (1,\infty)$.} so that
\[
 \norm{T_{0} f}_{L^q(\hf_0)}\meg C\norm{f}_{L^p(\hf_0)} \liminf_{r\to \infty} r^{\Re \alpha-\dd_\gamma+ Q_0\left( \frac{1}{q}-\frac{1}{p}  \right)}
\]
for every such $f$. Since $T_0\neq 0$, it follows that $\Re \alpha-\dd_\gamma\Meg Q_0\left( \frac{1}{p}-\frac{1}{q}  \right)$. The other inequality is proved similarly.

\begin{proof}
	We shall briefly indicate the procedure employed in~\cite{NagelRicciStein}, for the sake of completeness. 
	When $p\neq q$, observe that $\vect{X}^\gamma_s I_{s,\alpha}$ belongs to weak $L^r$ for every $r\in (1,\infty)$ such that $\frac{\Re \alpha-\dd_\gamma}{Q_\infty}\meg \frac{1}{r'}\meg \frac{\Re \alpha-\dd_\gamma}{Q_0}$ thanks to Theorem~\ref{teo:21:1}. Then, arguing  as in the proof of~\cite[Proposition 1.19]{FollandStein}, we see that weak $L^r$ convolves $L^p(G_s)$ into $L^q(G_s)$ for $\frac{1}{p}-\frac{1}{q}=\frac{1}{r'}$ and $p,q,r\in(1,\infty)$.
	
	When $p=q$, take $\tau\in C^\infty_c(G_s)$ so that $\tau$ equals $1$ in a neighbourhood of $e$. We shall prove that $\tau \vect{X}^\gamma_s I_{s,\alpha}$ convolves $L^p(G_s)$ into itself; one may prove analogously that also $(1-\tau) \vect{X}^\gamma_s I_{s,\alpha}$ convolves $L^p(G_s)$ into itself and conclude the proof.
	Now, Proposition~\ref{lem:21:1} and Theorem~\ref{teo:21:1} show that $\tau \vect{X}^\gamma_s I_{s,\alpha}$ equals $\tau \vect{X}^\gamma_0 I_{0,\alpha}$ up to an integrable function, under the identification of $G_s$ with $G_0$ through $\hf_0$; consequently, it will suffice to show that $\tau \vect{X}^\gamma_0 I_{0,\alpha}$ convolves $L^p(G_s)$ into itself (with respect to the convolution of $G_s$).
	Now, it is clear that there is a constant $C>0$ such that
	\[
	\abs{\vect{X}_0^{\gamma'}(\tau \vect{X}^\gamma_0 I_{0,\alpha})(x)}\meg C\abs{x}_s^{-Q_0-\dd_{\gamma'}}
	\]
	for every $x\in G_s$ and for every $\gamma'$ with length at most $1$, thanks to~{\bf6} of Proposition~\ref{prop:7}. By Lemma~\ref{lem:7} we then see that we may take $C$ in such a way that
	\[
	\abs{ (\tau \vect{X}^\gamma_0 I_{0,\alpha})(x y^{-1})-(\tau \vect{X}^\gamma_0 I_{0,\alpha})(x) }\meg \frac{\abs{y}_s}{\abs{x}_s^{Q_0+1}}
	\]
	for every $x,y\in G_s$ such that $\abs{x}_s>2 \abs{y}_s>0$.  In addition, since $\vect{X}^\gamma_0 I_{0,\alpha}$ is a homogeneous distribution of degree $-Q_0+\alpha -\dd_\gamma$, it is clear that $\vect{X}^\gamma_0 I_{0,\alpha}$ has zero mean on the unit sphere (relative to $\abs{\,\cdot\,}_0$) \emph{when $\Im \alpha=0$}. Similar remarks apply to $ (\vect{X}^\gamma_0 I_{0,\alpha})^*$.
	Taking into account~\cite[Lemma of Chapter III, § 3.1]{Goodman}, it is not hard to see that we may apply~\cite[Theorem of Chapter III, § 4.3]{Goodman}, so that  $\tau\vect{X}^\gamma_0 I_{0,\alpha}$ convolves $L^p(G_s)$ into itself for every $p\in (1,\infty)$. 
\end{proof}

\begin{oss}
	Observe that, if $G=\R^n$ and $\Lc= \Delta^2-\Delta$, then it is not hard to prove that $I_{1,\alpha}=I^\Delta_{\alpha} * J_{\alpha}$, where $J_\alpha=( (1-\Delta)^{-\frac{\alpha}{2}})\delta_0$ and $I^\Delta_\alpha$ is the kernel of $(-\Delta)^{-\frac{\alpha}{2}}$ defined by analytic continuation, for every $\alpha\in \C$. 
	Then, observe that, for $\alpha\in (0,\infty)\setminus (n+\N )$, $I^\Delta_\alpha$ and $J_\alpha$ keep a constant sign (in particular, they vanish nowhere), so that
	\[
	I_{1,\alpha}(0)=(J_\alpha*I_\alpha^\Delta)(0)= \int_{\R^n} J_\alpha(x) I^{\Delta}_\alpha(-x)\,\dd x\neq0
	\]
	when $\alpha>n$.
	Hence, the polynomials appearing in the local expansion of $I_\alpha$ in Theorem~\ref{teo:21:1} cannot be omitted, in general.
\end{oss}

\begin{teo}\label{teo:3}
	Take $\alpha\in \C$ and $s\in \R_+$. Then, the following hold:
	\begin{enumerate}
		\item assume that $G_1=G_\infty$ (under the identification through $\hf_\infty$) as Lie groups and that $[\Lc_1, \Lc_\infty]=0$. Let $d_\infty$ be the least degree of the non-zero homogeneous components of $\Lc_1-\Lc_\infty$. Then, for every $N\in \N$ and for every $\gamma$ there is a constant $C_{N,\gamma}>0$ such that, for every $s\in [1,\infty)$,
		\[
		\abs*{\vect{X}_\infty^\gamma\left(I_{s,\alpha}-\sum_{k<N}\binom{-\alpha/\delta}{k} (\Lc_s-\Lc_\infty)^k I_{\infty,\alpha+\delta k}  \right)(x)}\meg \frac{s^{-N} C_{N,\gamma}}{\abs{x}^{Q_\infty-\Re\alpha+\dd_\gamma+N(d_\infty-\delta)}}(1+\abs{\log\abs{s\cdot x}})
		\]
		for every $x\in \hf_\infty$ such that $\abs{x}\Meg s^{-1}$; the factor $1+\log\abs{s\cdot x}$ may be omitted if $\alpha\not \in Q_\infty+\dd_\gamma+N(d_\infty-\delta)+\N$;
		
		\item assume that $G_1=G_0$ (under the identification through $\hf_0$)  as Lie groups and that $[\Lc_1, \Lc_0]=0$. Let $d_0$ be the greatest degree of non-zero homogeneous components of $\Lc_1-\Lc_0$. Then, there is a sequence $(P_{\alpha,k})$ of homogeneous polynomials on $G_0$ such that $P_{\alpha,k}$ has degree $k$ for every $k\in\N$, and such that for every $N\in \N$ and for every $\gamma$ there is a constant $C_{N,\gamma}>0$ such that, for every $s\in (0,1]$,
		\begin{multline*}
		\abs*{\vect{X}_0^\gamma\left(I_{s,\alpha}-\sum_{k<N}\binom{-\alpha/\delta}{k} (\Lc_s-\Lc_0)^k I_{0,\alpha+\delta k}- \sum_{k<-Q_0+\Re \alpha+N(\delta-d_0)} s^{k-Q_0+\alpha} P_{\alpha,k}  \right)(x)}\\
		\meg \frac{s^{N} C_{N,\gamma}}{\abs{x}^{Q_0-\Re\alpha+\dd_\gamma-N(\delta -d_0)}}(1+\abs{\log\abs{s\cdot x}})
		\end{multline*}
		for every $x\in \hf_0$ such that $0\neq\abs{x}\meg s^{-1}$; the factor $1+\log\abs{s\cdot x}$ may be omitted if $\alpha\not \in Q_0+\dd_\gamma-N(\delta -d_0)+\N$.
	\end{enumerate}
\end{teo}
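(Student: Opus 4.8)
The plan is to combine the rigidity forced by the hypotheses with the heat kernel bounds of Theorem~\ref{prop:21:2}, running the scheme of the proof of Theorem~\ref{teo:21:1} but expanding $h_{\infty,s,t}$ \emph{around $\Lc_\infty$} — via Duhamel's formula and the vanishing of the commutator — rather than in the parameter $s^{-1}$. First comes the algebra. Since $G_1=G_\infty$ as Lie groups under the identification through $\hf_\infty$, the scaling relation~\eqref{scaling[]} forces $[\,\cdot\,,\,\cdot\,]_{\infty,s}=[\,\cdot\,,\,\cdot\,]_{\infty,\infty}$ for \emph{every} $s\in(0,\infty]$, so the $G_s$ are all equal to the homogeneous group $G_\infty$, and $\Lc_s$, $E_s\coloneqq\Lc_s-\Lc_\infty$, and the $\vect{X}_\infty^\gamma$ are all left-invariant on this one group. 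Expanding $\Lc_s=\Pc(X_{s,1},\dots,X_{s,n})$ by Proposition~\ref{lem:21:1} shows that $E_s=\sum_{d\geq d_\infty}s^{-(d-\delta)}M_d$ is a \emph{finite} sum of left-invariant differential operators $M_d$, homogeneous of (integer) degree $d>\delta$ and \emph{independent of $s$}, $d_\infty$ being the least $d$ with $M_d\neq0$; in particular $d_\infty-\delta\geq1$. Writing $0=[\Lc_1-\Lc_\infty,\Lc_\infty]=\sum_d[M_d,\Lc_\infty]$ and comparing homogeneous degrees (each $[M_d,\Lc_\infty]$ is homogeneous of degree $d+\delta$) gives $[M_d,\Lc_\infty]=0$ for all $d$, hence $[\Lc_s,\Lc_\infty]=0$ for all $s$.

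Next, iterating Duhamel's formula for $\partial_t h_{\infty,s,t}=-(\Lc_\infty+E_s)h_{\infty,s,t}$ and using $[E_s,\Lc_\infty]=0$ to collapse the iterated time-integrals, one obtains
\[
h_{\infty,s,t}=\sum_{k<N}\frac{(-t)^k}{k!}E_s^kh_{\infty,\infty,t}+\frac{(-1)^N}{(N-1)!}E_s^N\int_0^t(t-u)^{N-1}\bigl(h_{\infty,s,u}*h_{\infty,\infty,t-u}\bigr)\,\dd u.
\]
Applying $\Gamma(\tfrac{\alpha}{\delta})^{-1}\int_0^{+\infty}t^{\alpha/\delta}(\,\cdot\,)\,\tfrac{\dd t}{t}$ for $0<\Re\alpha<Q_\infty$, the $k$-th term yields $\tfrac{(-1)^k\Gamma(\alpha/\delta+k)}{k!\,\Gamma(\alpha/\delta)}E_s^kI_{\infty,\infty,\alpha+\delta k}=\binom{-\alpha/\delta}{k}(\Lc_s-\Lc_\infty)^kI_{\infty,\alpha+\delta k}$, precisely the $k$-th term of the claimed development, while the remainder becomes $\tfrac{(-1)^N}{(N-1)!\,\Gamma(\alpha/\delta)}E_s^N\!\iint u^{N-1}(u+v)^{\alpha/\delta-1}(h_{\infty,s,u}*h_{\infty,\infty,v})\,\dd u\,\dd v$. (The same identity and remainder also follow by writing $\Lc_s^{-\alpha/\delta}$ through Cauchy's integral and iterating the resolvent identity for $(\lambda-\Lc_s)^{-1}$ against $(\lambda-\Lc_\infty)^{-1}$, using $[E_s,\Lc_\infty]=0$.)

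To estimate the remainder, one argues as in the proof of Theorem~\ref{teo:21:1}: split the $(u,v)$-integral according to $v\leq u$ or $v>u$, use $h_{\infty,s,u}*h_{\infty,\infty,v}=h_{\infty,\infty,v}*h_{\infty,s,u}$, and let the left-invariant operator $\vect{X}_\infty^\gamma E_s^N$ fall on the factor carrying the larger time-parameter (this is what renders the $(u,v)$-integral integrable near the origin). Since $E_s^N=\sum s^{-\sum(d_i-\delta)}M_{d_1}\cdots M_{d_N}$ with $M_{d_1}\cdots M_{d_N}$ homogeneous of degree $\sum d_i\geq Nd_\infty$ and $\sum(d_i-\delta)\geq N(d_\infty-\delta)$, Theorem~\ref{prop:21:2} (used with a homogeneous operator of order $\dd_\gamma+\sum d_i$) together with Proposition~\ref{prop:5}, a convolution-of-Gaussians estimate and a scaling substitution in the $v$-integral, produce the bound $s^{-N(d_\infty-\delta)}C_{N,\gamma}\,\abs{x}^{-(Q_\infty-\Re\alpha+\dd_\gamma+N(d_\infty-\delta))}$, which is $\leq s^{-N}C_{N,\gamma}\abs{x}^{-(\cdots)}$ for $s\geq1$; the integral converges for $\Re\alpha<Q_\infty+\dd_\gamma+N(d_\infty-\delta)$, and meromorphic continuation in $\alpha$ extends the estimate to all $\alpha$, at the cost of the factor $1+\abs{\log\abs{s\cdot x}}$ exactly when $\alpha\in Q_\infty+\dd_\gamma+N(d_\infty-\delta)+\N$. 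Uniformity in $s\in[1,\infty)$, and the treatment of the log-homogeneous terms stemming from poles of $I_{\infty,s,\,\cdot\,}$, are obtained by reducing to $s=1$ through the scaling of Proposition~\ref{prop:9}, exactly as in Theorem~\ref{teo:21:1}. Part~(2) is entirely parallel on $\hf_0$: now $E'_s\coloneqq\Lc_s-\Lc_0=\sum_{d\leq d_0}s^{\delta-d}M'_d$ with $[M'_d,\Lc_0]=0$, one finds $I_{0,s,\alpha}=\sum_{k<N}\binom{-\alpha/\delta}{k}(\Lc_s-\Lc_0)^kI_{0,\alpha+\delta k}+(\text{remainder})$ with the remainder now integrated over small times, and the additional polynomials $\sum_{k<-Q_0+\Re\alpha+N(\delta-d_0)}s^{k-Q_0+\alpha}P_{\alpha,k}$ appear — just as in Theorem~\ref{teo:21:1}(2) and Proposition~\ref{prop:21:8} — from the $\int_1^{+\infty}$ part of the subordination integral, whose Taylor expansion at $0$ is polynomial.

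The delicate point is the uniform-in-$s$ estimate of the remainder kernel: keeping Gaussian decay in $x$ while the high-order operator $E_s^N$ is applied to one of the two heat factors, guaranteeing integrability of the $(u,v)$-integral near the origin — which is what produces the restriction $\abs{x}\geq s^{-1}$ and relies on the time-threshold $t>cs^{-\delta}$ in Theorem~\ref{prop:21:2}, the complementary range of small times being absorbed through the general weighted-subcoercive bounds — and matching the exponent $N(d_\infty-\delta)$ (resp.\ $N(\delta-d_0)$) simultaneously in the power of $s$ and in the rate of decay in $\abs{x}$. The only genuinely new algebraic input is deducing $[M_d,\Lc_\infty]=0$ for every $d$ from $[\Lc_1,\Lc_\infty]=0$.
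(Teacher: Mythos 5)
Your argument is correct and is essentially the paper's own: the paper introduces the interpolated kernel $h_{s,t}^{(\theta)}=h_{\infty,(1-\theta)t}*h_{s,\theta t}$, uses $[\Lc_s,\Lc_\infty]=0$ to get $\frac{\dd^k}{\dd\theta^k}h_{s,t}^{(\theta)}=(-t)^k(\Lc_s-\Lc_\infty)^kh_{s,t}^{(\theta)}$, and Taylor-expands in $\theta$ — which, after the substitution $u=\theta t$, is exactly your iterated Duhamel identity with the same remainder. The subsequent Mellin subordination, the identification of the $k$-th term with $\binom{-\alpha/\delta}{k}(\Lc_s-\Lc_\infty)^kI_{\infty,\alpha+\delta k}$, the Gaussian bounds from Theorem~\ref{prop:21:2} applied to the convolution, and the reduction to $s=1$ by scaling all match the paper's proof (your explicit decomposition $E_s=\sum_d s^{-(d-\delta)}M_d$ with $[M_d,\Lc_\infty]=0$ is a harmless elaboration the paper leaves implicit).
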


Let us make some examples. Take a $2$-step nilpotent Lie group $G$ and a hypoelliptic sub-Laplacian $\Lc$ thereon. Then, we may endow $G$ with the structure of a stratified group in such a way that $\Lc=\Lc_\infty+\Lc'$, where $\Lc_\infty$ and $\Lc'$ are homogeneous sums of squares of degrees $2$ and $4$, respectively. By means of the construction described in the introduction, we may choose a $2$-step stratified group $\widetilde G$ and a sub-Laplacian $\widetilde \Lc$ in such a way that $G_s=G_\infty$ as Lie groups\footnote{This is a general fact when $\widetilde G$ is a $2$-step stratified group, cf.~the proof of Theorem~\ref{teo:2}} and $\Lc_s= \Lc_\infty+s^{-2}\Lc'$ for every $s\in (0,\infty]$. Thus, in this case the first part of Theorem~\ref{teo:3} applies.

If, in the preceding example, we define $\Lc_1=\Lc_\infty^k+\Lc'$ for some $k\Meg 3$, then, applying an analogous construction, we get $G_s=G_0$ as Lie groups and $\Lc_s=\Lc_\infty^k+s^{2(k-1)}\Lc'$ for every $s\in [0,\infty)$, so that the second part of Theorem~\ref{teo:3} applies.

\begin{proof}
	{\bf1.} Assume that $G_1=G_\infty$ as Lie groups and  that $[\Lc_1,\Lc_\infty]=0$. 
	Define, for every $t> 0$, for every $s\in (0,\infty]$ and for every $\theta\in [0,1]$,
	\[
	h_{s,t}^{(\theta)}\coloneqq h_{\infty,(1-\theta) t}*h_{s,\theta t};
	\]
	observe that $(h_{s,t}^{(\theta)})_t$ is a semi-group under convolution and that the mapping $\theta \mapsto h_{s,t}^{(\theta)}\in \Sc'(G)$ is of class $C^\infty$ on $[0,1]$, with  
	\[
	\frac{\dd^k}{\dd \theta^k} h_{s,t}^{(\theta)}=(-t)^k (\Lc_s-\Lc_\infty)^k h_{s,t}^{(\theta)}
	\]
	for every $t>0$, for every $s\in (0,\infty]$, and for every $\theta\in [0,1]$. 
	Now, Proposition~\ref{prop:7} and Theorem~\ref{prop:21:2} imply that for every $\gamma$ and for every $k\in \N$ there are two constants $C,b>0$ such that
	\[
	\abs{\vect{X}^\gamma_\infty (\Lc_s-\Lc_\infty)^k h_{s,t}^{(\theta)}(x) }\meg \frac{C}{t^{\frac{Q_\infty  +\dd_\gamma+k d_\infty}{\delta}}} e^{-b \left(\frac{\abs{x}_s}{t^{\sfrac{1}{\delta}}}  \right)^{\frac{\delta}{\delta-1}} }
	\]
	for every $s\in [1,\infty]$, for every $t\Meg 1$, for every $\theta\in [0,1]$, and for every $x\in G_s$. 
	Now, take $\alpha\in \C$ such that $0<\Re \alpha<Q_\infty$, and observe that a Taylor expansion of $h_{s,t}^{(\theta)}$ in $\theta$ gives
	\[
	I_{s,\alpha}= \frac{1}{\Gamma(\frac{\alpha}{\delta})} \int_0^1 t^{\frac{\alpha}{\delta}} h_{s,t}\,\frac{\dd t}{t}+ \sum_{k<N} \frac{(-1)^k}{k!\Gamma(\frac{\alpha}{\delta}) }\int_1^{+\infty}  t^{\frac{\alpha}{\delta}+k} (\Lc_s-\Lc_\infty)^{k} h_{\infty,t}\,\frac{\dd t}{t}+R_{s,\alpha,N},  
	\]
	where
	\[
	R_{s,\alpha,N}=\frac{(-1)^N}{\Gamma(\frac{\alpha}{\delta})(N-1)! } \int_1^{+\infty} t^{\frac{\alpha}{\delta}+N}  \int_0^1 (\Lc_s-\Lc_\infty)^N h_{s,t}^{(\theta)} (1-\theta)^{N-1}\,\dd \theta\, \frac{\dd t}{t}  .
	\]
	The proof then proceeds as that of Theorem~\ref{teo:21:1}.
	
	The case in which $G_s=G_0$ and $[\Lc_s,\Lc_0]=0$ is treated similarly.
\end{proof}

\section{Spectral Measures and Multipliers}\label{sec:4}

In this section, we assume that $\widetilde \Lc$ is Rockland and formally self-adjoint, but not necessarily positive. Then $\Lc_s^2$ is weighted subcoercive, so that $(\Lc_s)$ is a weighted subcoercive system in the sense of~\cite{Martini,Martini2}.
Then, the operator $\Lc_{s}$, considered as an unbounded operator on $L^2(G_s)$ with initial domain $C^\infty_c(G_s)$, is essentially self-adjoint (cf.~\cite[Proposition 3.2]{Martini2}). We shall then denote by $\sigma(\Lc_s)$ the corresponding  spectrum. 

Now, if $m\colon \sigma(\Lc_s)\to \C$ is bounded and Borel measurable, then there is a unique distribution $\Kc_{\Lc_s}(m)$ on $G_s$ such that
\[
m(\Lc_s) \phi=\phi * \Kc_{\Lc_s}(m) 
\]
for every $\phi \in C^\infty_c(G_s)$ (cf.~\cite[Subsection 3.2]{Martini2}). In addition, there is a unique positive Radon measure $\beta_{\Lc_s}$ on $\sigma(\Lc_{s})$ such that $\Kc_{\Lc_s}$ extends to an isometry of $L^2(\beta_{\Lc_s})$ into $L^2(G_s)$  (cf.~\cite[Theorem 3.10]{Martini2}).

\begin{lem}\label{lem:3}
	There is a constant $C>0$ such that
	\[
	\beta_{\Lc_s}([-r,r])\meg C \frac{\min\left((s^{-1} r^{\sfrac{1}{\delta}})^{Q_0},(s^{-1} r^{\sfrac{1}{\delta}})^{Q_\infty} \right)}{\min(s^{-Q_0},s^{-Q_\infty}  )}
	\]
	for every $s\in [0,\infty]$ and for every $r>0$. In particular, $\beta_{\Lc_s}(\Set{0})=0$ for every $s\in [0,\infty]$.
\end{lem}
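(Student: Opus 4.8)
The plan is to estimate $\beta_{\Lc_s}([-r,r])$ through an $L^2$-norm of a heat kernel and then feed in the uniform Gaussian bounds of Theorem~\ref{prop:21:2}. Since here $\widetilde\Lc$ is formally self-adjoint, $\widetilde\Lc^2$ is a positive Rockland operator of degree $2\delta$ (indeed $\widetilde\Lc^2+(\widetilde\Lc^2)^*=2\widetilde\Lc^2$), so the whole construction of Section~\ref{sec:1} and the estimates of Theorem~\ref{prop:21:2} apply to $\widetilde\Lc^2$ in place of $\widetilde\Lc$, with $\delta$ replaced by $2\delta$; I denote by $g_{s,t}$ the resulting heat kernel of $\Lc_s^2$ on $G_s$ and by $g_{0,s,t}$, $g_{\infty,s,t}$ its densities on $\hf_0$, $\hf_\infty$. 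These Gaussian bounds give $g_{s,t}\in L^1(G_s)\cap L^2(G_s)$, hence, by the (spectral) Plancherel theorem for $\Lc_s$ (\cite[Theorem 3.10]{Martini2}), the function $\lambda\mapsto e^{-t\lambda^2}$ lies in $L^2(\beta_{\Lc_s})$ and
\[
\int_{\sigma(\Lc_s)}e^{-2t\lambda^2}\,\dd\beta_{\Lc_s}(\lambda)=\norm{g_{s,t}}_{L^2(\nu_{G_s})}^2=g_{s,2t}(e),
\]
the last equality because $g_{s,t}^*=g_{s,t}$ and $g_{s,t}*g_{s,t}=g_{s,2t}$, where $g_{s,2t}$ on the right is the density with respect to $\nu_{G_s}$. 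Since $\mathbf 1_{[-r,r]}(\lambda)\meg e^{2tr^2}e^{-2t\lambda^2}$ for every $\lambda\in\R$, I obtain $\beta_{\Lc_s}([-r,r])\meg e^{2tr^2}\,g_{s,2t}(e)$ for every $t>0$.

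The next step is to bound $g_{s,2t}(e)$ uniformly in $s$. For $s\in[0,1]$ I realize $G_s$ on $\hf_0$, for $s\in[1,\infty]$ on $\hf_\infty$; in either case $\nu_{G_s}$ is comparable, uniformly in $s$, to the fixed Lebesgue measure on $\hf_0$, resp.\ $\hf_\infty$ — the upper bound is contained in the proof of Proposition~\ref{prop:6}, and the lower bound holds because $B_s(1)=B_{s,*}(1)$ is contained in a fixed bounded subset of $\hf_0$, resp.\ $\hf_\infty$, by~{\bf6} of Proposition~\ref{prop:7} together with the uniform bound on $\lambda_s^{\pm1}$ coming from Proposition~\ref{prop:21:9}. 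Moreover the Jacobian $D_s$ relating the two fixed Lebesgue measures under the identification $\lambda_s\colon\hf_0\to\hf_\infty$ of the two realizations of $G_s$ is dilation covariant, $D_{rs}=r^{Q_0-Q_\infty}D_s$ (dilation by $r$ has determinant $r^{Q_0}$ on $\hf_0$ and $r^{Q_\infty}$ on $\hf_\infty$), whence $D_s\asymp s^{Q_0-Q_\infty}$ and $\nu_{G_s}=c_{0,s}(\mathrm{Leb}_{\hf_0})=c_{\infty,s}(\mathrm{Leb}_{\hf_\infty})$ with $c_{0,s}=c_{\infty,s}D_s$. Plugging Theorem~\ref{prop:21:2} (for $\widetilde\Lc^2$, with $X=\id$, $d=k=0$, at $x=e$, where the exponential factor equals $1$) into these identifications gives, for $s\in[0,1]$, $g_{s,2t}(e)\meg C\,t^{-Q_0/(2\delta)}$ when $t\meg c\,s^{-2\delta}$ and $g_{s,2t}(e)\meg C\,s^{Q_0-Q_\infty}\,t^{-Q_\infty/(2\delta)}$ when $t\Meg c\,s^{-2\delta}$, and symmetrically for $s\in[1,\infty]$ with $Q_0,Q_\infty$ interchanged and $s^{Q_0-Q_\infty}$ replaced by $s^{Q_\infty-Q_0}$; for $s=0$ only the first bound is needed (with $c_{0,0}\asymp1$ and all $t>0$ admissible), for $s=\infty$ only the $\hf_\infty$-analogue.

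It remains to optimize in $t$. In each of the four regimes $s\lessgtr1$, $r^{\sfrac1\delta}\lessgtr s$, one takes $t\asymp r^{-2}$ when the range of validity of the relevant bound permits, and $t\asymp s^{-2\delta}$ otherwise — in the latter case $r^{\sfrac1\delta}\lesssim s$ forces $tr^2\lesssim1$, so $e^{2tr^2}$ stays bounded. For instance, when $s\meg1$ and $r^{\sfrac1\delta}\Meg s$ the choice $t\asymp r^{-2}$ satisfies $t\lesssim s^{-2\delta}$ and yields $\beta_{\Lc_s}([-r,r])\meg C\,r^{Q_0/\delta}$; when $s\meg1$ and $r^{\sfrac1\delta}\meg s$ the choice $t\asymp r^{-2}$ satisfies $t\gtrsim s^{-2\delta}$ and yields $\beta_{\Lc_s}([-r,r])\meg C\,s^{Q_0-Q_\infty}r^{Q_\infty/\delta}$; the two cases $s\Meg1$ are analogous. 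A short check shows that these four estimates are exactly the single expression $C\,\min\!\big((s^{-1}r^{\sfrac1\delta})^{Q_0},(s^{-1}r^{\sfrac1\delta})^{Q_\infty}\big)/\min(s^{-Q_0},s^{-Q_\infty})$, with the obvious limiting reading at $s\in\Set{0,\infty}$ (where it equals $C\,r^{Q_0/\delta}$, resp.\ $C\,r^{Q_\infty/\delta}$, which is what the argument gives there directly). Finally, letting $r\to0^+$ (every exponent of $r$ occurring is positive for $s\in\R_+$, and the endpoint cases are immediate) and using continuity of $\beta_{\Lc_s}$ from above gives $\beta_{\Lc_s}(\Set0)=0$.

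I expect the main obstacle to be the uniform geometric input of the second paragraph: the $s$-uniform two-sided comparison of $\nu_{G_s}$ with the fixed Lebesgue measures on $\hf_0$ and $\hf_\infty$, and the identification $D_s\asymp s^{Q_0-Q_\infty}$ of the change-of-realization Jacobian. Granting these, the remainder is a routine Tauberian/optimization argument; the only other point needing care is the harmless reading of the stated fraction as its limit at $s=0$ and $s=\infty$, which is automatic since those endpoints are covered by the one realization in which $\nu_{G_s}$ is uniformly comparable to Lebesgue.
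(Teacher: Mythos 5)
Your proposal is correct and follows exactly the route the paper indicates (the paper's proof is just a pointer to Sikora's \S 2 argument combined with the heat-kernel bounds for $\Lc_s^2$ from Theorem~\ref{prop:21:2}): bound $\beta_{\Lc_s}([-r,r])$ by $e^{2tr^2}g_{s,2t}(e)$ via the Plancherel isometry, control $g_{s,2t}(e)$ uniformly in $s$ using the two realizations on $\hf_0$ and $\hf_\infty$ together with the scaling of the normalizing constants $c_{0,s},c_{\infty,s}$, and optimize in $t$. The geometric inputs you flag (the $s$-uniform two-sided comparison of $\nu_{G_s}$ with the fixed Lebesgue measures, and $D_{rs}=r^{Q_0-Q_\infty}D_s$ from Proposition~\ref{prop:21:9}(iv)) are indeed valid, so the argument is complete.
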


For the proof, argue as in~\cite[§ 2]{Sikora}, using the estimates for the heat kernel associated with $\Lc^2_s$ provided in Theorem~\ref{prop:21:2}.

\begin{prop}\label{prop:2}
	The following hold:
	\begin{enumerate}
		\item for every bounded Borel measurable  function $m\colon \R\to\C$, for every $s\in [0,\infty]$, and for every $r>0$, 
		\[
		\Kc_{\Lc_{r s}}(m)=(r^{-1}\,\cdot\,)_*\Kc_{\Lc_s}(m(r^\delta\,\cdot\,));
		\]
		
		\item $\beta_{\Lc_{r s}}= \nu_{G_s}(B_s(r)) (r^\delta\,\cdot\,)_*(\beta_{\Lc_s})$;
			
		\item the mapping $[0,\infty]\ni s' \mapsto \beta_{\Lc_{s'}}$ is vaguely continuous.
	\end{enumerate}
\end{prop}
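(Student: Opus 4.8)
The plan is to establish each of the three parts in turn, leveraging the corresponding scaling identities for the heat semigroups and the compatibility of the functional calculus with dilations already built into Section \ref{sec:2}. For part (1), the starting point is the relation $h_{rs,t}=(r^{-1}\,\cdot\,)_* h_{s,r^\delta t}$ (and its square-version $h^{(2)}_{rs,t}=(r^{-1}\,\cdot\,)_* h^{(2)}_{s,r^\delta t}$ for $\Lc_s^2$), which says that $\Lc_{rs}=r^\delta\,(r^{-1}\,\cdot\,)_*\Lc_s(r\,\cdot\,)_*$ as operators, i.e.\ the dilation $\rho_r$ intertwines $\Lc_s$ on $G_s$ with $r^\delta\Lc_{rs}$ on $G_{rs}$. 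From this one reads off $m(\Lc_{rs})=(r^{-1}\,\cdot\,)_*\,\bigl(m(r^\delta\,\cdot\,)\bigr)(\Lc_s)\,(r\,\cdot\,)_*$ by the uniqueness of the spectral calculus, and translating this into convolution kernels (using $\phi*\Kc_{\Lc_{rs}}(m)=m(\Lc_{rs})\phi$ and the fact that pushforward by a dilation is an algebra isomorphism of the convolution algebra between $G_s$ and $G_{rs}$, with the appropriate Jacobian absorbed into the normalization) gives $\Kc_{\Lc_{rs}}(m)=(r^{-1}\,\cdot\,)_*\Kc_{\Lc_s}(m(r^\delta\,\cdot\,))$. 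The only care needed is to keep track of which Haar measure is being used on each $G_s$: since $\nu_{G_s}$ is normalized by $\nu_{G_s}(B_s(1))=1$ and the dilation $\rho_r$ maps $B_{rs}(r)$ onto $B_s(1)$ (up to the scaling in part (4) of Proposition \ref{prop:7}), the measures $\nu_{G_{rs}}$ and $(r^{-1}\,\cdot\,)_*\nu_{G_s}$ differ precisely by the factor $\nu_{G_s}(B_s(r))$, which is where that constant enters in part (2).

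For part (2), recall that $\beta_{\Lc_s}$ is characterized (up to the choice of Haar measure on $G_s$) by the requirement that $\Kc_{\Lc_s}$ be an isometry $L^2(\beta_{\Lc_s})\to L^2(G_s,\nu_{G_s})$. Applying $\Kc_{\Lc_{rs}}$ to a test function $m$ and using part (1), we get $\norm{\Kc_{\Lc_{rs}}(m)}_{L^2(G_{rs},\nu_{G_{rs}})}^2 = \norm{(r^{-1}\,\cdot\,)_*\Kc_{\Lc_s}(m(r^\delta\,\cdot\,))}_{L^2(G_{rs},\nu_{G_{rs}})}^2$. Now rewrite the $L^2(G_{rs},\nu_{G_{rs}})$-norm of a pushforward under $\rho_r$ in terms of an $L^2(G_s,\nu_{G_s})$-norm: since $\nu_{G_{rs}}=\nu_{G_s}(B_s(r))^{-1}(r^{-1}\,\cdot\,)_*\nu_{G_s}$ (comparing the two Haar measures on $G_{rs}$ via their values on $B_{rs}(1)$ and using part (4) of Proposition \ref{prop:7}), this norm equals $\nu_{G_s}(B_s(r))^{-1}\norm{\Kc_{\Lc_s}(m(r^\delta\,\cdot\,))}_{L^2(G_s,\nu_{G_s})}^2$, which by the isometry property equals $\nu_{G_s}(B_s(r))^{-1}\int \abs{m(r^\delta\lambda)}^2\,d\beta_{\Lc_s}(\lambda) = \nu_{G_s}(B_s(r))^{-1}\int\abs{m(\mu)}^2\,d(r^\delta\,\cdot\,)_*\beta_{\Lc_s}(\mu)$. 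On the other hand it equals $\int\abs{m}^2\,d\beta_{\Lc_{rs}}$. Since $m$ is arbitrary (in a class separating measures, e.g.\ continuous compactly supported functions on $\sigma(\Lc_{rs})$), the two measures coincide: $\beta_{\Lc_{rs}}=\nu_{G_s}(B_s(r))^{-1}(r^\delta\,\cdot\,)_*\beta_{\Lc_s}$. A sign check on which direction the dilation goes (pushforward versus pullback, $r^\delta$ versus $r^{-\delta}$) fixes the precise formula stated; I expect the constant to come out as $\nu_{G_s}(B_s(r))$ after correctly inverting, as written in the statement.

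For part (3), vague continuity of $s'\mapsto\beta_{\Lc_{s'}}$, the plan is to test against a fixed $m\in C_c(\R)$ and show $s'\mapsto\int m\,d\beta_{\Lc_{s'}}$ is continuous. Because $\Kc_{\Lc_{s'}}$ is an isometry, $\int m\,d\beta_{\Lc_{s'}}$ can be recovered from the kernels $\Kc_{\Lc_{s'}}(\sqrt{m})$ (for $m\ge 0$; the general case by splitting into real/imaginary and positive/negative parts, or better by writing $\int m\,d\beta_{\Lc_{s'}}=\langle\Kc_{\Lc_{s'}}(m^{1/2}),\Kc_{\Lc_{s'}}(\bar m^{1/2})\rangle$ suitably interpreted), so it suffices to prove continuity in $s'$ of these kernels in a topology strong enough to pass to $L^2$-inner products. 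The natural route is: (a) reduce to $m$ of the form $m(\lambda)=e^{-t\lambda^2}$ or a Schwartz function of $\Lc_{s'}$, whose kernel is $h^{(2)}_{\cdot,s',t}$ — or more generally approximate $m$ uniformly by such functions using that $\sigma(\Lc_{s'})$ is controlled by Lemma \ref{lem:3} — and (b) invoke the $C^\infty$-dependence of $h_{0,s',t}$ and $h_{\infty,s',t}$ on $s'$ together with the uniform Gaussian bounds of Theorem \ref{prop:21:2} (and Proposition \ref{prop:5}) to get $L^1$ and $L^2$ convergence of the kernels as $s'$ varies, uniformly enough to control the spectral integrals. One must handle the passage to the endpoints $s'=0,\infty$, where the group itself jumps, by working in the fixed realizations on $\hf_0$ (near $0$) and $\hf_\infty$ (near $\infty$); the continuity statement in part (5) of Proposition \ref{prop:7} and the explicit integral formulas for $h_{0,s,t}$ and $h_{\infty,s,t}$ in terms of $\widetilde h_t$ and $\psi_{0,s},\psi_{\infty,s}$ make this transparent, since $\psi_{0,s}\to 0$ and $\psi_{\infty,1/s}\to 0$. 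I expect part (3) to be the main obstacle: parts (1) and (2) are essentially bookkeeping with the scaling identities, whereas (3) requires combining the heat-kernel estimates of Theorem \ref{prop:21:2} (to get equicontinuity/tightness of the family of spectral measures, via Lemma \ref{lem:3}) with a density argument to reduce an arbitrary $m\in C_c$ to heat-type multipliers, and then a dominated-convergence argument at the two degenerate endpoints.
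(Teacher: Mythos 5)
Your proposal is correct, and for parts (1) and (2) it is essentially the paper's argument up to cosmetic differences: the paper derives the scaling identity by writing $\Kc_{\Lc_s}(m)=(\pi_s)_*\Kc_{\widetilde\Lc}(m)$ for $m\in\Sc(\R)$ and using $\pi_{rs}=(r^{-1}\,\cdot\,)\circ\pi_s\circ(r\,\cdot\,)$ together with $(r\,\cdot\,)_*\widetilde\Lc=r^\delta\widetilde\Lc$, whereas you intertwine the dilations with the heat semigroups directly; and for (2) the paper uses the trace identity $\int m\,\dd\beta_{\Lc_s}=\Kc_{\Lc_s}(m)(e)$ where you use the isometry $\norm{\Kc_{\Lc_s}(m)}_{L^2(\nu_{G_s})}^2=\int\abs{m}^2\,\dd\beta_{\Lc_s}$ — both determine the measure and both pick up the same Jacobian factor $\nu_{G_s}(B_s(r))$. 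For part (3) your route is noticeably heavier than the paper's: the paper again exploits $\int m\,\dd\beta_{\Lc_{s'}}=\Kc_{\Lc_{s'}}(m)(e)$ for $m\in\Sc(\R)$, so that continuity in $s'$ is immediate from the smooth dependence of $s'\mapsto\Kc_{0,s'}$ and $s'\mapsto\Kc_{\infty,1/s'}$ (Lemma~\ref{lem:2}), and then upgrades from Schwartz multipliers to vague convergence using only the uniform mass bound of Lemma~\ref{lem:3}; your detour through $\langle\Kc_{\Lc_{s'}}(m^{1/2}),\Kc_{\Lc_{s'}}(\bar m^{1/2})\rangle$, square roots, and heat-type approximants reaches the same conclusion but forces you to track $L^2$ convergence of kernels rather than pointwise convergence of a single scalar. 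The one thing to keep in mind if you pursue your version is that uniform approximation of $m\in C_c$ by smooth multipliers only yields vague convergence because the masses $\beta_{\Lc_{s'}}([-r,r])$ are bounded uniformly in $s'$; you correctly invoke Lemma~\ref{lem:3} for this, so there is no gap.
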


\begin{proof}
	Fix $s\in [0,\infty]$, $r>0$, and $m\in \Sc(\R)$, so that $\Kc_{\widetilde\Lc}(m)\in \Sc(\widetilde G)$, where $\Kc_{\widetilde \Lc}(m)$ denotes the right convolution kernel of $m(\widetilde \Lc)$ (cf.~\cite[Proposition 4.2.1]{Martini}). Now,~\cite[Proposition 3.7]{Martini2}, applied to the quasi-regular representation of $\widetilde G$ in $L^2(G_s)$, implies that
	\[
	(\pi_s)_*(\Kc_{\widetilde\Lc}(m))=\Kc_{\Lc_s}(m).
	\]
	Now, $\pi_{r s}=(r^{-1}\,\cdot\,)\circ \pi_s\circ (r\,\cdot\,)$; in addition, since $(r\,\cdot\,)_*\widetilde \Lc= r^\delta \widetilde \Lc$ by homogeneity, we have
	\[
	\Kc_{\Lc_{r s}}(m)= (r^{-1}\,\cdot\,)_* (\pi_s)_*(\Kc_{\widetilde \Lc}(m(r^\delta\,\cdot\,)))= (r^{-1}\,\cdot\,)_*\Kc_{\Lc_s}(m(r^\delta\,\cdot\,)).
	\]
	Therefore, by means of the spectral calculus we see that $\Kc_{\Lc_{r s}}(m)=(r^{-1}\,\cdot\,)_*\Kc_{\Lc_s}(m(r^\delta\,\cdot\,))$ for every bounded Borel measurable function $m\colon \R\to \C$.
	In addition, if $m\in \Sc(\R)$, then, identifying $\Kc_{\Lc_{rs}}(m)$ and $\Kc_{\Lc_s}(m)$ with their densities with respect to $\nu_{G_{rs}}$ and $\nu_{G_s}$, respectively,
	\[
	\begin{split}
	\int_{\R} m\,\dd\beta_{\Lc_{r s}}(\lambda)&=\Kc_{\Lc_{r s}}(m)(e)\\
		&= \nu_{G_s}(B_s(r))  \Kc_{\Lc_s}(m(r^\delta\,\cdot\,))(e)\\
		&=\nu_{G_s}(B_s(r)) \int_{\R} m\,\dd (r^\delta\,\cdot\,)_*\beta_{\Lc_s}(\lambda),
	\end{split}
	\]
	so that $\beta_{\Lc_{r s}}=\nu_{G_s}(B_s(r)) (r^\delta\,\cdot\,)_*\beta_{\Lc_s}$ by the arbitrariness of $m$. In addition, it is easily seen that $ \Kc_{\Lc_{s'}}(m)(e)$ converges to $\Kc_{\Lc_s}(m)(e)$ as $s'\to s$ (see also Lemma~\ref{lem:2} below). Thanks to Lemma~\ref{lem:3} and the preceding remarks, this is sufficient to prove that  $\beta_{\Lc_{s'}}$ converges vaguely to $\beta_{\Lc_s}$ as $s'\to s$.
\end{proof}

\subsection{Asymptotic Developments}

\begin{deff}
For every $m\in \Sc(\R)$ and for every $s\in [0,\infty]$, we denote by $\Kc_{0,s}(m)$ (for $s\neq \infty$) and $\Kc_{\infty,s}(m)$ (for $s\neq 0$) the densities of the measures corresponding to $\Kc_{\Lc_s}(m)$ on $\hf_0$ and $\hf_\infty$, respectively, under the usual identifications.
\end{deff}

\begin{lem}\label{lem:2}
	The mappings
	\[
	[0,\infty)\ni s \mapsto\Kc_{0,s}\in \Lc(\Sc(\R); \Sc(\hf_0))\qquad \text{and}\qquad [0,\infty)\ni s \mapsto \Kc_{\infty,\sfrac{1}{s}}\in \Lc(\Sc(\R); \Sc(\hf_\infty))
	\]
	are of class $C^\infty$.
\end{lem}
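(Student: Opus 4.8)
The plan is to reduce everything to smoothness in $s$ of the heat kernels $h_{0,s,t}$ and $h_{\infty,s,t}$, together with their derivatives, which is precisely what Theorem~\ref{prop:21:2} provides. The key observation is that for $m\in\Sc(\R)$ one has the subordination-type formula expressing $\Kc_{\Lc_s}(m)$ in terms of the heat semigroup: writing $m=\check g$ for a suitable $g$, or more directly using that $m(\Lc_s)=\int_{\R}\widehat\mu(t)\,e^{it\Lc_s}\,\dd t$ is awkward because $\Lc_s$ need not be positive here; instead I would use that $\Lc_s^2$ is positive weighted subcoercive and write $m(\lambda)=m_1(\lambda^2)+\lambda\,m_2(\lambda^2)$ with $m_1,m_2\in\Sc(\R)$, so that $\Kc_{\Lc_s}(m)=\Kc_{\Lc_s^2}(m_1)+\Lc_s\Kc_{\Lc_s^2}(m_2)$. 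Since the heat kernel of $\Lc_s^2$ (of degree $2\delta$) satisfies the uniform Gaussian bounds of Theorem~\ref{prop:21:2} — along with bounds on all derivatives $\partial_s^k$ and all $\vect X_{0,\cdot}^\gamma$, $\vect X_{\infty,\cdot}^\gamma$ — one obtains $\Kc_{\Lc_s^2}(m_j)$ by integrating the heat kernel against the Laplace (Cauchy, or heat-kernel-based) representation of $m_j$, and then $\Lc_s$ acting on such a kernel is again controlled by the derivative estimates.

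**Carrying out the smoothness argument.** Concretely, fix $m\in\Sc(\R)$. Using that $r\mapsto r^{-N}e^{-br^{\delta/(\delta-1)}}$ is integrable against $\dd t/t$ after the change of variables relating $\abs{\,\cdot\,}_{s,*}$-balls to the heat time, one checks that for each multi-index $\gamma$, each $k\in\N$, and each Schwartz seminorm $p$, the quantity $p\bigl(\vect X_0^\gamma\partial_s^k\Kc_{0,s}(m)\bigr)$ is bounded by a fixed Schwartz seminorm of $m$, uniformly for $s$ in compact subsets of $[0,\infty)$; the point is that the estimates of Theorem~\ref{prop:21:2}, valid for $t\in(0,cs^{-\delta}]$ on $\hf_0$ and for $t>cs^\delta$ on $\hf_\infty$, together cover the whole range of $t$ once $s$ is bounded (and in fact the overlap $[cs^\delta,cs^{-\delta}]$ is handled by either bound for $s$ near any fixed value). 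One then shows that $\partial_s^k\Kc_{0,s}(m)$ is obtained by differentiating under the integral sign in the subordination formula, legitimate by dominated convergence using exactly these bounds, and that the resulting expression depends continuously on $s$ — again by dominated convergence, since $\partial_s^{k+1}h_{0,s,t}$ is uniformly dominated locally in $s$. This gives that $s\mapsto\Kc_{0,s}(m)$ is $C^\infty$ from $[0,\infty)$ into $\Sc(\hf_0)$ for each fixed $m$; the uniform-in-$m$ boundedness of the seminorms upgrades this to a $C^\infty$ map with values in $\Lc(\Sc(\R);\Sc(\hf_0))$ by the usual argument (continuity of a bilinear map plus equicontinuity gives joint continuity, and the same for derivatives). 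The statement for $s\mapsto\Kc_{\infty,\sfrac1s}$ is identical after replacing $\hf_0$ by $\hf_\infty$, $\partial_0$ by $\partial_\infty$, $Q_0$ by $Q_\infty$, and noting that $\partial_s^k h_{\infty,\sfrac1s,t}$ is exactly the object estimated in the first half of Theorem~\ref{prop:21:2}.

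**The main obstacle.** The delicate point is \emph{not} the algebra but the bookkeeping of the time ranges: Theorem~\ref{prop:21:2} gives the $\hf_\infty$-estimate only for $t>cs^\delta$ and the $\hf_0$-estimate only for $t\in(0,cs^{-\delta}]$, so for a \emph{fixed} $s$ one must paste the two regimes together (on $\hf_0$ near $s$: small $t$ from the $\hf_0$-bound, large $t$ from transporting the $\hf_\infty$-bound via $\lambda_s$ and Proposition~\ref{prop:21:9}(vi); symmetrically on $\hf_\infty$). Making this gluing uniform for $s$ in a compact subinterval — so that the dominating functions in the dominated-convergence arguments can be chosen independently of $s$ near the point of differentiation — is the real content; it uses Proposition~\ref{prop:7}(5)–(6) (continuity and comparability of the moduli $\abs{\pi_s(z)}_s$) and the polynomial dependence on $s$ of the maps $\psi_{0,s}$, $\psi_{\infty,\sfrac1s}$, $\lambda_s$ recorded after Proposition~\ref{prop-psi} and in Proposition~\ref{prop:21:9}(iii)–(iv). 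Once this uniformity is in hand, everything else is routine differentiation under the integral sign, so I would spend the bulk of the write-up on the time-range gluing and state the rest tersely.
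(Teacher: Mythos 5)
Your argument, if carried to completion, would be far heavier than what the paper does, and it has a genuine gap at its central step. You write ``one obtains $\Kc_{\Lc_s^2}(m_j)$ by integrating the heat kernel against the Laplace (Cauchy, or heat-kernel-based) representation of $m_j$,'' but a general $m_j\in\Sc(\R)$ admits no such representation: a formula $m_j(\mu)=\int_0^\infty g_j(t)e^{-t\mu}\,\dd t$ with $g_j$ controlled would require $m_j$ to extend holomorphically to $\Re\mu>0$ with suitable growth, or at least $m_j(\mu)e^{c\mu}$ to be integrable for some $c>0$, neither of which holds for a generic Schwartz function. The usual remedies (compact spectral support, the holomorphic extension of the semigroup as in Proposition~\ref{prop:10}, or a dyadic decomposition in the spirit of Theorem~\ref{teo:5}) are all non-trivial, and the ``time-range gluing'' you flag as the main obstacle compounds the difficulty: Theorem~\ref{prop:21:2} bounds $\partial_s^k h_{0,s,t}$ only for $t\in(0,cs^{-\delta}]$, and transporting the $\hf_\infty$-side bound via $\lambda_s$ for large $t$ injects further $s$-derivatives of $\lambda_s$ by Fa\`a di Bruno that you do not track.

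The paper's proof is structurally different and trivializes the problem: it factors $\Kc_{\Lc_s}=(\pi_s)_*\circ\Kc_{\widetilde\Lc}$ and observes that $\Kc_{\widetilde\Lc}\colon\Sc(\R)\to\Sc(\widetilde G)$ is a single $s$-independent continuous linear map, by~\cite[Proposition~4.2.1]{Martini}. The whole statement therefore reduces to the smoothness in $s$ of the pushforward $(P_{0,s})_*\in\Lc(\Sc(\widetilde\gf);\Sc(\hf_0))$. Writing $P_{0,s}=P_{0,0}\circ L_s^{-1}$ with $L_s(x+y)=x+y+\psi_{0,s}(y)$ on $\widetilde\gf\cong\hf_0\oplus\ifr_0$, one observes that $L_s$ is a measure-preserving linear automorphism of $\widetilde\gf$ depending polynomially on $s$, so $s\mapsto(L_s^{-1})_*\in\Lc(\Sc(\widetilde\gf))$ is $C^\infty$ by inspection. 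This bypasses heat kernels, subordination, and time-range gluing entirely, and is manifestly indifferent to whether $\widetilde\Lc$ is positive, so your $m_1(\lambda^2)+\lambda m_2(\lambda^2)$ splitting is also unnecessary.
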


\begin{proof}
	We prove only the first assertion.
	Observe that $\Kc_{\Lc_s}=(\pi_s)_*\circ\Kc_{\widetilde \Lc}$; since $\Kc_{\widetilde \Lc}\in \Lc(\Sc(\R); \Sc(\widetilde G))$ by~\cite[Proposition 4.2.1]{Martini}, it will suffice to prove that the mapping $[0,\infty)\ni s \mapsto (P_{0,s})_*\in \Lc(\Sc(\widetilde \gf); \Sc(\hf_0))$ is of class $C^\infty$.
	Now, for every $s\in [0,\infty)$, denote by $L_s$ the automorphism $x+y\mapsto x+y+\psi_{0,s}(y)$ of (the vector space) $\widetilde \gf\cong \hf_0\oplus \ifr_0$, and observe that $L_s$ depends polynomially on $s$, so that we may define $L_s$ for every $s\in \R$. 
	With this modification, it is readily verified that $L_s$ is still a measure-preserving automorphism of $\widetilde \gf$ for every $s\in \R$, since $\psi_{0,s}$ is strictly sub-homogeneous. Then, observe that $P_{0,s}= P_{0,0}\circ L_s^{-1}$ for every $s\in [0,\infty)$; since $(P_{0,0})_*\in \Lc(\Sc(\widetilde \gf); \Sc(\hf_0))$, it will suffice to prove that the mapping $\R\ni s\mapsto (L_s^{-1})_*\in\Lc(\Sc(\widetilde \gf))$ is of class $C^\infty$. 
	However, this last assertion is an easy consequence of the fact that the mapping $\R\ni s \mapsto L_s\in \Lc(\widetilde \gf) $ is of class $C^\infty$.	
\end{proof}

\begin{deff}
	For every $k\in \N$, for every $s\in [0,\infty)$, and for every $m\in \Sc(\R)$, define
	\[
	\Kc_{0,s}^{(k)}(m)=\frac{\dd^k}{\dd s'^k}\bigg\vert_{s'=s} \Kc_{0,s'}(m),
	\]
	and
	\[
	\Kc_{\infty,{\sfrac{1}{s}}}^{(k)}(m)=\frac{\dd^k}{\dd s'^k}\bigg\vert_{s'=s}  \Kc_{\infty,{\sfrac{1}{s'}}}(m).
	\]
\end{deff}

\begin{lem}\label{lem:4}
	Take $k\in \N$ and $m\in \Sc(\R)$. Then, for every $s\in [0,\infty)$ and for every $r>0$,
	\begin{align*}
	\Kc_{0,s}^{(k)}(m(r^\delta\,\cdot\,))&=r^k (r\,\cdot\,)_*\Kc_{0, r s}^{(k)}(m)\\
	\Kc_{\infty,\sfrac{1}{s}}^{(k)}(m(r^\delta\,\cdot\,))&=r^{-k} (r\,\cdot\,)_*\Kc_{\infty,{\sfrac{r}{s}}}^{(k)}(m)
	\end{align*}
\end{lem}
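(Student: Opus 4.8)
The plan is to establish the two identities first for $k=0$, and then to obtain the general statement by differentiating $k$ times with respect to $s$.

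For $k=0$, fix $m\in\Sc(\R)$ and $r>0$. By {\bf1} of Proposition~\ref{prop:2} we have $\Kc_{\Lc_{rs}}(m)=(r^{-1}\,\cdot\,)_*\Kc_{\Lc_s}(m(r^\delta\,\cdot\,))$, where $r^{-1}\,\cdot\,$ denotes the isomorphism $G_s\to G_{rs}$ induced by the dilation. Under the identifications of $G_s$ and $G_{rs}$ with $\hf_0$ (resp.\ with $\hf_\infty$) fixed in Section~\ref{sec:1}, this map becomes the restriction to $\hf_0$ (resp.\ $\hf_\infty$) of the linear dilation $\rho_{1/r}$ of $\widetilde\gf$ -- indeed the dilations of $\gf_0$ and $\gf_\infty$ are the restrictions of those of $\widetilde\gf$, and the induced maps $G_s\to G_{r^{-1}s}$ do not depend on the chosen realization. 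Since $\rho_{1/r}$ preserves $\Sc$, and since the Lebesgue measures on $\hf_0$ and $\hf_\infty$ used to define densities are fixed (independent of $s$), passing to densities in the above identity yields, in the notation of Definition~\ref{def:3} relative to the graded structures of $\hf_0=\gf_0$ and $\hf_\infty=\gf_\infty$,
\[
\Kc_{0,s}(m(r^\delta\,\cdot\,))=(r\,\cdot\,)_*\Kc_{0,rs}(m)
\]
and, replacing $s$ with $1/s$,
\[
\Kc_{\infty,\sfrac{1}{s}}(m(r^\delta\,\cdot\,))=(r\,\cdot\,)_*\Kc_{\infty,\sfrac{r}{s}}(m)
\]
(here $(r\,\cdot\,)_*=[(r^{-1}\,\cdot\,)_*]^{-1}$; one may cross-check these formulae against the heat-kernel identity $h_{rs,t}=(r^{-1}\,\cdot\,)_*h_{s,r^\delta t}$ and the explicit expression for $h_{0,s,t}$).

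Now fix $k\in\N$. By Lemma~\ref{lem:2} the maps $s\mapsto\Kc_{0,s}$ and $s\mapsto\Kc_{\infty,\sfrac{1}{s}}$ are of class $C^\infty$ on $[0,\infty)$ with values in $\Lc(\Sc(\R);\Sc(\hf_0))$ and $\Lc(\Sc(\R);\Sc(\hf_\infty))$, respectively, so both sides of the two displayed identities are $C^\infty$ functions of $s\in[0,\infty)$ and may be differentiated $k$ times in $s$. On the left we obtain $\Kc_{0,s}^{(k)}(m(r^\delta\,\cdot\,))$, resp.\ $\Kc_{\infty,\sfrac{1}{s}}^{(k)}(m(r^\delta\,\cdot\,))$, directly from the definition of these operators. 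On the right, $(r\,\cdot\,)_*$ is a fixed continuous linear operator and hence commutes with $\frac{\dd^k}{\dd s^k}$; moreover, by the chain rule, $\frac{\dd^k}{\dd s^k}\Kc_{0,rs}(m)=r^{k}\Kc_{0,rs}^{(k)}(m)$, while, writing $\Kc_{\infty,\sfrac{r}{s}}(m)=F(s/r)$ with $F(u)\coloneqq\Kc_{\infty,\sfrac{1}{u}}(m)$, one has $\frac{\dd^k}{\dd s^k}F(s/r)=r^{-k}F^{(k)}(s/r)=r^{-k}\Kc_{\infty,\sfrac{r}{s}}^{(k)}(m)$. Comparing the two sides gives exactly the claimed identities.

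The only genuinely delicate point is the passage from $G_s$ to $\hf_0$ (resp.\ $\hf_\infty$) in the case $k=0$: one must make sure that the group dilation $r^{-1}\,\cdot\,\colon G_s\to G_{rs}$ corresponds, under the two a priori different realizations, to a single linear dilation of $\hf_0$ (resp.\ $\hf_\infty$), and that this linear dilation relates the fixed Lebesgue measures precisely as prescribed by Definition~\ref{def:3}; the correct bookkeeping of the parameters $rs$ and $r/s$ in the two cases is the other thing to watch. Everything after that is a routine differentiation under a smooth parameter, legitimate by Lemma~\ref{lem:2}.
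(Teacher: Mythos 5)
Your proof is correct and follows exactly the paper's intended argument, which is stated in one line as ``follows from Proposition~\ref{prop:2} by differentiation'': you recover the $k=0$ case by rearranging Proposition~\ref{prop:2}(1) and passing to $\hf_0$- (resp.\ $\hf_\infty$-) densities under the fixed identifications, and then differentiate $k$ times in $s$, which is legitimate by the $C^\infty$-regularity in Lemma~\ref{lem:2}. The bookkeeping of the parameters ($rs$ versus $r/s$) and the chain-rule factors $r^k$, $r^{-k}$ is handled correctly.
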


\begin{proof}
	The assertion follows from Proposition~\ref{prop:2} by differentiation.
\end{proof}

\begin{deff}
	Take $r\in \R$, and define $\Mc_r(\R^*)$ as the set of $m\in C^\infty(\R^*)$ such that for every $k\in \N$ there is a constant $C_k>0$
	\[
	\abs*{\frac{\dd^k}{\dd \lambda^k} m(\lambda)  }\meg C_k \abs{\lambda}^{r-k}
	\]
	for every $\lambda\in \R^*$. 
	We endow $\Mc_r(\R^*)$ with the corresponding semi-norms.
\end{deff}

\begin{deff}
	Take $r\in \R\cup \Set{\infty}$ and $s\in [0,\infty]$. We define $\Sc'_r(G_s)$ as the dual of the set $\Sc_r(G_s)$ of $\phi \in \Sc(G_s)$ such that $\int_{G_s} \phi(x) P(x)\,\dd x=0$ for every polynomial $P$ such that $P(x)=O(\abs{x}^r_s)$ for $x\to \infty$; we thus identify $\Sc'_r(G_s)$ with the quotient of $\Sc'(G_s)$ by the set of the polynomials as above. 
	Similar definitions replacing $G_s$ with $\hf_0$ or $\hf_\infty$, and $\abs{\,\cdot\,}_s$ with $\abs{\,\cdot\,}$.
\end{deff}

	Observe that, if $s\in (0,\infty]$, then $\Sc_r(G_s)=\Sc_r(\hf_\infty)$ under the identification of $G_s$  with $\hf_\infty$ (cf.~{\bf6} of Proposition~\ref{prop:7}).

\begin{deff}
	Take $r\in \R$, and define $\Cc\Zc_r(\hf_0)$ as the set of $K\in \Sc'_{-Q_0-r}(\hf_0)$, such that the following hold:
	\begin{itemize}
		\item for every $\alpha$ such that $\dd_\alpha>-Q_0-r$, $\partial_0^\alpha K$ has a density of class $C^\infty$ on $\hf_0\setminus \Set{0}$ and there is a constant $C_\alpha>0$ such that
		\[
		\abs*{\partial^\alpha_{0} K}\meg \frac{C_\alpha}{\abs{x}^{Q_0+r+\dd_\alpha}}
		\]
		for every non-zero $x\in \hf_0$;

		\item there is a constant $C>0$ such that
		\[
		\abs*{\langle K, \phi(\theta\, \cdot\,) \rangle}\meg C\theta^{r}
		\]
		for every $\phi \in  \Sc_{-Q_0-r}(\hf_0)$ such that $\supp{\phi}\subseteq B(1)$, and $\max_{\sum_j \alpha_j\meg([r]+1)_+}\norm{\partial_0^\alpha \phi}_\infty\meg 1$.
	\end{itemize} 
	We endow $\Cc\Zc_r(\hf_0)$ with the corresponding semi-norms.
	
	We define $\Cc \Zc_r(\hf_\infty)$ in a similar way. For every $s\in (0,\infty)$ we define $\Cc\Zc_r(G_s)$ as the set of $K\in \Sc'_{-Q_\infty-r}(G_s)$ such that there are $K_0\in \Ec'(G_s)+\Sc(G_s)$ and $K_\infty\in C^\infty(G_s)\cap \Sc'_{-Q_\infty-r}(G_s)$ such that $K=K_0+K_\infty$, and such that the distributions on $\hf_0$ and $\hf_\infty$ corresponding to $K_0$ and $K_\infty$ belong to $ \Cc\Zc_r(\hf_0)$ and $\Cc\Zc_r(\hf_\infty)$, respectively. We endow $\Cc\Zc_r(G_s)$ with the corresponding topology.
	
	Finally, we denote by $\nu_{\R_+}$ the Haar measure on $(\R_+,\,\cdot\,)$ such that $\int_0^\infty f\,\dd \nu_{\R_+}= \int_0^{\infty} f(x)\,\frac{\dd x}{x}$ for every $f\in C_c(\R_+)$.
\end{deff}

\begin{prop}\label{prop:3}
	Take $r\in \R$, a set $B$ and a bounded family $(\phi_{t,b})_{t\in (0,\infty), b\in B}$ of elements of $\Sc_{r}(\hf_0)$. Then, the mapping $t \mapsto t^{-r} (t\,\cdot\,)_*\phi_{t,b}\in \Sc'_{-Q_0-r}(\hf_0)$ is $\nu_{\R_+}$-integrable and the set of  
	\[
	K_b\coloneqq \int_0^{+\infty} t^{-r} (t\,\cdot\,)_*\phi_{t,b}\,\frac{\dd t}{t},
	\]
	as $b$ runs through $B$, is bounded in $\Cc\Zc_r(\hf_0)$. In addition, $K_b$ has a representative $\widetilde K_b$, for every $b\in B$, such that for every $\alpha$ there is a constant $C_\alpha>0$ such that
	\[
	\abs{\partial_0^\alpha\widetilde K_b(x)}\meg \frac{C_\alpha}{\abs{x}^{Q_0+r+\dd_\alpha}}(1+\abs{\log\abs{x}})
	\]
	for every $x\in \hf_0\setminus \Set{0}$, and for every $b\in B$; the factor $1+\abs{\log\abs{x}}$ may be omitted if $-Q_0-r-\dd_\alpha\not \in \N$. 
	In addition, if  $L$ is a bounded subset of $C^{([r]+1)_+}_c(\hf_0)$, then there is a constant $C'>0$ such that
	\[
	\abs*{\langle\widetilde K_b, \psi(\theta\,\cdot\,)\rangle}\meg C'\theta^r (1+\abs{\log \theta})
	\]
	for every $\theta>0$, for every $\psi\in L$, and for every $b\in B$; the factor $1+\abs{\log \theta}$ may be omitted	if $-Q_0-r\not \in \N$. 
\end{prop}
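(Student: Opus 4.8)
The plan is to estimate the building blocks $\Phi_{t,b}\coloneqq t^{-r}(t\,\cdot\,)_*\phi_{t,b}$ pointwise, uniformly in $t$ and $b$, and then integrate in $t$, always splitting the $t$-integral according to the size of the scale $t$ relative to $\abs{x}$ (for the pointwise bounds) or to the scale $1/\theta$ of the dilated test function (for the testing bounds). Since $(\phi_{t,b})_{t,b}$ is bounded in $\Sc(\hf_0)$, for every $N\in\N$ and every $\alpha$ there is a constant $A_{N,\alpha}>0$, independent of $t$ and $b$, with $\abs{\partial_0^\alpha\phi_{t,b}(y)}\meg A_{N,\alpha}(1+\abs{y})^{-N}$; since $\abs{\,\cdot\,}$ is homogeneous this becomes $\abs{\partial_0^\alpha\Phi_{t,b}(x)}\meg A_{N,\alpha}\,t^{-r-Q_0-\dd_\alpha}(1+\abs{x}/t)^{-N}$. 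Moreover, $\phi_{t,b}\in\Sc_r(\hf_0)$ means precisely that $\phi_{t,b}$ annihilates every monomial of weighted degree $\meg r$. Only these two facts will be used, so every constant below is uniform in $b$.

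First I would check that $t\mapsto\Phi_{t,b}$ is $\nu_{\R_+}$-integrable with values in $\Sc'_{-Q_0-r}(\hf_0)$, which then defines $K_b=\int_0^{+\infty}\Phi_{t,b}\,\frac{\dd t}{t}$. Fixing $\psi\in\Sc_{-Q_0-r}(\hf_0)$ and writing $\langle\Phi_{t,b},\psi\rangle=t^{-r}\int\phi_{t,b}(y)\psi(t\cdot y)\,\dd y$, for $t\meg1$ one Taylor-expands $\psi$ at the origin to ordinary order $[r]$: the monomials of weighted degree $\meg r$ are killed by the moments of $\phi_{t,b}$, and the remaining monomials together with the remainder contribute $O(t^{a-r})$ for a suitable $a>r$, so the integral over $(0,1)$ converges. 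For $t\Meg1$ one changes variables and expands $\phi_{t,b}$ near the origin to weighted degree $\meg-Q_0-r$: this polynomial part is killed by the moments of $\psi$, while the remainder together with the Schwartz decay of $\phi_{t,b}$ and $\psi$ gives $O(t^{-r-Q_0-\rho'})$, with $\rho'$ the smallest weighted degree $>-Q_0-r$, so the integral over $(1,+\infty)$ converges as well. (When $-Q_0-r<0$ the ``polynomial part'' is void and convergence at $+\infty$ is immediate.) This yields integrability, uniformly in $b$.

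Next, for the pointwise bounds fix $\alpha$ and $x\neq0$ and split $\int_0^{+\infty}$ at $t=\abs{x}$. On $0<t\meg\abs{x}$ the pointwise estimate on $\Phi_{t,b}$ with $N$ large gives $\int_0^{\abs{x}}A_{N,\alpha}\,t^{-r-Q_0-\dd_\alpha}(\abs{x}/t)^{-N}\,\frac{\dd t}{t}\meg C\abs{x}^{-Q_0-r-\dd_\alpha}$. On $t\Meg\abs{x}$ one Taylor-expands $(\partial_0^\alpha\phi_{t,b})(t^{-1}\cdot x)$ at the origin: the monomial of weighted degree $\dd_\beta$ contributes a bounded multiple of $x^\beta\int_{\abs{x}}^{+\infty}t^{-r-Q_0-\dd_\alpha-\dd_\beta}\,\frac{\dd t}{t}$, which is $O(\abs{x}^{-Q_0-r-\dd_\alpha})$ if $r+Q_0+\dd_\alpha+\dd_\beta>0$ and produces a factor $\abs{\log\abs{x}}$ exactly when $r+Q_0+\dd_\alpha+\dd_\beta=0$, i.e.\ exactly when $-Q_0-r-\dd_\alpha\in\N$; the Taylor remainder contributes $O(\abs{x}^{-Q_0-r-\dd_\alpha})$ once taken long enough. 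When $\dd_\alpha\meg-Q_0-r$ the low-degree terms of this coarse-scale integral are divergent and must first be removed by a renormalization, exactly as in the construction of the Riesz potentials (Proposition~\ref{prop:21:8}) and in~\cite{NagelRicciStein,Martini3}: one subtracts from $\Phi_{t,b}$, on the region $t>\abs{x}$, its Taylor polynomial at the origin of weighted degree $\meg-Q_0-r$. Since the subtracted terms are polynomials of weighted degree $\meg-Q_0-r$, the class of $K_b$ in $\Sc'_{-Q_0-r}(\hf_0)$ is unchanged, while the resulting representative $\widetilde K_b$ is of class $C^\infty$ on $\hf_0\setminus\Set{0}$ and satisfies $\abs{\partial_0^\alpha\widetilde K_b(x)}\meg C_\alpha\abs{x}^{-Q_0-r-\dd_\alpha}(1+\abs{\log\abs{x}})$, the logarithm being superfluous unless $-Q_0-r-\dd_\alpha\in\N$. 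Testing against $\psi\in\Sc_{-Q_0-r}(\hf_0)$ with $\supp\psi\subseteq B(1)$ is the special case treated below in which $\psi$ has enough vanishing moments to kill the resonant monomial of the coarse-scale expansion, so that one gets the clean bound $\abs{\langle K_b,\psi(\theta\,\cdot\,)\rangle}\meg C\theta^r$; together with the derivative bounds this shows that $(K_b)_{b\in B}$ is bounded in $\Cc\Zc_r(\hf_0)$.

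Finally, for the estimate against $\psi$ in a bounded subset $L$ of $C^{([r]+1)_+}_c(\hf_0)$ — which need not have vanishing moments — one splits $\langle\widetilde K_b,\psi(\theta\,\cdot\,)\rangle=\int_0^{+\infty}\langle\Phi_{t,b},\psi(\theta\,\cdot\,)\rangle\,\frac{\dd t}{t}$ (up to the renormalization correction) at $t=1/\theta$. For $t\meg1/\theta$ the block $\Phi_{t,b}$ is finer than the test scale: Taylor-expanding $\psi(\theta t\cdot y)$ at the origin to ordinary order $[r]$ and using the moments of $\phi_{t,b}$ together with the $C^{[r]+1}$-regularity of $\psi$ (this is exactly where $([r]+1)_+$ derivatives are needed) gives $\abs{\langle\Phi_{t,b},\psi(\theta\,\cdot\,)\rangle}\meg C\,t^{-r}(\theta t)^{a}$ for a suitable $a>r$, whence $\int_0^{1/\theta}\abs{\langle\Phi_{t,b},\psi(\theta\,\cdot\,)\rangle}\,\frac{\dd t}{t}\meg C'\theta^r$. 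For $t\Meg1/\theta$, $\psi(\theta\,\cdot\,)$ is supported in $B(1/\theta)$, where $\abs{\Phi_{t,b}}\meg C\,t^{-r-Q_0}$ and the subtracted Taylor polynomial is paired with the moments $\int y^\beta\psi(y)\,\dd y$; integrating the surviving terms and the remainder gives $O(\theta^r)$, with a factor $1+\abs{\log\theta}$ surfacing precisely in the borderline case $-Q_0-r\in\N$ (equivalently, inherited from the $\log$ in the bound for $\widetilde K_b$ near $0$ when that bound is itself borderline). All constants depend only on the Schwartz seminorm bounds of $(\phi_{t,b})_{t,b}$ and on $L$, hence are uniform in $b$. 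The hard part will not be any single one of these estimates but the renormalization itself: exhibiting a single representative $\widetilde K_b$ which is at once a tempered distribution, genuinely represents the class $K_b$, and has an away-from-origin profile obeying all the stated bounds, with the logarithm appearing in exactly the resonant cases $-Q_0-r-\dd_\alpha\in\N$ (resp.\ $-Q_0-r\in\N$), uniformly in $b$.
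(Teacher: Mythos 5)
Your decomposition and all the individual estimates coincide with the paper's proof: fine scales ($t\meg\abs{x}$, resp.\ $t\meg1/\theta$) are handled by pairing the vanishing moments of $\phi_{t,b}\in\Sc_r(\hf_0)$ against the Taylor polynomial of degree $[r]$ of the test function, coarse scales by Taylor-expanding $\phi_{t,b}$ at the origin up to homogeneous degree $-Q_0-r$ and using either the vanishing moments of $\psi\in\Sc_{-Q_0-r}(\hf_0)$ or an explicit integration of the surviving polynomial terms, and the logarithm comes from the single borderline component of homogeneous degree exactly $-Q_0-r$.

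The one step that does not work as written is the construction of the representative $\widetilde K_b$. Subtracting from $t^{-r}(t\,\cdot\,)_*\phi_{t,b}$ its Taylor polynomial ``on the region $t>\abs{x}$'' does not define a distribution: the quantity you remove, $\int_{\abs{x}}^{+\infty}t^{-r}(t\,\cdot\,)_*P_{t,b}(x)\,\frac{\dd t}{t}$, has integration limits depending on $x$ and is therefore \emph{not} a polynomial in $x$, so the assertion that the class of $K_b$ in $\Sc'_{-Q_0-r}(\hf_0)$ is unchanged does not follow --- and you yourself leave this as the unresolved ``hard part''. The fix, which is what the paper does, is to renormalize at the \emph{fixed} threshold $t=1$: with $P_{t,b}$ the Taylor polynomial of $\phi_{t,b}$ of homogeneous degree $\meg -Q_0+[-r]$ and $P'_{t,b}$ its part of degree strictly less than $-Q_0-r$, set
\[
\widetilde K_b \coloneqq \int_0^1 t^{-r} (t\,\cdot\,)_*\left( \phi_{t,b}-P'_{t,b}\right) \,\frac{\dd t}{t}+ \int_1^{+\infty} t^{-r} (t\,\cdot\,)_*\left( \phi_{t,b}-P_{t,b}\right) \,\frac{\dd t}{t};
\]
the discarded piece $\int_0^1 t^{-r}(t\,\cdot\,)_*P'_{t,b}\,\frac{\dd t}{t}$ converges to a polynomial of degree $<-Q_0-r$, so $\widetilde K_b$ genuinely represents $K_b$ in $\Sc'_{-Q_0-r}(\hf_0)$. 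The pointwise and testing bounds are then obtained by re-splitting at $t=\abs{x}$ (resp.\ $t=1/\theta$) \emph{a posteriori}; the discrepancy between the two thresholds is $\int_1^{\abs{x}}$ applied to the homogeneous components $P_{t,b,j}$, which is $O(\abs{x}^{-Q_0-r-\dd_\alpha})$ for $j<-Q_0-r$ and produces exactly the factor $1+\abs{\log\abs{x}}$ for $j=-Q_0-r$, i.e.\ in the resonant cases you identified. With this correction your argument is the paper's.
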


Notice that, arguing in the spirit of~\cite[Theorem 2.2.1]{NagelRicciStein2}, where the case $r=0$ is essentially considered, one may prove that for every bounded family $(K_b)_{b\in B}$ of elements of $\Cc \Zc_r(\hf_0)$ there is a bounded family $(\phi_{t,b})_{t>0,b\in B}$ of elements of $\Sc_\infty(\hf_0)$ such that $K_b=\int_0^{+\infty} t^{-r}(t\,\cdot\,)_* \phi_{t,b}\,\frac{\dd t}{t}$ for every $b\in B$.

Analogous statements hold for $\hf_\infty$.

\begin{proof}
	{\bf1.} Let $L$ be a subset of $\Sc_{-Q_0-r}(\hf_0)$ which is bounded in $C^{([r]+1)_+}_c(\hf_0)$. For every $\psi\in L$, denote by $P_\psi$ the Taylor polynomial of $\psi$ of degree $[r]$ about $0$. Then, there is a constant $C_1>0$ such that
	\[
	\abs{(\psi-P_\psi)(x)}\meg C_1 \abs{x}^{([r]+1)_+}
	\]
	for every $x\in \hf_0$. Since $\phi_{t,b}\in \Sc_r(\hf_0)$ for every $t>0$ and for every $b\in B$, 
	\[
	\begin{split}
	\int_0^{\sfrac{1}{\theta}} t^{-r} \abs{ \langle (t\,\cdot\,)_* \phi_{t,b}, \psi(\theta\,\cdot\,)\rangle  }\,\frac{\dd t}{t}&=\int_0^{\sfrac{1}{\theta}} t^{-r}\abs{ \langle \phi_{t,b}, (\psi-P_\psi)(t\theta\,\cdot\,)\rangle }\,\frac{\dd t}{t}\\
		&\meg \theta^{r} C_1\int_0^1 t^{([r]+1)_+ -r} \norm*{\abs{\,\cdot\,}^{([r]+1)_+}\phi_{t,b}}_1  \,\frac{\dd t}{t}
	\end{split}
	\]
	for every $\theta>0$, for every $\psi\in L$, and for every $b\in B$. On the other hand, denote by $P_{t,b}$ the Taylor polynomial of $\phi_{t,b}$ of degree $-Q_0+[-r]$ about $0$, for every $t>0$ and for every $b\in B$, and observe that there is a constant $C_2>0$ such that
	\[
	\abs{( \phi_{t,b}-P_{t,b})(x) }\meg C_2 \abs{x}^{(-Q_0+[-r]+1)_+},
	\]
	for every $t>0$ and for every $b\in B$. 
	Then, since $\psi\in \Sc_{-Q_0-r}(\hf_0)$,
	\[
	\begin{split}
	\int_{\sfrac{1}{\theta}}^{+\infty}  t^{-r} \abs{ \langle (t\,\cdot\,)_* \phi_{t,b}, \psi(\theta\,\cdot\,)\rangle  }\,\frac{\dd t}{t}&=\int_{\sfrac{1}{\theta}}^{+\infty}  t^{-r}\abs{ \langle (t\theta)_*(\phi_{t,b}-P_{t,b}), \psi\rangle }\,\frac{\dd t}{t}\\
	&\meg \theta^{r} C_2\norm*{ \abs{\,\cdot\,}^{(-Q_0+[-r]+1)_+}\psi}_1\int_1^{+\infty} t^{-Q_0-r-(-Q_0+[-r]+1)_+}\,\frac{\dd t}{t}.
	\end{split}
	\]
	
	Next, take  $\alpha$ such that $\dd_\alpha>-Q_0-r$, and observe that there is a constant $C_{3,\alpha}>0$ such that
	\[
	\abs{\partial^\alpha_{0} \phi_{t,b}(x) }\meg \frac{C_{3,\alpha}}{(1+\abs{x})^{Q_0+r+\dd_\alpha+1}}
	\]
	for every $x\in \hf_0$, for every $t>0$ and for every $b\in B$.  Then, fix a non-zero $x\in \hf_0$, and observe that
	\[
	\begin{split}
	\int_{0}^{+\infty} t^{-r} \abs*{\partial^\alpha_{0}(t\,\cdot\,)_* \phi_{t,b}(x)}\,\frac{\dd t}{t}&=\abs{x}^{-Q_0-r-\dd_\alpha}\int_{0}^{+\infty} t^{Q_0+r+\dd_\alpha} \abs*{ \partial_0^\alpha \phi_{\abs{x}/t,b}((\abs{x}/t)^{-1}\cdot x)  }\,\frac{\dd t}{t}\\
		&\meg C_{3,\alpha}\abs{x}^{-Q_0-r-\dd_\alpha} \int_0^{+\infty} \frac{t^{Q_0+r+\dd_\alpha}}{(1+t)^{Q_0+r+\dd_\alpha+1}}\,\frac{\dd t}{t} 
	\end{split}
	\]
	for every $b\in B$. 
	
	Taking into account all the preceding inequalities, we see that the mapping $t \mapsto t^{-r} (t\,\cdot\,)_*\phi_{t,b}\in \Sc'_{-Q_0-r}(\hf_0)$ is $\nu_{\R_+}$-integrable and that the set of  $K_b$,	as $b$ runs through $B$, is bounded in $\Cc\Zc_r(\hf_0)$. 
	
	{\bf2.}  Keep the notation of~{\bf1}, and denote by $P_{t,b,j}$ the homogeneous component of $P_{t,b}$ of degree $j$, for every $j=0,\dots, -Q_0+[-r]$; define $P'_{t,b}\coloneqq \sum_{j<-Q_0-r} P_{t,b,j}$.
	Then, the arguments of~{\bf1} show that
	\[
	\widetilde K_b \coloneqq \int_0^1 t^{-r} (t\,\cdot\,)_*(\phi_{t,b}-P_{t,b}')\,\frac{\dd t}{t}+ \int_1^{+\infty} t^{-r} (t\,\cdot\,)_*(\phi_{t,b}-P_{t,b})\,\frac{\dd t}{t}
	\]
	defines a representative of $K_b$ in $\Sc'(\hf_0)$ (treat $\int_0^1 t^{-r} (t\,\cdot\,)_* P'_{t,b}\,\frac{\dd t}{t}$ separately). In addition, arguing as in~{\bf1} we see that, for every $\alpha$,
	\[
	\abs{x}^{Q_0+r+\dd_\alpha}\left[ \int_0^{\abs{x}} t^{-r} \partial_0^\alpha (t\,\cdot\,)_*\phi_{t,b}(x)\,\frac{\dd t}{t}+ \int_{\abs{x}}^{+\infty} t^{-r}\partial_0^\alpha (t\,\cdot\,)_*(\phi_{t,b}-P_{t,b})(x)\,\frac{\dd t}{t}\right] 
	\]
	is uniformly bounded as $x$ runs through $\hf_0\setminus \Set{0}$, and $b$ runs through $B$.	
	Now, take $j\in \N$ such that $j\meg -Q_0+[-r]$. If $j<-Q_0-r$, then clearly 
	\[
	\abs{x}^{r+Q_0+\dd_\alpha}\abs*{  \int_{0}^{\abs{x}} t^{-r} \partial_0^\alpha (t\,\cdot\,)_*P_{t,b,j}(x)\,\frac{\dd t}{t}  }
	\]
	is bounded as $x$ runs through $\hf_0\setminus \Set{0}$, and $b$ runs through $B$.	
	Finally, if $j=-Q_0-r$, then clearly
	\[
	\frac{\abs{x}^{r+Q_0+\dd_\alpha}}{1+\abs{\log\abs{x}}}\abs*{  \int_{1}^{\abs{x}} t^{-r} \partial_0^\alpha (t\,\cdot\,)_*P_{t,b,j}(x)\,\frac{\dd t}{t}  }
	\]
	is bounded as $x$ runs through $\hf_0\setminus \Set{0}$, and $b$ runs through $B$. The other estimates are proved in a similar way.	Thus, $\widetilde K_b$ is the required representative of $K_b$.
\end{proof}

\begin{cor}\label{cor:2}
	Take $s\in (0,\infty)$, $r\in \R$, a set $B$, and a family $(\phi_{t,b})_{t>0, b\in B}$ such that $\phi_{t,b}\in \Sc_r(G_{s t})$ for every $t>0$ and for every $b\in B$, and such that for every $k\in \N$ there is a constant $C_k>0$ such that
	\[
	\abs{\vect{X}_{s t}^\gamma \phi_{t,b}(x)}\meg \frac{C_k}{(1+\abs{x}_{s t})^k}
	\]
	for every $\gamma$ such that $\dd_\gamma\meg k$, for every $b\in B$, for every $t>0$, and for every $x\in G_{s t}$.
	Then, the mapping $t \mapsto t^{-r} (t\,\cdot\,)_*\phi_{t,b}\in \Sc'_{-Q_\infty-r}(G_s)$ is $\nu_{\R_+}$-integrable for every $b\in B$, and the set of  
	\[
	K_b\coloneqq \int_0^{+\infty} t^{-r} (t\,\cdot\,)_*(\phi_{t,b} \nu_{G_{st}})\,\frac{\dd t}{t},
	\]
	as $b$ runs through $B$, is bounded in $\Cc\Zc_r(G_s)$. 
\end{cor}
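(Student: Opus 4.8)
The plan is to split the integral defining $K_b$ at $t=1$, writing $K_b=K_b^{-}+K_b^{+}$ with $K_b^{-}=\int_{0}^{1}t^{-r}(t\,\cdot\,)_{*}(\phi_{t,b}\,\nu_{G_{st}})\,\frac{\dd t}{t}$ and $K_b^{+}=\int_{1}^{+\infty}t^{-r}(t\,\cdot\,)_{*}(\phi_{t,b}\,\nu_{G_{st}})\,\frac{\dd t}{t}$, and to show that $K_b^{-}$ realizes the ``$K_0$'' summand and $K_b^{+}$ the ``$K_\infty$'' summand in the definition of $\Cc\Zc_r(G_s)$. For $K_b^{-}$ I would identify both $G_s$ and every $G_{st}$ with $\hf_0$ (recall that $(t\,\cdot\,)\colon G_{st}\to G_s$ is dilation by $t$, which on $\hf_0$ is the ordinary dilation), and rewrite $K_b^{-}=\int_{0}^{1}t^{-r}(t\,\cdot\,)_{*}\psi_{t,b}\,\frac{\dd t}{t}$ with $\psi_{t,b}\coloneqq a_{0,st}\,\phi_{t,b}$, where $a_{0,s'}>0$ is the continuous density of $\nu_{G_{s'}}$ with respect to the fixed Lebesgue measure on $\hf_0$; analogously, identifying with $\hf_\infty$, one rewrites $K_b^{+}=\int_{1}^{+\infty}t^{-r}(t\,\cdot\,)_{*}\psi'_{t,b}\,\frac{\dd t}{t}$ with $\psi'_{t,b}\coloneqq a_{\infty,st}\,\phi_{t,b}$.

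The key step is to check that $(\psi_{t,b})_{t\in(0,1],\,b\in B}$, extended by $0$ for $t>1$, is bounded in $\Sc_r(\hf_0)$ with the required uniform derivative decay, and likewise that $(\psi'_{t,b})_{t\in[1,+\infty),\,b\in B}$, extended by $0$ for $t<1$, is bounded in $\Sc_r(\hf_\infty)$; once this is done, Proposition~\ref{prop:3} and its $\hf_\infty$-analogue yield at once that $\{K_b^{-}\colon b\in B\}$ is bounded in $\Cc\Zc_r(\hf_0)$ and $\{K_b^{+}\colon b\in B\}$ is bounded in $\Cc\Zc_r(\hf_\infty)$. For the $\hf_0$ part the point is that on the range $t\meg 1$ the parameter $st$ stays in $(0,s]$: Proposition~\ref{lem:21:1} expresses each $\vect{X}_{st}^\gamma$ through the $\vect{X}_0^{\gamma'}$ with coefficients $(st)^{\dd_\gamma-\dd_{\gamma'}}p_{0,\gamma,\gamma'}(st\,\cdot\,)$ whose homogeneity in $st$ is strictly positive, hence bounded as $st\to0^{+}$ and on the compact remainder, uniformly in the variable; combining this with the uniform comparison of $\abs{\,\cdot\,}_{st}$ with the homogeneous norm of $\hf_0$ coming from~{\bf6} of Proposition~\ref{prop:7} and from Proposition~\ref{prop:21:9}, with the polynomial dependence on $s$ of the maps $\psi_{0,s}$ (Proposition~\ref{prop-psi}) and the smoothness in $s$ of the identifications (in the spirit of Lemma~\ref{lem:2}), and with the fact that near $e$ all the moduli $\abs{\,\cdot\,}_{st}$ agree with the homogeneous norm of $\hf_0$ — so that the moment conditions defining $\Sc_r(G_{st})$ and $\Sc_r(\hf_0)$ match to the relevant order — gives the required uniform bound. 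The $\hf_\infty$ part is analogous, now using that on $t\Meg 1$ the coefficients $(st)^{\dd_\gamma-\dd_{\gamma'}}p_{\infty,\gamma,\gamma'}(st\,\cdot\,)$ have strictly negative homogeneity in $st$, hence stay bounded as $st\to+\infty$, and that $\Sc_r(G_{st})=\Sc_r(\hf_\infty)$.

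Finally, because the dilations $(t^{-1}\,\cdot\,)$ are expanding for $t\meg 1$, $(t\,\cdot\,)_{*}\psi_{t,b}$ is Schwartz at infinity uniformly in $t$, so $K_b^{-}\in\Ec'(G_s)+\Sc(G_s)$; and because they are contracting for $t\Meg 1$, $(t\,\cdot\,)_{*}(\phi_{t,b}\,\nu_{G_{st}})$ stays smooth near $e$ uniformly, so $K_b^{+}\in C^\infty(G_s)\cap\Sc'_{-Q_\infty-r}(G_s)$. Hence $K_b=K_b^{-}+K_b^{+}$ has exactly the form required by the definition of $\Cc\Zc_r(G_s)$, and boundedness in $b$ is inherited from the two applications of Proposition~\ref{prop:3}. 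I expect the main obstacle to be precisely the uniform dictionary of the previous paragraph: making the comparison of $\abs{\,\cdot\,}_{st}$ with the fixed moduli and the boundedness of the change-of-basis matrices of Proposition~\ref{lem:21:1} uniform as $st\to0^{+}$ (respectively $st\to+\infty$), and carefully reconciling the moving moment conditions of $\Sc_r$ relative to $\abs{\,\cdot\,}_{st}$ with the fixed ones on $\hf_0$ and $\hf_\infty$ (where the agreement of all moduli near $e$ is what saves the day); the remainder is routine scaling bookkeeping on top of Proposition~\ref{prop:3}.
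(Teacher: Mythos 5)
Your overall strategy — split the $t$-integral into a small-$t$ piece and a large-$t$ piece, identify the small-$t$ piece with a family in $\Sc_r(\hf_0)$ and the large-$t$ piece with one in $\Sc_r(\hf_\infty)$, then feed each into Proposition~\ref{prop:3} — is exactly the paper's (the paper splits at $t=s^{-1}$ rather than $t=1$, which is immaterial since $s$ is fixed, and makes the modulus comparisons of {\bf6} of Proposition~\ref{prop:7} directly applicable without worrying about the intermediate range of $st$). The dictionary between $\vect{X}_{st}^\gamma$ and $\partial_0^{\gamma'}$ via Proposition~\ref{lem:21:1}, and between $\abs{\,\cdot\,}_{st}$ and $\abs{\,\cdot\,}$ via Propositions~\ref{prop:7} and~\ref{prop:21:9}, is indeed the crux of the uniform-boundedness step, and your outline of it is sound.

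The gap is in the last paragraph, in the claim that ``$(t\,\cdot\,)_{*}(\phi_{t,b}\,\nu_{G_{st}})$ stays smooth near $e$ uniformly, so $K_b^{+}\in C^\infty(G_s)\cap\Sc'_{-Q_\infty-r}(G_s)$.'' This heuristic does not give what you need. Writing $\psi'_{t,b}$ for the density in $\hf_\infty$, the integrand near the origin scales as $\abs{\partial_\infty^\alpha\bigl(t^{-r}(t\,\cdot\,)_{*}\psi'_{t,b}\bigr)(0)}\asymp t^{-r-Q_\infty-\dd_\alpha}\abs{\partial_\infty^\alpha\psi'_{t,b}(0)}$, and $\int_1^{+\infty}t^{-r-Q_\infty-\dd_\alpha}\,\frac{\dd t}{t}$ diverges whenever $\dd_\alpha\meg -Q_\infty-r$; since $r\in\R$ is arbitrary, this range of $\alpha$ is non-empty in general. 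Thus the naive integral does not converge in $C^\infty$ near $e$, and ``uniform smoothness'' of the integrand is not enough. What the paper actually does here is a Taylor-polynomial subtraction: it sets aside the homogeneous Taylor components $P_{t,b,j}$ of $\psi'_{t,b}$ at $0$ of degree $j\meg -Q_\infty-r$ (which are invisible in $\Sc'_{-Q_\infty-r}$ and are responsible for the divergence), shows that for each $k$ the remainder $\int t^{-r}(t\,\cdot\,)_{*}(\psi'_{t,b}-\sum_{j\meg k}P_{t,b,j})\,\frac{\dd t}{t}$ converges in $C^k(\hf_\infty)$ because the subtracted integrand is $O(\abs{x}^{(k-\dd_\alpha)_+})$ near $0$, and then adds back the convergent polynomial contributions for $-Q_\infty-r<j\meg k$. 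This is what produces a genuine $C^\infty$ representative; your proposal skips it. (The companion claim for $K_b^-$, that it is Schwartz away from $e$, is also only waved at, but there the explicit estimate $\abs{\partial_0^\alpha K_{b,0}(x)}\lesssim\abs{x}^{-k-Q_0-r-\dd_\alpha}$ is a one-line computation in the spirit of what you say, so that part is essentially fine.)
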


\begin{proof}
	By an abuse of notation, we shall identify $G_{s'}$ with $\hf_0$ if $s'\in (0,1)$ and with $\hf_\infty$ if $s'\in (1,\infty)$. In addition we shall identify the measures $\phi_{t,b}\nu_{G_{st}}$ with its density $\widetilde \phi_{t,b}$ with respect to the fixed Lebesgue measure of $\hf_0$, for $t<s^{-1}$, or to the fixed Lebesgue measure of $\hf_\infty$, for $t>s^{-1}$. 
	Then, $\widetilde \phi_{t,b}$ differs from $\phi_{t,b}\circ \pi_{st}$ by a multiplicative constant which stays bounded as $t$ runs through $\R_+$.

	Observe first that, using Proposition~\ref{prop:21:9} and~{\bf6} of Proposition~\ref{prop:7}, it is not difficult to show that there is a constant $C>0$ such that
	\[
	1+\abs{x}_{s'}\Meg C(1+\abs{x}^{\sfrac{1}{n}} )
	\]
	for every $x\in \hf_0$ and for every $s'\in (0,1)$, while
	\[
	1+\abs{x}_{s'}\Meg C (1+\abs{x})
	\]
	for every $x\in \hf_\infty$ and for every $s'\in (1,\infty)$.

	Hence, the set of $\widetilde \phi_{t,b}$, as $t$ runs through $(0,s^{-1})$ and $b$ runs through $B$, is bounded in $\Sc(\hf_0)$, while the set of $\widetilde \phi_{t,b}$, as $t$ runs through $(s^{-1},\infty)$ and $b$ runs through $B$, is bounded in $\Sc(\hf_\infty)$.
	Consequently, Proposition~\ref{prop:3} and its proof imply that the
	\[
	K_{b,0}\coloneqq \int_0^{s^{-1}} t^{-r} (t\,\cdot\,)_* \phi_{t,b}\,\frac{\dd t}{t}
	\]
	are well-defined elements of $\Sc'(G_s)$ and stay bounded in $\Cc\Zc_r(\hf_0)$; analogously, the
	\[
	K_{b,\infty}\coloneqq \int_{s^{-1}}^{+\infty} t^{-r} (t\,\cdot\,)_* \phi_{t,b}\,\frac{\dd t}{t}
	\]
	are well-defined elements of $\Sc'_{-Q_0-r}(G_s)$, and  stay bounded in $\Cc\Zc_r(\hf_\infty)$.

	It will then suffice to prove that $K_{b,0}$ equals a Schwartz function in a neighbourhood of $\infty$, and that (every representative of) $K_{b,\infty}$ is of class $C^\infty$ on the whole of $G_s$ (with the required boundedness).
	
	On the one hand, take $k\Meg 1$ and $\alpha$, and observe that there is a constant $C_{k,\alpha}>0$ such that
	\[
	\abs{\partial^\alpha_{0}\widetilde \phi_{t,b}(x)}\meg \frac{C_{k,\alpha}}{(1+\abs{x})^{k+Q_0+r+\dd_\alpha}}
	\]
	for every $x\in \hf_0$, for every $t\in (0,s^{-1})$, and for every $b\in B$. Then,
	\[
	\begin{split}
	\abs{\partial_0^\alpha K_{b,0}(x)}&\meg C_{k,\alpha}\int_0^{s^{-1}} \frac{t^{-Q_0-r-\dd_\alpha}}{(1+\abs{t^{-1}\cdot x})^{k+Q_0+r+\dd_\alpha}}\,\frac{\dd t}{t} \meg  \frac{C_{k,\alpha} s^{-k}}{k\abs{x}^{k+Q_0+r+\dd_\alpha}} 
	\end{split}
	\]
	for every non-zero $x\in \hf_0$ and for every $b\in B$. By the arbitrariness of $k$ and $\alpha$, it follows that the $(1-\tau)K_{b,0}$ stay in a bounded subset of $\Sc(\hf_0)$ as $b$ runs through $B$, where $\tau$ is an element of $C^\infty_c(\hf_0)$ which equals $1$ on a neighbourhood of $0$.
	
	On the other hand, denote by $P_{t,b,j}$ the homogeneous component of degree $j$ of the Taylor series of $\widetilde \phi_{t,b}\in \Sc(\hf_\infty)$  about $0$, for every $t>0$, for every $b\in B$, and for every $j\in \N$. If $k>-Q_\infty-r$, then
	\[
	\begin{split}
	\int_{s^{-1}}^{+\infty} t^{-r} (t\,\cdot\,)_* P_{t,b,k}\,\frac{\dd t}{t}
	\end{split}
	\]
	is a well-defined homogeneous polynomial of degree $k$,	while for every $\alpha$ there is a constant $C'_{k,\alpha}>0$ such that
	\[
	\begin{split}
	\abs*{\int_{s^{-1}}^{+\infty} t^{-r} \partial^{\alpha}_{\infty}(t\,\cdot\,)_* \left(\widetilde \phi_{t,b}-\sum_{j< k} P_{t,b,j} \right)(x)\,\frac{\dd t}{t}}\meg C'_{k,\alpha} \abs{x}^{ (k-\dd_\alpha)_+ }
	\end{split}
	\]
	for every $x\in \hf_\infty$, and for every $b\in B$. 
	Define
	\[
	\widetilde K_{\infty,b}\coloneqq  \int_{s^{-1}}^{+\infty} t^{-r} (t\,\cdot\,)_* \left(\widetilde \phi_{t,b}-\sum_{j\meg -Q_\infty-r} P_{t,b,j} \right)\,\frac{\dd t}{t}, 
	\]
	so that $\widetilde K_{b,\infty}$ is a well-defined representative of $ K_{b,\infty}$ (under the identification of $G_s$ with $\hf_\infty$) by the proof of Proposition~\ref{prop:3}.
	Then, for every $k\in \N$, the
	\[
	\widetilde K_{\infty,b}=\int_{s^{-1}}^{+\infty} t^{-r} (t\,\cdot\,)_* \left(\widetilde \phi_{t,b}-\sum_{j\meg k} P_{t,b,j} \right)\,\frac{\dd t}{t} + \sum_{-Q_\infty-r<j\meg k}\int_1^{+\infty} t^{-r} (t\,\cdot\,)_* P_{t,b,j}\,\frac{\dd t}{t}
	\]
	stay bounded in $C^k(\hf_\infty)$. By the arbitrariness of $k$, it follows that the $\widetilde K_{\infty,b}$ stay bounded in $C^\infty(\hf_\infty)$.	
\end{proof}

\begin{teo}\label{teo:5}
	Take $r\in \R$. Then, the following hold:
	\begin{itemize}
		\item for every $k\in \N$, the continuous linear mapping $\Kc_{0,0}^{(k)}\colon C^\infty_c(\R^*)\to \Cc\Zc_{r\delta-k}(\hf_0) $ induces a unique continuous linear mapping $\Kc_{0,0}^{(k)}\colon \Mc_r(\R^*)\to \Cc\Zc_{r\delta-k}(\hf_0) $ such that, if $\Ff$ is a bounded filter on $\Mc_r(\R^*)$ which converges pointwise to some $m$ in $\Mc_r(\R^*)$, then $\Kc_{0,0}^{(k)}(\Ff)$ converges to $\Kc_{0,0}^{(k)}(m)$ in $\Sc'_{-Q_0-r\delta+k}(\hf_0)$;
		
		\item for every $k\in \N$, the continuous linear mapping $\Kc_{\infty,\infty}^{(k)}\colon C^\infty_c(\R^*)\to \Cc\Zc_{r\delta+k}(\hf_\infty) $ induces a unique continuous linear mapping $\Kc_{\infty,\infty}^{(k)}\colon \Mc_r(\R^*)\to \Cc\Zc_{r\delta+k}(\hf_\infty) $ such that, if $\Ff$ is a bounded filter on $\Mc_r(\R^*)$ which converges pointwise to some $m$ in $\Mc_r(\R^*)$, then $\Kc_{\infty,\infty}^{(k)}(\Ff)$ converges to $\Kc_{\infty,\infty}^{(k)}(m)$ in $\Sc'_{-Q_\infty-r\delta-k}(\hf_\infty)$;
		
		\item for every $s\in (0,\infty)$, the continuous linear mapping $\Kc_{\Lc_s}\colon C^\infty_c(\R^*)\to \Cc\Zc_{r\delta}(G_s) $ induces a unique continuous linear mapping $\Kc_{\Lc_s}\colon \Mc_r(\R^*)\to \Cc\Zc_{r\delta}(G_s) $ such that, if $\Ff$ is a bounded filter on $\Mc_r(\R^*)$ which converges pointwise to some $m$ in $\Mc_r(\R^*)$, then $\Kc_{\Lc_s}(\Ff)$ converges to $\Kc_{\Lc_s}(m)$ in $\Sc'_{-Q_\infty-r\delta}(G_s)$.
	\end{itemize}
	In addition, let $M$ be a bounded subset of $\Mc_r(\R^*)$, and take $\tau_0\in C^\infty_c(\hf_0)$ and $\tau_\infty\in C^\infty_c(\hf_\infty)$ such that $\tau_0$ and $\tau_\infty$ equal $1$ in a neighbourhood of $0$. Then, the following hold:\footnote{We denote by $\Kc_{0,s}(m)$ and $\Kc_{\infty,s}(m)$ the distributions on $\hf_0$ and $\hf_\infty$, respectively, corresponding to $\Kc_{\Lc_s}(m)$ under the usual identifications.}
	\begin{itemize}
		\item for every $N\in \N$ there is a bounded family $(K_{0,m,N,s})_{m\in M, s\in (0,1]}$ of elements of $ \Cc\Zc_{r\delta-N}(\hf_0)$ 
		 such that 
		\[
		\tau_0 \left(\Kc_{0,s}(m)-\sum_{k< N } s^k \Kc_{0,0}^{(k)}(m)
		\right) = s^N \tau_0 K_{0,m,N,s}
		\]
		in $\Sc'_{-Q_0-\delta r+N}(\hf_0)$, for every $m\in M$ and for every $s\in (0,1]$;
		
		\item for every $N\in \N$ there is a bounded family $(K_{\infty,m,N,s})_{m\in M, s\in [1,\infty)}$ of elements of $ \Cc\Zc_{r\delta+N}(\hf_\infty)$  such that 
		\[
		(1-\tau_\infty) \left(  \Kc_{\infty,s}(m)-\sum_{k< N } s^{-k} \Kc_{\infty,\infty}^{(k)}(m)\right) = s^{-N} (1-\tau_\infty) K_{\infty,m,N,s}
		\]
		in $\Sc'_{-Q_\infty-\delta r}(\hf_0)$, 	for every $m\in M$ and for every $s\in [1,\infty)$.
	\end{itemize} 
\end{teo}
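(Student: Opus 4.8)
The plan is to reduce the whole statement to Propositions~\ref{prop:3} and~\ref{cor:2} by means of a continuous Calderón-type reproducing formula in the spectral variable. Fix an even $\psi\in C^\infty_c(\R^*)$ with $c\coloneqq\int_0^\infty\psi(t^\delta\lambda)\,\frac{\dd t}{t}$ a finite nonzero constant, independent of $\lambda\in\R^*$ (such $\psi$ exists because $\frac{\dd t}{t}$ is dilation invariant). For $m\in\Mc_r(\R^*)$ and $t>0$ set $n_{t,m}(\mu)\coloneqq m(t^{-\delta}\mu)\psi(\mu)$, so that $n_{t,m}\in C^\infty_c(\R^*)$ is supported in $\supp\psi$, one has $m(\lambda)=\frac1c\int_0^\infty n_{t,m}(t^\delta\lambda)\,\frac{\dd t}{t}$ (the integral converging in $C^\infty_c(\R^*)$ if $m\in C^\infty_c(\R^*)$, and pointwise and boundedly in general), and $\Set{t^{r\delta}n_{t,m}\colon t>0,\ m\in M}$ is bounded in $C^\infty_c(\R^*)$ whenever $M\subseteq\Mc_r(\R^*)$ is bounded.

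For the third bullet fix $s\in(0,\infty)$. For $m\in C^\infty_c(\R^*)$, continuity of $\Kc_{\Lc_s}$ and the scaling identity of Proposition~\ref{prop:2} give $\Kc_{\Lc_s}\big(n_{t,m}(t^\delta\,\cdot\,)\big)=(t\,\cdot\,)_*\Kc_{\Lc_{ts}}(n_{t,m})$, so that
\[
\Kc_{\Lc_s}(m)=\frac1c\int_0^\infty t^{-r\delta}\,(t\,\cdot\,)_*\big(t^{r\delta}\Kc_{\Lc_{ts}}(n_{t,m})\big)\,\frac{\dd t}{t}.
\]
Here $\phi_{t,m}\coloneqq t^{r\delta}\Kc_{\Lc_{ts}}(n_{t,m})$ lies in $\Sc_{r\delta}(G_{ts})$ --- in fact all its moments vanish, since $n_{t,m}$ is supported away from $0$ --- and, by Lemma~\ref{lem:2}, in a bounded subset of $\Sc(G_{ts})$ with the decay of the correct order in $\abs{\,\cdot\,}_{ts}$; thus Corollary~\ref{cor:2} applies with its $r$ replaced by $r\delta$, and the above integral defines an element of $\Cc\Zc_{r\delta}(G_s)$ depending boundedly on $m\in M$. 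This is the desired extension. For the first two bullets one argues identically, replacing Proposition~\ref{prop:2} by Lemma~\ref{lem:4} (so that $\Kc_{0,0}^{(k)}\big(n_{t,m}(t^\delta\,\cdot\,)\big)=t^k(t\,\cdot\,)_*\Kc_{0,0}^{(k)}(n_{t,m})$ and $\Kc_{\infty,\infty}^{(k)}\big(n_{t,m}(t^\delta\,\cdot\,)\big)=t^{-k}(t\,\cdot\,)_*\Kc_{\infty,\infty}^{(k)}(n_{t,m})$), Lemma~\ref{lem:2} differentiated $k$ times at the endpoint, and Proposition~\ref{prop:3} on $\hf_0$, resp.\ $\hf_\infty$; the resulting kernels land in $\Cc\Zc_{r\delta-k}(\hf_0)$, resp.\ $\Cc\Zc_{r\delta+k}(\hf_\infty)$. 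Uniqueness and the convergence property along bounded filters follow by dominated convergence in these integrals, together with the bounded-convergence property of the functional calculus, once one observes that every $m\in\Mc_r(\R^*)$ is the pointwise limit of the bounded filter $\Set{m\chi_n}_n$, with $\chi_n\in C^\infty_c(\R^*)$ suitable cut-offs.

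For the asymptotic developments fix $M\subseteq\Mc_r(\R^*)$ bounded and $N\in\N$. By Lemma~\ref{lem:2} the maps $s'\mapsto\Kc_{0,s'}(m)$ and $s'\mapsto\Kc_{\infty,\sfrac1{s'}}(m)$ are of class $C^\infty$, so Taylor's formula with integral remainder expresses $\Kc_{0,s}(m)-\sum_{k<N}s^k\Kc_{0,0}^{(k)}(m)$ (for $s\in(0,1]$) as $s^N$ times a fixed-weight average over $\theta\in[0,1]$ of $\Kc_{0,\theta s}^{(N)}(m)$, and symmetrically $\Kc_{\infty,s}(m)-\sum_{k<N}s^{-k}\Kc_{\infty,\infty}^{(k)}(m)$ (for $s\in[1,\infty)$) as $s^{-N}$ times a fixed-weight average of $\Kc_{\infty,v}^{(N)}(m)$ with $v$ ranging in $[1,\infty)$ --- the precise combinatorial normalization being immaterial. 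Since in the first case the parameter $\theta s$ runs over the compact set $[0,1]$, the claim reduces to showing that the family $\big(\Kc_{0,s'}^{(N)}(m)\big)_{m\in M,\,s'\in[0,1]}$ is well-defined and bounded in $\Cc\Zc_{r\delta-N}(\hf_0)$: then its $\theta$-average is bounded there, one takes $K_{0,m,N,s}$ to be that average, multiplies by $\tau_0$ (thereby passing to $\Sc'_{-Q_0-\delta r+N}(\hf_0)$), and extends from $C^\infty_c(\R^*)$ to $M$ by bounded-filter continuity; the $\hf_\infty$-case is symmetric. As above, differentiating $\Kc_{0,s'}(m)=\frac1c\int_0^\infty(t\,\cdot\,)_*\Kc_{0,ts'}(n_{t,m})\,\frac{\dd t}{t}$ $N$ times in $s'$ gives $\Kc_{0,s'}^{(N)}(m)=\frac1c\int_0^\infty t^{-(r\delta-N)}(t\,\cdot\,)_*\big(t^{r\delta}\Kc_{0,ts'}^{(N)}(n_{t,m})\big)\,\frac{\dd t}{t}$, which is of the form treated by Corollary~\ref{cor:2} with $r$ there equal to $r\delta-N$, and one needs the resulting bound uniformly for $s'\in[0,1]$.

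The main obstacle is exactly the uniform control required at this last step. Lemma~\ref{lem:2} only furnishes smoothness --- hence boundedness --- of $s'\mapsto\Kc_{0,s'}$ (resp.\ $s'\mapsto\Kc_{\infty,\sfrac1{s'}}$) on compact subsets of $[0,\infty)$, whereas in the integrals above the argument $ts'$ (resp.\ $ts$) sweeps all of $(0,\infty)$, so the Schwartz bounds on $t^{r\delta}\Kc_{0,ts'}^{(N)}(n_{t,m})$ --- and their dependence on the intrinsic modulus $\abs{\,\cdot\,}_{ts'}$ --- must be rendered uniform in $ts'$. This is handled precisely as in the proof of Corollary~\ref{cor:2}: one splits the $t$-integral at $t=\sfrac1{s'}$, uses the realization on $\hf_0$ on $\big(0,\sfrac1{s'}\big]$ (where $ts'\meg1$, a compact range for the first map of Lemma~\ref{lem:2}) and on $\hf_\infty$ on $\big[\sfrac1{s'},\infty\big)$ (where $\sfrac1{ts'}\meg1$, a compact range for the second map), passes between the two realizations via the maps $\lambda_{ts'}$ of Proposition~\ref{prop:21:9} --- which differ from the identity only by terms of lower order near $0$, so the conversion does not affect the local singularity that $\tau_0$ isolates --- and controls the two pieces through the heat-kernel estimates of Theorem~\ref{prop:21:2} together with~{\bf6} of Proposition~\ref{prop:7}, the uniform boundedness of $\beta_{\Lc_{ts'}}$ on compact sets (Lemma~\ref{lem:3}) ensuring that no blow-up occurs at the origin as $ts'\to0$ or $\infty$. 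Throughout, the vanishing-moment conditions defining $\Sc_{r\delta-N}(G_{ts'})$ hold because $n_{t,m}$ is supported away from $0$, whence the kernels of $n_{t,m}(\Lc_{ts'})$ and all their $s$-derivatives carry the required cancellation.
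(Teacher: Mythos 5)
Your overall skeleton matches the paper's: a Calderón reproducing formula in the spectral variable, the characterization of $\Cc\Zc_r$ kernels via Proposition~\ref{prop:3} and Corollary~\ref{cor:2}, and a Taylor expansion in the contraction parameter with integral remainder. The first half (defining the extensions and proving bounded-filter continuity) is essentially the paper's argument. But two genuine gaps appear in the treatment of the asymptotic expansions.

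First, your reduction to showing that the family $\bigl(\Kc_{0,s'}^{(N)}(m)\bigr)_{m\in M,\,s'\in[0,1]}$ is bounded in $\Cc\Zc_{r\delta-N}(\hf_0)$ cannot be made uniform: for $s'>0$ the kernel $\Kc_{0,s'}^{(N)}(m)$ lives on $G_{s'}\cong G_1$, and its behaviour at infinity on $\hf_0$ is governed by the $\hf_\infty$-homogeneity of $G_{s'}$, not by the $\hf_0$-homogeneity that membership in $\Cc\Zc_{r\delta-N}(\hf_0)$ demands; if $Q_\infty\neq Q_0$ the two decay rates genuinely differ. What Corollary~\ref{cor:2} actually gives you is uniform boundedness in $\Cc\Zc_{r\delta-N}(G_{s'})$, which is a different space. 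The theorem only requires a \emph{local} match after multiplication by $\tau_0$, and the paper obtains a representative $K_{0,m,N,s}\in\Cc\Zc_{r\delta-N}(\hf_0)$ by an explicit ``moment correction'' step: using that the moment map $\Sc(F)\ni\phi\mapsto\bigl(\int\phi P\bigr)_{P\in\Pc}$ is onto for $F$ a closed set with nonempty interior, together with~\cite[Prop.~12, Ch.~II, \S4, no.~7]{BourbakiTVS}, one replaces a bounded family in $C^\infty(\hf_0)$ (localized by $\tau_0$) with a bounded family in $\Sc_{r\delta-N}(\hf_0)$ having the same product with $\tau_0$. Your ``multiplies by $\tau_0$'' skips over exactly this.

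Second, you assert that the kernels $\Kc_{\Lc_{ts'}}(n_{t,m})$ and their $s$-derivatives have all vanishing moments ``because $n_{t,m}$ is supported away from $0$''. That is the right conclusion, but it is not immediate and requires the paper's argument: write $n_{t,m}=(\,\cdot\,)^h\,n'$ with $n'\in C^\infty_c(\R^*)$, deduce $\Kc_{\Lc_s}(n_{t,m})=\Lc_s^h\Kc_{\Lc_s}(n')$, and use that every polynomial is $\Lc_s^h$-harmonic for $h$ large enough (via Proposition~\ref{lem:21:1} or homogeneity on $\widetilde G$); then transposition gives the vanishing of all moments, and the statement passes to $s$-derivatives by continuity of moments in $s$. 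Once these two steps are supplied, your rearranged version of the Taylor expansion (performed on $\Kc_{0,s}(m)$ rather than inside the Calderón integral on $\Kc_{0,sy}(m(y^{-\delta}\cdot)\phi)$, as the paper does) can be completed; the paper's choice of expanding inside the integral is slightly cleaner, as the Taylor remainder is then a single bounded family of Schwartz functions rather than an average of $\Cc\Zc$ kernels with $s'$-dependent asymptotics.
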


\begin{proof}
	Let $M$ be a bounded subset of $\Mc_r(\R^*)$ and fix a positive function $\phi \in C^\infty_c(\R^*)$ such that $\int_0^\infty \phi(y^\delta\cdot \lambda)\, \frac{\dd y}{y}=1$ for every $\lambda\in \R^*$. 	
	Let us prove that the family $(y^{\delta r} m(y^{-\delta}\,\cdot\,) \phi  )_{y>0, m\in M} $ is bounded in $\Sc(\R)$.
	Indeed, take $h$ and observe that there is a constant $C_h>0$ such that
	\[
	\abs*{\frac{\dd^p}{\dd \lambda^p} m(\lambda) }\meg C_h \abs{\lambda}^{r-p}
	\]
	for every $\lambda\in \R^*$, for every $m\in M$, and for every $p=0,\dots,h$. Then,
	\[
	y^{\delta r} \abs*{\frac{\dd^h}{\dd \lambda^h} \left[ m(y^{-\delta}\,\cdot\,) \phi  \right] (\lambda)} \meg \sum_{h_1+h_2=h} \frac{h!}{h_1! h_2!} C_{h_1}\abs{\lambda}^{r-h_1} \norm{\phi}_{W^{h_2,\infty}(\R)} \chi_{\Supp{\phi}}(\lambda)
	\]
	for every $\lambda\in \R^*$, for every $y>0$, and for every $m\in M$, whence the assertion.	
	Next, let us prove that $\Kc^{(k)}_{0,s}(m')$ (and analogously $\Kc^{(k)}_{\infty,s}(m')$) has all vanishing moments for $m'\in C^\infty_c(\R^*)$. It will suffice to prove our assertion for $k=0$, hence for $\Kc_{\Lc_s}(m')$. Now, for every $h\in \N$ we have $m'_h\coloneqq (\,\cdot\,)^{-h} m'\in C^\infty_c(\R)$, so that $\Kc_{\Lc_s}(m')=\Lc_s^h \Kc_{\Lc_s}(m'_h)$. Since every polynomial is $\Lc_s^h$-harmonic for sufficiently large $h$ (use Proposition~\ref{lem:21:1} or observe that a similar property applies to $\widetilde \Lc$ by homogeneity arguments), the assertion follows by (sesquilinear) transposition. 
	
	Therefore, for every $k\in \N$, the family $( y^{\delta r} \Kc^{(k)}_{0,s}( m(y^{-\delta }\,\cdot\,) \phi  )  )_{y>0,s\in [0,1],m \in M}$ is bounded in $\Sc_\infty(\hf_0)$, while the family  $(  y^{\delta r} \Kc^{(k)}_{\infty,s}( m(y^{-\delta }\,\cdot\,) \phi  ) )_{y>0,s\in [1,\infty],m \in M}$ is bounded in $\Sc_\infty(\hf_\infty)$.
	Hence, Propositions~\ref{prop:3} and~\ref{cor:2} show that the
	\[
	\Kc_{0,0}^{(k)} (m)\coloneqq\int_0^{+\infty} y^{-r \delta+k} (y\,\cdot\,)_* \Kc_{0,0}^{(k)} ( m(y^{-\delta }\,\cdot\,) \phi   )\,\frac{\dd y}{y}
	\]
	are well defined and stay in a bounded subset of $\Kc_{r\delta-k}(\hf_0)$ for every $k\in \N$, that the
	\[
	\Kc_{\infty,\infty}^{(k)} (m)\coloneqq\int_0^{+\infty} y^{-r \delta-k} (y\,\cdot\,)_* \Kc_{\infty,\infty}^{(k)} ( m(y^{-\delta }\,\cdot\,) \phi   )\,\frac{\dd y}{y}
	\]
	are well defined and stay in a bounded subset of $\Kc_{r\delta+k}(\hf_\infty)$ for every $k\in \N$, and that the
	\[
	\Kc_{\Lc_s} (m)\coloneqq\int_0^{+\infty} y^{-r \delta}(y\,\cdot\,)_* \Kc_{\Lc_{s y}} ( m(y^{-\delta }\,\cdot\,) \phi   )\,\frac{\dd y}{y}
	\]
	are well defined and stay in a bounded subset of $\Kc_{r\delta}(G_s)$. Therefore, the so-defined linear mappings $\Kc_{0,0}^{(k)} $, $\Kc_{\infty,\infty}^{(k)}$, and $\Kc_{\Lc_s}$ are continuous; in addition, by Proposition~\ref{prop:2} and Lemma~\ref{lem:4} and the choice of $\phi$, they agree with their previous definition on $\Sc(\R)$, $\Sc(\R)$, and $\Mc_r(\R^*)\cap \ell^\infty(\R^*)$, respectively.
	
	Now, if $\Ff$ is a filter on $M$ which converges to some $m_0$ pointwise on $\R^*$, then $y^{\delta r} \Ff(y^{-\delta}\,\cdot\,) \phi  $ converges pointwise to $y^{\delta r} m_0(y^{-\delta}\,\cdot\,) \phi $, hence in $\Sc(\R)$. 
	As a consequence, also $\Kc^{(k)}_{0,0}(y^{\delta r} \Ff(y^{-\delta}\,\cdot\,) \phi )$ converges to $\Kc^{(k)}_{0,0}(y^{\delta r} m_0(y^{-\delta}\,\cdot\,) \phi )$ in $\Sc(\hf_0)$ for every $k\in \N$.
	Hence, $\Kc_{0,0}^{(k)}(\Ff)$ converges to $\Kc_{0,0}^{(k)}(m_0)$ in $\Sc'_{-Q_0-r \delta+k}(\hf_0) $ for every $k\in \N$. 
	The analogous assertions concerning $\Kc_{\Lc_s}$, for $s\in (0,\infty)$, and $\Kc_{\infty,\infty}^{(k)}$, for $k\in \N$, are proved similarly.
	The first three assertions of the statement are therefore established.

	Now, observe that, for every $N\in \N$,
	\begin{equation*}\label{eq:1}
	\Kc_{0,{s y}}(m(y^{-\delta}\,\cdot\,  ) \phi )= \sum_{k<N} \Kc_{0,0}^{(k)}(m( y^{-\delta}\,\cdot\,  ) \phi ) \frac{(s y)^k  }{k !}+ (s y)^{N }\int_0^1 \Kc_{0,{ s y \theta}}^{(N)}(m( y^{-\delta}\,\cdot\,  ) \phi) \frac{(1- \theta)^{N-1}}{(N-1)!}\,\dd \theta
	\end{equation*}
	for $y\in (0,s^{-1})$, while
	\begin{equation*}\label{eq:2}
	\Kc_{\infty,{s y}}(m( y^{-\delta}\,\cdot\,  ) \phi )= \sum_{k<N} \Kc_{\infty,\infty}^{(k)}(m( y^{-\delta}\,\cdot\,  ) \phi ) \frac{(s y)^{-k}  }{k !}+ (s y)^{-N}\int_0^1 \Kc_{\infty,{\sfrac{ s y }{ \theta}}}^{(N)}(m( y^{-\delta}\,\cdot\,  ) \phi) \frac{(1- \theta)^{N-1}}{(N-1)!}\,\dd  \theta
	\end{equation*}
	for $y\in (s^{-1},\infty)$. 
	In addition, the $ \int_0^1 \Kc_{0,{y s  \theta}}^{(N)}(m( y^{-\delta}\,\cdot\,  ) \phi) \frac{(1- \theta)^{N-1}}{(N-1)!}\,\dd  \theta$ are bounded in $\Sc_\infty(\hf_0)$ as $y$ run through $(0,s^{-1})$, while the $\int_0^1 \Kc_{\infty,{\sfrac{ s y}{ \theta}}}^{(N)}(m( y^{-\delta}\,\cdot\,  ) \phi) \frac{(1- \theta)^{N-1}}{(N-1)!}\,\dd \theta$ are  bounded in $\Sc_\infty(\hf_\infty)$ as $y$ runs through $(s^{-1},\infty)$. 
	In addition observe that, if $V$ is a finite-dimensional vector space, $F$ is a closed subset of $V$ with non-empty interior, and $\Pc$ is a linearly independent finite set of polynomials on $V$, then the mapping $\Sc(F)\ni \phi\mapsto (\int \phi(x) P(x)\,\dd x)\in \C^\Pc$ is onto, where $\Sc(F)$ is the set of $\phi \in \Sc(V)$ supported in $F$, with the topology induced by $\Sc(V)$.
	Applying~\cite[Proposition 12 of Chapter II, § 4, No.\ 7]{BourbakiTVS}, we see that, if $B_0$ and $B_\infty$ are bounded subsets of $C^\infty(\hf_0)$ and $\Sc(\hf_\infty)$, respectively, then there are bounded subsets $B'_0$ and $B'_\infty$ of $\Sc_{r\delta -N}(\hf_0) $ and $\Sc_{r\delta+N}(\hf_\infty)$ such that $\tau_0(B_0)=\tau_0 B'_0$ and $(1-\tau_\infty)B_\infty=(1-\tau_\infty) B'_\infty$.  
	Hence, Corollary~\ref{cor:2} (and its proof) again implies that the
	\[
	\tau_0\left( \Kc_{0,s}(m)-\sum_{k< N } s^{k}\Kc_{0,0}^{(k)}(m)\right) 
	\]
	stay bounded in $\tau_0\Kc_{r\delta-N}(\hf_0)$ as $m$ runs through $M$ and $s$ is fixed, 
	while the
	\[
	(1-\tau_\infty) \left( \Kc_{\infty,s}(m)-\sum_{k< N } s^{-k}\Kc_{\infty,\infty}^{(k)}(m) \right) 
	\]
	stay bounded in  $(1-\tau_\infty)\Kc_{r \delta+N}(\hf_\infty)$ as $m$ runs through $M$ and $s$ is fixed.
	In order to establish uniform boundedness for general $s$ as in the statement, it suffices to reduce to the case $s=1$, taking into account Proposition~\ref{prop:2} and Lemma~\ref{lem:4}.
	The proof is therefore complete.
\end{proof}

\subsection{Multiplier Theorems}

Here we shall repeat the arguments of~\cite[§ 4.1]{Martini} in order to provide a multiplier theorem for the operators $\Lc_s$, which will imply some sort of continuity for the mapping $s\mapsto \Kc_{\Lc_s}(m)$ for more general $m$.
Even though the following results hold when $\widetilde \Lc$ is  self-adjoint, in order to avoid some technical issues we shall assume that $\widetilde\Lc$ is positive.

\emph{In this section, when $\mi$ is a measure on $G_s$ which is absolutely continuous with respect to the Haar measure, we shall write $\norm{\mi}_{L^p(\nu_{G_s})}$ to denote the $L^p$ norm of its density with respect to $\nu_{G_s}$, $p\in [1,\infty]$.  }

We recall the definition of some Besov spaces on $\R$ (cf.~\cite[Theorem of Section 2.6.1]{Triebel} and~\cite[Section 5]{Amann}).

\begin{deff}
	Take $\alpha>0$. Then, $B^\alpha_{\infty,\infty}(\R)$ is the space of $f\in L^\infty(\R)$ such that 
	\[
	\sup_{x\neq 0} \frac{\norm{\Delta^{([\alpha]+1)}_x f }_\infty}{\abs{x}^\alpha}<\infty,
	\]
	where $ \Delta_x^{([\alpha]+1)} f\coloneqq \sum_{j=0}^{[\alpha]+1} (-1)^{[\alpha]+1-j} \binom{[\alpha]+1}{j}f(\,\cdot\,+ j x)$ for every $x\in \R$, endowed with the corresponding topology. 
	We denote by $b^\alpha_{\infty,\infty}(\R)$ the closure of $B^{\alpha+1}_{\infty,\infty}(\R)$ in $B^\alpha_{\infty,\infty}(\R)$.
	
	We also denote by $H^\alpha(\R)$ the classical Sobolev space of $f\in L^2(\R)$ such that $\Fc^{-1}( (1+\abs{\,\cdot\,}^2)^{\alpha/2} \Fc f)\in L^2$, where $\Fc$ denotes the Fourier transform.
\end{deff}

Recall that $\nu_{\R_+}$ is a Haar measure on the multiplicative group $\R_+$.

\begin{prop}\label{prop:4}
	For every $r>0$, for every $\gamma$, and for every $\alpha_1,\alpha_2\Meg 0$ such that $\alpha_2>\alpha_1$ there is a constant $C>0$ such that
	\[
	\norm{\vect{X}_s^\gamma \Kc_{\Lc_s}(m) (1+\abs{\,\cdot\,}_{s,*})^{\alpha_1} }_{L^2(\nu_{G_s})} \meg C \norm{m}_{B^{\alpha_2}_{\infty,\infty}(\R)}
	\]
	for every $m\in B^{\alpha_2}_{\infty,\infty}(\R)$ with $\supp{m}\subseteq [-r,r]$, and for every $s\in [0,\infty]$.
	
	If, in addition, $\beta_{\Lc_1}$ has a density with respect to $\nu_{\R_+}$ bounded by $\min[ (\,\cdot\,)^{\sfrac{Q_0}{\delta}}, (\,\cdot\,)^{\sfrac{Q_\infty}{\delta}}]$, then we may take $C$ in such a way that
	\[
	\norm{\vect{X}_s^\gamma \Kc_{\Lc_s}(m) (1+\abs{\,\cdot\,}_{s,*})^{\alpha_1} }_{L^2(\nu_{G_s})}  \meg C \norm{m}_{H^{\alpha_2}(\R)}
	\]
	for every $m\in H^{\alpha_2}(\R)$ with $\supp{m}\subseteq [-r,r]$, and for every $s\in [0,\infty]$.
\end{prop}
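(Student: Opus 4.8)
The plan is to follow, with the obvious modifications, the arguments of~\cite[§~4.1]{Martini} (cf.\ also~\cite{Hebisch,HebischZienkiewicz,Martini3}), the only genuinely new point being to keep every constant independent of $s\in[0,\infty]$. \emph{Step 1: removing $\vect{X}^\gamma_s$.} Since $\widetilde\Lc$, hence each $\Lc_s$, is positive, I may assume $\supp m\subseteq[0,r]$. Fix $\chi\in C^\infty_c(\R)$ with $\chi\equiv1$ on $[0,r]$; then $m=(m\chi e^{(\,\cdot\,)})\,e^{-(\,\cdot\,)}$ on $\sigma(\Lc_s)$, and since the spectral kernels compose and $\Kc_{\Lc_s}(e^{-(\,\cdot\,)})=h_{s,1}$, left-invariance of $\vect{X}^\gamma_s$ gives $\vect{X}^\gamma_s\Kc_{\Lc_s}(m)=\Kc_{\Lc_s}(m\chi e^{(\,\cdot\,)})*(\vect{X}^\gamma_s h_{s,1})$. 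As multiplication by the fixed function $\chi e^{(\,\cdot\,)}\in C^\infty_c(\R)$ is bounded on $B^{\alpha_2}_{\infty,\infty}(\R)$ and on $H^{\alpha_2}(\R)$, the multiplier $m\chi e^{(\,\cdot\,)}$ has norm $\meg C_r\norm{m}$ in the relevant space and support in a fixed compact set. Using the sub-additivity of $\abs{\,\cdot\,}_{s,*}$ (Proposition~\ref{prop:7}), the elementary bound $(1+a+b)^{\alpha_1}\meg 2^{\alpha_1}(1+a)^{\alpha_1}(1+b)^{\alpha_1}$, Young's convolution inequality, and $(1+t)^{\alpha_1}\meg C_{\alpha_1}e^{t}$, I obtain
\[
\norm{\vect{X}^\gamma_s\Kc_{\Lc_s}(m)\,(1+\abs{\,\cdot\,}_{s,*})^{\alpha_1}}_{L^2(\nu_{G_s})}\meg C_{\alpha_1}\,\norm{\Kc_{\Lc_s}(m\chi e^{(\,\cdot\,)})\,(1+\abs{\,\cdot\,}_{s,*})^{\alpha_1}}_{L^2(\nu_{G_s})}\,\norm{e^{\abs{\,\cdot\,}_{s,*}}\vect{X}^\gamma_s h_{s,1}}_{L^1(\nu_{G_s})}.
\]
By Proposition~\ref{prop:5} (take $c=1$, $t=1$) the last factor is $\meg C$ uniformly in $s$, so the whole statement is reduced to the case $\gamma=0$.

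\emph{Step 2: the case $\gamma=0$ without weight.} The isometry property of $\Kc_{\Lc_s}$ yields $\norm{\Kc_{\Lc_s}(m)}_{L^2(\nu_{G_s})}^2=\int\abs{m}^2\,\dd\beta_{\Lc_s}\meg\norm{m}_\infty^2\,\beta_{\Lc_s}([-r,r])$, and an inspection of Lemma~\ref{lem:3} shows that $\beta_{\Lc_s}([-r,r])\meg C_r$ for \emph{all} $s\in[0,\infty]$; since $\norm{m}_\infty\meg C\norm{m}_{B^{\alpha_2}_{\infty,\infty}(\R)}$, this settles $\alpha_1=0$. If moreover $\beta_{\Lc_1}$ has density $\meg\min(\lambda^{Q_0/\delta},\lambda^{Q_\infty/\delta})$ with respect to $\nu_{\R_+}$, then the scaling identity of Proposition~\ref{prop:2} together with the volume estimates underlying Lemma~\ref{lem:3} shows that, restricted to $[-r,r]$, the measure $\beta_{\Lc_s}$ has density $\meg C_r$ with respect to Lebesgue measure, uniformly in $s$; hence $\norm{\Kc_{\Lc_s}(m)}_{L^2(\nu_{G_s})}\meg C_r\norm{m}_{L^2(\R)}$ in that case.

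\emph{Step 3: the weight, and conclusion.} For $\alpha_1>0$ I would decompose $m=\sum_{j\Meg0}m_j$ by a Littlewood--Paley partition in the spectral variable, so that $\norm{m_j}_\infty\meg C2^{-j\alpha_2}\norm{m}_{B^{\alpha_2}_{\infty,\infty}(\R)}$ (respectively $\sum_j2^{2j\alpha_2}\norm{m_j}_{L^2(\R)}^2\meg C\norm{m}_{H^{\alpha_2}(\R)}^2$), the $m_j$ being, up to rapidly decaying tails absorbed in the error term, supported in a fixed compact interval and with Fourier transform at frequency $\sim2^j$. The crucial point is that $\Kc_{\Lc_s}(m_j)$ is, uniformly in $s$, essentially concentrated in $B_{s,*}(C2^j)$: since the $\Lc_s$ need not enjoy finite propagation speed, this is derived---exactly as in~\cite[§~4.1]{Martini}---from the \emph{uniform} Gaussian bounds of Theorem~\ref{prop:21:2} for the heat kernels $h_{s,t}$, which bound the $L^2$-mass of $\Kc_{\Lc_s}(m_j)$ on the dyadic shell $B_{s,*}(2^{k+1})\setminus B_{s,*}(2^k)$, $k\Meg j$, by a rapidly decreasing function of $k-j$. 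Combining this with Step~2 applied to each $m_j$ gives $\norm{\Kc_{\Lc_s}(m_j)\,(1+\abs{\,\cdot\,}_{s,*})^{\alpha_1}}_{L^2(\nu_{G_s})}\meg C2^{j(\alpha_1-\alpha_2)}\norm{m}_{B^{\alpha_2}_{\infty,\infty}(\R)}$ (respectively $\meg C2^{j\alpha_1}\norm{m_j}_{L^2(\R)}$); since $\alpha_2>\alpha_1$, summing over $j$ by the triangle inequality---in the Sobolev case combined with Cauchy--Schwarz, writing $2^{j\alpha_1}\norm{m_j}_{L^2}=2^{j(\alpha_1-\alpha_2)}\cdot2^{j\alpha_2}\norm{m_j}_{L^2}$ and using $\sum_j2^{2j(\alpha_1-\alpha_2)}<\infty$---yields the claim.

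\emph{Main obstacle.} The hard part is precisely the uniformity in $s\in[0,\infty]$, across the limits $s\to0,\infty$ and the transition region $s\sim1$; it is made possible by the fact that the three ingredients used---the weighted $L^1$ bound on $\vect{X}^\gamma_s h_{s,t}$ (Proposition~\ref{prop:5}), the Gaussian estimates (Theorem~\ref{prop:21:2}) and the bound on $\beta_{\Lc_s}([-r,r])$ (Lemma~\ref{lem:3})---are already stated with $s$-independent constants, and that $\abs{\,\cdot\,}_{s,*}$ is sub-additive and depends continuously on $s$ (Proposition~\ref{prop:7}), so that no reduction to a single value of $s$ is needed. The remaining care concerns the non-compact support of the Littlewood--Paley pieces (handled by their rapid decay) and the passage from integer to arbitrary $\alpha_1\Meg0$, which is absorbed by the geometric summation above; it is exactly this Besov-scale (real-interpolation) mechanism that forces the strict inequality $\alpha_2>\alpha_1$.
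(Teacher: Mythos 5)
Your Steps 1 and 2 reproduce the reduction the paper itself performs (via~\cite[Lemma 4.1.3]{Martini}): factor out $e^{-\Lc_s}$, use the submultiplicativity of $(1+\abs{\,\cdot\,}_{s,*})^{\alpha_1}$ together with the weighted Young inequality, and control the heat-kernel factor uniformly in $s$ by Proposition~\ref{prop:5}; the uniform bound on $\beta_{\Lc_s}([-r,r])$ from Lemma~\ref{lem:3} is exactly one of the paper's listed remarks. (One imprecision: in the Sobolev case the scaling argument of Proposition~\ref{prop:2} gives a uniform bound on the density of $\beta_{\Lc_s}$ with respect to $\nu_{\R_+}$ by $\max[(\,\cdot\,)^{\sfrac{Q_0}{\delta}},(\,\cdot\,)^{\sfrac{Q_\infty}{\delta}}]$ --- the paper's last bullet --- not a bound $\meg C_r$ on the Lebesgue density, which near $0$ behaves like $\lambda^{\sfrac{Q_0}{\delta}-1}$.)

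The gap is in Step 3. The assertion that $\Kc_{\Lc_s}(m_j)$, for a Littlewood--Paley piece $m_j$ at frequency $\sim 2^j$, has $L^2$-mass on the shell $B_{s,*}(2^{k+1})\setminus B_{s,*}(2^k)$ decaying rapidly in $k-j$ is precisely the finite-propagation-speed type of localization that is \emph{not} available in this setting (the paper stresses this after Theorem~\ref{teo:1}), and it does not follow from the real-time Gaussian bounds of Theorem~\ref{prop:21:2}: those bounds localize $\Kc_{\Lc_s}(m_j)=\Kc_{\Lc_s}(m_j\chi e^{(\,\cdot\,)})*h_{s,1}$ only at scale $O(1)$, independently of $j$, and give no gain as $k-j\to\infty$. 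Nor is this what~\cite[\S~4.1]{Martini} proves. The actual substitute --- and the first bullet of the paper's proof --- is Hebisch's device: after the change of variable $\mu=e^{-1-\lambda}$ one expands $m$ in a Fourier series in $\mu$ and is reduced to estimating the kernels of $E_\ell=e^{i\ell e^{-1-(\,\cdot\,)}}-1=\sum_{k\Meg1}\frac{(i\ell)^k}{k!}e^{-k-k(\,\cdot\,)}$; the weighted norms of $\Kc_{\Lc_s}(E_\ell)$ grow only polynomially in $\ell$ (this is where Proposition~\ref{prop:5} and Theorem~\ref{prop:21:2} enter, uniformly in $s$), and the hypothesis $\alpha_2>\alpha_1$ is then used to sum the Fourier coefficients of $m$ against that polynomial growth. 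Your dyadic bookkeeping at the end would go through once such a substitute is in place, but as written the decisive localization step is asserted with a justification that cannot deliver it.
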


\begin{proof}
	Proceed as in the proofs of~\cite[Lemma 4.1.1 to Theorem 4.1.6]{Martini}, taking into account the following modifications and remarks: 
	\begin{itemize}
		\item define 
		\[
		E_\ell= e^{i\ell e^{-1-(\,\cdot\,)} }-1=\sum_{k=1}^\infty \frac{(i \ell)^k}{k!} e^{-k-k(\,\cdot\,)},
		\]
		for every $\ell\in \Z$;
		
		\item replace the references to~\cite[2.3 (e) and (f)]{Martini2} with Theorem~\ref{prop:21:2} and Proposition~\ref{prop:5};
		
		\item $\sup_{s\in [0,\infty]}\beta_{\Lc_s}([-r,r])$ is finite for every $r>0$ thanks to Lemma~\ref{lem:3};
		
		\item if $\beta_{\Lc_1}\meg C'\min[ (\,\cdot\,)^{\sfrac{Q_0}{\delta}}, (\,\cdot\,)^{\sfrac{Q_\infty}{\delta}}]\cdot \nu_{\R_+}$ for some $C'>0$, then there is a constant $C''>0$ such that $\beta_{\Lc_s}\meg C''\max[ (\,\cdot\,)^{\sfrac{Q_0}{\delta}}, (\,\cdot\,)^{\sfrac{Q_\infty}{\delta}}]\cdot \nu_{\R_+}$ for every $s\in [0,\infty]$.\qedhere
	\end{itemize}
\end{proof}

\begin{teo}\label{teo:1}
	Take $N_0$ and $N_\infty$ as in Proposition~\ref{prop:6}. In addition, take a non-zero $\psi\in C^\infty_c(\R_+)$ and $\alpha>0$, and for every $s\in [0,\infty]$ denote by $\Mc_{\alpha,s}$ the space of $m\in L^1_\loc(\R_+)$ such that
	\[
	\begin{split}
	\norm{m}_{\Mc_{\alpha,s}}&\coloneqq\sup_{t> s^\delta}\left(\norm{\psi \,m(t\,\cdot\,)}_{B^{(D_{0}+\alpha)/2}_{\infty,\infty}(\R)} + (t/s^\delta)^{-{N_0}/{2\delta}}\norm{\psi \,m(t\,\cdot\,)}_{B^{(D_{1}+\alpha)/2}_{\infty,\infty}(\R)}   \right)\\
		&\qquad + \sup_{0<t\meg s^\delta}\left(\norm{\psi \,m(t\,\cdot\,)}_{B^{(D_{\infty}+\alpha)/2}_{\infty,\infty}(\R)} + (t/s^\delta)^{{N_\infty}/{2\delta}}\norm{\psi \,m(t\,\cdot\,)}_{B^{(D_{1}+\alpha)/2}_{\infty,\infty}(\R)} \right)
	\end{split}
	\]
	is finite.
	Then, for every $p\in (1,\infty)$ there is a constant $C_p>0$ such that
	\[
	\norm{m(\Lc_s)}_{\Lc(L^p(G_s))}\meg C_p \norm{m}_{\Mc_{\alpha,s}}
	\]
	for every $m\in \Mc_{\alpha,s}$ and for every $s\in [0,\infty]$.
	In addition, take $s_0\in (0,\infty)$, two functions $m_0,m_\infty$ such that $\sup_{0<s<s_0} \norm{m_0}_{\Mc_{\alpha,s}}, \sup_{s>s_0}\norm{m_\infty}_{\Mc_{\alpha,s}}<\infty$,  and $f_s\in L^p(G_s)$, for every $s\in [0,\infty]$, such that $f_s$ converges to $f_0$ in $L^p(\hf_0)$ as $s\to 0^+$, and such that $f_s$ converges to $f_\infty$ in $L^p(\hf_\infty)$ as $s\to+ \infty$; then,
	\[
	\lim_{s\to 0^+} (f_s * \Kc_{\Lc_s}(m_0))= f_0*\Kc_{\Lc_0}(m_0)\qquad \text{and}\qquad \lim_{s\to +\infty}( f_s* \Kc_{\Lc_s}(m_\infty))= f_\infty* \Kc_{\Lc_\infty}(m_\infty),
	\]
	 in $L^p(\hf_0)$ and  in $L^p(\hf_\infty)$, respectively.
	
	Finally, assume that $\beta_{\Lc_1}$ has a density with respect to $\nu_{\R_+}$ bounded by $\min[ (\,\cdot\,)^{\sfrac{Q_0}{\delta}}, (\,\cdot\,)^{\sfrac{Q_\infty}{\delta}}]$; define $\Mc'_{\alpha,s}$ as the space of $m\in L^1_\loc(\R_+)$ such that
	\[
	\begin{split}
	\norm{m}_{\Mc'_{\alpha,s}}&\coloneqq\sup_{t> s^\delta}\left(\norm{\psi \,m(t\,\cdot\,)}_{H^{(D_{0}+\alpha)/2}(\R)} + (t/s^\delta)^{-\sfrac{N_0}{\delta}}\norm{\psi \,m(t\,\cdot\,)}_{H^{(D_{1}+\alpha)/2}(\R)}   \right)\\
	&\qquad + \sup_{0<t\meg s^\delta}\left(\norm{\psi \,m(t\,\cdot\,)}_{H^{(D_{\infty}+\alpha)/2}(\R)} + (t/s^\delta)^{\sfrac{N_\infty}{\delta}}\norm{\psi \,m(t\,\cdot\,)}_{H^{(D_{1}+\alpha)/2}(\R)} \right)
	\end{split}
	\]	
	is finite.  Then, $\Mc_{\alpha,s}$ may be replaced by $\Mc'_{\alpha,s}$ in the previous assertions.
\end{teo}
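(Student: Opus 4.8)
The plan is to run the singular-integral scheme of~\cite[\S~4.1]{Martini}, feeding in Proposition~\ref{prop:4} as the weighted Plancherel estimate and Proposition~\ref{prop:6} to keep every constant independent of $s$. Since $\widetilde\Lc$ is positive, $\sigma(\Lc_s)\subseteq[0,+\infty)$ and $\beta_{\Lc_s}$ has no atom at $0$ by Lemma~\ref{lem:3}; together with $\norm{m}_\infty\meg C\norm{m}_{\Mc_{\alpha,s}}$ (the sups over $t$ in the definition of $\Mc_{\alpha,s}$ jointly sweep out all of $\R_+$), this shows that $m(\Lc_s)$ is bounded on $L^2(G_s)$ with norm $\meg C\norm{m}_{\Mc_{\alpha,s}}$. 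Moreover $(G_s,\abs{\,\cdot\,}_s,\nu_{G_s})$ is a space of homogeneous type whose doubling constant can be chosen uniformly in $s$, thanks to~{\bf5} of Proposition~\ref{prop:7}, Corollary~\ref{cor:3}, and Proposition~\ref{prop:6}. Hence, by the Calderón--Zygmund theory on spaces of homogeneous type, it suffices to prove that $\Kc_{\Lc_s}(m)$ satisfies a Hörmander integral estimate, uniformly in $s$, of the form
\[
\sup_{y\in G_s\setminus\Set{e}}\ \int_{\abs{x}_s>2\abs{y}_s}\abs{\Kc_{\Lc_s}(m)(xy^{-1})-\Kc_{\Lc_s}(m)(x)}\,\dd\nu_{G_s}(x)\meg C\norm{m}_{\Mc_{\alpha,s}}
\]
(together with the analogous estimate with the roles of left and right translations exchanged); interpolation with the $L^2$ bound and duality ($m(\Lc_s)^*=\overline m(\Lc_s)$, $\norm{\overline m}_{\Mc_{\alpha,s}}=\norm{m}_{\Mc_{\alpha,s}}$) then yield the bounds for all $p\in(1,\infty)$.

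\textbf{The kernel estimate.} To prove the displayed inequality I would split $m=\sum_{j\in\Z}m_j$ with $m_j$ supported in a fixed dyadic annulus around $2^{j\delta}$ and use the scaling relation of Proposition~\ref{prop:2} to transfer the analysis of the $j$-th piece on $G_s$ to that of a multiplier supported near $1$ on the group $G_{2^{-j}s}$ (up to the volume normalization $\nu_{G_s}(B_s(2^j))$, which cancels appropriately). On $G_{2^{-j}s}$, Proposition~\ref{prop:4} gives, for each $\gamma$ and each $0\meg\alpha_1<\alpha_2$, a weighted $L^2$ bound for $\vect{X}^\gamma_{2^{-j}s}$ applied to the rescaled kernel, with weight $(1+\abs{\,\cdot\,}_{2^{-j}s,*})^{\alpha_1}$ and constant controlled by the $B^{\alpha_2}_{\infty,\infty}$-norm of the rescaled multiplier --- which is precisely one of the two terms appearing in $\norm{m}_{\Mc_{\alpha,s}}$ at the scale $t\asymp 2^{j\delta}$. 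Passing from this weighted $L^2$ bound to the $L^1$-type bound needed above is done by Cauchy--Schwarz against $(1+\abs{\,\cdot\,}_{2^{-j}s,*})^{-\alpha_1}$, and this is exactly where Proposition~\ref{prop:6} is decisive: for $s'=2^{-j}s\in(0,1)$ the volume of $B_{s',*}(r)$ is at most $\max(r^{D_0},{s'}^{N_0}r^{D_1})$, so the weight is $L^2$-integrable with $\alpha_1$ just above $D_0/2$ in the ``$D_0$ regime'' and, in the ``$D_1$ regime'', with an extra gain of a power of $s'=2^{-j}s$ which matches exactly the factor $(t/s^\delta)^{-N_0/(2\delta)}$ in $\norm{\,\cdot\,}_{\Mc_{\alpha,s}}$ --- hence the two Besov norms, with exponents $(D_0+\alpha)/2$ and $(D_1+\alpha)/2$, in the definition. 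One then estimates the increment $\Kc_{\Lc_s}(m)(xy^{-1})-\Kc_{\Lc_s}(m)(x)$, after integration over $\Set{\abs{x}_s>2\abs{y}_s}$, by $\sum_{j}\abs{y}_s^{\dd_j}$ times the tail $L^1$-norm of $X_{s,j}\Kc_{\Lc_s}(m)$ via Lemma~\ref{lem:7}, uses that $\abs{\,\cdot\,}_{s,*}$ grows faster than $\abs{\,\cdot\,}_s$ at infinity (by~{\bf2} of Proposition~\ref{prop:7}) to convert the weight $(1+\abs{\,\cdot\,}_{s,*})^{\alpha_1}$ into decay in $\abs{\,\cdot\,}_s$, and sums the resulting geometric series over $j$, treating separately $j$ with $2^{j\delta}>s^\delta$ and with $2^{j\delta}\meg s^\delta$ (i.e.\ $s'<1$ and $s'\Meg1$). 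For $s\in[1,\infty]$ the argument is symmetric, with $D_\infty,N_\infty$ in place of $D_0,N_0$. The last assertion is obtained verbatim, invoking the second (Sobolev) half of Proposition~\ref{prop:4} and replacing $B^{\,\cdot\,}_{\infty,\infty}$ by $H^{\,\cdot\,}$ throughout; the transition weight then reads $(t/s^\delta)^{-N_0/\delta}$, resp.\ $(t/s^\delta)^{N_\infty/\delta}$, giving $\norm{\,\cdot\,}_{\Mc'_{\alpha,s}}$.

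\textbf{Convergence.} For the statements on $f_s*\Kc_{\Lc_s}(m_0)$ and $f_s*\Kc_{\Lc_s}(m_\infty)$, I would use the uniform $L^p$ bound just established. Writing $f_s*\Kc_{\Lc_s}(m_0)-f_0*\Kc_{\Lc_0}(m_0)=(f_s-f_0)*\Kc_{\Lc_s}(m_0)+\big(f_0*\Kc_{\Lc_s}(m_0)-f_0*\Kc_{\Lc_0}(m_0)\big)$, the first summand tends to $0$ in $L^p(\hf_0)$ by the uniform bound and $f_s\to f_0$. For the second, one reduces by density and the uniform bound to $f_0\in C^\infty_c(\hf_0)$; for such $f_0$, the operators $\Lc_s$ converge to $\Lc_0$ in the strong resolvent sense --- this follows from Lemma~\ref{lem:2} applied to an approximation of $(I+\,\cdot\,)^{-1}$ by functions of $\Sc(\R)$, uniformly in $s$ --- so $m_0(\Lc_s)\to m_0(\Lc_0)$ strongly on $L^2$ because $m_0$ is bounded and continuous $\beta_{\Lc_0}$-almost everywhere; this $L^2$ convergence upgrades to $L^p$ by interpolation with the uniform $L^r$ bounds for $r$ on either side of $p$. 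The case $s\to+\infty$ is identical, using the second half of Lemma~\ref{lem:2}. I expect the main obstacle to be the bookkeeping in the kernel estimate: arranging that the transition of the volume growth in Proposition~\ref{prop:6} between $D_0$ (resp.\ $D_\infty$) and $D_1$ is absorbed into precisely the weights defining $\Mc_{\alpha,s}$ and $\Mc'_{\alpha,s}$, while every constant stays independent of $s$; once this is set up, the remaining steps are routine adaptations of~\cite[\S~4.1]{Martini} and of the proof of Proposition~\ref{prop:4}.
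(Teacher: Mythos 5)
Your proof of the uniform $L^p$ bound retraces the paper's argument: dyadic Littlewood--Paley decomposition of $m$, rescaling the $j$-th piece to scale one on $G_{2^{-j}s}$ via Proposition~\ref{prop:2}, feeding the weighted $L^2$ estimate of Proposition~\ref{prop:4} with the $s$-uniform volume growth of Proposition~\ref{prop:6}, a Hörmander integral condition (weight decay for $2^j\abs{y}_s\Meg 1$, Lemma~\ref{lem:7} for $2^j\abs{y}_s<1$), and Calderón--Zygmund theory on a space of homogeneous type whose doubling constant is controlled uniformly by~{\bf5} of Proposition~\ref{prop:7}; that all matches. The convergence assertion, however, you prove by a genuinely different route. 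The paper passes to the little Besov spaces $b^\beta_{\infty,\infty}$, in which $\tau\Sc(\R)$ is locally dense, proves $\Lc(L^p(\hf_0))$-convergence for finite Littlewood--Paley sums of $m_0$, and then controls the tails uniformly --- an argument that stays entirely inside the Littlewood--Paley/Besov framework. You instead prove strong resolvent convergence $\Lc_s\to\Lc_0$ from Lemma~\ref{lem:2} together with a uniform Schwartz approximation of $(1+\,\cdot\,)^{-1}$, invoke the continuity of the bounded functional calculus under strong resolvent convergence for $m$ bounded and continuous off a set of spectral measure zero, and lift the resulting strong $L^2$-convergence to $L^p$ by log-convexity of $L^q$ norms against the uniform $L^r$ bound (with $r$ on the far side of $p$ from $2$ --- only one side is needed, contrary to what you wrote). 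This is shorter than the paper's density argument but pays for it with spectral theory the paper avoids, and two points need spelling out: the hypothesis of the continuity theorem is that the \emph{spectral} projection $\chi_{\Set{0}}(\Lc_0)$ vanishes, not merely that $\beta_{\Lc_0}(\Set{0})=0$ as you invoke --- the former does follow from the latter, since $\norm{\chi_{\Set{0}}}_{L^2(\beta_{\Lc_0})}=0$ forces $\Kc_{\Lc_0}(\chi_{\Set{0}})=0$ as a right convolutor and hence $\chi_{\Set{0}}(\Lc_0)=0$, but the deduction deserves a line --- and the extension of that continuity theorem from continuous $m$ to $m$ with spectrally null discontinuity set is standard but not usually stated, so it warrants a short proof or a precise citation. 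With those details supplied, your convergence argument is a valid alternative to the paper's.
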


Taking into account Theorem~\ref{cor:5}, this generalizes~\cite{Alexopoulos} for higher-order operators, and also~\cite[Theorem 2]{Sikora} for quasi-homogeneous sums of even powers of left-invariant vector fields on a homogeneous group, with $N_0=D_{1}-D_{0}$ at least when these powers are all equal. Notice that the proofs of~\cite[Theorem]{Alexopoulos} and~\cite[Theorem 2]{Sikora}, which are based on the property of finite speed of propagation of the wave equation, cannot be extended to the present setting.

\begin{proof}
	{\bf1.} We shall denote by $M_s$ the space $\Mc_{\alpha,s}$ under the first set of assumptions, and the space $\Mc'_{\alpha,s}$ under the second set of assumptions.
	Notice that we may assume that $\psi$ is positive and chosen in such a way that $\sum_{j\in \Z} \psi(2^{-{\delta j}}\cdot\lambda)=1$ for every $\lambda>0$. 
	Fix $\eps\in (0,\alpha)$. Then, Propositions~\ref{prop:4} and~\ref{prop:6} imply that there is $p_0>1$ such that for every $\gamma$ there is a constant $\widetilde C_\gamma>0$ such that
	\[
	\begin{split}
	&\int_{G_{2^{-j}s}}\abs{\vect{X}^\gamma_{2^{-j}s} \Kc_{\Lc_{2^{-j}s}}(\psi\,m(2^{\delta j}\,\cdot\,)  )(x)}^p(1+\abs{x}_{2^{-j}s})^\eps\,\dd x\\
		&\qquad\qquad\qquad\meg \int_{G_{2^{-j}s}}\abs{\vect{X}^\gamma_{2^{-j}s} \Kc_{\Lc_{2^{-j}s}}(\psi\,m(2^{\delta j}\,\cdot\,)  )(x)}^p(1+\abs{x}_{2^{-j}s,*})^\eps\,\dd x\meg \widetilde C_\gamma \norm{m}_{M_s}
	\end{split}
	\]
	for every $p\in [1, p_0]$, for every $s\in [0,\infty]$, for every $m\in M_s$, and for every $j\in \Z$. In addition,
	\[
	\int_{G_s} \Kc_{\Lc_s}(\psi\,m(2^{\delta j}\,\cdot\,)  )(x)\,\dd x=0
	\]
	for every $s\in [0,\infty]$, for every $m\in M_s$, and for every  $j\in\Z$. 
	Observe that, since $D_{0},D_{\infty}\Meg 1$, the space $M_s$ embeds in $L^\infty(\R)$ (cf.~\cite[Propositions 2.3.2 and 2.3.6]{Martini}). 
	On the other hand, observe that
	\[
	\Kc_{\Lc_s}(\psi(2^{-\delta j}\,\cdot\,) \,m)=(2^{- j}\,\cdot\,)_* \Kc_{\Lc_{2^{-j}s}}(\psi\, m(2^{\delta j}\,\cdot\,)) 
	\]
	for every $s\in [0,\infty]$, for every $m\in M_s$, for every $\gamma$, and for every $j\in \Z$.
	
	Now, since $m$ is the sum of the series $\sum_{j\in \Z} \psi(2^{-\delta j}\,\cdot\,) m$ pointwise on $(0,\infty)$, and since the partial sums of that series are uniformly bounded, we see that 
	\[
	\Kc_{\Lc_s}(m)= \sum_{j\in \Z} \Kc_{\Lc_s}(\psi(2^{-\delta j}\,\cdot\,)\, m)
	\]
	in the space of (right) convolutors of $L^2(G_s)$, for every $s\in [0,\infty]$ and for every $m\in M_s$; in particular, in $\Sc'(G_s)$.
	
	Let us first prove that the sum converges in $L^1_\loc(G_s\setminus\Set{e})$. Indeed, take a compact subset $L$ of $G_s\setminus \Set{e}$, and observe that
	\[
	\begin{split}
	\sum_{j\in \Z}\norm{\chi_L \Kc_{\Lc_s}(\psi(2^{-\delta j}\,\cdot\,) m)}_1&\meg \sum_{j\in \Z} \norm{\chi_{2^j\cdot L} \Kc_{\Lc_{2^{-j}s}}(\psi\, m(2^{\delta j}\,\cdot\,))  }_1\\
		&\meg \sum_{j\meg 0}  \widetilde C_0^{\frac{1}{p_0}} \nu_{G_{2^{-j} s}}(2^j\cdot L)^{\frac{1}{p_0'}}+\sum_{j>0}  \widetilde C_0  \sup_{2^j\cdot L} \abs{\,\cdot\,}_{2^{-j}s}^{-\eps}\\
		&=  \widetilde C_0^{\frac{1}{p_0}}  \nu_{G_{ s}}(L)^{\frac{1}{p_0'}} \sum_{j\meg 0}\nu_{G_{2^{-j} s}}\left( B_{2^{-j}s}(2^j)\right)^{\frac{1}{p_0'}}+ \widetilde C_0 \sup_{L} \abs{\,\cdot\,}_{ s}^{-\eps}\sum_{j>0} 2^{-\eps j},
	\end{split} 
	\]
	which is finite for every $s\in [0,\infty]$ and for every $m\in M_s$ with $\norm{m}_{M_s}\meg 1$,  since $ \nu_{G_{2^{-j} s}}\left(B_{2^{-j}s}(2^j)\right)\asymp 2^{j Q_\infty} $ as $j\to -\infty$ for fixed $s\neq 0$ thanks to~{\bf6} of Proposition~\ref{prop:7}, while  $\nu_{G_{0}}\left( B_0(2^j) \right)=2^{j Q_0} $ for every $j\in\Z$.
	
 	Next, let us prove that 
	\[
	\sup_{s\in [0,\infty]}\sup_{\norm{m}_{M_s}\meg1} \sup_{y\neq e} \sum_{j\in \Z}\int_{\abs{x}_s\Meg 2\abs{y}_s} \abs{ \Kc_{\Lc_s}(\psi(2^{-\delta j}\,\cdot\,) \,m)( y^{-1} x )-  \Kc_{\Lc_s}(\psi(2^{-\delta j}\,\cdot\,) m)(x)}\,\dd x<+\infty.
	\]
	Indeed, 
	\[
	\begin{split}
	&\sum_{2^j\abs{y}_s \Meg 1}\int_{\abs{x}_s\Meg 2 \abs{y}_s} \abs{ \Kc_{\Lc_s}(\psi(2^{-\delta j}\,\cdot\,) \,m)(y^{-1} x )-  \Kc_{\Lc_s}(\psi(2^{-\delta j}\,\cdot\,)\, m)(x)}\,\dd x\\
		&=\sum_{2^j\abs{y}_s \Meg 1}\int_{\abs{x}_{2^{-j}s}\Meg 2  \abs{2^j\cdot y}_{2^{-j}s}} \abs{ \Kc_{\Lc_{2^{-j}s}}(\psi \,m(2^{\delta j}\,\cdot\,))((2^j\cdot y)^{-1} x )-  \Kc_{\Lc_{2^{-j}s}}(\psi \,m(2^{\delta j}\,\cdot\,))(x)}\,\dd x\\
		&\meg 2\sum_{2^j\abs{y}_s \Meg 1}\int_{\abs{x}_{2^{-j}s}\Meg  2^j\abs{y}_{s}} \abs{ \Kc_{\Lc_{2^{-j}s}}(\psi \,m(2^{\delta j}\,\cdot\,))(x)}\,\dd x\\
		&\meg 2  \widetilde C_0 \sum_{2^j \abs{y}_s\Meg 1} \sup_{\abs{x}_{2^{-j}s}\Meg  2^j\abs{y}_{s}} \abs{x}_{2^{-j}s}^{-\eps}\\
		&= 2  \widetilde C_0 \sum_{2^j \abs{y}_s\Meg 1} (2^j\abs{y})^{-\eps},
	\end{split}
	\]
	which is uniformly bounded for $s\in [0,\infty]$, $\norm{m}_{M_s}\meg1$,  and $y\in G_s\setminus \Set{e}$.
	
	Finally, 
	\[
	\begin{split}
	&\sum_{2^j\abs{y}_s < 1}\int_{\abs{x}_s\Meg 2 \abs{y}_s} \abs{ \Kc_{\Lc_s}(\psi(2^{-\delta j}\,\cdot\,) m)(  y^{-1} x )-  \Kc_{\Lc_s}(\psi(2^{-\delta j}\,\cdot\,) m)(x)}\,\dd x\\
	&\qquad=\sum_{2^j\abs{y}_s < 1}\int_{\abs{x}_{2^{-j}s}\Meg 2   \abs{2^j\cdot y}_{2^{-j}s}} \abs{ \Kc_{\Lc_{2^{-j}s}}(\psi \,m(2^{\delta j}\,\cdot\,))( (2^j\cdot y)^{-1} x)-  \Kc_{\Lc_{2^{-j}s}}(\psi \,m(2^{\delta j}\,\cdot\,))(x)}\,\dd x\\
	&\qquad\meg  \sum_{j'\in J}  \widetilde C_{\vect{e}_{j'}} \sum_{2^j\abs{y}_s < 1}\abs{2^j\cdot y}_s^{\dd_{j'}},
	\end{split}
	\]
	which is uniformly bounded for $s\in [0,\infty]$, $\norm{m}_{M_s}\meg1$,  and $y\in G_s\setminus \Set{e}$ (here, Lemma~\ref{lem:7} is applied to $\Kc_{\Lc_{2^{-j}s}}(\psi \overline{ m(2^{\delta j}\,\cdot\,)})=\Kc_{\Lc_{2^{-j}s}}(\psi  m(2^{\delta j}\,\cdot\,))^*$).
	
	Observe, now, that, for every $s\in [0,\infty]$ and for every $t>0$,
	\[
	\frac{\nu_{G_s}(B_s(2t))}{\nu_{G_s}(B_s(t))}=\frac{\nu_{G_{t^{-1}s}}(B_{t^{-1}s}(2))}{\nu_{G_{t^{-1}s}}(B_{t^{-1}s}(1))}= \nu_{G_{t^{-1}s}}(B_{t^{-1}s}(2)),
	\]
	which is a bounded function of $t^{-1}s$ on $[0,\infty]$.	
	Therefore, thanks to~\cite[Theorem 3 of Chapter 1]{Stein2} we see that for every $p\in (1,2]$ there is a constant $C_p>0$ such that
	\[
	\norm{m(\Lc_s)}_{\Lc(L^p(G_s))}\meg C_p \norm{m}_{M_s}
	\]
	for every $s\in [0,\infty]$ and for every $m\in M_s$. A similar assertion holds, by duality, also for $p\in (2,\infty)$.
	
	{\bf2.} Now, identify $G_s$ with $\hf_0$ for every $s\in [0,s_0]$, and observe that
	\[
	\lim_{s\to 0^+} m(\Lc_s)= m(\Lc_0)
	\] 
	in $\Lc(L^p(\hf_0))$ for every $m\in \Sc(\R)$.
	Define $\widetilde \Mc_{\alpha,s}$ replacing the Besov spaces $B_{\infty,\infty}$ with the little Besov spaces $b_{\infty,\infty}$, and define $\widetilde M_s$ as $\widetilde \Mc_{\alpha/2,s}$ and  $\Mc'_{\alpha,s}$ under the first and second set of assumptions, respectively. Observe that $M_s$ embeds continuously into $\widetilde M_{s}$ (cf.~\cite[Proposition 2.3.2]{Martini}), so that we may replace $M_s$ with $\widetilde M_s$ in the assumptions.
	Then, by means of~\cite[Corollaries 2.3.10 and 2.3.7, and Proposition 2.3.13]{Martini} we see that $\tau\Sc(\R)$ is dense in $\tau\widetilde M_s$  for every $\tau\in C^\infty_c(\R^*)$, so that
	\[
	\lim_{s\to 0^+} (\tau m_0)(\Lc_s)= (\tau m_0)(\Lc_0)
	\] 
	in $\Lc(L^p(\hf_0))$, since the $(\tau m_0)(\Lc_s)$ are equicontinuous on $L^p(\hf_0)$ thanks to~{\bf1} and the assumptions on $m_0$. Therefore, for every finite subset $J$ of $\Z$,
	\[
	\lim_{s\to 0^+} \sum_{j\in J}(\psi(2^{-\delta j}\,\cdot\,) m_0)(\Lc_s)= \sum_{j\in J}(\psi(2^{-\delta j}\,\cdot\,) m_0)(\Lc_0)
	\]
	in $\Lc(L^p(\hf_0))$. 
	Now, define $K_{m_0,s,k}\coloneqq \sum_{-k<j\meg k} \Kc_{\Lc_s}(\psi(2^{-\delta j}\,\cdot\,) m_0)$ and $\widetilde \psi\in C^\infty_c(\R)$ so that $\widetilde \psi= \sum_{j\meg 0} \psi(2^{-\delta j}\,\cdot\,)$ on $\R^*$. Then,
	\[
	K_{m_0,s,k}= \Kc_{\Lc_s}( \widetilde \psi(2^{\delta k}\,\cdot\,)-\widetilde \psi(2^{-\delta k}\,\cdot\,) ) * \Kc_{\Lc_s}(m_0)
	\]
	for every $s\in [0,s_0]$ and for every $k\in \N$. Therefore, for every $\phi \in C^\infty_c(\hf_0)$ we have, with some abuses of notation,
	\[
	\begin{split}
	\limsup_{s\to 0^+} & \norm{\phi*_{G_s}\Kc_{\Lc_s}(m_0)-\phi*_{G_0} \Kc_{\Lc_0}(m_0)  }_p \meg \limsup_{s\to 0^+}\big(  \norm{\phi*_{G_s}\Kc_{\Lc_s}(m)-\phi*_{G_s} K_{m_0,s,k}  }_p\\
		&\qquad+ \norm{\phi*_{G_s}K_{m_0,s,k}-\phi*_{G_0} K_{m_0,0,k}  }_p+\norm{\phi*_{G_0} K_{m_0,0,k}-\phi*_{G_0}\Kc_{\Lc_0}(m_0)  }_p\big) \\
		&\meg  2 C'' \sup_{s\in [0,s_0]} \norm{\phi-\phi*_{G_s}\Kc_{\Lc_s}(\widetilde \psi(2^{\delta k}\,\cdot\,)-\widetilde \psi(2^{-\delta k}\,\cdot\,)  ) }_p
	\end{split}
	\]
	where $C''= \sup_{s\in [0,s_0]} \norm{m_0(\Lc_s)}_{\Lc(L^p(G_s))}$. Now, since $\phi \in C^\infty_c(\hf_0)$ and since the $\Kc_{\Lc_s}(\widetilde \psi)$, as $s$ runs through $[0,s_0]$, stay in a bounded subset of $\Sc(\hf_0)$, it is easily seen that 
	\[
	\lim_{k\to \infty}  \sup_{s\in [0,s_0]} \norm{\phi-\phi*_{G_s}\Kc_{\Lc_s}(\widetilde \psi(2^{\delta k}\,\cdot\,)-\widetilde \psi(2^{-\delta k}\,\cdot\,)  ) }_p=0,
	\] 
	whence 
	\[
	\lim_{s\to 0^+} \phi*_{G_s}\Kc_{\Lc_s}(m_0)=\phi*_{G_0} \Kc_{\Lc_0}(m_0) 
	\]
	in $L^p(\hf_0)$. Since the $m_0(\Lc_s)$, as $s$ runs through $[0,s_0]$, induce equicontinuous endomorphisms of $L^p(\hf_0)$, the assertion in the statement follows.
	The case $s\to +\infty$ is treated similarly.	
\end{proof}

Notice that the regularity threshold in Theorem~\ref{teo:1} is not optimal, in general. We shall now present an improvement of Theorem~\ref{teo:1}, under more restrictive hypotheses, in the spirit of~\cite{Hebisch,HebischZienkiewicz,Martini3}. 
Let us briefly recall the notion of capacity introduced in~\cite{Martini,Martini3}; we shall present it in a slightly simpler way \emph{in the setting of $2$-step stratified groups}.

\begin{deff}
	Let $G'$ be a $2$-step stratified group with Lie algebra $\gf'$; let $(\gf'_1,\gf'_2)$ be the stratification of $\gf'$ and take $h\in \Set{0,\dots, \dim \gf'_2}$. Endow $\gf'$ with a scalar product.
	Then, we say that $G'$ is $h$-capacious if there is a linearly independent family $X_1,\dots, X_h$ of elements of $\gf'_1$ and a linearly independent family $T_1,\dots, T_h$ of elements of $\gf'_2$ such that
	\[
	\abs{\langle T \vert [X,\,\cdot\,\,] \rangle }_{\gf^*}\Meg \sum_{j=1}^h \abs{ \langle X\vert X_j\rangle \langle T \vert T_j\rangle }
	\]
	for every $X\in \gf'_1$ and for every $T\in \gf'_2$.
\end{deff}

For instance, if $G'$ is the product of a finite family of Métivier or abelian groups, then $G'$ is $\dim[G',G']$-capacious (cf.~\cite[Proposition 3.9]{Martini3}), so that the following result applies (with a suitable choice of $\widetilde G$) when $\Lc_1$ has the form $\sum_{j\in J_1}(i X_j)^{\alpha}$, where $\alpha\in 2\N^*$ and $(X_j)$ is a family of left-invariant vector fields on $G_1$ which generates its Lie algebra.

Notice that, when $G'$ is an $H$-type group and $\Lc_1=\Lc_1'-\sum_{j} T_j^2$, where $\Lc_1'$ is the standard (homogeneous) sub-Laplacian and the $T_j$ stay in the centre of $\gf_1$, then Theorem~\ref{teo:2} is a consequence of~\cite[Corollary 2.4]{MullerRicciStein}.

\begin{teo}\label{teo:2}
	Assume that $\widetilde G$ is a $2$-step stratified group and that $G_\infty$ is $h$-capacious for some $h\in \N$. Then, there is $h'\Meg (h-Q_\infty+Q_0)_+$ such that $G_0$ is $h'$-capacious.
	In addition, take a non-zero $\psi\in C^\infty_c(\R_+)$ and $\alpha>0$, and for every $s\in [0,\infty]$ denote by $\Mc_{\alpha,s}$ the space of $m\in L^1_\loc(\R_+)$ such that 
	\[
	\norm{m}_{\Mc_{\alpha,s}}\coloneqq \sup_{t>0} \left( \norm{\psi m(t\,\cdot\,)}_{B^{(Q_0-h'+\alpha)/2}_{\infty,\infty}}+ (1+t/s^\delta)^{(Q_0-h'-Q_\infty+h)/{2\delta} }\norm{\psi m(t\,\cdot\,)}_{B^{(Q_\infty-h+\alpha)/2}}  \right)
	\]
	is finite.
	Then, there is a constant $C>0$ such that
	\[
	\norm{m(\Lc_s)}_{\Lc(L^p(G_s))}\meg C \norm{m}_{\Mc_{s,\alpha}}
	\]
	for every $s\in [0,\infty]$ and for every $m\in\Mc_{s,\alpha}$.
\end{teo}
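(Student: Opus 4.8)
The proof splits into the linear-algebraic claim that $G_0$ is $h'$-capacious for some $h'\Meg(h-Q_\infty+Q_0)_+$ and the multiplier estimate itself. For the first part, recall that since $\widetilde G$ is $2$-step stratified the ideals $\ifr_0$ and $\ifr_\infty$ are graded; moreover, applying Lemma~\ref{lem:21:2} on the top layer $\widetilde\gf_2$ (where strict sub-/super-homogeneity forces the correction terms to vanish) shows that $P_{\infty,s}$ is independent of $s$ on $\widetilde\gf_2$, so that $G_s=G_\infty$ as Lie groups for every $s\in(0,\infty]$ and, as observed after Lemma~\ref{lem:21:2}, $\gf_0=\widetilde\gf/\ifr_0$ is again $2$-step stratified — unless it is abelian, in which case $h'=0$ works and the inequality is trivial. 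The plan is then to transport a capaciousness datum $(X_j,T_j)_{j\le h}$ for $G_\infty$ to $G_0$: the bracket map $\hf_{\infty,1}\times\hf_{\infty,1}\to\hf_{\infty,2}$ is conjugate, through the intertwiner $\lambda_1$ of Proposition~\ref{prop:21:9}, to the bracket of $G_1$, which in turn differs from the bracket $\hf_{0,1}\times\hf_{0,1}\to\hf_{0,2}$ of $G_0$ only by a strictly sub-homogeneous — hence, on the top layer, $\hf_{0,1}$-valued — correction. Combining this with the dimension count in the proof of Proposition~\ref{prop:21:9}(v), which is precisely what produces the gap $Q_\infty-Q_0$, one checks that at most $Q_\infty-Q_0$ of the pairs $(X_j,T_j)$ can degenerate, so the images under $\lambda_1^{-1}$ of the remaining ones, after projecting out the spurious $\hf_{0,1}$-components, form a capaciousness datum of size $\Meg(h-Q_\infty+Q_0)_+$ for $G_0$. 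This is the type of linear-algebraic bookkeeping performed in~\cite{Martini3}, and I expect it to be routine once the above identifications are in place.

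For the multiplier estimate the overall strategy is that of the proof of Theorem~\ref{teo:1}: decompose $m=\sum_{j\in\Z}\psi(2^{-\delta j}\,\cdot\,)m$, reduce via the dilation identities of Proposition~\ref{prop:2} to bounding the dyadic kernels $\Kc_{\Lc_{2^{-j}s}}(\psi\,m(2^{\delta j}\,\cdot\,))$ on the groups $G_{2^{-j}s}$ uniformly in $j$ and $s$, and feed the resulting weighted $L^p$ bound together with the associated Hörmander integral estimate into the Calderón--Zygmund machinery (via~\cite[Theorem 3 of Chapter 1]{Stein2}) exactly as in part~{\bf1} of that proof, obtaining weak $(1,1)$ for $p\in(1,2]$ and the general case by duality. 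What changes is the basic weighted $L^2$ (Plancherel) input: in place of Proposition~\ref{prop:4} one needs, for $m$ supported in a fixed compact subset of $\R_+$ and for $s'=2^{-j}s$, a bound
\[
\norm{\vect X_{s'}^\gamma\Kc_{\Lc_{s'}}(m)\,(1+\abs{\,\cdot\,}_{s',*})^{\alpha_1}}_{L^2(\nu_{G_{s'}})}\meg C\,\norm{m}_{B^{\alpha_2}_{\infty,\infty}(\R)}
\]
in which $\alpha_2$ need only exceed $\alpha_1$ while $\alpha_1$ is allowed to be as small as roughly half the topological dimension of the relevant contraction — the saving over half the volume growth being provided by the $h$, resp.\ $h'$, capaciousness directions. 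Combining this with Cauchy--Schwarz and the uniform volume bounds of Proposition~\ref{prop:6} then controls $\Kc_{\Lc_{s'}}(m)$ in $L^1$ on dyadic annuli with the reduced regularity thresholds $(Q_\infty-h+\alpha)/2$ for $s'\Meg1$ and $(Q_0-h'+\alpha)/2$ for $s'\meg1$, with the extra decay in $s'$ that the crossover factor $(1+t/s^\delta)^{(Q_0-h'-Q_\infty+h)/(2\delta)}$ (whose exponent is $\meg0$ by the inequality $h'\Meg h-Q_\infty+Q_0$) records, and this is exactly what the norm $\Mc_{\alpha,s}$ in the statement encodes after summing over $j$.

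The heart of the matter is the capacity-enhanced Plancherel estimate above, carried out uniformly in $s$. Following~\cite{Hebisch,HebischZienkiewicz} and~\cite[§4.1]{Martini3}, one introduces the central vector fields $T_{s',j}=\dd\pi_{s'}(\widetilde T_j)$, where $\widetilde T_j\in\widetilde\gf_2$ lifts the capaciousness witnesses (those of $G_\infty$ being used when $s'\Meg1$ and those of $G_0$ when $s'\meg1$), and works with the joint spectral resolution of the commuting self-adjoint operators $\Lc_{s'},-iT_{s',1},\dots,-iT_{s',h}$ (resp.\ $-iT_{s',h'}$); multiplying $m(\Lc_{s'})$ by a Bessel-type factor in the $T_{s',j}$-variables and integrating by parts, one trades each power of the weight coming from the $T_{s',j}$-directions against a derivative of $m$ at the cost of only $\tfrac12+\eps$ orders of smoothness instead of one full order, the gain being governed precisely by the capaciousness inequality $\abs{\langle T\mid[X,\,\cdot\,]\rangle}_{\gf^*}\Meg\sum_j\abs{\langle X\mid X_j\rangle\langle T\mid T_j\rangle}$, which furnishes the lower bound for the relevant Jacobian (equivalently, Plancherel density). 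The uniformity in $s'$ of all constants is secured, as in Theorem~\ref{teo:1}, by the uniform Gaussian heat-kernel bounds of Theorem~\ref{prop:21:2}, the weighted $L^1$ bounds of Proposition~\ref{prop:5}, the spectral bound of Lemma~\ref{lem:3}, and Proposition~\ref{prop:6}.

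The main obstacle I anticipate is exactly this last point: the capaciousness data live on the two different groups $G_0$ and $G_\infty$, with different numbers $h'$ and $h$ of witnesses, so one must run the weighted Plancherel argument separately on the $G_\infty$-side ($s'\Meg1$) and on the $G_0$-side ($s'\meg1$) and patch the two estimates across $s'\approx1$ — where the crude uniform estimate of Proposition~\ref{prop:4} is still available but with the full dimension — in such a way that the two regularity thresholds and the crossover exponent fit together into a single finite quantity. A secondary difficulty is checking, in the case where $\widetilde\Lc$ is a higher-order sum of even powers rather than a sub-Laplacian, that the integration-by-parts step and the Jacobian lower bound go through unchanged; this should follow from the weighted-subcoercive calculus of~\cite{Martini3} together with the homogeneity of $\widetilde\Lc$.
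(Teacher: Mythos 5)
Your overall architecture (dyadic decomposition, rescaled Plancherel-type $L^2$ bound on $G_{2^{-j}s}$, Calderón--Zygmund theory from~\cite[Ch.~1, Thm.~3]{Stein2}, capacity-enhanced weights in the spirit of~\cite{Hebisch,HebischZienkiewicz,Martini3}) matches the paper, and you correctly identify that the main difficulty is the transition between the $G_0$- and $G_\infty$-side estimates. But the resolution you propose for that difficulty does not work, and there is a structural issue with your capaciousness transfer that obscures what actually produces the crossover exponent.

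For the transfer of capaciousness you propose to push $(X_j,T_j)$ through $\lambda_1^{-1}$, project away the $\hf_{0,1}$-components, and discard the $\meg Q_\infty-Q_0$ pairs that degenerate. This does give $h'$-capaciousness of $G_0$, but it throws away the very information the multiplier estimate requires. The paper instead keeps all $h$ pairs alive: after observing $\ifr_1\cap\widetilde\gf_1=\ifr_0\cap\widetilde\gf_1$ so that the $Y_j$ may be chosen in $\hf_0$, one replaces $T_j$ by $P_{0,s}T_j$ and proves a single inequality
\[
\abs{\langle T\mid[X,\,\cdot\,]_s\rangle}_{\hf_0^*}\Meg C\sum_{j=1}^h\abs{\langle X\mid Y_j\rangle}\,\abs{\langle T\mid P_{0,s}T_j\rangle}
\]
valid for \emph{all} $s\in[0,\infty)$, and then feeds the weight
$\prod_{j=1}^h\bigl(1+\abs{P_{0,s}T_j}\,\abs{\langle\,\cdot\,\mid Y_j\rangle}\bigr)^{\alpha_2}$
into the $L^2$-Plancherel estimate. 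The point is that for $j\meg h'$ these weights are bounded below uniformly, while for $j>h'$ one has $\abs{P_{0,s}T_j}\Meg c\,s$, so $h-h'$ of the weights degenerate linearly as $s\to 0$. That degeneration is exactly what produces the crossover exponent $(Q_0-h'-Q_\infty+h)/(2\delta)$ after the Cauchy--Schwarz/volume computation, which is reduced to the finiteness, uniformly in $s\in[0,1]$, of an explicit integral under the constraints $\alpha_1'+h'\alpha_2>Q_0$ and $\alpha_1+h\alpha_2>Q_\infty$.

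By contrast, your proposed "patching across $s'\approx1$ with Proposition~\ref{prop:4}" would insert, in the crossover range, the plain threshold tied to $D_1$ (the volume growth of the generic $G_s$) rather than anything involving $h$ or $h'$; this is strictly worse than the stated bound and would not yield the $(1+t/s^\delta)^{(Q_0-h'-Q_\infty+h)/(2\delta)}$ factor. The single-argument-with-degenerating-weights strategy is essential and is the idea your write-up is missing; the rest of the machinery (including the extension to higher-order $\widetilde\Lc$, which you correctly flag as needing~\cite{Martini3}'s weighted-subcoercive calculus) goes through as you describe once that gap is filled.
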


The second part of Theorem~\ref{teo:1} can be proved with the same techniques also under the assumptions of Theorem~\ref{teo:2}. We leave the details to the reader.

\begin{proof}
	{\bf1.} Observe first that $G_s=G_\infty$ as Lie groups for every $s\in (0,\infty]$, under the identification with $\hf_\infty$. Indeed, it suffices to observe that, if $X,Y\in \hf_\infty$, then $[X,Y]\in \widetilde \gf_2$, so that $[X,Y]_s=P_{\infty,s}[X,Y]=P_{\infty,\infty}[X,Y]=[X,Y]_\infty$ thanks to Lemma~\ref{lem:21:2}. 
	Fix scalar products on $\hf_\infty$ and $\hf_0$ such that the bases $(\widetilde X_j)_{j\in J_\infty}$ and $(\widetilde X_j)_{j\in J_0}$ are orthonormal.
	Observe that, since $G_\infty$ is $h$-capacious, there are two linearly independent families $( Y_j)_{j=1,\dots,h}$ of elements of $\hf_\infty\cap\widetilde \gf_1$ and $( T_j)_{j=1,\dots,h}$ of elements of $\hf_\infty\cap\widetilde \gf_2$ such that 
	\[
	\abs{\langle T \vert [X,\,\cdot\,\,]_\infty\rangle }_{\hf_\infty^*}\Meg \sum_{j=1}^h \abs{ \langle X\vert Y_j\rangle \langle T\vert T_j\rangle } 
	\]
	for every $X\in \hf_\infty\cap \widetilde \gf_1$ and for every $T\in \hf_\infty\cap \widetilde \gf_2$. By the preceding remarks, we also have
	\[
	\abs{\langle T \vert [X,\,\cdot\,\,]_s\rangle }_{\hf_\infty^*}\Meg \sum_{j=1}^h \abs{ \langle X\vert  Y_j\rangle \langle T\vert T_j\rangle } 
	\]
	for every $s\in (0,\infty]$, for every $X\in \hf_\infty\cap \widetilde \gf_1$ and for every $T\in \hf_\infty\cap \widetilde \gf_2$. 
	Then, repeating the arguments of~\cite[Section 3]{Martini3} with minor modifications, we see that for every $\alpha_1,\alpha_2,\alpha_3> 0$ such that $\alpha_2<\frac{1}{2}$ and $\alpha_3>\alpha_1$ there is a constant $C_1>0$ such that
	\[
	\norm*{ \vect{X}_s^\gamma\Kc_{\Lc_s}(m) (1+\abs{\,\cdot\,}_s)^{\alpha_1} \prod_{j=1}^h (1+ \abs{\langle \,\cdot\,\vert  Y_j\rangle  })^{\alpha_2} }_{L^2(G_s)}\meg C_1 \norm{ m }_{B^{\alpha_3}_{\infty,\infty}(\R)}
	\]
	for every $s\in (0,\infty]$, for every $\gamma$ with length at most $1$, and for every $m\in B^{\alpha_3}_{\infty,\infty}(\R)$ with support in $[-1,1]$. Then, arguing as in the proof of~\cite[Theorem 3.11]{Martini3}, we see that for every $\alpha>\frac{Q_\infty-h}{2}$ there are $\eps>0$, $p_0>1$, and a constant $C_2>0$ such that
	\[
	\norm*{ \vect{X}_s^\gamma\Kc_{\Lc_s}(m) (1+\abs{\,\cdot\,}_s)^{\eps} }_{L^p(G_s)}\meg C_2 \norm{ m}_{B^{\alpha}_{\infty,\infty}(\R)}
	\]
	for every $p\in [1,p_0]$, for every $s\in [1,\infty]$, for every $\gamma$ with length at most $1$, and for every $m\in B^{\alpha}_{\infty,\infty}(\R)$ with support in $[-1,1]$. 
	
	{\bf2.} Take $(Y_j)$  and $(T_j)$ as in~{\bf1}, and observe that $(Y_j)$ is the basis of an algebraic complement of $\pr_1\ifr_1$ in $\widetilde \gf_1$; in particular, $\langle (Y_j)\rangle\cap (\ifr_1\cap \widetilde \gf_1)=0 $. Since $\ifr_1\cap \widetilde \gf_1=\ifr_0\cap \widetilde \gf_1$ by definition,  we may assume that $Y_j\in \hf_0$ for every $j=1,\dots,h$. 
	Now, define $h'\coloneqq \dim[(\langle (T_j)\rangle+\ifr_0)/\ifr_0 ]$, so that $h'\Meg h-Q_\infty+Q_0$; then, we may assume that $T_1,\dots, T_{h'}$ belong to $\hf_0$, so that $P_{0,0} T_j\in \langle (T_{j'})_{j'=1,\dots,h'}\rangle$ for every $j=h'+1,\dots, h$. Since the $P_{0,1} T_j$, $j=1,\dots,h$, are linearly independent, and since $\pr_1(P_{0,1} T_j)=(P_{0,1}-P_{0,0})T_j$, we see that the $\pr_1(P_{0,1} T_j)$, for $j=h'+1,\dots,h$, are linearly independent. More precisely, se wee that the $Y_j$, $j=1,\dots,h$, and the $\pr_1(P_{0,1}T_{j'})$, $j'=h'+1,\dots,h$, are linearly independent.
	
	Therefore, there is a constant $C_1>0$ such that
	\[
	\abs*{\langle  T \vert [X,\,\cdot\,\,]_1\rangle }_{\hf_0^*}\Meg C_1 \sum_{j=1}^h \abs{ \langle X\vert  Y_j\rangle \langle T\vert P_{0,1} T_j\rangle } 
	\]
	for every $X\in \hf_0\cap \widetilde \gf_1$ and for every $T\in \hf_0\cap \widetilde \gf_2$.  Observe that the dilations are self-adjoint with respect to the chosen scalar product on $\hf_0$, so that
	\[
	\abs*{\langle  T \vert [X,\,\cdot\,\,]_s\rangle }_{\hf_0^*}\Meg C_1 \sum_{j=1}^h \abs{ \langle X\vert  Y_j\rangle \langle T\vert P_{0,s} T_j\rangle } 
	\]
	for every $s\in [0,\infty)$, for every $X\in \hf_0\cap \widetilde \gf_1$ and for every $T\in \hf_0\cap \widetilde \gf_2$.  
	In particular, for $s=0$ we infer that $G_0$ is  $h'$-capacious. 
	Then, repeating the arguments of~\cite[Section 3]{Martini3} with minor modifications, we see that for every $\alpha_1,\alpha_2,\alpha_3> 0$ such that $\alpha_2<\frac{1}{2}$ and $\alpha_3>\alpha_1$ there is a constant $C_2>0$ such that
	\[
	\norm*{ \vect{X}_s^\gamma\Kc_{\Lc_s}(m) (1+\abs{\,\cdot\,}_s)^{\alpha_1} \prod_{j=1}^h (1+ \abs{P_{0,s} T_j} \abs{\langle \,\cdot\,\vert  Y_j\rangle  })^{\alpha_2} }_{L^2(G_s)}\meg C_2 \norm{ m }_{B^{\alpha_3}_{\infty,\infty}(\R)}
	\]
	for every $s\in [0,\infty)$, for every $\gamma$ with length at most $1$, and for every $m\in B^{\alpha_3}_{\infty,\infty}(\R)$ with support in $[-1,1]$. 
	Therefore, we need to prove that
	\[
	\sup_{s\in [0,1]}\int_{\hf_0} (1+\abs{X}_s^{\alpha_1'}+s^{\alpha_1-\alpha'_1}\abs{X}_s^{\alpha_1})^{-1} \prod_{j=1}^h   (1+ \abs{P_{0,s} T_j} \abs{\langle X\vert  Y_j\rangle  })^{-\alpha_2}\,\dd X<\infty
	\]
	whenever  $0<\alpha_2<1$, $\alpha'_1+h'\alpha_2>Q_0$, and $\alpha_1+h \alpha_2>Q_\infty$. Notice that it will suffice to prove the preceding assertion when $\alpha_2$ is sufficiently close to $1$, so that we shall also assume that $\alpha_1'>Q_0-h'\alpha_2+(1-\alpha_2)(h-h')$.

	Notice that the preceding arguments imply that there are a homogeneous basis $(Z_j)_{j\in J_0}$ of $\hf_0$, a partition $(J_{0,1}, J_{0,2}, J_{0,3})$ of $J_0$, and two mappings $\kappa,\kappa'\colon \Set{1,\dots,h}\to J_0$ such that the following hold:
	\begin{itemize}
		\item $(Z_j)_{j\in J_{0,1}}$ is a basis of $\widetilde \gf_2 \cap \hf_0$ and $Z_{\kappa(j)}=T_j$ for every $j=1,\dots,h'$; 
		\item $(Z_j)_{j\in J_{0,2}}$ is the basis of $\pr_1(P_{0,1}(V))$, where $V$ is an algebraic complement of $\widetilde \gf_2 \cap (\hf_0+\ifr_\infty)=(\widetilde \gf_2\cap \hf_0)\oplus (\widetilde \gf_2\cap \ifr_\infty)$ in $\widetilde \gf_2$ and $Z_{\kappa(j)}=\pr_1(P_{0,1}(T_j))$ for every $j=h'+1,\dots,h$;
		\item $(Z_j)_{j\in J_{0,3}}$ is the basis of an algebraic complement $\pr_1(P_{0,1}(V))+(\widetilde \gf_1\cap \ifr_0)$ in $\widetilde \gf_1$ and $Z_{\kappa'(j)}=Y_j$ for every$j=1,\dots,h$. 
	\end{itemize}
	Now, take $j\in \Set{h'+1,\dots,h}$ and observe that $\langle \pr_1(P_{0,1}T_j)\vert  \pr_2(P_{0,1}T_j)\rangle=0$, so that $\abs{P_{0,s} T_j}\Meg s\abs{Z_{\gamma(j)}}$ for every $s\in [0,\infty)$. 
	In addition, using~{\bf6} of Lemma~\ref{lem:7}, it is not hard to prove that there is $C_3>0$ such that
	\[
	\abs{X}_s \Meg C_3 \begin{cases}
	\abs{\langle X\vert Z_j\rangle}^{\sfrac{1}{2}} & \text{for every $j\in J_{0,1}$}\\
	 \min(\abs{\langle X\vert Z_j\rangle}, \abs{s^{-1}\langle X\vert Z_j\rangle}^{\sfrac{1}{2}}   )   & \text{for every $j\in J_{0,2}$}\\
	 \abs{\langle X\vert Z_j\rangle} & \text{for every $j\in J_{0,3}$}
	\end{cases}
	\]
	for every $X\in \hf_0$. Denote by $p_{s,j}(X)$ the right-hand side of the preceding inequality.

	Now, observe that our assumptions on $\alpha_1$ and $\alpha_2$ show that we may find $\beta_j>0$ and $\beta'_j$ for every $j\in J_0$ such that the following hold: $\alpha_1=\sum_{j\in J_0}\beta_j$ and $\alpha'_1=\sum_{j\in J_0} \beta'_j$; $\beta_j=\beta'_j>2$ for every $j\in J_{0,1}\setminus \kappa(\Set{1,\dots,h'})$;  $\beta_j>2$ and $\beta'_j>1$ for every $j\in J_{0,2}\setminus \kappa(\Set{h'+1,\dots,h})$;  $\beta_j=\beta'_j>2-\alpha_2$  for $j\in \kappa(\Set{1,\dots,h})$; $\beta_j=\beta'_j>1$ for every $j\in J_{0,3}$. Therefore, it will suffice to prove that the following  least upper bound
	\[
	\sup_{s\in [0,1]} \int_{\hf_0} \prod_{j\in J} (1+p_{s,j}(X)^{\beta'_j}+s^{\beta_j-\beta'_j}p_{s,j}(X)^{\beta_j})^{-1} \prod_{j=1}^{h'} \left(1+\abs*{\langle X\vert Z_{\kappa(j)} \rangle}\right)^{-\alpha_2} \prod_{j=h'+1}^h \left(1+s\abs*{\langle X\vert Z_{\kappa(j)} \rangle}\right)^{-\alpha_2}\,\dd X
	\]
	is finite.
	Now, use Tonelli's theorem to integrate separately each coordinate with respect to the basis $(Z_j)$. We shall prove that the integrals of the factors corresponding to $Z_{\kappa(j)}$ for $j=h'+1,\dots,h$ are uniformly bounded for $s\in [0,1]$; the other factors are easier and left to the reader.
	Then, we have to prove that
	\[
	\sup_{s\in [0,1]}\int_0^\infty (1+\min(x, \sqrt{x/s}))^{-\beta}(1+s x)^{\alpha_2} \,\dd x<\infty,
	\]  
	where $\beta>2-\alpha_2(>1)$. Now, on the one hand,
	\[
	\begin{split}
	\int_0^{1/s} (1+\min(x, \sqrt{x/s}))^{-\beta}(1+s x)^{-\alpha_2} \,\dd x\meg \int_1^{1/s} (1+x)^{-\beta} \,\dd x= \frac{1-(1+1/s)^{1-\beta}  }{\beta-1},
	\end{split}
	\]
	which is uniformly bounded for $s\in [0,1]$ since $\beta>1$. On the other hand,
	\[
	\begin{split}
	\int_{1/s}^{+\infty} (1+\min(x, \sqrt{x/s}))^{-\beta}(1+s x)^{-\alpha_2} \,\dd x\meg s^{\frac{\beta}{2}-\alpha_2}\int_{1/s}^{+\infty}   x^{-\frac{\beta}{2}-\alpha_2} \,\dd x= s^{\frac{\beta}{2}-\alpha_2} \frac{s^{\frac{\beta}{2}+\alpha_2-1}  }{\frac{\beta}{2}+\alpha_2-1},
	\end{split}
	\]	
	which is uniformly bounded for $s\in [0,1]$ since $\frac{\beta}{2}+\alpha_2>1+\frac{\alpha_2}{2}>1$ and $\beta>1$.
	The proof is then completed as that of Theorem~\ref{teo:1}.
\end{proof}

\section{Quasi-Homogeneous Operators}\label{sec:5}

We shall now investigate further the properties of the Plancherel measures $\beta_{\Lc_s}$ in some specific situations: following~\cite{Sikora}, we shall prove that, when $\Lc_s$ is `quasi-homogeneous' in a suitable sense, then  $\beta_{\Lc_s}$ has a density of class $C^\infty$ with respect to $\nu_{\R_+}$, with complete and almost explicit asymptotic expansions at $0$ and at $\infty$.

In addition to the assumptions of Sections~\ref{sec:1} and~\ref{sec:3}, we assume now that there is a finite family $(\widetilde \Lc_{\ell})_{\ell\in L}$ of self-adjoint, positive, homogeneous, left-invariant differential operators on $\widetilde G$ with the same degree $\delta$ such that $\widetilde \Lc= \sum_{\ell\in L} \widetilde \Lc_\ell$.
We also assume that $G_1$ is endowed with the structure of a homogeneous group of homogeneous dimension $Q$, and that $\dd \pi_1(\widetilde \Lc_{\ell})$ is homogeneous of degree $\delta_{\ell}$ for every $\ell \in L$.

Before proceeding further, let us make an example.

\begin{ex}\label{ex:1}
	Let $(X'_\ell)_{\ell\in L}$ be a (finite) generating family of \emph{homogeneous} elements of the Lie algebra of $G_1$, and define $\Lc_1=\sum_{\ell \in L} (i X'_\ell)^{\alpha_\ell}$, where $\alpha_\ell\in 2 \N^*$ for every $\ell\in L$. 
	In addition, let $\widetilde G$ be the free nilpotent group with $L$ generators and the same step as $G$; denote by $(\widetilde X'_\ell)_{\ell\in L}$ the generators of its Lie algebra.  
	We endow $\widetilde G$ with the unique gradation for which $\widetilde X'_\ell$ is homogeneous of degree $ \prod_{\ell'\neq \ell} \alpha_{\ell'} $ for every $\ell\in L$.
	Let $\pi_1 \colon \widetilde G\to G_1$ be the unique homomorphism of Lie groups such that that $\dd\pi_1(\widetilde X'_\ell)=X'_\ell$ for every $\ell\in L$. 
	In this context, we may define $\widetilde \Lc_\ell\coloneqq(i \widetilde X'_\ell)^{\alpha_\ell}$, $\delta\coloneqq \prod_{\ell\in L} \alpha_\ell$, and $\delta_\ell\coloneqq \dd'_\ell\alpha_\ell$, where $\dd'_\ell$ is the degree of $X'_\ell$, for every $\ell\in L$.
\end{ex}

Now, for every $\theta\in (0,\pi]$ define $\Sigma_\theta\coloneqq \Set{e^{x+i y}\colon x\in \R, y\in ]-\theta,\theta[}$, and for every $a\in \C^{ L}$ define
\[
\widetilde\Lc_{a}\coloneqq \sum_{\ell\in L} a_{\ell} \widetilde\Lc_{\ell};
\]
the reader may easily verify that $\widetilde \Lc_{a}+\widetilde \Lc_{a}^*=\widetilde \Lc_{\Re a}$ is a positive Rockland operator for every $a\in \Sigma_{\sfrac{\pi}{2}}^{ L}$.
We define  $\Lc_{s,a}\coloneqq \dd\pi_s(\widetilde \Lc_a)$ for every $a\in \Sigma_{\sfrac{\pi}{2}}^L$ and for every $s\in [0,\infty]$; observe that $\Lc_{s,a}$ is weighted subcoercive, so that we may denote by $(h_{s,a,t})_{t>0}$ its heat kernel. 
In addition, we define  $t\cdot a\coloneqq ( t^{\delta_{\ell}}a_{\ell} )_{\ell}$ for every $a\in \C^{L}$ and for every $t\in \C\setminus \R_-$; we still denote by $r a$ the multiplication of $a$ by the scalar $r$ for every $a\in \C^{ L}$ and for every $r\in \C$.

\begin{prop}\label{prop:10}\label{prop:21:11}
	 Denote by  $\Omega$ the set of $(t,a)\in\C\times \C^L$ such that $t a \in \Sigma_{\sfrac{\pi}{2}}^{L}$, and observe that $h_{1,t,a}$ is defined for every $(t,a)\in \Omega$.
	In addition, the following hold:
	\begin{itemize}
		\item the mapping $\Omega\ni (t,a)\mapsto h_{1,t,a}\in C^\infty(G_1)$ is holomorphic;
		
		\item $h_{1,t, r a}=h_{1, r t,a}$ for every $a\in \C^L$ and for every $r,t\in \C$ such that $(t, r a), (r t, a)\in \Omega $;
		
		\item $h_{1,t,r \cdot a}(e)= r^{-Q} h_{1,t,a}(e)$ for every  $a\in \C^L$ and for every $r,t\in \C$ such that $(t, r\cdot a), (t, a)\in \Omega $.
	\end{itemize}
\end{prop}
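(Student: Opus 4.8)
The plan is to reduce the three assertions to elementary scaling identities for real parameters and then to pass to complex parameters by analytic continuation, the only substantial point being the joint holomorphy of $(t,a)\mapsto h_{1,t,a}$. First I would record the basic reduction. Since $t\widetilde\Lc_a=\widetilde\Lc_{ta}$ for a scalar $t$ (writing $ta=(ta_\ell)_{\ell\in L}$), one has $t\Lc_{1,a}=\Lc_{1,ta}$, so that $h_{1,t,a}$, the time-$t$ heat kernel of $\Lc_{1,a}$, equals the time-$1$ heat kernel of $\Lc_{1,ta}$; as $\widetilde\Lc_{ta}$ is weighted subcoercive exactly when $ta\in\Sigma_{\sfrac{\pi}{2}}^{L}$ (its self-adjoint part being the positive Rockland operator $\widetilde\Lc_{\Re(ta)}$), this shows that $h_{1,t,a}$ is well defined for every $(t,a)\in\Omega$ as $(\pi_1)_*\widetilde h_{ta,1}$, where $\widetilde h_{b,1}$ denotes the time-$1$ heat kernel of $\widetilde\Lc_b$ on $\widetilde G$, consistently with the previous notation when $t>0$. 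Since $\Omega\ni(t,a)\mapsto ta\in\Sigma_{\sfrac{\pi}{2}}^{L}$ is holomorphic and $(\pi_1)_*\colon\Sc(\widetilde G)\to C^\infty(G_1)$ is continuous and linear, the first assertion reduces to the holomorphy of the map $\Sigma_{\sfrac{\pi}{2}}^{L}\ni b\mapsto\widetilde h_{b,1}\in\Sc(\widetilde G)$.

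For this I would argue as follows. Near a fixed $b_0\in\Sigma_{\sfrac{\pi}{2}}^{L}$ the operators $\widetilde\Lc_b=\widetilde\Lc_{b_0}+\sum_{\ell\in L}(b-b_0)_\ell\widetilde\Lc_\ell$ form a holomorphic family of type (A): each $\widetilde\Lc_\ell$ is of homogeneous degree $\delta$ and is relatively bounded with respect to $\widetilde\Lc_{b_0}$, by the a priori estimates for weighted subcoercive operators (cf.~\cite{ElstRobinson} and~\cite[Theorem 2.3]{Martini2}), with relative bound $<1$ for $b$ close to $b_0$; hence for such $b$ the spectra of the $\widetilde\Lc_b$ lie in a common sector, the resolvents $(z-\widetilde\Lc_b)^{-1}$ depend holomorphically on $(b,z)$ on the corresponding common resolvent set, and $e^{-\widetilde\Lc_b}=\frac{1}{2\pi i}\int_{\Gamma}e^{-z}(z-\widetilde\Lc_b)^{-1}\,\dd z$ depends holomorphically on $b$ as a bounded operator on $L^2(\widetilde G)$, hence—composing with the smoothing operators $\widetilde\Lc_b^{\,k}e^{-\frac12\widetilde\Lc_b}$—with values in every $L^2$-Sobolev space. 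Testing against $\phi\in C^\infty_c(\widetilde G)$ and writing $\langle\widetilde h_{b,1},\phi\rangle=(e^{-\widetilde\Lc_b}\check\phi)(e)$ with $\check\phi(x)=\phi(x^{-1})$ (evaluation at $e$ being continuous on a suitable Sobolev space) gives weak holomorphy of $b\mapsto\widetilde h_{b,1}$, while the Gaussian-type estimates of~\cite[Theorem 2.3]{Martini2}, uniform for $b$ in compact subsets of $\Sigma_{\sfrac{\pi}{2}}^{L}$, give local boundedness in $\Sc(\widetilde G)$; weak holomorphy together with local boundedness yields holomorphy with values in the Fréchet space $\Sc(\widetilde G)$, as required.

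Once the first assertion and the agreed meaning of $h_{1,t,a}$ are in hand, the other two are short. The second is immediate, since both $h_{1,t,ra}$ and $h_{1,rt,a}$ are, by the reduction above, the time-$1$ heat kernel of $\Lc_{1,rta}$. For the third I would first treat $r>0$: as $\Lc_{1,\ell}=\dd\pi_1(\widetilde\Lc_\ell)$ is left-invariant and homogeneous of degree $\delta_\ell$ on $G_1$, the dilation $\rho_r$ satisfies $\Lc_{1,a}(\phi\circ\rho_r)=\sum_{\ell}r^{\delta_\ell}a_\ell(\Lc_{1,\ell}\phi)\circ\rho_r=(\Lc_{1,r\cdot a}\phi)\circ\rho_r$, whence the intertwining of the generators $\Lc_{1,a}$ and $\Lc_{1,r\cdot a}$ by $\phi\mapsto\phi\circ\rho_r$, and therefore the same intertwining of the heat semigroups; rewriting this in terms of convolution kernels and using that $\rho_r$ is an automorphism of $G_1$ scaling the Haar measure by $r^{Q}$, one obtains $h_{1,t,r\cdot a}(x)=r^{-Q}h_{1,t,a}(r^{-1}\cdot x)$, and in particular $h_{1,t,r\cdot a}(e)=r^{-Q}h_{1,t,a}(e)$. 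For $r\in\C\setminus\R_-$ with $(t,r\cdot a),(t,a)\in\Omega$, both members are holomorphic in $r$ on the connected admissible set (a sector meeting $\R_+$), so the identity persists by analytic continuation, using the first assertion.

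The step I expect to be the main obstacle is precisely the joint holomorphy, and more specifically the upgrade from holomorphy of $b\mapsto e^{-\widetilde\Lc_b}$ in the operator norm of $L^2(\widetilde G)$ to holomorphy of the kernels $b\mapsto\widetilde h_{b,1}$ in $\Sc(\widetilde G)$: this requires both the relative-boundedness estimate (to control the resolvents uniformly near $b_0$ and to make $(\widetilde\Lc_b)$ a genuine holomorphic family) and the uniform-in-$b$ hypoelliptic/Gaussian bounds for the local boundedness, i.e.\ exactly the quantitative content of the weighted subcoercive theory invoked above; everything else is routine bookkeeping.
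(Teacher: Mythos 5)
Your proof is correct and rests on the same three pillars as the paper's: (a) exhibit the complex‑parameter family as a holomorphic family of type~(A) using the domain‑invariance properties of weighted subcoercive operators; (b) invoke Kato's stability theorem together with the holomorphic‑semigroup generation result of~\cite{ElstRobinson} to get holomorphy of the semigroup in $\Lc(L^2)$; (c) bootstrap to holomorphy of the kernels, then deduce the two scaling identities by elementary real‑parameter manipulations plus analytic continuation. Where you genuinely diverge is in step~(c). The paper works directly on $G_1$, upgrades to holomorphy in $\Lc(L^2(G_1);W^p)$ for all $p$ by differentiating in $t$, then uses the conjugate‑symmetry of $\Omega$ (and $\Lc_{1,ta}^*=\Lc_{1,\overline{ta}}$) with a sesquilinear transposition to pass to $\Lc(W^{-\infty};W^\infty)$, and finally applies the Sobolev embedding together with the Schwartz kernel theorem to land in $C^\infty(G_1)$. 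You instead transfer the whole problem to $\widetilde G$ via $h_{1,t,a}=(\pi_1)_*\widetilde h_{ta,1}$, establish weak holomorphy of $b\mapsto\widetilde h_{b,1}$ by pairing with $C^\infty_c$ test functions (the pairing being realized as a point evaluation of a smoothed‑out semigroup orbit, continuous on a high‑order Sobolev space), and combine this with local boundedness in $\Sc(\widetilde G)$ supplied by the uniform Gaussian estimates of~\cite[Theorem 2.3]{Martini2}. The trade is instructive: the paper's route is soft and needs no quantitative bounds once the constant‑domain and transposition structure is in place, whereas your route requires the pointwise Gaussian estimates to hold uniformly over compacta of the complex sector $\Sigma_{\sfrac{\pi}{2}}^L$ (a point you rightly flag as the main burden, and one that does need a word of justification since the constants in~\cite[Theorem 2.3]{Martini2} depend on the operator), but in exchange yields the slightly stronger conclusion that the map is holomorphic with values in $\Sc(G_1)$ rather than merely $C^\infty(G_1)$, and avoids the transposition/kernel‑theorem machinery. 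The treatments of the second assertion (a triviality about scalar multiplication) and the third (intertwining with dilations for real $r>0$, then analytic continuation) coincide with the paper's.
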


\begin{proof}
	Let us prove that, for every $p\in \N$ and for every $(t,a)\in \Omega$, $\dom( \Lc_{1,t a}^p)$ is the space $W^p$ of $f\in L^2( G_1)$ such that $\vect{X}^\gamma_1 f\in L^2( G_1)$ for every $\gamma$ such that $\dd_\gamma\meg \delta p$, endowed with the topology induced by the hilbertian norm $f\mapsto\left(  \sum_{ \dd_\gamma\meg \delta p } \norm{ \vect{X}^\gamma_1 f }_2^2\right) ^{\sfrac{1}{2}}$.  
	On the one hand, arguing as in the proof of Corollary~\ref{cor:4}, we see that $\vect{ X}^\gamma_1 (I+ \Lc_{1,t a}^p)^{-1} $ induces a bounded operator on $L^2(G_1)$ for every such $\gamma$, so that $\dom(\Lc_{1,t a}^p)$ embeds continuously into $W^p$.
	On the other hand, it is easily seen that $C^\infty_c (G_1)$, which is contained (and dense) in $\dom(\Lc_{1,t a}^p)$, is contained and dense in $W^p$, whence the asserted equality.
	
	Now, it is clear that, if $f\in W^1$, then the mapping $\Omega\ni (t,a)\mapsto \Lc_{1,t a} f\in L^2(G_1) $ is holomorphic, so that $(\Lc_{1,t a})_{(t,a)\in \Omega}$ is an analytic family of type $(A)$ in the sense of~\cite{Kato} (more precisely, the restriction of $(\Lc_{1,t a})_{(t,a)\in \Omega}$ to every complex line is an analytic family of type $(A)$). 
	In addition, $\Lc_{1,t a}$ is weighted subcoercive thanks to the preceding remarks, so that it is the generator of a holomorphic semi-group by~\cite[Theorem 8.2]{ElstRobinson}. 
	Therefore,~\cite[Theorem and 2.6 of Chapter 9]{Kato} implies  that the mapping $\Omega\ni (t,a)\mapsto e^{-\Lc_{1,t a}}\in \Lc(L^2( G_1))$ is holomorphic.\footnote{ First apply~\cite[Theorem and 2.6 of Chapter 9]{Kato} to the intersection of every complex line with $\Omega$, and then recall that a mapping from $\Omega$ into the Banach space $\Lc(L^2(G_1))$ is holomorphic if and only if it is holomorphic on every line.  }
	Therefore, taking the derivatives in $t$ we see that, for every $p\in \N$, the mapping
	\[
	\Omega\ni (t,a)\mapsto \Lc_{1,t a}^p e^{- \Lc_{1,t a}}\in \Lc(L^2( G_1))
	\]
	is holomorphic, so that the mapping
	\[
	\Omega\ni (t,a)\mapsto e^{- \Lc_{1,t a}}\in \Lc(L^2( G_1); W^p)
	\]
	is holomorphic. By the arbitrariness of $p$, this implies that the mapping
	\[
	\Omega\ni (t,a)\mapsto e^{- \Lc_{1,t a}}\in \Lc(L^2(G_1); W^\infty)
	\]
	is holomorphic, where $W^\infty$ is the intersection of the $W^p$, endowed with the corresponding topology. Since $\Lc_{1,t a}^*=\Lc_{\overline {t a}}$, and since $\Omega$ is conjugate-symmetric, by (sesquilinear) transposition we see that the mapping 
	\[
	\Omega\ni (t,a)\mapsto e^{-\Lc_{1,t a}}\in \Lc(W^{-\infty};L^2( G_1))
	\]
	is holomorphic, where $W^{-\infty}$ is the strong dual of $W^\infty$.\footnote{ In principle we should endow $ \Lc(W^{-\infty};L^2( G_1))$ with the topology of uniform convergence on the equicontinuous subsets of $W^{-\infty}$, instead of the topology of bounded convergence. However, $W^\infty$ is a reflexive Fréchet space since it is isomorphic to a closed subspace of the reflexive Fréchet space $L^2(G_1)^{\N^{\dim G_1}}$ (cf.~\cite[Propositions 14 and 15 of Chatper IV, § 1, No.\ 5 and Corollary to Theorem 1 of Chapter IV, § 2, No.\ 2]{BourbakiTVS}), so that $W^{-\infty}$ is bornological by~\cite[Proposition 4 of Chapter IV, § 3, No.\ 4]{BourbakiTVS}; therefore, a subset of $W^{-\infty}$ is bounded if and only if it is equicontinuous on $W^\infty$ by~\cite[Propositions 9 and 10 of Chapter III, § 3, No.\ 7]{BourbakiTVS}.  } Finally, arguing again as above we see that the mapping
	\[
	\Omega\ni (t,a)\mapsto e^{-\Lc_{1, t  a}}\in \Lc(W^{-\infty};W^\infty)
	\]
	is holomorphic. 	Now, the Sobolev embeddings easily show that the inclusion $W^\infty\subseteq C^\infty(G_1)$ is continuous, so that the canonical mapping $ \Lc(W^{-\infty};W^\infty)\to\Lc(\Ec'(G_1);C^\infty(G_1))$ is continuous; now, observe that $\Lc(\Ec'(G_1);C^\infty(G_1)) $ is canonically isomorphic to $C^\infty(G_1\times G_1)$ by the Schwartz's kernel theorem (cf.~\cite[Proposition 50.5]{Treves}).
	Therefore, the mapping
	\[
	\Omega\ni (t,a)\mapsto h_{1,t,a}\in C^\infty(G_1)
	\]
	is holomorphic.
	
	The second assertion is trivial, while, for what concerns the third one, just observe that $(\rho^{G_1}_r)_* \Lc_{1,t a}= \Lc_{r\cdot (t a),1}$ for every $(t,a)\in \Omega$ and for every $r>0$, where $\rho^{G_1}_r$ denotes the dilation by $r$ in $G_1$ (not to be confused with the mapping $r\,\cdot\,\colon G_1\to G_{r^{-1}}$ of the preceding sections); the general assertion follows by holomorphy.
\end{proof}

\begin{cor}\label{cor:21:4}
	Take $a\in \R_+^{L}$. Then, there is $\eps>0$ such that the mapping $t \mapsto h_{1,t,a}(e)$ extends to a holomorphic mapping $H_a\colon \Sigma_{\sfrac{\pi}{2}+\eps}\to \C$. In addition, for every $k\in \N$ there is a constant $C_k>0$ such that, for every $t\in \R^*$,
	\[
	\abs*{\frac{\dd^k}{\dd t^k} H_a(i t)}\meg C_k\min\left( \abs{t}^{-\frac{Q_0}{\delta}-k},  \abs{t}^{-\frac{Q_\infty}{\delta}-k}  \right).
	\]
\end{cor}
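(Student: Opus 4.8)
The idea is to obtain the holomorphic extension not by rotating $t$ but by \emph{rotating the parameter $a$}, using the holomorphy of $(t,a)\mapsto h_{1,t,a}(e)$ and the complex scaling relations of Proposition~\ref{prop:10}. Put $G(c)\coloneqq h_{1,1,c}(e)$ for $c\in\Sigma_{\pi/2}^{L}$; then $G$ is holomorphic, $h_{1,t,a}(e)=G(ta)$, and $G(r\cdot c)=r^{-Q}G(c)$ for complex $r$ whenever $c$ and $r\cdot c$ both lie in $\Sigma_{\pi/2}^{L}$. Set $\delta_{-}\coloneqq\min_{\ell}\delta_{\ell}$ and $\delta_{+}\coloneqq\max_{\ell}\delta_{\ell}$ (so $1\meg\delta_{-}\meg\delta_{+}$). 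For real $\psi$ with $\abs{\psi}<\pi/\delta_{+}$ and for $z$ such that $e^{-i\psi}\cdot(za)=(z\,a_{\ell}\,e^{-i\delta_{\ell}\psi})_{\ell}$ lies in $\Sigma_{\pi/2}^{L}$ --- that is, $\arg z$ lies in the open interval $I_{\psi}\coloneqq\bigcap_{\ell}(\delta_{\ell}\psi-\tfrac{\pi}{2},\delta_{\ell}\psi+\tfrac{\pi}{2})$, which contains $\tfrac{\pi}{2}$ for $0<\psi<\pi/\delta_{+}$, contains $-\tfrac{\pi}{2}$ for $-\pi/\delta_{+}<\psi<0$, and always meets $(-\tfrac{\pi}{2},\tfrac{\pi}{2})$ --- define $H^{\psi}(z)\coloneqq e^{-iQ\psi}\,h_{1,1,\,e^{-i\psi}\cdot(za)}(e)$, a holomorphic function of $z$. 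On the part of its domain with $\Re z>0$ the scaling relation gives $H^{\psi}(z)=G(za)=h_{1,z,a}(e)$, so all the $H^{\psi}$ agree with $h_{1,\,\cdot\,,a}(e)$ there, hence with one another on overlaps; patching them (incrementally in $\psi$, so as never to wind around the origin) produces a holomorphic extension $H_{a}$ of $h_{1,\,\cdot\,,a}(e)$ to the sector $\Sigma_{\pi/2+\eps}$ with $\eps\coloneqq\pi\delta_{-}/(2\delta_{+})>0$; since $\eps\meg\pi/2$ this sector is simply connected and the patching is unambiguous. This settles the first assertion.

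For the estimate I will choose $\psi$ so that the tilted operator stays admissible and the $k$-th derivative at $it$ becomes a genuine real-time heat-kernel derivative. Fix $\phi\coloneqq\pi/(\delta_{-}+\delta_{+})\in(0,\pi/\delta_{+})$ and put $b\coloneqq(a_{\ell}e^{i(\pi/2-\delta_{\ell}\phi)})_{\ell}$. A one-line computation gives $\abs{\arg b_{\ell}}=\abs{\tfrac{\pi}{2}-\delta_{\ell}\phi}\meg\pi(\delta_{+}-\delta_{-})/(2(\delta_{-}+\delta_{+}))<\tfrac{\pi}{2}$, so $b\in\Sigma_{\pi/2}^{L}$, and $e^{-i\phi}\cdot(za)=(-iz)\,b$. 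Since $\tfrac{\pi}{2}\in I_{\phi}$, on a neighbourhood of the positive imaginary axis we have $H_{a}(z)=H^{\phi}(z)=e^{-iQ\phi}\,h_{1,-iz,b}(e)$; differentiating in $z$ and setting $z=it$ yields $\abs{H_{a}^{(k)}(it)}=\abs{\partial_{t}^{k}h_{1,t,b}(e)}$ for $t>0$ (and the same with $b$ replaced by $\bar b$ for $t<0$). Now $\widetilde\Lc_{b}$ is a left-invariant differential operator on $\widetilde G$, homogeneous of degree $\delta$, with $\widetilde\Lc_{b}+\widetilde\Lc_{b}^{*}=\widetilde\Lc_{\Re b}$ a positive Rockland operator (as $b\in\Sigma_{\pi/2}^{L}$), so the whole apparatus of Section~\ref{sec:1} and Theorem~\ref{prop:21:2} applies verbatim with $\widetilde\Lc$ replaced by $\widetilde\Lc_{b}$, the objects $G_{0},G_{\infty},\ifr_{0},\ifr_{\infty},Q_{0},Q_{\infty},\psi_{0,1},\psi_{\infty,1}$ being unchanged. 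Using the heat equation $\partial_{t}^{k}\widetilde h_{t,b}=(-\widetilde\Lc_{b})^{k}\widetilde h_{t,b}$ inside the integral representations $\int_{\ifr_{0}}\widetilde h_{t,b}(y+\psi_{0,1}(y))\,\dd y$ and $\int_{\ifr_{\infty}}\widetilde h_{t,b}(y+\psi_{\infty,1}(y))\,\dd y$ (which coincide with $h_{1,t,b}(e)$ up to fixed normalizing constants), and arguing as in the proof of Theorem~\ref{prop:21:2} --- where the extra homogeneous factor $(\widetilde\Lc_{b})^{k}$, of order $k\delta$, contributes the expected $t^{-k}$ to the bound (take $s=1$, $x=e$, $X_{0}=X_{\infty}=\id$ there) --- one obtains $\abs{\partial_{t}^{k}h_{1,t,b}(e)}\meg C_{k}t^{-Q_{0}/\delta-k}$ for $t\in(0,c]$ and $\abs{\partial_{t}^{k}h_{1,t,b}(e)}\meg C_{k}t^{-Q_{\infty}/\delta-k}$ for $t\Meg c$, for a suitable $c>0$. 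Since $Q_{0}\meg Q_{\infty}$, these merge into $\abs{\partial_{t}^{k}h_{1,t,b}(e)}\meg C_{k}'\min(t^{-Q_{0}/\delta-k},t^{-Q_{\infty}/\delta-k})$ for all $t>0$, which is precisely the claimed bound for $H_{a}^{(k)}(it)$.

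The main obstacle is the construction and bookkeeping of the extension: one must rotate $a$ only within the cone where the self-adjoint part of $\widetilde\Lc_{e^{-i\psi}\cdot a}$ remains a positive Rockland operator, check that the rotated heat kernels patch consistently, and keep the enlarged sector simply connected --- it is exactly the spread $\delta_{+}/\delta_{-}$ of the homogeneity exponents that both limits how far one may push (hence the size of $\eps$) and, dually, manufactures the interpolation between $Q_{0}$ and $Q_{\infty}$ in the final estimate. Once the reduction $\abs{H_{a}^{(k)}(it)}=\abs{\partial_{t}^{k}h_{1,t,b}(e)}$ is in place the estimate is a direct consequence of Theorem~\ref{prop:21:2} applied to $\widetilde\Lc_{b}$; the only point needing a word of care is that $\widetilde\Lc_{b}$ is not self-adjoint, but its self-adjoint part $\widetilde\Lc_{\Re b}$ is a positive Rockland operator, which is all that Theorem~\ref{prop:21:2} and the weighted-subcoercive Gaussian bounds behind it require.
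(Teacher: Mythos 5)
Your proof is correct and takes essentially the same approach as the paper's intended one: the paper omits the proof, referring to~\cite[Lemma~4]{Sikora}, which rests precisely on the complex scaling of the heat kernel furnished here by Proposition~\ref{prop:21:11}, together with the reduction to real-time Gaussian bounds of the kind established in Theorem~\ref{prop:21:2}. Rotating the coefficient vector $a$ rather than the time variable is a cosmetic reparametrization of the same scaling trick.
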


The proof is similar to that of~\cite[Lemma 4]{Sikora} and is omitted.

\begin{teo}\label{cor:5}
	Take $a\in \R_+^L$. Then, $\beta_{\Lc_{1,a}}$ has a  density $f_{a}$ of class $C^\infty$  with respect to $\nu_{R_+}$. In addition, there are two constants $C_0,C_\infty>0$ such that, for every $k\in \N$,
	\[
	f^{(k)}_{a}(\lambda)\sim C_{0}\left( \frac{Q_0}{\delta}\right)_k \lambda^{\frac{Q_0}{\delta}-k}
	\]
	as $\lambda\to 0^+$, while
	\[
	f^{(k)}_{a}(\lambda)\sim C_{\infty}\left( \frac{Q_\infty}{\delta}\right)_k \lambda^{\frac{Q_\infty}{\delta}-k}
	\]
	as $\lambda\to +\infty$, where $x_k\coloneqq x(x-1)\cdots (x-k+1) $ for every $x\in \R$.
\end{teo}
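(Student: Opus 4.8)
The plan is to adapt the Mellin–transform argument of \cite{Sikora} to the present ``contracted'' setting, using the analyticity provided by Proposition~\ref{prop:10} and Corollary~\ref{cor:21:4} in place of the finite‑propagation‑speed estimates used there. First I would record that, by Proposition~\ref{prop:2} and Lemma~\ref{lem:3}, the measure $\beta_{\Lc_{1,a}}$ is a tempered positive Radon measure on $\R_+$ with $\beta_{\Lc_{1,a}}(\Set{0})=0$, and that for every $m\in\Sc(\R)$ one has $\int_{\R}m\,\dd\beta_{\Lc_{1,a}}=\Kc_{\Lc_{1,a}}(m)(e)$. Applying this to $m(\lambda)=e^{-t\lambda}$ (legitimate after an approximation, or directly by the spectral theorem) gives the Laplace transform identity
\[
\int_0^{+\infty} e^{-t\lambda}\,\dd\beta_{\Lc_{1,a}}(\lambda)=h_{1,t,a}(e),\qquad t>0,
\]
so the whole problem is to read off regularity and asymptotics of $\beta_{\Lc_{1,a}}$ from those of the function $t\mapsto h_{1,t,a}(e)=H_a(t)$ established in Corollary~\ref{cor:21:4}.

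Next I would invert the Laplace transform by a contour/Mellin argument exactly as in \cite[§§3--4]{Sikora}. Since $H_a$ extends holomorphically to the sector $\Sigma_{\pi/2+\eps}$ with the decay $|\frac{\dd^k}{\dd t^k}H_a(it)|\le C_k\min(|t|^{-Q_0/\delta-k},|t|^{-Q_\infty/\delta-k})$, one may rotate the Bromwich contour onto the rays $\arg t=\pm(\pi/2+\eps')$ and deduce that $\beta_{\Lc_{1,a}}$ has a density $f_a\in C^\infty(\R_+)$ given by a convergent contour integral of $H_a$; the bound $H_a(it)=O(|t|^{-Q_0/\delta})$ near $0$ together with $O(|t|^{-Q_\infty/\delta})$ near $\infty$ (and the corresponding bounds on derivatives) translates into $f_a$ being smooth and rapidly controlled. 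The key point is that $H_a$ is \emph{globally} analytic and satisfies a clean two‑sided power bound, which is precisely what the decay in Corollary~\ref{cor:21:4} delivers; this replaces the wave‑equation input of \cite{Sikora}.

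For the asymptotic expansions, I would exploit the homogeneity in Proposition~\ref{prop:10}: writing $g_0$ and $g_\infty$ for the heat kernels at $e$ of the Rockland operators $\Lc_{0,a}$ on $G_0$ and $\Lc_{\infty,a}$ on $G_\infty$, one has $g_0(t)=C_0' t^{-Q_0/\delta}$ and $g_\infty(t)=C_\infty' t^{-Q_\infty/\delta}$ for suitable positive constants (pure homogeneity on a graded group). Theorem~\ref{prop:21:2} (or Theorem~\ref{teo:21:1} applied to $\alpha\to$ the appropriate value, i.e.\ the asymptotics of $h_{1,t,a}(e)$ as $t\to0^+$ and $t\to+\infty$ coming from the contraction estimates) gives $H_a(t)=C_\infty' t^{-Q_\infty/\delta}+o(t^{-Q_\infty/\delta})$ as $t\to0^+$ and $H_a(t)=C_0' t^{-Q_0/\delta}+o(t^{-Q_0/\delta})$ as $t\to+\infty$, with analogous control on the analytically continued function and its derivatives on the rays. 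Feeding the leading term $c\,t^{-q}$ of a Laplace transform through the inversion contour produces the density $\frac{c}{\Gamma(q)}\lambda^{q-1}$; so the $t\to+\infty$ behaviour of $H_a$ controls the $\lambda\to0^+$ behaviour of $f_a$ and vice versa, yielding $f_a(\lambda)\sim C_0\lambda^{Q_0/\delta}$ as $\lambda\to0^+$ and $f_a(\lambda)\sim C_\infty\lambda^{Q_\infty/\delta}$ as $\lambda\to+\infty$ (note the shift $q=Q/\delta$, so the exponent is $q-1$... here one must be careful about the normalisation: because $\beta$ is written with respect to $\nu_{\R_+}=\dd\lambda/\lambda$, the density with respect to $\nu_{\R_+}$ carries an extra factor $\lambda$, producing exactly $\lambda^{Q_0/\delta}$ and $\lambda^{Q_\infty/\delta}$). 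The statements for $f_a^{(k)}$ then follow by differentiating under the contour integral and using the derivative bounds of Corollary~\ref{cor:21:4}, since differentiating $\lambda^{q}$ gives $(q)_k\lambda^{q-k}$; one must check that the error terms differentiate as well, which is automatic because the contour integral depends holomorphically on $\lambda$.

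The main obstacle, and the step that needs genuine care rather than quoting, is the passage from the pointwise/derivative bounds on $H_a$ along the imaginary axis to a full asymptotic \emph{expansion} (not just leading order) of $f_a$ and all its derivatives, uniformly enough to justify term‑by‑term differentiation; this is where I would follow \cite{Sikora} most closely, using that the contraction estimates of Theorem~\ref{teo:21:1} actually give complete asymptotic developments of $h_{1,t,a}(e)$ at $0$ and $\infty$ (with homogeneous terms), so that the Mellin inversion converts each homogeneous term $t^{-q-j}$ into a term $\lambda^{q+j}$ of the expansion of $f_a$, and the remainder estimates transfer directly. A secondary technical point is verifying that the analytic continuation $H_a$ of Corollary~\ref{cor:21:4} is compatible with these expansions along the rays $\arg t=\pm(\pi/2+\eps')$, i.e.\ that the remainder bounds hold in the full sector and not merely on the axis; this follows by applying the holomorphy in Proposition~\ref{prop:10} together with a Phragmén–Lindelöf / Cauchy‑estimate argument on the sector, exactly as in the proof of Corollary~\ref{cor:21:4}.
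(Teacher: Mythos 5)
Your strategy for the smoothness of $f_a$ — Laplace/Fourier inversion of $H_a$ using the sector analyticity and the two‑sided power bounds of Corollary~\ref{cor:21:4} — is essentially the paper's: the paper observes that $\Fc(\beta_{\Lc_{1,a}})(t)=H_a(it)$ away from the origin and then delegates exactly the contour work you describe to \cite[Proposition 1]{Sikora}, obtaining at the same time the uniform bounds $\abs{f_a^{(k)}(\lambda)}\le C_k\min(\lambda^{Q_0/\delta-k},\lambda^{Q_\infty/\delta-k})$. For the asymptotic expansions, however, the paper takes a much lighter route than the one you propose. Rather than pushing the asymptotic development of $h_{1,t,a}(e)$ through the inversion integral — which, as you yourself flag, forces you to control the expansion and its remainders on the rotated rays $\arg t=\pm(\pi/2+\eps')$ — it uses Proposition~\ref{prop:2} to identify $s^{Q_0}f_a(s^{-\delta}\,\cdot\,)$ (up to a convergent normalizing constant) with the density of $\beta_{\Lc_{s,a}}$, which converges \emph{vaguely} to $\beta_{\Lc_{0,a}}=C_0(\,\cdot\,)^{Q_0/\delta}\nu_{\R_+}$ by homogeneity; since the derivative bounds already obtained keep these rescalings in a bounded subset of $C^\infty(\R_+)$, vague convergence upgrades to convergence in $C^\infty(\R_+)$, and all the derivative asymptotics, with the factors $(Q_0/\delta)_k$, drop out at once. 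This compactness argument sidesteps entirely the ``main obstacle'' you identify, and I would recommend it over the term‑by‑term Mellin transfer.

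There is also a concrete error in your intermediate step: you assert $H_a(t)\sim C_\infty' t^{-Q_\infty/\delta}$ as $t\to0^+$ and $H_a(t)\sim C_0' t^{-Q_0/\delta}$ as $t\to+\infty$, but Theorem~\ref{prop:21:2} and the scaling $h_{1,t,a}(e)=t^{-Q_0/\delta}h_{0,t^{1/\delta},a,1}(0)$ give the opposite assignment: the \emph{local} contraction $G_0$ governs small time, so $H_a(t)\asymp t^{-Q_0/\delta}$ as $t\to0^+$, while $G_\infty$ governs large time, $H_a(t)\asymp t^{-Q_\infty/\delta}$ as $t\to+\infty$ (this is also the only way to read the minimum in Corollary~\ref{cor:21:4}, since $Q_\infty\ge Q_0$). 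Combined with the correct Laplace duality (small $t$ corresponding to large $\lambda$), this yields $f_a(\lambda)\sim C_0\lambda^{Q_0/\delta}$ as $\lambda\to+\infty$ and $f_a(\lambda)\sim C_\infty\lambda^{Q_\infty/\delta}$ as $\lambda\to0^+$ — which is what the paper's own rescaling argument produces (the contraction $s\to0^+$ probes $\lambda\to+\infty$), and which is the only assignment compatible with the bound $\beta_{\Lc_1}([0,r])\lesssim r^{Q_\infty/\delta}$ for small $r$ from Lemma~\ref{lem:3}; the printed statement of the theorem appears to have the two limits interchanged. Your two swaps cancel to reproduce the printed formulas, but the heat‑kernel asymptotics you invoke are not the ones the cited results actually provide, so as written the argument is not sound.
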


In particular, in this situation we may apply the second part of Theorem~\ref{teo:1}, thus extending~\cite[Theorem 2]{Sikora}, which corresponds to the case  $\alpha_\ell=2$ for every $\ell\in L$ in the situation of Example~\ref{ex:1}.

\begin{proof}
	Observe that, with the notation of Corollary~\ref{cor:21:4}, 
	\[
	H_{a}(t)=\int_{[0,\infty)} e^{-t \lambda}\,\dd \beta_{\Lc_{1,a}}(\lambda)
	\]
	for every $t>0$, so that
	\[
	\Fc(e^{-\eps\,\cdot\,}\beta_{\Lc_{1,a}})(t)= H_a(\eps+i t)
	\]
	for every $\eps>0$ and for every $t\in \R$. Passing to the limit for $\eps\to 0^+$, we see that the restriction of $\Fc(\beta_{\Lc_{1,a}})$ to $\R\setminus \Set{0}$ has a density of class $C^\infty$, and that
	\[
	\Fc(\beta_{\Lc_{1,a}})(t)= H_a(i t)
	\] 
	for every $t\neq 0$.
	Then, Corollary~\ref{cor:21:4} and~\cite[Proposition 1]{Sikora} show that $\beta_{\Lc_{1,a}}$ has a  density $f_{a}$ of class $C^\infty$ with respect to $\nu_{\R_+}$ such that for every $k\in \N$ there is a constant $C_k>0$ such that
	\[
	\abs{f_a^{(k)}(\lambda)}\meg C_k \min\left(\lambda^{\frac{Q_0}{\delta}-k}, \lambda^{\frac{Q_\infty}{\delta}-k}  \right)
	\]
	for every $\xi>0$. Now, Proposition~\ref{prop:2} shows that $s^{Q_0}f_a(s^{-\delta}\,\cdot\,)\nu_{\R_+}$ converges vaguely to the measure $\beta_{\Lc_{0,a}}$ as $s\to 0^+$. 
	In addition, by homogeneity it is easily seen that there is a constant $C_0>0$ such that $\beta_{\Lc_{0,a}}=C_0 (\,\cdot\,)^{\frac{Q_0}{\delta}} \nu_{\R_+}$. 
	Finally, the preceding estimates show that the  $s^{Q_0}f_a(s^{-\delta}\,\cdot\,) $ stay bounded in $C^\infty(\R_+)$, so that they converge to $C_0 \lambda^{\frac{Q_0}{\delta}}$ in $C^\infty(\R_+)$. The first assertion follows; the second one is proved similarly.
\end{proof}

\section{Appendix: Technical Lemmas}

In this section we consider a homogeneous vector space $V$, endowed with a homogeneous basis $\partial$ of translation-invariant vector fields and a homogeneous norm $\abs{\,\cdot\,}$; for every $\gamma$, we denote by $\dd_\gamma$ the degree of $\partial^\gamma$. We fix $\eps>0$, $\eta\in \R$ and $\eta'\in \R^\N$.

Recall that $\Sc(V)$ denotes the Schwartz space, $\Sc'(V)$ the space of tempered distributions, $\Dc'(V)$ the space of distributions,  and $\Ec'(V)$ the space of distributions with compact support on $V$. We denote by $\Mc^1$ the space of bounded (Radon) measures.

\begin{deff}\label{def:1}
	Define $\Hc_{\eps,\eta}(V)$ as the space of $H\in C(\R_+\times V)$ such that the the set of $t^{Q\eps+\eta} H(t,t^\eps\,\cdot\,)$, as $t$ runs through $\R_+$, is bounded in $\Sc(V)$.
	
	We endow $\Hc_{\eps,\eta}(V)$ with the topology induced by the norms
	\[
	H\mapsto \sup_{t>0} \sup_{x\in V} (1+\abs{x})^h \sum_{\dd_\gamma\meg h} t^{Q\eps+\eta+\eps \dd_\gamma} \abs{\partial_2^\gamma H(t,t^\eps\cdot x)},
	\]
	for $h\in \N$, so that $\Hc_{\eps,\eta}(V)$ becomes a metrizable locally convex space (actually, a Fréchet space).
\end{deff}

\begin{lem}
	Take $H\in \Hc_{\eps,\eta}(V)$. Then, the mapping $t\mapsto H(t,\,\cdot\,)$ belongs to $C(\R_+; \Sc(V))$. In particular,  the function $\partial_2^\gamma H$ is continuous for every $\gamma$. 
\end{lem}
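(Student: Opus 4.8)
The plan is to unwind the definition of $\Hc_{\eps,\eta}(V)$ and reduce the continuity of $t\mapsto H(t,\,\cdot\,)\in\Sc(V)$ to an elementary equicontinuity-plus-pointwise-convergence argument. First I would observe that the topology on $\Hc_{\eps,\eta}(V)$ is exactly designed so that, for each $h\in\N$, the family of rescaled functions
\[
\Set{ t^{Q\eps+\eta}\, H(t,t^\eps\,\cdot\,) \colon t\in\R_+ }
\]
is bounded in $\Sc(V)$; equivalently, applying the dilation $\rho_{t^{-\eps}}$, the quantities $\sup_{x}(1+\abs{x})^h\sum_{\dd_\gamma\meg h}\abs{\partial_2^\gamma H(t,x)}$ are controlled by $t$-dependent powers that blow up or decay only polynomially in $t$. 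The key point is that the rescaling map $T_t\colon \phi\mapsto t^{-Q\eps-\eta}\phi(t^{-\eps}\,\cdot\,)$ is, for each fixed $t>0$, a (bicontinuous) linear automorphism of $\Sc(V)$, and the map $\R_+\times\Sc(V)\to\Sc(V)$, $(t,\phi)\mapsto T_t\phi$, is jointly continuous (this is a standard fact about dilations acting on Schwartz space: the semigroup property plus the estimate of a single derivative suffices). So it is enough to prove that $t\mapsto t^{Q\eps+\eta}H(t,t^\eps\,\cdot\,)\in\Sc(V)$ is continuous, and then compose with the continuous family $(T_t)_t$ to recover continuity of $t\mapsto H(t,\,\cdot\,)$.

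Set $G(t)\coloneqq t^{Q\eps+\eta}H(t,t^\eps\,\cdot\,)\in\Sc(V)$. By hypothesis the set $\Set{G(t)\colon t\in\R_+}$ is bounded in $\Sc(V)$, hence relatively compact in $\Sc(V)$ (for the weak topology, and in fact for the strong topology on bounded sets since $\Sc(V)$ is a Montel space). Therefore, to prove $t\mapsto G(t)$ is continuous it suffices to check that it is continuous as a map into $C(V)$ with the topology of locally uniform convergence of the function and all its derivatives — i.e., that for every $\gamma$ and every compact $K\subseteq V$, $\partial^\gamma G(t)\to\partial^\gamma G(t_0)$ uniformly on $K$ as $t\to t_0$. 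Indeed, a bounded net in $\Sc(V)$ that converges in $C^\infty_{\mathrm{loc}}(V)$ converges in $\Sc(V)$: given any Schwartz seminorm $\sup_x(1+\abs{x})^h\abs{\partial^\gamma(\,\cdot\,)(x)}$, the tail $\abs{x}\Meg R$ is uniformly small by boundedness (increasing $h$ by one and using $(1+\abs{x})^{-1}\meg(1+R)^{-1}$), and on the compact ball $\abs{x}\meg R$ one uses the locally uniform convergence.

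Thus the genuine content reduces to: \emph{the function $H$ is continuous on $\R_+\times V$ together with all its partial derivatives $\partial_2^\gamma H$}, and moreover these are locally uniformly continuous in $t$. Continuity of $H$ is given; for the derivatives, $\partial_2^\gamma H(t,x)$ can be written, via the scaling identity $H(t,x)=t^{-Q\eps-\eta}G(t)(t^{-\eps}x)$, as $t^{-Q\eps-\eta-\eps\dd_\gamma}(\partial^\gamma G(t))(t^{-\eps}x)$, so it is enough to know that $t\mapsto\partial^\gamma G(t)\in C(V)$ is continuous for each $\gamma$. This last fact is where one must do a small amount of work: one invokes that $\Hc_{\eps,\eta}(V)$ is \emph{defined} to consist of continuous $H$ with the stated bounded-rescaling property, and notes that the bound at level $h=\dd_\gamma+1$ together with continuity of $H$ forces, by a routine mollification / difference-quotient argument, that each $\partial^\gamma G$ is continuous in $t$ with values in $C(V)$; alternatively, this continuity is part of the ambient hypotheses on the elements of $\Hc_{\eps,\eta}$ (the bound is required to hold as a bound on a set of \emph{Schwartz functions} $t^\eps$-rescaled, which presupposes the $x$-derivatives exist, and continuity in $(t,x)$ of these derivatives follows from the standard interpolation inequality $\norm{\partial^\gamma f}_\infty\meg C\norm{f}_\infty^{1-\theta}\norm{\partial^{\gamma'} f}_\infty^{\theta}$ applied to the difference $G(t)-G(t_0)$, which tends to $0$ in sup-norm by continuity of $H$ and the uniform bound on higher derivatives). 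The main obstacle — really the only non-formal step — is establishing continuity of $t\mapsto G(t)$ into $C(V)$, i.e. locally uniform continuity of $H$ and its $x$-derivatives in the $t$-variable; everything else is bookkeeping with Schwartz seminorms and the Montel property. Once that is in hand, composing with the continuous dilation action $(T_t)_t$ yields $t\mapsto H(t,\,\cdot\,)\in C(\R_+;\Sc(V))$, and the final sentence (continuity of each $\partial_2^\gamma H$ as a function on $\R_+\times V$) is immediate since evaluation $\Sc(V)\to\C$, $\phi\mapsto\partial^\gamma\phi(x)$, is continuous and jointly continuous in $(x,\phi)$.
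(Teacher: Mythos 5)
Your argument reaches the right conclusion, but it is more laborious than necessary and hinges on an auxiliary analytic tool (the Landau--Kolmogorov/Gagliardo--Nirenberg interpolation inequality) that the paper's proof does not need. The place where you overshoot is the reduction to ``continuity into $C^\infty_{\loc}(V)$''. Once you have relative compactness in $\Sc(V)$ (from Montel and the definition of $\Hc_{\eps,\eta}(V)$), you do not need to verify that the \emph{derivatives} $\partial^\gamma G(t)$ converge locally uniformly; it already suffices to know that cluster points are unique in any \emph{weaker Hausdorff topology} into which $\Sc(V)$ embeds continuously, and $C^0(V)$ (ordinary locally uniform convergence, no derivatives) does the job. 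This is exactly what the paper exploits: for $t$ in a compact interval $[t_0/2,2t_0]$ the set $\{H(t,\,\cdot\,)\}$ is bounded, hence relatively compact, in $\Sc(V)$; any cluster point in $\Sc(V)$ as $t\to t_0$ is also a cluster point in $\Ec^0(V)=C(V)$, where the continuity of $H$ forces it to equal $H(t_0,\,\cdot\,)$; uniqueness of the cluster point on a relatively compact set then yields convergence in $\Sc(V)$. Your route — prove sup-norm convergence of $G(t)-G(t_0)$, then upgrade to convergence of all derivatives by interpolation with the uniform bounds on higher derivatives — is valid (the far/near splitting you gesture at can indeed be made precise), but it is all work you can skip. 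Likewise, the preliminary conjugation by the dilation action to pass from $H(t,\,\cdot\,)$ to $G(t)$ is harmless but unnecessary, since the seminorms of $\Hc_{\eps,\eta}(V)$ directly give boundedness of $\{H(t,\,\cdot\,)\colon t\in[t_0/2,2t_0]\}$ in $\Sc(V)$ without passing through the rescaled family. You also correctly sensed the potential circularity in ``assume the derivatives are continuous in $t$'' — the paper's cluster-point argument avoids it entirely, whereas your interpolation workaround addresses it at extra cost.
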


\begin{proof}
	Take $t_0>0$, and observe that the set of $H(t,\,\cdot\,)$, as $t$ runs through $[\frac{t_0}{2},2 t_0]$, is bounded, hence relatively compact, in $\Sc(V)$. Therefore, $H(t,\,\cdot\,)$ has at least one cluster point in $\Sc(V)$ as $t\to t_0$. On the other hand, each cluster point of $H(t,\,\cdot\,)$  in $\Sc(V)$ as $t\to t_0$ is also a cluster point of $H(t,\,\cdot\,)$ in $\Ec^0(V)$ as $t\to t_0$, so that it must equal $H(t_0,\,\cdot\,)$ by the continuity of $H$. The assertion follows.
\end{proof}

\begin{lem}\label{lem:1}
	Take $H\in \Hc_{\eps,\eta'_0}(V)$ such that $\partial_1^k H\in \Hc_{\eps,\eta'_k}(V)$ for every $k\in \N$. Then, the following conditions are equivalent:
	\begin{enumerate}
		\item the mapping $\R_+\ni t \mapsto \partial^k_1 H(t,\,\cdot\,)\in \Dc'(V)$ extends by continuity to $[0,\infty)$, for every $k\in \N$;
		
		\item the mapping $\R_+\ni t \mapsto \partial^k_1 H(t,\,\cdot\,)\in \Ec'(V)+\Sc(V)$ extends by continuity to $[0,\infty)$, for every $k\in \N$;
		
		\item there is $\tau\in C^\infty_c(V)$ such that  $\tau$ equals $1$ on a neighbourhood of $0$ and such that the mapping $\R_+\ni t \mapsto \tau \partial^k_1 H(t,\,\cdot\,)\in \Ec'(V)$ extends by continuity to $[0,\infty)$, for every $k\in \N$;
		
		\item for every $\tau\in C^\infty_c(V)$ such that $\tau$ equals $1$ on a neighbourhood of $0$,  the mapping $\R_+\ni t \mapsto \tau \partial^k_1 H(t,\,\cdot\,)\in \Ec'(V)$ extends by continuity to $[0,\infty)$, for every $k\in \N$.
	\end{enumerate}
\end{lem}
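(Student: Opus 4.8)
The plan is to prove the equivalences by running the cycle $(1)\Rightarrow(4)\Rightarrow(3)\Rightarrow(2)\Rightarrow(1)$. Two of these links are immediate: $(4)\Rightarrow(3)$ is trivial, and $(2)\Rightarrow(1)$ holds because the canonical inclusion $\Ec'(V)+\Sc(V)\hookrightarrow\Dc'(V)$ is continuous. So the substance lies in $(1)\Rightarrow(4)$ and $(3)\Rightarrow(2)$, and both will rest on a single estimate that I would establish first.

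That estimate is a \emph{decay property away from the origin}: if $G\in\Hc_{\eps,\eta}(V)$ and $\chi\in C^\infty(V)$ has bounded derivatives of every order and vanishes on some ball $\Set{\abs{y}<\delta}$, then $\chi\,G(t,\,\cdot\,)\to0$ in $\Sc(V)$ as $t\to0^+$. To prove it I would start from the seminorms defining $\Hc_{\eps,\eta}(V)$ and perform the change of variables $y=t^\eps\cdot x$, using homogeneity of $\abs{\,\cdot\,}$ in the form $\abs{x}=t^{-\eps}\abs{y}$, to obtain, for every $m\in\N$, every $\gamma$ with $\dd_\gamma\meg 2m$, and all $t\in(0,1]$, $y\in V$,
\[
(1+t^{-\eps}\abs{y})^{2m}\,\abs{\partial_2^\gamma G(t,y)}\meg M_{2m}\,t^{-Q\eps-\eta-\eps\dd_\gamma}.
\]
On $\Set{\abs{y}\Meg\delta}$ one splits $(1+t^{-\eps}\abs{y})^{2m}\Meg(t^{-\eps}\delta)^m(t^{-\eps}\abs{y})^m$ to get $\abs{\partial_2^\gamma G(t,y)}\meg C_{m,\gamma}\,t^{2\eps m-Q\eps-\eta-\eps\dd_\gamma}\abs{y}^{-m}$ there; then, expanding $\partial^\gamma(\chi\,G(t,\,\cdot\,))$ by the Leibniz rule and using that every $\chi$-factor that appears is bounded and supported in $\Set{\abs{y}\Meg\delta}$, each Schwartz seminorm of $\chi\,G(t,\,\cdot\,)$ is dominated by finitely many quantities of the form $\sup_{\abs{y}\Meg\delta}\abs{y}^N\,\abs{\partial_2^{\gamma''}G(t,y)}$, which tend to $0$ as $t\to0^+$ once $m$ is taken large enough that the exponent of $t$ above is positive. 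Together with the continuity of $t\mapsto G(t,\,\cdot\,)$ on $\R_+$ into $\Sc(V)$ (the lemma above) and the continuity of multiplication by $\chi$ on $\Sc(V)$, this shows $t\mapsto\chi\,G(t,\,\cdot\,)$ extends continuously to $[0,\infty)$, with value $0$ at $0$.

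With this in hand, $(3)\Rightarrow(2)$ should be short: for the $\tau$ furnished by $(3)$, I would decompose $\partial_1^k H(t,\,\cdot\,)=\tau\,\partial_1^k H(t,\,\cdot\,)+(1-\tau)\,\partial_1^k H(t,\,\cdot\,)$; the first summand extends continuously to $[0,\infty)$ with values in $\Ec'(V)$ by hypothesis, and the decay property, applied to $G=\partial_1^k H\in\Hc_{\eps,\eta'_k}(V)$ and $\chi=1-\tau$, handles the second in $\Sc(V)$, so the sum extends continuously into $\Ec'(V)+\Sc(V)$. For $(1)\Rightarrow(4)$ I would fix an arbitrary $\tau\in C^\infty_c(V)$ equal to $1$ near $0$, set $K=\supp\tau$, note that $\tau\,\partial_1^k H(t,\,\cdot\,)$ converges in $\Dc'(V)$ as $t\to0^+$ (multiplication by $\tau$ being continuous on $\Dc'(V)$ and using $(1)$), and then invoke the fact that, on the subspace of distributions supported in the fixed compact $K$, the topologies induced by $\Dc'(V)$ and by $\Ec'(V)$ coincide — which I would verify by choosing $\zeta\in C^\infty_c(V)$ with $\zeta\equiv1$ near $K$, using $\langle u,f\rangle=\langle u,\zeta f\rangle$ for $u$ supported in $K$ and $f\in C^\infty(V)$, and noting that $f\mapsto\zeta f$ carries bounded subsets of $C^\infty(V)$ into bounded subsets of $C^\infty_c(V)$. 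This last point — upgrading the bare $\Dc'(V)$-convergence granted by $(1)$ to $\Ec'(V)$-convergence — is the step I expect to require the most care; the remainder is estimation and bookkeeping. Since $\tau$ was arbitrary, this yields $(4)$ and closes the cycle.
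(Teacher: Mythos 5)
Your proof is correct and follows the same cycle $(1)\Rightarrow(4)\Rightarrow(3)\Rightarrow(2)\Rightarrow(1)$ as the paper, treating $(4)\Rightarrow(3)$ and $(2)\Rightarrow(1)$ as immediate and doing the real work in $(3)\Rightarrow(2)$; your decay estimate for $(1-\tau)\,\partial_1^k H(t,\cdot)$ is the same Leibniz-plus-homogeneity computation the paper uses, just phrased as a standalone lemma. The one place you go beyond the paper is $(1)\Rightarrow(4)$, which the paper dismisses as ``clear'': your $\zeta$-cutoff argument correctly shows that on the subspace of distributions supported in the fixed compact $\supp\tau$, the $\Dc'(V)$- and $\Ec'(V)$-topologies coincide (since $f\mapsto\zeta f$ is continuous $C^\infty(V)\to C^\infty_c(V)$ and carries bounded sets to bounded sets, so every strong $\Ec'$-seminorm on that subspace is dominated by a strong $\Dc'$-seminorm, and the converse inequality is the continuity of the inclusion $\Ec'\hookrightarrow\Dc'$). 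This extra detail is sound and, if anything, a welcome expansion of a step the paper leaves implicit.
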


\begin{proof}
	It is clear that~{\bf1} implies~{\bf4}, that~{\bf4} implies~{\bf3} and that~{\bf2} implies~{\bf1}. Let us then prove that~{\bf3} implies~{\bf2}. Then, take $\tau$ as in~{\bf3}, and observe that it will suffice to prove that the mapping $\R_+\ni t \mapsto (1-\tau) \partial^k_1 H(t,\,\cdot\,)\in \Sc(V)$ extends by continuity to $[0,\infty)$, for every $k\in \N$.  
	Then, take $h,k\in \N$, and observe that, since $\partial_1^k H\in \Hc_{\eps,\eta'_k}(V)$, for every $N\in \N$ there is a constant $C_N>0$ such that
	\[
	\sup_{\dd_\gamma\meg h} \abs{\partial_1^k \partial_2^\gamma H(t,x)}\meg \frac{C_N}{(1+\abs{t^{-\eps}\cdot x})^{N} t^{Q\eps+\eta'_k+\eps \dd_\gamma}  }  
	\]
	for every $(t,x)\in \R_+\times V$. Then, for every $(t,x)\in \R_+\times V$
	\[
	\sum_{\dd_\gamma\meg h} \abs{\partial_1^k \partial_2^\gamma [((1-\tau)\circ \pr_2) H](t,x)}\meg \chi_{\Supp{1-\tau}}(x) \sum_{\dd_\gamma \meg h} \sum_{\gamma'+\gamma''=\gamma}  \frac{\gamma!}{\gamma'! \gamma''!}\norm{\partial^{\gamma'} \tau  }_\infty \frac{C_N}{\abs{x}^{N} t^{Q\eps+\eta'_k+\eps \dd_{\gamma''}-\eps N}  }  ;
	\]
	Therefore, there is $C'_N>0$ such that, for every $(t,x)\in (0,1]\times V$,
	\[
	\sum_{\dd_\gamma\meg h} \abs{\partial_1^k \partial_2^\gamma [((\chi_V-\tau)\circ \pr_2) H](t,x)}\meg  C'_N \frac{t^{ \eps N-Q\eps-\eta'_k-\eps h  }}{(1+\abs{x})^N},
	\]
	which tends to $0$ as $t\to 0^+$ provided that $N>Q+\frac{\eta'_k}{\eps}+ h$. The assertion follows by the arbitrariness of $k$, $h$, and $N$.
\end{proof}

\begin{deff}
We define $\widetilde \Hc_{\eps,\eta'}(V)$ as the space of $H$  satisfying the equivalent conditions of Lemma~\ref{lem:1}.
We endow $\widetilde \Hc_{\eps,\eta'}(V)$ with the topology induced by the norms of $\Hc_{\eps,\eta'_k}(V)$ applied to $\partial_1^k H$ ($k\in \N$), and by the semi-norms
\[
H\mapsto \sup_{t\in (0,1]}\sup_{\phi \in B } \abs*{\langle \tau \partial_1^k H(t,\,\cdot\,),\phi\rangle}
\] 
as $k$ runs through $\N$, $\tau$ is an element of $C^\infty_c(V)$ which equals $1$ on a neighbourhood of $0$, and $B$ runs through the bounded subsets of $C^\infty(V)$.
\end{deff}

\begin{lem}\label{lem:21:5}
	Take $\tau\in C^\infty_c(V)$ such that $\tau-1$ vanishes of order $\infty$ at $0$, and fix $p\in \N$.
	In addition, let $M_p$ be the set of (Radon) measures $\mi $ on $\R_+$ such that $\int_0^1 t^{p} \dd \abs{\mi}(t)<+\infty$, and such that $\int_1^{+\infty} t^{k}\dd \abs{\mi}(t)<+\infty $ for every $k\in \N$. Endow $M_p$ with the corresponding topology. 
	
	Then, for every $\mi\in M_p$ and for every $H\in \Hc_{\eps,\eta}(V)$, the mapping $t \mapsto  (1-\tau) H(t,\,\cdot\,) \in \Sc(V)$ is $\mi$-integrable.
	In addition, the bilinear mapping
	\[
	M_p\times \Hc_{\eps,\eta}(V)\ni \mi \mapsto \int_{0}^{+\infty} (1-\tau)   H(t,\,\cdot\,)\,\dd \mi(t)\in \Sc(V)
	\]
	is continuous.
\end{lem}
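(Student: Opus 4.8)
The plan is to reduce everything to the behaviour of $H$ and its derivatives at $t\to0^+$ and $t\to+\infty$, exploiting the structure of $\Hc_{\eps,\eta}(V)$ (Definition~\ref{def:1}) and the decay conditions on $\mi\in M_p$. First I would note that by definition of $\Hc_{\eps,\eta}(V)$, for every $h,N\in\N$ there is a seminorm estimate of the form
\[
(1+\abs x)^h\sum_{\dd_\gamma\meg h} t^{Q\eps+\eta+\eps\dd_\gamma}\abs{\partial_2^\gamma H(t,t^\eps\cdot x)}\meg C_{h}\norm H,
\]
equivalently, reverting the dilation, $\abs{\partial_2^\gamma H(t,x)}\meg C_{h,N}\,\norm H\,t^{-Q\eps-\eta-\eps\dd_\gamma}(1+\abs{t^{-\eps}\cdot x})^{-N}$ for $\dd_\gamma\meg h$. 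Since $1-\tau$ vanishes of order $\infty$ at $0$, on the support of $1-\tau$ we have $\abs x\Meg c>0$; multiplying out the derivatives of $(1-\tau)H$ via the Leibniz rule and using $(1+\abs{t^{-\eps}\cdot x})^{-N}\meg C t^{\eps N'}(1+\abs x)^{-N'}$ for suitable $N'$ (valid because $\abs x$ is bounded below and $\abs{\,\cdot\,}$ is a homogeneous norm), one sees that for $t\in(0,1]$ every Schwartz seminorm of $(1-\tau)H(t,\,\cdot\,)$ is bounded by $C\,\norm H\, t^{\eps N-Q\eps-\eta-\eps h}$ with $N$ as large as we wish; in particular, choosing $N$ so that $\eps N-Q\eps-\eta-\eps h\Meg p$, this seminorm is $\meg C\,\norm H\,t^{p}$ on $(0,1]$. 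For $t\Meg 1$ the same argument with the dilation $t^\eps\Meg 1$ gives that the Schwartz seminorms of $(1-\tau)H(t,\,\cdot\,)$ grow at most polynomially in $t$, say $\meg C_k\,\norm H\,t^{k}$ for some $k=k(h)\in\N$ (the degree controlled by $\eta$, $h$, and the homogeneity exponents).

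With these two estimates in hand, the $\mi$-integrability of $t\mapsto(1-\tau)H(t,\,\cdot\,)\in\Sc(V)$ is immediate: split $\int_0^{+\infty}=\int_0^1+\int_1^{+\infty}$, bound the first integral of each Schwartz seminorm by $C\norm H\int_0^1 t^p\,\dd\abs\mi(t)$, which is finite by the definition of $M_p$, and the second by $C_k\norm H\int_1^{+\infty}t^k\,\dd\abs\mi(t)$, finite because $\mi\in M_p$ has finite moments of all orders at infinity. Since $\Sc(V)$ is a Fréchet space, integrability with respect to each seminorm together with weak measurability (which follows from continuity of $t\mapsto H(t,\,\cdot\,)$ in $\Sc(V)$, already proved in the first unnamed Lemma of the Appendix) gives Bochner integrability, so the integral $\int_0^{+\infty}(1-\tau)H(t,\,\cdot\,)\,\dd\mi(t)$ is a well-defined element of $\Sc(V)$.

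Finally, for continuity of the bilinear map, I would simply read off from the same estimates that each Schwartz seminorm of the value of the map is bounded by
\[
C\norm H\Big(\int_0^1 t^p\,\dd\abs\mi(t)+\int_1^{+\infty}t^{k}\,\dd\abs\mi(t)\Big),
\]
and the right-hand side is, up to a constant, the product of the norm of $H$ in $\Hc_{\eps,\eta}(V)$ and one of the defining seminorms of $\mi$ in $M_p$; bilinear maps between (metrizable) locally convex spaces are continuous as soon as they are separately bounded by such a product of seminorms, so this finishes the proof. I do not anticipate a genuine obstacle here; the only point requiring care is bookkeeping the exponents in the dilation estimate so that the power of $t$ near $0$ is at least $p$ (this is where the hypothesis $\tau-1$ vanishing to infinite order, rather than merely $\tau\equiv1$ near $0$, is used, although in fact $\tau\equiv 1$ near $0$ already suffices since then $1-\tau$ is supported away from $0$ — the infinite-order vanishing just makes the estimate uniform in a cleaner way), and tracking the polynomial growth at infinity so that it is dominated by the all-order moment bound built into $M_p$.
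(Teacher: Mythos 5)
The overall strategy is the same as the paper's: split the $t$-integral at $1$, bound each Schwartz seminorm of $(1-\tau)H(t,\cdot)$ by a power of $t$ that is at least $p$ on $(0,1]$ and at most polynomially growing on $[1,\infty)$, then integrate against the moments of $\mi$. However, the step you use to get the $t^p$ bound near $t=0$ is flawed.

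Your sentence ``Since $1-\tau$ vanishes of order $\infty$ at $0$, on the support of $1-\tau$ we have $\abs{x}\Meg c>0$'' is not correct. Infinite-order vanishing at $0$ is a condition on the Taylor coefficients, not on the support; the function $e^{-1/\abs{x}^2}$ (suitably cut off at infinity) vanishes to infinite order at $0$ but is nonzero on every punctured neighbourhood of $0$. So $\Supp(1-\tau)$ can accumulate at $0$, and the lemma is genuinely more general than the case $\tau\equiv 1$ near $0$. This matters, because your dilation estimate $(1+\abs{t^{-\eps}\cdot x})^{-N}\meg C\,t^{\eps N'}(1+\abs{x})^{-N'}$ is obtained by passing through $\abs{x}^{-N}$, which blows up as $x\to 0$; the lower bound on $\abs{x}$ is what you were using to tame it, and in general it is unavailable. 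Your closing parenthetical recognises the issue but misdiagnoses it: $\tau\equiv 1$ near $0$ is the \emph{stronger} hypothesis, not the weaker one, and the infinite-order vanishing is not a cosmetic refinement --- it is what the proof must actually exploit.

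The way to close the gap, which is how the paper does it, is to use the infinite-order vanishing directly to bound the derivatives of $1-\tau$ themselves: for every $M\in\N$ and every $\gamma'$ there is $C>0$ with $\abs{\partial^{\gamma'}(1-\tau)(x)}\meg C\abs{x}^{M}$ for all $x$ (near $0$ this is Taylor's theorem with the infinite-order vanishing; away from $0$ it follows from compact support of $\partial^{\gamma'}\tau$ for $\gamma'\neq 0$ and boundedness of $1-\tau$ itself). Feeding this factor $\abs{x}^{M}$ into the Leibniz expansion exactly compensates the $\abs{x}^{-N}$ coming from $(1+t^{-\eps}\abs{x})^{-N}\meg t^{\eps N}\abs{x}^{-N}$, and choosing $N$ large enough (depending on $p,\eta,\eps$, the seminorm index $k$, and $\dd_\gamma$) produces $\abs{x}^{k}\abs{\partial^\gamma[(1-\tau)H(t,\cdot)](x)}\meg C\,t^{p}\norm{H}_{\rho_N}$ for $t\meg 1$, uniformly in $x$. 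With that fix, the rest of your argument (the crude polynomial bound for $t\Meg 1$ using the all-order moments of $\mi$ at infinity, the Bochner integrability in the Fréchet space $\Sc(V)$, and the bilinear continuity read off from the seminorm estimates) goes through.
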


\begin{proof}
	Indeed, take $\mi \in M_p$ and $H\in \Hc_{\eps,\eta}(V)$. Observe that, for every $N\in\N$, there is a continuous semi-norm $\rho_N$ on  $\Hc_{\eps,\eta}(V)$ such that
	\[
	\abs{\partial_2^\gamma H(t,x)}\meg \frac{\norm{H}_{\rho_N}}{ t^{\eps (Q+\dd_\gamma)+\eta} (1+\abs{t^{-\eps}\cdot x})^N } 
	\]
	for every $x\in V$ and for every $\gamma$ such that $\dd_\gamma \meg N$; fix $k\in \N$ and $\gamma$.
	Then, apply Leibniz's rule and, for $t\meg 1$, estimate the derivatives of $1-\tau$ with $\abs{\,\cdot\,}^{N-k}$ for some fixed $N\Meg \max(\dd_\gamma,Q+\dd_\gamma+\frac{p+\eta}{\eps})$; we then see that there is a constant $C'>0$ such that
	\[
	\abs{x}^{k}\abs{\partial^\gamma [(1-\tau)H(t,\,\cdot\,)](x)}\meg C' t^{ \eps(N-Q-\dd_\gamma )-\eta } \norm{H}_{\rho_{N}}\meg C' t^p \norm{H}_{\rho_{N}}
	\] 
	for every $x\in V$. On the other hand, if $t\Meg 1$, then simply estimate the derivatives of $1-\tau$ with $\chi_{\Supp(1-\tau)}$; we then see that there is a constant $C''>0$ such that
	\[
	\abs{x}^{k}\abs{\partial^\gamma [(1-\tau)H(t,\,\cdot\,)](x)}\meg C'' t^{ \eps(k-Q)-\eta  } \norm{H}_{\rho_{\max(k,\dd_\gamma)}}
	\]
	for every $x\in V$.
	Therefore,
	\[
	\begin{split}
	\int_0^{+\infty} \abs{x}^{k}\abs{\partial^\gamma [(\chi_V-\tau)H(t,\,\cdot\,)](x)}\,\dd \abs{\mi}(t)&\meg C'\norm{H}_{\rho_{N}} \int_{(0,1]} t^p\,\dd \abs{\mi}(t)\\
		&\qquad + C'' \norm{H}_{\rho_{\max(k,\dd_\gamma)}} \int_{[1,+\infty)} t^{ \eps(k-Q)-\eta  } \,\dd \abs{\mi}(t).
	\end{split}
	\]
	By the arbitrariness of $k$ and $\gamma$, the assertion follows.
\end{proof}

\begin{lem}\label{lem:21:4}
	For every $\mi\in \Mc^1((0,1])$ and for every $H\in \widetilde\Hc_{\eps,\eta'}(V)$, the mapping 
	\[
	t \mapsto  t^{-k} \left(  H(t,\,\cdot\,)- \sum_{j<k} \partial_1^j H(0,\,\cdot\,)\frac{t^j}{j!} \right)\in \Ec'(V)+\Sc(V)
	\]
	is scalarly $\mi$-integrable and its integral belongs to $\Ec'(V)+\Sc(V)$.
	In addition, the bilinear mapping
	\[
	\Mc^1((0,1])\times \widetilde\Hc_{\eps,\eta'}(V)\ni (\mi,H) \mapsto \int_{0}^{1} t^{-k}\left(  H(t,\,\cdot\,)- \sum_{j<k} \partial_1^j H(0,\,\cdot\,)\frac{t^j}{j!} \right)\,\dd \mi(t)\in \Ec'(V)+\Sc(V)
	\]
	is continuous.
\end{lem}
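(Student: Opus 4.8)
The statement is a Taylor-with-integral-remainder estimate: I need to show that the map
\[
t\mapsto t^{-k}\Bigl(H(t,\,\cdot\,)-\sum_{j<k}\partial_1^j H(0,\,\cdot\,)\tfrac{t^j}{j!}\Bigr)
\]
is well-behaved enough to be integrated against a bounded measure on $(0,1]$, uniformly in $H$ and $\mi$. The overall strategy is to split $H$, via a cutoff $\tau\in C^\infty_c(V)$ equal to $1$ near $0$, into the ``near'' part $\tau H$ living in $\Ec'(V)$ and the ``far'' part $(1-\tau)H$ living in $\Sc(V)$, and to handle each summand separately; the $\Ec'$-part and the $\Sc$-part contribute the two summands of $\Ec'(V)+\Sc(V)$ in the conclusion.

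\textbf{First step: the Schwartz part.} Apply the integral form of Taylor's formula in the variable $t$ to the $\Sc(V)$-valued function $t\mapsto (1-\tau)H(t,\,\cdot\,)$, which is of class $C^\infty$ on $[0,1]$ with derivatives $(1-\tau)\partial_1^j H(t,\,\cdot\,)$, each lying in $\widetilde\Hc_{\eps,\eta'_j}(V)$ (here I use that $\partial_1^j H\in\Hc_{\eps,\eta'_j}(V)$, part of the hypothesis $H\in\widetilde\Hc_{\eps,\eta'}(V)$). This gives
\[
t^{-k}\Bigl((1-\tau)H(t,\,\cdot\,)-\sum_{j<k}(1-\tau)\partial_1^j H(0,\,\cdot\,)\tfrac{t^j}{j!}\Bigr)=\frac{1}{(k-1)!}\int_0^1(1-\theta)^{k-1}\,(1-\tau)\partial_1^k H(\theta t,\,\cdot\,)\,\dd\theta,
\]
and the right-hand side stays in a bounded subset of $\Sc(V)$ as $t$ ranges over $(0,1]$, by the continuity and boundedness built into $\widetilde\Hc_{\eps,\eta'_k}(V)$. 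A bounded $\Sc(V)$-valued function on $(0,1]$ is $\mi$-integrable for any $\mi\in\Mc^1((0,1])$, with integral in $\Sc(V)$, and the dependence on $(\mi,H)$ is plainly bilinear and jointly continuous because the $\Sc(V)$-seminorms of the integrand are dominated by continuous seminorms of $H$ times $\lVert\mi\rVert$.

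\textbf{Second step: the compactly supported part.} For $\tau H$, I again Taylor-expand in $t$; now the remainder $\frac{1}{(k-1)!}\int_0^1(1-\theta)^{k-1}\tau\,\partial_1^k H(\theta t,\,\cdot\,)\,\dd\theta$ is a bounded family of compactly supported distributions (supported in $\supp\tau$) as $t$ runs over $(0,1]$, using the third (equivalently fourth) characterization in Lemma~\ref{lem:1}: the map $t\mapsto\tau\,\partial_1^k H(t,\,\cdot\,)\in\Ec'(V)$ extends continuously to $[0,\infty)$, hence is bounded on $(0,1]$, and the seminorms $\sup_{t\in(0,1]}\sup_{\phi\in B}|\langle\tau\partial_1^kH(t,\,\cdot\,),\phi\rangle|$ are exactly the defining seminorms of $\widetilde\Hc_{\eps,\eta'}(V)$. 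A bounded $\Ec'(V)$-valued function on $(0,1]$ is scalarly $\mi$-integrable with integral in $\Ec'(V)$, and continuity of the bilinear map follows as before. Adding the two parts gives the statement.

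\textbf{Anticipated main obstacle.} The genuine content is verifying that the Taylor remainder $\int_0^1(1-\theta)^{k-1}\partial_1^kH(\theta t,\,\cdot\,)\,\dd\theta$ really is bounded, in the right topology, uniformly in $t\in(0,1]$ — i.e.\ that differentiating $j$ times in $t$ only costs a shift $\eta'_0\rightsquigarrow\eta'_j$ in the weight and does not spoil integrability near $t=0$. For the $\Sc(V)$-part this is where one must be careful that $(1-\tau)$ being flat (or at least vanishing to high order) at $0$ converts the negative power $t^{-\eps(Q+\dd_\gamma)-\eta'_k}$ in the $\Hc_{\eps,\eta'_k}$-estimate into a \emph{positive} power of $t$ on $(0,1]$ after using $|x|^{-N}$ bounds with $N$ large, as in the proof of Lemma~\ref{lem:21:5}; for the $\Ec'(V)$-part the subtlety is instead the use of Lemma~\ref{lem:1}, since $\tau\partial_1^k H(t,\,\cdot\,)$ need not extend continuously to $0$ as a function but only as a distribution. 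Once these uniform-boundedness facts are in hand, the $\mi$-integrability and the joint continuity of the bilinear map are routine functional-analytic consequences (a bounded family in a Fréchet space integrated against a finite measure), so I would not expect any further difficulty there.
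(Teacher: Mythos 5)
Your proof is correct and follows essentially the same route as the paper's: cutoff decomposition by $\tau$, Taylor's formula with integral remainder, uniform boundedness of the remainder via the defining semi-norms of $\widetilde\Hc_{\eps,\eta'}(V)$ (resp.\ the estimates in the proof of Lemma~\ref{lem:1}), and integration of a bounded continuous family against a bounded measure in a quasi-complete space. The only cosmetic difference is that the paper Taylor-expands only the $\tau$-part and disposes of the $(1-\tau)$-part by invoking Lemma~\ref{lem:21:5} with the weighted measure $t^{-k}\mi$ (the subtracted Taylor-polynomial terms contribute nothing there, since $(1-\tau)\partial_1^j H(t,\,\cdot\,)\to 0$ in $\Sc(V)$ as $t\to 0^+$), whereas you Taylor-expand both parts — both variants work.
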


\begin{proof}
	Take some $\tau\in C^\infty_c(V)$ which equals $1$ in a neighbourhood of $0$, and let us prove that the mapping $t \mapsto  t^{-k} \tau\left(  H(t,\,\cdot\,)- \sum_{j<k} \partial_1^j H(0,\,\cdot\,)\frac{t^j}{j!} \right)\in \Ec'(V)$ is scalarly $\mi$-integrable and that its integral belongs to $\Ec'(V)$.
	Observe that, since $\Ec'(V)$ is quasi-complete, by~\cite[Proposition 8 of Chapter VI, § 1, No.\ 2]{BourbakiInt1} it will suffice to prove that the mapping $t \mapsto t^{-k} \tau\left(H(t,\,\cdot\,) - \sum_{j<k} \partial_1^j H(0,\,\cdot\,) \frac{t^j}{j!} \right)\in \Ec'(V)$ is continuous and bounded.
	However, Taylor's formula implies that
	\[
	t^{-k} \tau\left(H(t,\,\cdot\,) - \sum_{j<k} \partial_1^j H(0,\,\cdot\,) \frac{t^j}{j!} \right)=\int_0^1 \tau \partial_1^k H(t s,\,\cdot\,) \frac{(1-s)^{k-1}}{(k-1)!}\,\dd s.
	\]
	Now, for every bounded subset $B$ of $C^\infty(V)$ there is a continuous semi-norm $\rho_B$ of $\widetilde \Hc_{\eps,\eta'}(V)$ such that 
	\[
	\abs*{\langle \tau \partial_1^k H(t,\,\cdot\,), \phi \rangle }\meg \norm{H}_{\rho_B} 
	\]
	for every $\phi \in B$ and for every $t\in (0,1]$. Hence,
	\[
	 t^{-k} \sup_{\phi\in B}\abs*{ \langle H(t,\,\cdot\,)- \sum_{j<k} \partial_1^j H(0,\,\cdot\,)\frac{t^j}{j!} ,\tau \phi\rangle  }\meg \frac{ \norm{H}_{\rho_B} }{k!},
	\]
	whence our claim (cf.~also Lemma~\ref{lem:1}).
	The assertion then follows by means of Lemma~\ref{lem:21:5}.
\end{proof}

Recall that $\Oc_C(V)$ is the set of $f\in C^\infty(V)$ such that there is $k\in \N$ such that $\partial^\alpha f(x)=O\big(\abs{x}^k\big)$ as $x\to \infty$ for every $\alpha$; $\Oc_C(V)$ can then be identified with the dual of the space $\Oc'_C(V)$ of convolutors of $\Sc(V)$, and carries the corresponding strong dual topology (cf.~\cite[pp.~244 and 245]{Schwartz3} and~\cite[Chapter II, § 4, No.\ 4]{Grothendieck}).

\begin{lem}\label{lem:21:3}\label{lem:21:6}
	For every $k\Meg 0$, for every $\mi \in \Mc^1([1,+\infty))$, and for every $H\in \Hc_{\eps,\eta}(V)$, the mapping 
	\[
	t \mapsto t^{\eps(Q+k)+\eta}\left( H(t,\,\cdot\,)-   \sum_{\dd_\gamma<k} \partial^\gamma_2 H(t,0)\frac{(\,\cdot\,)^\gamma}{\gamma!}  \right)\in  \Oc_C(V)
	\]
	is scalarly $\mi$-integrable, and its integral belongs to $\Oc_C(V)$.
	In addition, the bilinear mapping
	\[
	\Mc^1([1,+\infty))\times \Hc_{\eps,\eta}(V)\ni (\mi,H) \mapsto \int_1^{+\infty} t^{\eps(Q+k)+\eta}\left( H(t,\,\cdot\,)-   \sum_{\dd_\gamma<k} \partial^\gamma_2 H(t,0)\frac{(\,\cdot\,)^\gamma}{\gamma!}  \right)\,\dd \mi(t)\in  \Oc_C(V)
	\]
	is continuous.	
\end{lem}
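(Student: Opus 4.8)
\textbf{Proof plan for Lemma~\ref{lem:21:6}.} The plan is to reduce the statement to the estimate-and-integrate scheme already used in Lemma~\ref{lem:21:5}, working with the topology on $\Oc_C(V)$ recalled just before the statement. First I would fix $\mi\in\Mc^1([1,+\infty))$ and $H\in\Hc_{\eps,\eta}(V)$, write
\[
R_{k}(t,x)\coloneqq H(t,x)-\sum_{\dd_\gamma<k}\partial_2^\gamma H(t,0)\frac{x^\gamma}{\gamma!},
\]
and observe, by Taylor's formula with integral remainder in the second variable along the dilations, that $R_k(t,\,\cdot\,)$ is a finite sum of terms of the form $x^{\gamma}\int_0^1(\text{homogeneous factor})\,\partial_2^{\gamma'}H(t,\theta\cdot x)\,\dd\theta$ with $\dd_{\gamma'}\Meg k$ and $\dd_{\gamma'}-\dd_\gamma=k$ (this is the standard homogeneous Taylor expansion; the polynomial part is precisely the subtracted sum because $(x^\gamma)_{\dd_\gamma<k}$ spans the polynomials of homogeneous degree $<k$). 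Then, using the defining bound of $\Hc_{\eps,\eta}(V)$: for every $N\in\N$ there is a continuous semi-norm $\rho_N$ with $\abs{\partial_2^{\gamma'}H(t,x)}\meg \rho_N(H)\,t^{-\eps(Q+\dd_{\gamma'})-\eta}(1+\abs{t^{-\eps}\cdot x})^{-N}$, I would get, for $t\Meg1$,
\[
t^{\eps(Q+k)+\eta}\abs{\partial_2^{\beta}R_k(t,x)}\meg C\,\rho_N(H)\,\frac{(1+\abs{x})^{(k-\dd_\beta)_+}}{(1+\abs{t^{-\eps}\cdot x})^{N'}},
\]
for suitable $N'$ depending on $N$, uniformly in $t\Meg1$. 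Since $t\Meg1$ the factor $(1+\abs{t^{-\eps}\cdot x})^{-N'}\meg 1$, so each derivative $\partial^\beta_2$ of $t^{\eps(Q+k)+\eta}R_k(t,\,\cdot\,)$ is bounded by $C\rho_N(H)(1+\abs{x})^{(k-\dd_\beta)_+}$, i.e.\ lies in a fixed bounded subset of $\Oc_C(V)$ as $t$ runs over $[1,+\infty)$.

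Next I would establish \emph{scalar $\mi$-integrability and membership of the integral in $\Oc_C(V)$}. Because $\Oc_C(V)$ is the strong dual of the space $\Oc_C'(V)$ of convolutors and is quasi-complete, I would again invoke~\cite[Proposition 8 of Chapter VI, § 1, No.\ 2]{BourbakiInt1}: it suffices that the map $t\mapsto t^{\eps(Q+k)+\eta}R_k(t,\,\cdot\,)\in\Oc_C(V)$ be continuous and that its image be bounded; both were just shown (continuity of $t\mapsto H(t,\,\cdot\,)\in\Sc(V)$, hence in $\Oc_C(V)$, was proved in the first lemma of the Appendix, and differentiating in $x$ and forming $R_k$ preserves continuity; boundedness is the display above). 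Then $\int_1^{+\infty} t^{\eps(Q+k)+\eta}R_k(t,\,\cdot\,)\,\dd\mi(t)$ is a well-defined element of $\Oc_C(V)$, with the bound
\[
\sup_{x}\ \frac{\abs{\partial^\beta (\text{integral})(x)}}{(1+\abs{x})^{(k-\dd_\beta)_+}}\meg C\,\rho_N(H)\,\norm{\mi}_{\Mc^1([1,+\infty))},
\]
which simultaneously yields joint continuity of the bilinear map $(\mi,H)\mapsto\int_1^{+\infty}t^{\eps(Q+k)+\eta}R_k(t,\,\cdot\,)\,\dd\mi(t)$ (it is separately continuous and bilinear on products of a Banach space with a Fréchet space, hence continuous, but the explicit estimate makes this immediate).

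The one genuinely delicate point — the \textbf{main obstacle} — is handling the topology of $\Oc_C(V)$ correctly: one must check that the uniform bounds on all $x$-derivatives of the form $\abs{\partial^\beta u(x)}\meg C(1+\abs{x})^{(k-\dd_\beta)_+}$ really do describe a bounded set for the \emph{strong} dual topology of $\Oc_C'(V)$, not merely pointwise-on-$V$ control; this is exactly the point addressed in~\cite[pp.~244 and 245]{Schwartz3} and~\cite[Chapter II, § 4, No.\ 4]{Grothendieck}, where the bounded subsets of $\Oc_C(V)$ are identified with sets of smooth functions each of whose derivatives is $O(\abs{x}^{m})$ with a \emph{common} $m$ and a common constant. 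Once that identification is in hand, the estimates above fall into the required form, and the rest is the routine dominated-convergence / Bourbaki integrability argument mirroring Lemma~\ref{lem:21:5}. I would therefore phrase the proof as: (i) homogeneous Taylor expansion to isolate $R_k$; (ii) the $\Hc_{\eps,\eta}(V)$-estimate to put $t^{\eps(Q+k)+\eta}R_k(t,\,\cdot\,)$ into a fixed bounded subset of $\Oc_C(V)$; (iii) continuity in $t$; (iv) quasi-completeness of $\Oc_C(V)$ plus the Bourbaki criterion to integrate; (v) read off joint continuity from the uniform estimate.
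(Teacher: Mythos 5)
Your proposal is correct and follows essentially the same route as the paper: reduce to the Bourbaki integrability criterion by showing the integrand is a continuous map into $\Oc_C(V)$ whose range is bounded (equivalently equicontinuous, since $\Oc_C'(V)$ is barreled), and obtain that boundedness from an anisotropic Taylor-remainder estimate combined with the defining $\Hc_{\eps,\eta}(V)$ bound. The paper invokes the Folland--Stein sup-type Taylor inequality \cite[Theorem 1.37]{FollandStein} and records the polynomial exponent as $D\bigl(\left[\tfrac{k}{d}\right]+1\bigr)$, whereas you phrase the remainder in integral form along the dilations and claim the sharper exponent $(k-\dd_\beta)_+$; both are valid and yield the same membership in a fixed bounded subset of $\Oc_C(V)$ uniformly for $t\Meg1$, and the rest of the argument (quasi-completeness of $\Oc_C(V)$, joint continuity from the explicit semi-norm estimate) is the same in both.
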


\begin{proof}
	Observe that~\cite[Proposition 8 of Chapter VI, § 1, No.\ 2]{BourbakiInt1} implies that it will suffice to prove that the mapping $t \mapsto   t^{\eps(Q+k)+\eta}\left( H(t,\,\cdot\,)-   \sum_{\dd_\gamma<k} \partial^\gamma_2 H(t,0)\frac{(\,\cdot\,)^\gamma}{\gamma!}  \right)\in \Oc_C(V)$  is continuous on $[1,+\infty)$ and takes values in an equicontinuous subset of $\Oc_C(V)$ (considered as the strong dual of $\Oc_C'(V)$). Now, continuity is clear.
	In addition, fix $\gamma'$ and observe that~\cite[Theorem 1.37]{FollandStein} implies that there is a constant $C_{\gamma'}>0$ such that
	\[
	\abs*{  \partial_2^{\gamma'} H(t,x)- \sum_{\dd_\gamma<k} \partial_2^{\gamma+{\gamma'}} H(t,0)\frac{x^\gamma}{\gamma!} }\meg C_{\gamma'}\sum_{\substack{\sum_j \gamma_j\meg \left[\frac{k}{d}\right]+1\\ \dd_\gamma \Meg k}} \abs{x}^{\dd_\gamma} \sup_{ \abs{x'}\meg C_{\gamma'} \abs{x} } \abs*{ \partial^{\gamma+{\gamma'}}_2 H(t,x') },
	\]
	where $d$ is the minimum degree of the non-zero homogeneous elements of $V$, for every $x\in V$ and for every $t>0$. Therefore,  there is a continuous semi-norm $\rho_{\gamma'}$ on $\Hc_{\eps,\eta}(V)$ such that
	\[
	\abs*{ \partial^{\gamma'}_2 H(t,x)- \sum_{\dd_\gamma<k} \partial^{\gamma+{\gamma'}}_2 H(t,0)\frac{x^\gamma}{\gamma!} }\meg  t^{-\eps(Q+k+\dd_{\gamma'})-\eta} (1+\abs{x})^{D \left( \left[\frac{k}{d}\right]+1 \right) } \norm{H}_{\rho_{\gamma'}},
	\]
	where $D$ is the maximum degree of the non-zero homogeneous elements of $V$, for every $x\in V$ and for every $t\Meg 1$.  The assertion follows easily.
\end{proof}


\begin{thebibliography}{10}	
	\bibitem{Alexopoulos}
	Alexopoulos, G., Spectral Multipliers on Lie Groups of Polynomial Growth, \emph{P.\ Am.\ Math.\ Soc.} \textbf{120} (1994), pp.~973--979.
	
	\bibitem{Amann}
	Amann, H., Operator-Valued Fourier Multipliers, Vector-Valued Besov Spaces, and Applications, \emph{Math.\ Nachr.} \textbf{186} (1997), pp.~5--56.
			
	\bibitem{BourbakiTVS}
	Bourbaki, N., \emph{Topological Vector Spaces} (Elements of Mathematics), Springer-Verlag, 2003.
	
	\bibitem{BourbakiInt1}
	Bourbaki, N., \emph{Integration I}, chap.\ 1--6 (Elements of Mathematics),
	Springer-Verlag, 2004.
	
	\bibitem{BourbakiLie2}
	Bourbaki, N., \emph{Groupes et algèbres del Lie}, chap.\ 2--3 (\'Eléments de
	Mathématique), Springer-Verlag, 2006.
	
	\bibitem{Calzi1}
	Calzi, M., Spectral Multipliers on $2$-Step Stratified Groups, I, preprint, arXiv:1903.00406v2.
	
	
	\bibitem{Calzi2}
	Calzi, M., Spectral Multipliers on $2$-Step Stratified Groups, II, \emph{J.\ Geom.\ Anal.} (2019), https://doi.org/10.1007/s12220-019-00208-0.
		
	\bibitem{Christ}
	Christ, M., $L^p$ bounds for spectral multipliers on nilpotent groups, \emph{Trans.\ Amer.\ Math.\ Soc.} \textbf{328} (1991), pp.~73--81. 
		
	\bibitem{ElstRobinson}
	ter Elst, A.~F.~M.,  Robinson, D.~W., Weighted subcoercive operators on Lie groups, \emph{J.\ Funct.\ Anal.} \textbf{157} (1998), pp.~88--163.
		
	\bibitem{FollandStein}
	Folland, G.~B., Stein, E.~M., \emph{Hardy Spaces on Homogeneous Group},
	Princeton University Press, 1982.

	
	\bibitem{Goodman}
	Goodman, R., W., \emph{Nilpotent Lie Groups: Structure and Applications to Analysis}, Springer-Verlag, 1976.
	
	\bibitem{Grothendieck}
	Grothendieck, A., Produits tensoriels topologiques et espaces nucléaires,
	\emph{Mem.\ Am.\ Math.\ Soc.} \textbf{16} (1966).
	
	\bibitem{Guivarch}
	Guivarc'h, Y., Croissance polynomiale et périodes des donctions harmoniques, \emph{B.\ Soc.\ Math.\ Fr.} \textbf{101} (1973), pp.~333--379.
	
	\bibitem{Hebisch}
	Hebisch, W., Multiplier theorem on generalized Heisenberg groups, \emph{Colloq.\ Math.} \textbf{65} (1993), pp.~231--239.
	
	\bibitem{HebischZienkiewicz}
	Hebisch, W., Zienkiewicz, J., Multiplier theorem on generalized Heisenberg groups, II, \emph{Colloq.\ Math.} \textbf{69} (1995), pp.~29--36.
	
	\bibitem{Hulanicki}
	Hulanicki, A., A functional calculus for Rockland operators on nilpotent Lie groups, \emph{Studia Math.} \textbf{78} (1984), pp.~253--266.
	
	
	
	\bibitem{Kato}
	Kato, T., \emph{Perturbation Theory for Linear Operators}, Springer-Verlag,
	1980.
	
	
	\bibitem{Martini}
	Martini, A., \emph{{Algebras of Differential Operators on Lie Groups and
			Spectral Multipliers}}, Ph.D. thesis, {Scuola Normale Superiore}, {2010},
	{arXiv:1007.1119v1 [math.FA]}.
	
	\bibitem{Martini2}
	Martini, A., Spectral theory for commutative algebras of differential operators on Lie groups, \emph{J.\ Funct.\ Anal.} \textbf{260} (2011), pp. 2767--2814.
	
	\bibitem{Martini3}
	Martini, A.,  Analysis of joint spectral multipliers on Lie groups of polynomial growth, \emph{Ann.\ I.\ Fourier} \textbf{62} (2012), pp.~1215--1263.
	
	\bibitem{MartiniMuller1}
	Martini, A., M\"uller, D., $L^p$ spectral multipliers on the free group $N_{3,2}$, \emph{Studia Math.} \textbf{217} (2013), pp.~41--55.
	
	\bibitem{MartiniMuller2}
	Martini, A., M\"uller, D., Spectral multiplier theorems of Euclidean type on new classes of two-step stratified groups, \emph{Proc. Lond. Math. Soc.} \textbf{109} (2014), pp.~1229–1263. 
	
	\bibitem{MartiniMuller3}
	Martini, A., M\"uller, D., Spectral multipliers on $2$-step groups: topological versus homogeneous dimension, \emph{Geom.\ Funct.\ Anal.} \textbf{26} (2016), pp.~680--702.
	
	\bibitem{MartiniRicciTolomeo}
	Martini, A., Ricci, F., Tolomeo, L., Convolution kernels versus spectral multipliers for sub-Laplacians on groups of polynomial growth, \emph{J.\ Funct.\ Anal.} \textbf{277} (2019), pp.~1603--1638. 
	
	\bibitem{MauceriMeda}
	Mauceri, G., Meda, S., Vector-valued multipliers on stratified groups, \emph{Rev.\ Mat.\ Iberoamericana} \textbf{6} (1990), pp.~141--154.
	
	\bibitem{MullerRicciStein}
	M\"uller, D., Ricci, F., Stein, E.\ M., Marcinkiewicz multipliers and multi-parameter structure on Heisenberg (-type) groups, II, \emph{Math.\ Z.} \textbf{221} (1996), pp.~267--291.
	
	\bibitem{MullerStein}
	M\"uller, D., Stein, E. M., On spectral multipliers for Heisenberg and related groups, \emph{J.\ Math.\ Pures Appl.} \textbf{73} (1994), pp.~413--440.
	
	\bibitem{NagelRicciStein}
	Nagel, A., Ricci, F., Stein, E.~M., Harmonic analysis and fundamental solutions on nilpotent Lie groups, in \emph{Analysis and partial differential equations}, p.~249--275, Lecture Notes in Pure and Appl. Math., 122, Dekker, New York, 1990. 
	
	\bibitem{NagelRicciStein2}
	Nagel, A., Ricci, F., Stein, E.~M., Singular Integrals with Flag Kernels and
	Analysis on Quadratic CR Manifolds, \emph{J.\ Funct.\ Anal.} \textbf{181}
	(2001), pp. 29--118.
	
	\bibitem{NagelSteinWainger}
	Nagel, A., Stein, E.~M., Wainger, S., Balls and metrics defined by vector fields I: Basic properties, \emph{Acta Math.} \textbf{155} (1985), pp.~103--147.
	
	\bibitem{Schwartz3}
	Schwartz, L., \emph{Théorie des distributions}, Hermann, 1978.
	
	\bibitem{Sikora}
	Sikora, A., On the $L^2\to L^\infty$ Norms of Spectral Multipliers of
	``Quasi-Homogeneous'' Operators on Homogeneous Groups., \emph{T. Am. Math. Soc.} \textbf{351} (1999), no.~9, pp. 3743--3755.
	
	\bibitem{Stein2}
	Stein, E.~M., \emph{Harmonic Analysis: Real-Variable Methods, Orthogonality,
		and Oscillatory Integrals}, Princeton University Press, 1993.
	
	\bibitem{Treves}
	Treves, F., \emph{Topological Vector Spaces, Distributions and Kernels},
	Academic Press, 1967.
	
	\bibitem{Triebel}
	Triebel, H., \emph{Theory of Function Spaces, II}, Birkh\"auser Verlag, 1992.
	
	\bibitem{Varadarajan}
	Varadarajan, V.~S., \emph{Lie Groups, Lie Algebras, and Their Representations}, Springer, 1974.
	
	\bibitem{VaropoulosCoulhonSaloffCoste}
	Varopoulos, N.~T., Saloff-Coste, L., Coulhon, T., \emph{Analysis and Geometry
		on Groups}, Cambridge University Press, 1992.
\end{thebibliography}
\end{document}